\documentclass[12pt,reqno]{amsart}
\usepackage[backref]{hyperref}

\usepackage{mathrsfs}
\usepackage{amsmath}
\usepackage{latexsym}
\usepackage{amssymb}
\usepackage{amsfonts}
\usepackage{longtable}
\usepackage{enumerate}
\usepackage{colortbl}
\usepackage{hyperref}
\usepackage{color}

\newtheorem{theorem}{Theorem}[section]%
\newtheorem{lem}[theorem]{Lemma}
\newtheorem{cor}[theorem]{Corollary}%
\newtheorem{prop}[theorem]{Proposition}%
\newtheorem{lemma}[theorem]{Lemma}
\newtheorem{definition}[theorem]{Definition}%
\newtheorem{hypothesis}[theorem]{Hypotheses}%
\newtheorem{proposition}[theorem]{Proposition}%
\newtheorem{remark}[theorem]{Remark}%

\renewcommand\a{\alpha}
\renewcommand\b{\beta}
\newcommand\g{\gamma}
\renewcommand\d{\delta}

\renewcommand\r{\rho}
 \newcommand\s{\sigma}

\newcommand\ld{\lambda}

\newcommand\Om{{\it \Omega}}
\newcommand\Del{{\it \Delta}}
\newcommand\Sig{{\it\Sigma}}

  \newcommand\ov{\overline V}

\renewcommand\o{\omega}

\renewcommand\lg{\langle}
\newcommand\rg{\rangle}
\newcommand\nd{\mathrel{\bigm|\kern-.7em/}}

\newcommand\f{\noindent}

\newcommand\PSL{{\rm PSL}}
\newcommand\PSp{{\rm PSp}}
\newcommand\PSU{{\rm PSU}}

\newcommand\SL{{\rm SL}}
\newcommand\GL{{\rm GL}}
\newcommand\PGL{{\rm PGL}}

\newcommand\Aut{\hbox{\rm Aut\,}}

\newcommand\soc{\hbox{\rm soc}}

\newcommand\GF{{\rm GF}}

\renewcommand\mod{\hbox{\rm mod }}

\newcommand\PG{\hbox{\rm PG}}

\newcommand\M{{\rm M}}

\newcommand\Ga{{\it\Gamma}}
\newcommand\Sym{{\rm Sym}}
\newcommand\Ome{{\it \Omega}}
\newcommand\K{{\bf K}}

\newcommand\calB{{\mathcal B}}

\newcommand\ZZ{{\mathbb Z}}
\newcommand\Z{{\bf Z}}

\newcommand\C{{\bf C}}
\newcommand\N{{\bf N}}
\renewcommand\O{{\bf O}}

\newcommand\A{{\rm A}}
\renewcommand\S{{\rm S}}

\renewcommand\proof{\noindent{\it Proof.\ \ }}

\newcommand\AGammaL{{\rm A\Gamma L}}
\newcommand\PL{{\rm P\Gamma L} }
\newcommand\GammaL{{\rm \Gamma L}}
\newcommand\PO{{\rm P\Omega} }

\renewcommand\ov{\overline}
\renewcommand\l{\langle}
\renewcommand\r{\rangle}

\newcommand\mz{{\mathbb Z}}
\renewcommand\qed{\hskip10pt $\Box$\vspace{3mm}}

\def\ge{\geqslant}
\def\geq{\geqslant}

\def\leq{\leqslant}

\begin{document}

\title[Homogeneous graphs]{ The finite $k$-set homogeneous graphs}

\author[Li]{Cai Heng Li}
\address{Cai Heng Li\\
SUSTech International Center for Mathematics, and Department of Mathematics\\
Southern University of Science and Technology\\
Shenzhen 518055, Guangdong, P.R. China}
\email{lich@sustech.edu.cn}

\author[Yin]{Fu-Gang Yin}
\address{Fu-Gang Yin\\
School of Mathematics and Statistics\\
Beijing Jiaotong University\\
Beijing 100044, P.R. China}
\email{18118010@bjtu.edu.cn}

\author[Zhou]{Jin-Xin Zhou}
\address{Jin-Xin Zhou\\
School of Mathematics and Statistics\\
Beijing Jiaotong University\\
Beijing 100044, P.R. China}
\email{jxzhou@bjtu.edu.cn}

\date\today

\begin{abstract}
A classification is given of finite $k$-set-homogeneous graphs for $k\geqslant 2$, leading to a striking result that each finite $k$-set-homogeneous graph is $k$-homogeneous. It shows that $3$-set-homogeneous graphs are rare, consisting of the following graphs and their complements: $\C_5$, $\K_n\square\K_n$, $n\K_m$, the Schl\"{a}fli graph of order 27, the Higman-Sims graph, the MaLaughlin graph, {affine polar graphs, and elliptic orthogonal graphs}. As an ingredient for the proof, it is shown that all orbitals in a primitive permutation group of rank $4$ are self-paired, except for $\PSU_3(3)$ acting on 36 points.
\bigskip

\noindent{\bf Keywords:} $k$-set-homogeneous, automorphism, primitive group.  \\
\noindent{\bf 2000 Mathematics subject classification:} 05C25, 20B25.
\end{abstract}

\maketitle

\section{Introduction}

The main purpose of this paper is to solve the long-standing open problem:

\vskip0.1in
\f{\bf Problem A.} {\rm {Classify the finite graphs in which, for a given positive integer $k$, any two isomorphic induced subgraphs of order at most $k$ are equivalent under the automorphism group.}}
\vskip0.1in

Before proceeding, we give some background on this topic.
The strongest possible symmetry condition that one can impose {on a graph is {\it homogeneity}.}
A graph is called {\em homogeneous} if every isomorphism between any pair of isomorphic induced subgraphs extends to an automorphism of the graph. The study of graphs with {certain homogeneity-type properties} was initiated by Sheehan~{\cite{Sheehan1972,Sheehan1973,Sheehan1973a,Sheehan1974}},
and Gardiner~\cite{Gardiner1976} in the 1970s.
{It turns out that} the only finite homogeneous graphs are the following `trivial graphs':
\[\C_5,\ \K_3\square\K_3,\ n\K_m,\ \K_{n[m]}.\]
A graph $\Ga$ is called {\em set-homogeneous} if isomorphic subgraphs of $\Ga$ are equivalent under $\Aut(\Ga)$.
{In 1978, Ronse~\cite{Ronse}} proved that every finite set-homogeneous graph is homogeneous.
{Slightly weaker versions of homogeneity have been studied in the literature} (see Definition~\ref{Def-homog} for instance) and also {have been extended} to other mathematical structures. Over the past 50 years, { a fair amount of work has been done on objects with certain homogeneity-type properties. The reader may refer to the excellent survey paper of Macpherson~\cite{Macpherson-survey} for an overview.}

{One of these weaker conditions is as follows.}

\begin{definition}\label{Def-homog}
{\rm
Let $\Ga=(V,E)$ be a graph, $G\leqslant\Aut(\Ga)$, and  $k$ a positive integer.
\begin{itemize}
\item[(1)] If each isomorphism between any pair of isomorphic induced subgraphs of $\Ga$ of order at most $k$
extends to an automorphism of $\Ga$ { contained} in $G$, then $\Ga$ is called a {\it $(G, k)$-homogeneous} graph.
(The {\it order} of a graph is the number of vertices.)

\item[(2)] If any two isomorphic induced subgraphs of $\Ga$ of order at most $k$ are equivalent under $G$, then $\Ga$ is called a {\it $(G,k)$-set-homogeneous} graph.

\item[(3)] {When $G=\Aut(\Ga)$, a $(G, k)$-homogeneous or $(G, k)$-set-homogeneous graph is simply called {\em $k$-homogeneous} or {\em $k$-set-homogeneous}, respectively.}
\end{itemize}
}
\end{definition}

A major theme in the area has been the following classification problem.

\vskip0.1in
\f{\bf Problem B.}
{\rm Classify $k$-homogeneous graphs and $k$-set-homogeneous graphs.}
\vskip0.1in


This paper completely solves the problem, and particularly shows that a finite graph is $k$-homogeneous if and only if it is $k$-set-homogeneous.

{The study of Problem~B has lasted for over half a century, dating back to Sheehan and Gardiner's classification of finite homogeneous graphs in the 1970s mentioned above.} Later, Lachlan and Woodrow~\cite{Lachlan-Woodrow} obtained a classification of infinite countable homogeneous graphs. For more results regarding homogeneous graphs, we refer the reader to \cite{Enomoto} and \cite{Gray,Gray-Macpherson,GMPR,HH,Henson}.

By definition, $1$-homogeneous graphs and $1$-set-homogeneous graphs are exactly vertex-transitive graphs.
It is infeasible to obtain { an explicit list of vertex-transitive graphs.}
However, { an explicit classification has been achieved for}
finite $k$-homogeneous graphs with $k\geqslant 2$ in a collection of papers. Cameron~\cite{Cameron-6-transitive} proved that the list of finite $5$-homogeneous graphs is the same as that for homogeneous graphs;
Buczak~\cite{Buczak} classified finite $4$-homogeneous graphs; and Cameron and Macpherson~\cite{Cameron-3-hom} classified finite $3$-homogeneous graphs.
{We observe} that a graph $\Ga$ is $2$-homogeneous if and only if $\Ga$ and its complement $\ov\Ga$ are both arc-transitive. {Thus, a finite $2$-homogeneous graph is a complete multipartite graph or its complement, or an orbital graph of a primitive permutation group of rank $3$.
Note that primitive permutation groups of rank $3$ have been classified in \cite{Bannai,Kantor,Liebeck-affine,Liebeck-Saxl}.}

The main purpose of this paper is to solve Problem~B by proving that every $k$-set-homogeneous graph is $k$-homogeneous for $k\geqslant 2$, { so that $k$-set-homogeneous graphs are explicitly classified, as described in the previous paragraph.}
To state {our main theorem}, we fix some graph notation. Let $m$ and $n$ be positive integers. We denote the complete graph with $n$ vertices by $\K_n$, the cycle with $n$ vertices by $\C_n$, the complete multipartite graph with $n$ parts of
size $m$ by $\K_{n[m]}$, and its complement (the disjoint union of $n$ copies of $\K_m$) by $n\K_m$. Let $\K_n\square\K_n$ and $\K_n\times\K_n$ be the Cartesian product and direct product of two $\K_n$'s, respectively. For a graph $\Ga$, we denote by $\ov\Ga$ the {\it complement} of $\Ga$. Note that $\overline{\K_n\square\K_n}=\K_n\times\K_n$.

\begin{theorem}\label{th-main}
For each positive integer $k$, a finite graph is $k$-set-homogeneous if and only if it is $k$-homogeneous.
Moreover, { if $\Ga$ is a $k$-homogeneous graph with $k\geq 2$, then} one of the following holds:
\begin{itemize}
\item [{\rm(1)}] $k\geqslant 5$, and $\Ga$ is one of { $\C_5 (\cong\overline{\C_5})$, $\K_3\square\K_3 (\cong \overline{\K_3\square\K_3})$, $n\K_m$, and $\K_{n[m]}$; furthermore, $\Ga$ is homogeneous.}

{ \item[(2)] $\Ga$ is either the Schl\"{a}fli graph on $27$ vertices or its complement (the two non-trivial orbital graphs of $\mathrm{PSU}_4(2)$ acting on the isotropic lines); furthermore, $\Ga$ is $4$-homogeneous but not $5$-homogeneous.

\item[(3)] $\Ga$ is either $\K_n\square\K_n$ with $n\geqslant4$, or a graph from Theorem~\ref{th:3sethomo}(1)--(4), or the complement of any such graph; furthermore, $\Ga$ is $3$-homogeneous but not $4$-homogeneous.

\item[(4)] $\Ga$ is an orbital graph of a primitive permutation group of rank $3$ which is not covered by parts (1)--(3); furthermore, $\Ga$ is $2$-homogeneous but not $3$-homogeneous. }


\end{itemize}
\end{theorem}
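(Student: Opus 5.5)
Since $k$-homogeneity trivially implies $k$-set-homogeneity, the substance is the converse. Given the known classifications quoted in the introduction (Cameron for $k\ge5$, Buczak for $k=4$, Cameron--Macpherson for $k=3$, and the rank $3$ description for $k=2$), it then suffices to show that a $k$-set-homogeneous graph is $k$-homogeneous. Parts (1)--(4) are then read off from those lists, with Theorem~\ref{th:3sethomo} supplying the $3$-homogeneous graphs. I would argue by induction on $k$, the case $k=1$ being trivial since both notions mean vertex-transitivity.

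\textbf{Case $k=2$.} Let $\Ga$ be $2$-set-homogeneous with $G=\Aut(\Ga)$. Then $G$ is transitive on edges and on non-edges as unordered pairs, so $G$ has at most two orbital classes on unordered pairs. I must show that $G$ is arc-transitive on both $\Ga$ and $\ov\Ga$. If $\Ga$ or $\ov\Ga$ is disconnected, then $\Ga$ is $n\K_m$ or $\K_{n[m]}$ and we are done. Otherwise $G$ is primitive: a block system would split edges, or non-edges, into pairs inside blocks and pairs between blocks, and these two kinds cannot be swapped by $G$ unless a graph is disconnected. So $G$ is primitive of rank at most $5$, with the orbitals on edges forming one unordered class and those on non-edges another. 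If either class is a paired pair of non-self-paired orbitals, the rank is $3$, $4$ or $5$. For rank $3$ with non-self-paired orbitals we get a tournament-like case with even order, and for rank $3$ the group has even order, so there is an involution swapping some pair, which forces self-pairedness. For rank $4$ the key ingredient is the paper's theorem that all orbitals of a primitive rank $4$ group are self-paired except for $\PSU_3(3)$ on $36$ points. That exception is checked directly against the graph, whose full automorphism group is larger, namely $G_2(2)$ of rank $3$. For rank $5$ with both classes non-self-paired, a counting or half-transitivity argument is needed: each of $\Ga$ and $\ov\Ga$ is a $\frac12$-arc-transitive graph with $G$ primitive, and $\Aut(\Ga)=\Aut(\ov\Ga)$. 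I would try to rule this out by the same orbital-pairing methods, citing the classification of primitive groups with few non-self-paired orbitals.

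\textbf{Case $k\ge3$.} $\Ga$ is also $2$-set-homogeneous, hence $2$-homogeneous, so it is a rank $3$ graph or trivial. For $3$-homogeneity I need, for every induced $3$-vertex subgraph type (triangle, path, edge plus isolated vertex, and coclique), that the setwise stabiliser induces the full automorphism group of that subgraph. For the path $\mathrm P_3$ and for $\K_2\cup\K_1$ this is automatic: fixing the centre or the isolated vertex, we only need to swap the two end vertices, and arc-transitivity already provides this. So the real conditions are that the stabiliser of a triangle induces $\S_3$ and, dually, that the stabiliser of a coclique of size $3$ does. Set-homogeneity gives transitivity on triangles, so the stabiliser induces a transitive subgroup, namely $\A_3$ or $\S_3$. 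I therefore need to exclude the case where every triangle stabiliser induces only $\A_3$. Equivalently, in the rank $3$ group the two-point stabiliser $G_{uv}$ for an edge $uv$ must have no orbit on common neighbours that fails to be swapped. I would run through the rank $3$ classification (Bannai, Kantor, Liebeck, Liebeck--Saxl) and check, for each family transitive on triangles and cocliques, whether an element induces a transposition. This is the main obstacle: it is a long case analysis over affine, almost simple and grid-type rank $3$ groups. I would first try a uniform argument: if triangle stabilisers induce $\A_3$, then the number of triangles times $3$ divides $|G|$ while the number of triangles times $6$ does not, which gives a parity obstruction when $G_{uv}$ has even order and contains involutions acting on the common neighbourhood. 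For $k\ge4$, the analogous checks for $4$-vertex subgraphs are needed only in the short lists of $3$-homogeneous graphs, and these can be verified directly (for example, for the Schl\"afli graph).
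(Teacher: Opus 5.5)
Your outline has the same skeleton as the paper: prove that $k$-set-homogeneity implies $k$-homogeneity, then read off parts (1)--(4) from the known lists. However, the step that carries the weight, $k=3$, is not proved, and the reduction you propose for it rests on two false claims.

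First, $\mathbf{P}_3$ and $K_2\cup K_1$ are not automatic. Arc-transitivity of $\overline{\Gamma}$ (resp.\ $\Gamma$) gives an element interchanging the two end vertices (resp.\ the two adjacent vertices), but nothing makes that element fix the third vertex. What you need is that, for an edge $\{\alpha,\beta\}$, $G_{\alpha\beta}$ is transitive on $\Gamma(\alpha)\setminus(\Gamma(\beta)\cup\{\beta\})$, resp.\ on the common non-neighbours of $\alpha$ and $\beta$. From $3$-set-homogeneity one only gets \emph{at most two} orbits there (Lemma~\ref{rank}(2) and the proof of Lemma~\ref{lm:Gab10orbits}).

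Second, transitivity on triangles does not make $G_{\{T\}}$ transitive on $T$. The number of $G_{\alpha\beta}$-orbits on $\Gamma(\alpha)\cap\Gamma(\beta)$ equals $6/|G_{\{T\}}^{T}|$, and Lemma~\ref{rank}(1) allows the value $6$. Your parity test fails as well: if only $A_3$ is induced, then $|G|$ is the number of triangles times $3|G_{(T)}|$, and the pointwise stabiliser $G_{(T)}$ can have even order.

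Beyond this, the check through the rank $3$ classification is only announced, not carried out. The idea you are missing is that one never has to analyse the group induced on a $3$-set. The paper proves Theorem~\ref{th:3sethomo}, which \emph{classifies} the $(G,3)$-set-homogeneous graphs by running through the rank $3$ groups with necessary conditions: each $|\Omega_i(\Gamma)|$ divides $|G|$ (Lemma~\ref{lm:Gaorder}), the orbit bound of Lemma~\ref{lm:Gab10orbits}, Zsigmondy primes, and pairs of isomorphic triangles or $3$-cocliques spanning subspaces of different dimensions, which no semilinear map can interchange. It then observes that every survivor is already on the Cameron--Macpherson list of $3$-homogeneous graphs.

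The other cases also have holes. For $k=2$ you leave rank $5$ open, although with your choice $G=\mathrm{Aut}(\Gamma)$ it closes at once. With no self-paired non-trivial orbital, $G$ has no involution, so $|G|$ is odd. Hence $G$ is soluble and, being primitive, affine. Then $\Gamma$ is a Cayley graph on $\mathbb{Z}_p^d$ with $p$ odd, and $x\mapsto -x$ is an involutory automorphism, a contradiction. The same inversion trick disposes of affine rank $4$ when $G=\mathrm{Aut}(\Gamma)$. The paper, working with arbitrary $G$, instead identifies the Paley graphs and notes that their automorphism groups have rank $3$.

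For $k\ge4$, ``verified directly'' is not a proof. You would have to show two things:
\begin{itemize}
\item every graph on the $3$-homogeneous list, other than the homogeneous ones, the Schl\"afli graph and its complement, is not $4$-set-homogeneous;
\item the Schl\"afli graph is not $5$-set-homogeneous.
\end{itemize}
That list contains infinite families such as $K_n\square K_n$ ($n\ge4$), $\mathrm{VO}^{\pm}_{2m}(2)$ and $\Gamma(\mathrm{O}^-_6(q))$ with $q>2$. The paper's proof of this theorem is short precisely because it cites Devillers (with Ronse) for $k\ge4$ and Theorems~\ref{th:2sethomo} and~\ref{th:3sethomo} for $k=2,3$. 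Invoking those results is the way to complete your outline.
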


{\noindent{\bf Remark on Theorem~\ref{th-main}.}\ For $k\geq2$, all $k$-homogeneous graphs have already been classified in the literature, { as mentioned above}, and so all graphs in Theorem~\ref{th-main}~(1)--(4) are known.}\medskip

By definition, a $k$-homogeneous graph is $k$-set-homogeneous.
The converse of this statement is clearly true for $k=1$.
{In the case where $k\ge4$,}
{Devillers~\cite{Devillers-2002paper}} proved that every finite $5$-set-homogeneous graph is set-homogeneous, and that a $4$-set-homogeneous graph is either homogeneous or one of the Schl\"{a}fli graph on $27$ vertices and its complement, which are $4$-homogeneous (see, for example, \cite[Proposition~3.2.1]{Devillers}).
{So, to prove Theorem~\ref{th-main}, it suffices to show that every $k$-set-homogeneous graph is also $k$-homogeneous for $k=2$ and 3.}

Before stating the results, we introduce some families of graphs.

\begin{definition}\label{Def-graph-families}
{\rm \begin{itemize}
\item[\rm (1)]  Let ${\rm GF}(q)$ be a field of order a prime power $q$ such that $q\equiv 1\ (\mod 4)$, and let $\o$ be a fixed primitive { element} of ${\rm GF}(q)$.
Let $S=\{\o^2, \o^4, \ldots, \o^{q-3}, \o^{q-1}\}$. The {\it Paley graph} of order $q$, denoted by {\rm Paley}$(q)$, is the graph with vertex set ${\rm GF}(q)$ { where} two vertices $x,y$ are adjacent if and only if $x-y\in S$.

\item[\rm (2)]  For a positive integer $n$, the {\it Hamming graph}, denoted by $\mathrm{H}(2,n)$, is the graph with vertex set {$\{(i, j) \mid 1\leq i, j\leq n\}$}, and for every vertex $(i, j)$, its neighbour set is
\[
\{ (i,j')\mid j' \in \{1,2,\dots,n \}\setminus \{j\}\} \cup \{ (i',j)\mid i' \in \{1,2,\dots,n \}\setminus \{i\}\}.
\]
Note that $\mathrm{H}(2,n)$ is just the Cartesian product $\K_n\square\K_n$.

\item[\rm (3)]  For $\epsilon \in \{+,-\}$, a positive integer $m$ and a prime power $q$, the {\it affine polar graph}, denoted by $\mathrm{VO}_{2m}^{\epsilon}(q)$, is a graph with vertex set $\GF(q)^{2m}$ equipped with a nondegenerate quadratic form $Q$ of type $\epsilon$, and two vertices $\alpha$ and $\alpha'$ being adjacent if and only if $Q(\alpha-\alpha')=0$ (see \cite[\textsection{3.3.1}]{BrouwerSRG}). Since a nondegenerate $2$-dimensional orthogonal vector space of type $-$ contains no singular vector, it follows that $\mathrm{VO}_{2}^{-}(q)\cong q^2\K_1$. As remarked in~\cite[\textsection{3.3.1}]{BrouwerSRG}, we see that $\mathrm{VO}_{2}^{+}(q)\cong \K_q\square\K_q=\mathrm{H}(2,q)$.

\item[\rm (4)] For a prime power $q$, the {\it elliptic orthogonal graph}, denoted by $\mathrm{\Gamma}(\mathrm{O}^{-}_6(q))$ is a graph with vertex set consisting of  all singular $1$-subspaces of the orthogonal space on {$\GF(q)^{6}$ of type $-$},
    and two vertices $\alpha=\langle w \rangle$ and $\alpha'=\langle w' \rangle$ being adjacent if and only if $w\perp w'$ (see \cite[\textsection{2.6.3}]{BrouwerSRG}). The Schl\"{a}fli graph is the complement of the elliptic orthogonal graph $\mathrm{\Gamma}(\mathrm{O}^{-}_6(2))$ (see \cite[Table~11.8]{BrouwerSRG}).
\end{itemize}}
\end{definition}

We recall that a transitive permutation group $G$ on a set $\Ome$ is {\it $2$-homogeneous} if
all $2$-subsets of $\Ome$ are equivalent under $G$.
In the next theorem, a classification of finite $2$-set-homogeneous graphs is given.

\begin{theorem}\label{th:2sethomo}
Let $\Ga=(V,E)$ be a finite  $(G,2)$-set-homogeneous graph, where $G\leq\Aut(\Ga)$.
Then one of the following holds:
\begin{itemize}
\item[{\rm(1)}] $\Ga\cong\K_n$ or $n\K_1$, and $G$ is a $2$-homogeneous permutation group on $V$.

\item[{\rm(2)}] $\Ga\cong\K_{m[b]}$ or $m\K_b$, and  $G\leqslant X\wr Y$,
where $X,Y$ are $2$-homogeneous permutation groups of degree $b,m$, respectively.

\item[{\rm(3)}] $G$ is primitive of rank $3$, and $\Ga$ is $(G,2)$-homogeneous.

\item[{\rm(4)}] $G\cong\PSU_3(3)$ with vertex stabiliser $\PSL_3(2)$, $\Ga$ is of order $36$ and
valency $14$ or $21$, but $\Aut(\Ga)=\PSU_3(3).2$ is $2$-homogeneous on $\Ga$.

\item[(5)] $\Ga$ is a Paley graph of order $p^r$, where $p\equiv5 \pmod{8}$ and $r$ is odd, and
$G\leq\AGammaL_1(p^r)$ is $2$-set-homogeneous on $\Ga$.
\end{itemize}
In particular, all $2$-set-homogeneous graphs are $2$-homogeneous.
\end{theorem}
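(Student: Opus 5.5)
The plan is to translate $(G,2)$-set-homogeneity into a statement about $G$-orbits on $2$-subsets, and then split according to primitivity and the rank of $G$. Taking $k=1$ shows $G$ is transitive on $V$. Taking $k=2$ shows $G$ is transitive on $E$ and on the non-edges. Hence $G$ has at most two orbits on unordered pairs of distinct vertices.

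\emph{The complete/empty case.} If $E=\emptyset$ or $\Ga$ is complete, then $G$ is $2$-homogeneous, which is case~(1). From now on, $E$ and the non-edges are exactly the two $G$-orbits on $2$-subsets.

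\emph{The imprimitive case.} Suppose $G$ is imprimitive with a nontrivial block system $\calB$. The set of $2$-subsets lying inside a block is a nonempty, proper, $G$-invariant set. It is therefore one of the two orbits, so it is either $E$ or the set of non-edges, and the complementary orbit consists of the pairs meeting two blocks. Thus $\Ga\cong m\K_b$ or $\K_{m[b]}$, and $G\leq X\wr Y$ with $X=G_B^B$ and $Y=G^{\calB}$. Since $G$ is transitive on within-block pairs and on cross-block pairs, $X$ must be $2$-homogeneous on a block and $Y$ must be $2$-homogeneous on $\calB$. This is case~(2).

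\emph{The primitive case.} Now let $G$ be primitive. Each orbit on $2$-subsets corresponds either to one self-paired orbital or to a pair $\{O,O^*\}$ of mutually paired orbitals. Hence the rank of $G$ is $3$, $4$ or $5$.
\begin{itemize}
\item Rank $3$: both orbitals are self-paired, and $\Ga$ is $(G,2)$-homogeneous. This is case~(3).
\item Rank $4$: exactly one orbital is not self-paired. Here I would invoke the result announced in the abstract, that all orbitals of a primitive rank $4$ group are self-paired except for $\PSU_3(3)$ on $36$ points. This forces $G\cong\PSU_3(3)$ with stabiliser $\PSL_3(2)$. A direct check of the two orbital graphs, of valencies $14$ and $21$, shows that $\Aut(\Ga)=\PSU_3(3).2$ fuses the paired orbitals and is rank $3$. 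This is case~(4).
\item Rank $5$: no orbital is self-paired.
\end{itemize}

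\emph{Rank $5$.} Here no element of $G$ swaps two points. An involution would swap some $\alpha$ and $\alpha^g$, so $|G|$ is odd. By Feit--Thompson $G$ is soluble, and being primitive it is affine: $G=V{:}G_0\leq\AGammaL_d(p)$ with $|G_0|$ odd. The four $G_0$-orbits on nonzero vectors have the form $S_1,-S_1,S_2,-S_2$, and $-1\notin G_0$. The key observation is that $H=\langle G,-1\rangle$ satisfies $H_0=G_0\times\langle -1\rangle$ and is a primitive affine rank $3$ group whose stabiliser has an odd-order subgroup of index $2$. I would run through Liebeck's classification of affine rank $3$ groups \cite{Liebeck-affine}. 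The extraspecial and exceptional classes should be eliminated by the requirement that $H_0$ be soluble of order $2\cdot\text{odd}$, which is checked by orders. The imprimitive and tensor classes contain no odd-order index-$2$ subgroup with the right orbits, which should leave $H_0\leq\GammaL_1(q)$.

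Within $\AGammaL_1(q)$, with $q=p^r$, the arithmetic goes as follows.
\begin{itemize}
\item $H_0$ has two orbits of size $(q-1)/2$, so these orbits are the squares and the non-squares, $q\equiv1\pmod 4$, and $\Ga$ is $\mathrm{Paley}(q)$.
\item $G_0$ has odd order and orbits of size $(q-1)/4$, so $(q-1)/4$ must be odd, i.e.\ $q\equiv5\pmod 8$.
\item An odd-order subgroup of $\GammaL_1(q)$ acting with these orbits forces $p\equiv5\pmod8$ and $r$ odd (the field automorphism part having odd order).
\end{itemize}
This gives case~(5).

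\emph{The final claim.} Every $2$-set-homogeneous graph is $2$-homogeneous, by applying the classification with $G=\Aut(\Ga)$ and checking that $\Aut(\Ga)$ is arc-transitive on both $\Ga$ and $\ov\Ga$ in each case:
\begin{itemize}
\item cases (1)--(3): this is immediate;
\item case (4): $\PSU_3(3).2$ is rank $3$;
\item case (5): the Paley graph has a rank $3$ automorphism group.
\end{itemize}

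I expect the rank-$5$ analysis to be the main obstacle, in particular excluding the non-$\GammaL_1$ affine rank $3$ groups carefully.
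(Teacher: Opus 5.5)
Your outline matches the paper's proof: reduction to rank at most $5$, the imprimitive case, rank $3$, and rank $4$ via Theorem~\ref{trans-rank-4}, which is proved independently and is exactly what the paper uses. Rank $5$ also goes the paper's way: odd order, affine, $H=\langle G,-1\rangle$ of rank $3$, reduction to $\GammaL_1(q)$. However, the final step of the rank-$5$ case has a genuine gap.

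Once $H_0=G_0\times\langle -1\rangle\le\GammaL_1(q)$, you assert three things without argument: that $H_0$ has two orbits of size $(q-1)/2$, that these are the squares and non-squares, and that the $G_0$-orbits have size $(q-1)/4$. None of this follows from $H_0$ being a rank-$3$ subgroup of $\GammaL_1(q)$. Take $\omega$ primitive.
\begin{itemize}
\item For $q=25$, the irreducible group $\langle\omega^3,\,x\mapsto x^5\rangle$ has orbits of sizes $8$ and $16$.
\item For $q=9$, the irreducible group $\langle x\mapsto\omega x^3\rangle$ has two orbits of size $4$, namely $\{\omega^j: j\equiv 0,1\pmod 4\}$ and its complement. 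These are not the squares and non-squares.
\end{itemize}

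The missing idea is to exploit that $|G_0|$ is odd inside $\GammaL_1(q)$. The paper does this with Lemma~\ref{Hall-2'-GammaL_1(q)}: the Hall $2'$-subgroup $H$ of the metacyclic group $\GammaL_1(q)$ is normal. Hence $G_0\le H$, and all $H$-orbits have one common odd size. An $H$-orbit cannot be $\pm S_1\cup\pm S_2$, which has even size. So the $H$-orbits are exactly the four $G_0$-orbits, each of odd size $(q-1)/4$. This gives $q\equiv 5\pmod 8$, hence $r$ odd and $p\equiv 5\pmod 8$. Moreover $\langle H\cap\GL_1(q),-1\rangle\cong\ZZ_{(q-1)/2}$ has the squares and non-squares as its orbits, which gives the Paley graph.

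An equivalent direct route: let $2^a=(q-1)_2$. Then $\GammaL_1(q)$ permutes the $2^a$ cosets of the $2^a$-th powers by maps $j\mapsto p^ij+c$ on $\ZZ/2^a\ZZ$, that is, through a $2$-group. So $G_0$ fixes every coset. Since $4\mid q-1=2|S_1|+2|S_2|$ and $G_0$ has exactly four orbits, $2^a=4$. The $G_0$-orbits are then the cosets $C_0,\dots,C_3$ of the fourth powers, and $-C_j=C_{j+2}$.

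The reduction to $\GammaL_1(q)$ is also left at ``should'', and classes (A4)--(A11) of Liebeck's list are never mentioned. The tool you need, and do not state, is that each subdegree of $H$ equals $2|S_i|\equiv 2\pmod 4$.
\begin{itemize}
\item This kills the imprimitive class at once, since $(p^n-1)^2\equiv 0\pmod 4$.
\item It kills the tensor class at once, since $(q_0+1)(q_0^b-1)\equiv 0\pmod 4$ for odd $q_0$.
\item Together with solubility of $H_0$ and $|H_0|_2=2$, it disposes of (A4)--(A11).
\end{itemize}
The paper avoids this case analysis entirely by applying Foulser's classification of soluble primitive rank-$3$ groups to $\langle G,-1\rangle$ (Lemma~\ref{1-dim}).
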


{It is worth mentioning} that all graphs in Theorem~\ref{th:2sethomo}~(1)--(5) are $2$-homogeneous and already known in the literature.
{Since} a graph $\Ga$ is $(G, 2)$-set-homogeneous if and only if both $\Ga$ and $\ov\Ga$ are $G$-edge-transitive, { our} main approach for proving the above theorem is to {analyse} permutation groups of rank at most 5. It is well-known that all orbitals of a permutation group of rank 3 and {of even order} are self-paired. The following result shows that, except for a single group, all orbitals of a primitive permutation group of rank 4 are self-paired.

\begin{theorem}\label{trans-rank-4}
Let $G$ be a primitive permutation group of rank $4$. Then either all orbitals of $G$ are self-paired, or $G=\PSU_3(3)$ acting on $36$ points.
\end{theorem}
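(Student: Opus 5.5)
We first reduce to a small combinatorial statement about the orbital pairing. Let $G$ act primitively on $\Omega$ with rank $4$, so $G_\alpha$ has three nontrivial suborbits. Pairing $\Delta\mapsto\Delta^*$ is an involution on these three suborbits, and paired suborbits have equal length. If some orbital is not self-paired, then exactly two suborbits $\Delta,\Delta^*$ are paired and the third $\Sigma$ is self-paired. Moreover, if $|G|$ were even, then $G$ would contain an involution $t$ swapping some pair $\{\alpha,\beta\}$. That would make the orbital containing $(\alpha,\beta)$ self-paired, but it would not force all orbitals to be self-paired. So parity alone does not finish the argument. The key structural input is different: the orbital graph $\Sigma$ is undirected and $G$-arc-transitive, while $\Delta$ gives a $G$-arc-transitive digraph whose underlying graph $\Delta\cup\Delta^*$ is the complement of $\Sigma$. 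In particular $G$ acts on the undirected graph $\Ga_\Sigma$ with rank $3$ on \emph{unordered} pairs. Equivalently, $G$ has exactly two orbits on $2$-subsets.

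Next we invoke the O'Nan--Scott theorem together with the classification of primitive groups whose number of orbits on unordered pairs is small. Such a group is half of a rank $3$ configuration: the group $G^*=\langle G, \text{an element inducing the pairing}\rangle$, when it exists, would be rank $3$. Rather than relying on that, we argue directly with suborbit lengths. Write $d=|\Delta|=|\Delta^*|$ and $s=|\Sigma|$, so $|\Omega|=1+2d+s$. We then split by O'Nan--Scott type.

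In the affine case, $G=V{:}G_0$ with $G_0\le\GL(V)$ irreducible, and the suborbits are the $G_0$-orbits on $V\setminus\{0\}$. Because $v\mapsto -v$ relates $\Delta$ and $\Delta^*$, we have $\Delta^*=-\Delta$. Hence $-1\notin G_0 Z$-orbit-preserving, so $|G_0|$ is odd when $p$ is odd, and $p=2$ is impossible since then $-v=v$. We then use the classification of irreducible linear groups of odd order: these are solvable by Feit--Thompson and lie in $\GammaL_1(p^r)$ up to small exceptions. With $G_0\le\GammaL_1(p^r)$ and $G_0$ having exactly three orbits on nonzero vectors with $-1$ not preserving them, a direct computation with $p^r-1=2d+s$ and $-1=\o^{(p^r-1)/2}$ rules this out: the orbits are unions of cosets of a subgroup of $\GF(p^r)^*$ of index at most $3$, and $-1$ lies in every subgroup of even order.

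In the almost simple case we use the classification of primitive groups of rank $4$, or alternatively that of groups with few orbits on unordered pairs. This requires checking each known family, where generically a point stabiliser contains an element inverting a representative of each orbital. The product and diagonal types are excluded because rank $4$ forces small components, and their suborbits come from coordinate patterns, which are visibly self-paired. The main obstacle is the almost simple case, since it rests on a case-by-case check of the known rank $4$ primitive almost simple groups. Within that check, the sporadic configuration $\PSU_3(3)$ on $36$ points, with stabiliser $\PSL_3(2)$ and suborbits $1,7,7,21$, survives as the exception, while its overgroup $\PSU_3(3).2$ fuses the two $7$'s.
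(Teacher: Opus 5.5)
Your reduction and your treatment of product type are fine, and your almost simple paragraph points the same way as the paper. The affine case, however, has a genuine gap, and that case carries most of the weight of the theorem.

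The inference from $-1\notin G_0$ to $|G_0|$ odd is invalid. Worse, its conclusion is false in exactly the configuration you must exclude. In an affine group the orbital paired with $(0,v)^G$ is $(0,-v)^G$. So self-pairedness of $\Sigma$ means that for $v\in\Sigma$ some $g\in G_0$ satisfies $v^g=-v$. Since $p$ is odd, $v^{g^k}=(-1)^kv$ forces $|g|$ to be even. Thus $|G_0|$ is always even here; $G_0$ merely avoids the central involution $z=-1$. If $|G_0|$ were odd there would be no nontrivial self-paired orbital at all, which is the rank-$5$ situation, not this one.

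The later steps do not recover from this. Odd-order irreducible linear groups need not be one-dimensional semilinear: the irreducible monomial group $\ZZ_3\wr\ZZ_3<\GL_3(7)$ has order $81$, which does not divide $|\GammaL_1(7^3)|$. And even when $G_0\le\GammaL_1(p^r)$, the $G_0$-orbits are unions of cosets of $G_0\cap\GL_1(p^r)$, and the number of those cosets is not bounded by $3$, because field automorphisms in $G_0$ fuse cosets. The $2$-part of $G_0$ can lie entirely in the Frobenius part, which your coset computation ignores.

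The missing idea is the one you explicitly set aside. The element $z=-1$ normalises $G$, fixes $\Sigma$ and swaps $\Delta$ with $\Delta^*=-\Delta$. So $X=V{:}X_0$ with $X_0=G_0\times\langle z\rangle$ is an affine rank-$3$ group containing $G_0$ as an index-$2$ subgroup with $z\notin G_0$. You must then go through Liebeck's classification of affine rank-$3$ groups and show that no such subgroup splits exactly one $X_0$-orbit into $\pm$ halves.
\begin{itemize}
\item The main lever is $X_0^{(\infty)}\le G_0$, hence $-1\notin X_0^{(\infty)}$. This eliminates the cases where the quasisimple part contains $-1$.
\item In several other cases one exhibits elements of $X_0^{(\infty)}$ negating representatives of both $X_0$-orbits, as in the $\SL_a(q)$ and exterior-square families.
\item The $\GammaL_1$, direct-sum and tensor-product families need separate arguments. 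For $\GammaL_1$ this is a normal Hall $2'$-subgroup together with a count of $2$-parts.
\item The extraspecial and exceptional classes are finite computer checks.
\end{itemize}
Without this, or an equivalent argument, the affine half of the theorem is unproved.

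Two smaller points. Your almost simple part is a pointer rather than a proof. The paper uses Bannai, Praeger--Soicher, Vauhkonen and Cuypers, with the parabolic-subgroup lemma and a reflection argument for nonsingular points doing most of the work. For diagonal type, the reason is not coordinate patterns. The orbital graphs are Cayley graphs on $T$ whose connection sets are $\Aut(T)$-fusion classes, and these classes are inverse-closed for the four possible $T$.
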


\begin{remark}
{\rm\begin{itemize}
\item[\rm (a)] More detailed information for the single group $\PSU_3(3)$ and its orbitals is given
in Theorem~\ref{rank-4}.
\item[\rm (b)] There are infinitely many imprimitive permutation groups of rank $4$ with exactly one non-trivial self-paired orbital.
Here is an example. Let $X$ be a $2$-homogeneous but not $2$-transitive permutation group, and let
$Y$ be a $2$-transitive permutation group. Then $X\wr Y$ is an imprimitive permutation group of rank $4$ with exactly one non-trivial self-paired orbital.
\end{itemize}}
\end{remark}

Our last theorem gives a classification of finite $3$-set-homogeneous graphs.

\begin{theorem}\label{th:3sethomo}
Let $\Ga=(V,E)$ be a finite $3$-set-homogeneous graph. Then, up to complement, $\Ga$ is isomorphic either to one of the graphs: $\C_5$, $\K_n$, $\K_{m[b]}$, $\K_n\square\K_n$, or to one of the following graphs:
\begin{itemize}
\item[{\rm(1)}] the affine polar graph $\mathrm{VO}_{2m}^{\epsilon}(2)$ with $m\geq 1$ and $\epsilon \in \{+,-\}$;

\item[{\rm(2)}]  the elliptic orthogonal graph $\mathrm{\Gamma}(\mathrm{O}^-_6(q))$ with $q$ a prime power;

\item[{\rm(3)}] an orbital graph of the Higman-Sims group $\mathrm{HS}$ acting on $100$ points;

\item[{\rm(4)}] an orbital graph of the McLaughlin group $\mathrm{McL}$ acting on $275$ points.
\end{itemize}
In particular, all $3$-set-homogeneous graphs are $3$-homogeneous.
\end{theorem}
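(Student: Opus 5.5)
Since a $3$-set-homogeneous graph is in particular $2$-set-homogeneous, I start from Theorem~\ref{th:2sethomo} with $G=\Aut(\Ga)$, and I use that the property is invariant under complementation. Then I test each case for the extra condition on induced subgraphs of order $3$. Cases~(1) and~(2) give $\K_n$, $n\K_1$, $\K_{m[b]}$, $m\K_b$, and these are homogeneous. In case~(4) the graph is an orbital graph of $\PSU_3(3).2$ on $36$ points, which is rank $3$. In case~(5) the full automorphism group of a Paley graph is again of rank $3$. So, apart from these trivial families, I may assume $G=\Aut(\Ga)$ is primitive of rank $3$ and $\Ga$ is a nontrivial orbital graph. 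In particular $\Ga$ is strongly regular and $2$-homogeneous.

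Next I would translate $3$-set-homogeneity into a condition on point stabilisers. For an edge $\{u,v\}$, the induced subgraphs on $\{u,v,w\}$ are triangles or paths, and these are classified by whether $w\in\Ga(u)\cap\Ga(v)$, $w$ lies in exactly one of $\Ga(u),\Ga(v)$, or $w$ lies in neither. Set-homogeneity on triangles means that $G$ is transitive on the set of triangles. Since $G$ is arc-transitive, this is equivalent to $G_{\{u,v\}}$ being transitive on $\Ga(u)\cap\Ga(v)$. The same argument for paths $\K_{1,2}$, centred at $u$, gives that $G_u$ is transitive on non-adjacent pairs of $\Ga(u)$ up to swapping. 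Applying both conditions to $\ov\Ga$ as well, I get the following: $G_u$ acts on $\Ga(u)$ and on $\ov\Ga(u)$ so that both the induced graph and its complement are $(G_u,2)$-set-homogeneous. In other words, $\Ga(u)$ and $\ov\Ga(u)$ are $2$-set-homogeneous graphs under $G_u$, so Theorem~\ref{th:2sethomo} applies to them recursively. Combined with $\lambda,\mu$ parameter constraints, this is the local condition I would exploit.

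The main step is to run through the classification of primitive rank $3$ groups (almost simple, affine, grid/product type) and keep those satisfying the local condition. For the product action I get $\K_n\square\K_n$ and its complement. For the affine groups, $G_0$ acting on the two nonzero orbits must induce a $2$-set-homogeneous action on each orbit's induced graph. I expect this to force the quadratic-form case over $\GF(2)$, giving $\mathrm{VO}^{\pm}_{2m}(2)$, and the small cases such as $\C_5$ and $\K_3\square\K_3$. The other affine families (Paley, one-dimensional, tensor, exceptional) should be eliminated because the subdegree-stabiliser acting on a neighbourhood is too small, or has rank exceeding $5$ on $\Ga(u)$.

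For almost simple groups, I would check the local condition against the known lists. Classical polar-space actions leave $\Gamma(\mathrm{O}^-_6(q))$, where the local graph is a generalised quadrangle setting with enough transitivity. Among the sporadic cases HS and McL survive, since their local graphs are the Hoffman--Singleton graph and $\mathrm{U}_4(3)$'s $112$-point graph. The Schläfli graph arises from $\Gamma(\mathrm{O}^-_6(2))$. This case-by-case elimination is the main obstacle. Finally, each surviving graph is recognised as $3$-homogeneous, which matches the list of Cameron and Macpherson~\cite{Cameron-3-hom}, and this yields the last assertion.
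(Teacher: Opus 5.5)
\textbf{Gap: a false equivalence at the key step.} Your reduction to a local condition rests on a false equivalence. Set-homogeneity on triangles only gives that $G$ is transitive on the set of triangles. This does \emph{not} imply, even together with arc-transitivity, that $G_{\{u,v\}}$ is transitive on $\Ga(u)\cap\Ga(v)$.

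Count flags (triangle, edge of it), using edge-transitivity on one side and triangle-transitivity on the other. This shows that the number of $G_{\{u,v\}}$-orbits on $\Ga(u)\cap\Ga(v)$ equals the number of orbits of the setwise stabiliser $G_T$ of a triangle $T$ on the three edges of $T$. That number may be $1$, $2$ or $3$. It is $1$ exactly when $G_T$ is transitive on $T$, which is part of $3$-homogeneity and cannot be assumed.

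Your argument for induced paths is fine, because the centre of a $\mathbf{P}_3$ is canonical, and likewise the isolated vertex of $\K_1\cup\K_2$. Triangles and $3\K_1$ have no distinguished vertex, so the argument does not transfer. The correct local information is weaker: $G_u$ is transitive on the non-edges of $[\Ga(u)]$ but may have up to three orbits on its edges. This is what the paper imports from \cite[Theorem~1.3]{Zhou-EJC2021} as Lemma~\ref{rank}: $G_{\a\b}$ has $1,2,3$ or $6$ equal-sized orbits on $\Ga(\a)\cap\Ga(\b)$. The same weakening applies to independent triples on the $\ov\Ga$ side.

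Consequently $[\Ga(u)]$ and its counterpart for $\ov\Ga$ need not be $(G_u,2)$-set-homogeneous, and Theorem~\ref{th:2sethomo} cannot be applied recursively. A classification built on your local condition never examines a $3$-set-homogeneous graph whose triangle or coclique stabilisers are intransitive on the triple. Those are precisely the graphs whose non-existence the final assertion claims, so that assertion is assumed rather than proved.

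\textbf{The case analysis is not carried out.} The step you yourself call the main obstacle consists of ``should be eliminated'' and ``I expect'', so nothing is proved there. The paper works with the weak, correct condition and applies concrete tools against Liebeck's affine list and the rank-$3$ graph lists of \cite{BrouwerSRG}:
\begin{itemize}
\item the divisibility $|\Om_i(\Ga)|\mid|G|$ of Lemma~\ref{lm:Gaorder};
\item explicit pairs of isomorphic triples (triangles or cocliques) whose spans have different dimensions, hence are inequivalent under any semilinear group;
\item the bound of $12$ orbits of $G_{\a\b}$ (Lemma~\ref{lm:Gab10orbits}) for $G\leqslant\AGammaL_1(p^d)$;
\item an orbit-size argument with $s\in\{1,2,3,6\}$ for the bilinear forms graphs;
\item Magma for finitely many groups.
\end{itemize}

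\textbf{Factual slips.} The Higman--Sims graph is triangle-free with parameters $(100,22,0,6)$, so its local graph is $22\K_1$, not the Hoffman--Singleton graph. Also, several affine classes you expect to vanish in fact produce listed graphs: $\K_4\square\K_4$ from the tensor class, $\K_9\square\K_9$ and $\mathrm{VO}^-_6(2)$ from the extraspecial class, and $\mathrm{VO}^+_8(2)$ from $\A_9$ in class (C).
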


Note that all graphs in Theorem~\ref{th:3sethomo}~(1)--(4) are actually $3$-homogeneous graphs, which {are known} in the literature.

The rest of the paper is organised as follows. In Sections~\ref{subsec:2reduce}--\ref{sec6:proof-main-theorems}, we deal with finite $(G, 2)$-set-homogeneous graphs.
In Section~\ref{sec6:proof-main-theorems}, we prove Theorem~\ref{th:2sethomo} and Theorem~\ref{trans-rank-4}.
In Section~\ref{set:3sethom}, we deal with finite $(G, 3)$-set-homogeneous graphs, and prove Theorem~\ref{th:3sethomo}.
In Section~\ref{set:proof-of-main}, we prove Theorem~\ref{th-main}.

\section{A reduction}\label{subsec:2reduce}
In this section, we shall reduce the proof of Theorem~\ref{th:2sethomo} to two cases: $G$ is a primitive permutation group of rank $5$ { and odd order}, and $G$ is a primitive permutation group of rank $4$ which has exactly one self-paired orbital. The former case will be treated in Section~\ref{subset:prime-rank5}, and the latter case will be treated in Sections~\ref{subsec:affineprimitive-rank4} and \ref{subsec:almostsimple-rank4}. Based on these, Theorem~\ref{th:2sethomo} and Theorem~\ref{trans-rank-4} will be proved in Section~\ref{sec6:proof-main-theorems}.

We first introduce several terms of permutation groups.
Let $\Ome$ be a finite nonempty set. Denote by $\Sym(\Ome)$ the symmetric group on $\Ome$. Let $G\leqslant\Sym(\Ome)$ be transitive.
Let $\Ome\times\Ome=\{(u,v)\mid u,v\in \Ome\}$.
An orbit $(u,v)^G$ of $G$ on $\Ome\times\Ome$ is called an {\it orbital} of $G$ on $\Ome$, and
$(v,u)^G$ is called the {\it paired orbital} of $(u,v)^G$.
An orbital  $(u,v)^G$ is called {\it self-paired} if it equals $(v,u)^G$.
The orbital $(u,u)^G$ is called {\it trivial}, and the others are non-trivial. To an orbital $E$ we
associate the digraph with vertex set $\Ome$ and arc set $E$, called the {\em orbital digraph} for
$E$, denoted by the pair $(\Ome, E)$. For a given point $u$, each orbital $E=(u,v)^G$ corresponds to an orbit of $G_u$ on $\Ome$, namely, $\{v\ |\ (u,v)\in E\}$. The orbits of $G_u$ on $\Om$ are called {\em suborbits} of $G$, and their lengths are called the {\em subdegrees} of $G$.
For a set $\Ome$, let $\Ome^{\{2\}}$ be the set of 2-subsets of $\Ome$, namely, $\Ome^{\{2\}}=\{\{u,v\}\mid u\not=v\in \Ome\}$,  and let $\Ome^{(2)}=\{(u,v)\mid u\not=v\in \Ome\}$.

\begin{lemma}\label{rank<=5}
Let $\Ga=(V,E)$ be a $(G, 2)$-set-homogeneous graph, where $G\leq\Aut(\Ga)$.
Then $G$ is a permutation group on $V$  of rank at most $5$, and
one of the following holds:
\begin{itemize}
\item[{\rm(1)}] $G$ is of rank $2$, and $\Ga$ is a complete graph or an empty graph.

\item[{\rm(2)}] $G$ is of rank $3$, and {either $\Ga$ is a complete graph or an empty graph, or $\Ga$ is $(G, 2)$-homogeneous}.

\item[{\rm(3)}] $G$ is of rank $4$, and exactly one of the $3$ non-trivial orbitals is self-paired.

\item[{\rm(4)}] $G$ is of rank $5$, and none of the $4$ non-trivial orbitals is self-paired.

\end{itemize}
\end{lemma}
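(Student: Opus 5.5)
The plan is to count orbits of $G$ on unordered pairs. Since $\Ga$ is $(G,2)$-set-homogeneous, any two $2$-subsets of $V$ inducing isomorphic subgraphs are $G$-equivalent. There are only two isomorphism types of induced subgraph of order $2$: an edge or a non-edge. Hence $G$ has at most two orbits on $V^{\{2\}}$, namely $E$ (if nonempty) and $V^{\{2\}}\setminus E$ (if nonempty). Applying the same condition to subgraphs of order $1$ shows that $G$ is transitive on $V$. So $G$ is a transitive group whose orbitals can be analysed.

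Next we relate orbitals to orbits on $2$-subsets. Each non-trivial orbital $O=(u,v)^G$ maps to the $G$-orbit $\{u,v\}^G$ on $V^{\{2\}}$. Two orbitals map to the same orbit on $2$-subsets exactly when they are equal or paired with each other. A self-paired orbital therefore accounts for one orbit on $2$-subsets by itself. A non-self-paired orbital, together with its paired orbital, accounts for exactly one orbit on $2$-subsets. Write $s$ for the number of self-paired non-trivial orbitals and $t$ for the number of pairs of mutually paired non-self-paired orbitals. Then the number of $G$-orbits on $V^{\{2\}}$ is $s+t\le 2$, and the rank of $G$ is $1+s+2t$. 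Since $s+t\le2$, we get rank at most $1+2\cdot 2=5$.

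We now enumerate the possibilities; the nonzero cases are as follows.
\begin{itemize}
\item[(i)] $(s,t)=(1,0)$ gives rank $2$. Then $G$ is transitive on $2$-subsets, so $E=\emptyset$ or $E=V^{\{2\}}$, which is case (1).
\item[(ii)] $(s,t)=(0,1)$ gives rank $3$ with a pair of non-self-paired orbitals. Again $G$ has one orbit on $2$-subsets, so $\Ga$ is complete or empty.
\item[(iii)] $(s,t)=(2,0)$ gives rank $3$ with both non-trivial orbitals self-paired. Then $E$ is either a union of $2$-subset orbits (complete or empty) or exactly one of them. In the latter case $E$ and its complement are each a single self-paired orbital, hence arc-transitive under $G$. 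Given an isomorphism between induced subgraphs of order at most $2$, transitivity on $V$ and on the arcs of $\Ga$ and $\ov\Ga$ extends it to an element of $G$. So $\Ga$ is $(G,2)$-homogeneous, which is case (2).
\item[(iv)] $(s,t)=(1,1)$ gives rank $4$ with exactly one self-paired non-trivial orbital, which is case (3).
\item[(v)] $(s,t)=(0,2)$ gives rank $5$ with no self-paired non-trivial orbital, which is case (4).
\end{itemize}
The degenerate case $|V|\le1$ is trivially absorbed into (1).

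No step is a genuine obstacle. The only point requiring care is the bijection between orbit-pairs of orbitals and orbits on $2$-subsets, together with verifying $(G,2)$-homogeneity in the rank-$3$ self-paired case. That verification needs arc-transitivity, not just edge-transitivity, and this is exactly what self-pairedness supplies.
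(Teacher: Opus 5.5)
Your argument is correct and is essentially the paper's: both rest on $G$ having at most two orbits on $V^{\{2\}}$ (edges and non-edges), each of which lifts to one or two orbitals according to self-pairedness. Your $(s,t)$ count, with rank $1+s+2t$ and $s+t\le 2$, just packages the paper's split into $E=V^{\{2\}}$ versus $E\ne V^{\{2\}}$ uniformly; the one quibble is that $|V|=1$ gives rank $1$, so it is not literally absorbed into (1), and the statement (like the paper) tacitly assumes $|V|\ge 2$.
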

\proof
By definition, both $\Ga$ and its complement $\overline{\Ga}$ are $(G,2)$-set-homogeneous. So we may assume that $E$ is not empty. Then $G$ is transitive on $E$.

Assume first that $E=V^{\{2\}}$. Then $\Ga$ is a complete graph. If $G$ is transitive on $V^{(2)}$, then $G$ has rank $2$. If $G$ is intransitive on $V^{(2)}$, then for any $(u, v)\in V^{(2)}$, we have $V^{(2)}=(u, v)^G\cup (v, u)^G$, and so $(u, v)^G$ and $(v, u)^G$ are the only two orbits of $G$ on $V^{(2)}$. So $G$ is of rank $3$.

Assume now that $E\not=V^{\{2\}}$. Then $\Ga$ has two types of subgraphs of order $2$, namely, $\K_2$ and $2\K_1$, and then $G$ is transitive on $E$ and on $V^{\{2\}}\setminus E$. Let $\{u, v\}\in E$, and let $\{u, w\}\in V^{\{2\}}\setminus E$. Then the possible non-trivial orbitals of $G$ on $V$ are:
\[(u, v)^G, (v, u)^G, (u, w)^G, (w, u)^G.\]
If every non-trivial orbital of $G$ is self-paired, then $(u, v)^G=(v, u)^G$ and $(u, w)^G=(w, u)^G$. This implies that $G$ has rank $3$ and $\Ga$ is $(G, 2)$-homogeneous. If exactly one of the non-trivial orbitals of $G$ is self-paired, then $G$ has three pair-wise distinct non-trivial orbitals, and so $G$ has rank $4$. If no non-trivial orbital of $G$ is self-paired, then the above four orbitals of $G$ are pair-wise distinct, and then $G$ has rank $5$. This proves our lemma.
\hfill\qed

Let $G$ be a transitive permutation group on a set $\Ome$.
A nonempty subset { $\Del$ }  of $\Ome$ is called a {\em block} for $G$ if
for each $g\in G$, either $\Del^g=\Del$ or $\Del^g\cap \Del=\emptyset$. 
The set $\{\Del^g \mid g\in G\}$ of blocks is called a {\em block system} for $G$.
We say that $G$ is {\em primitive} if the only blocks for $G$ are {\it trivial} blocks, namely,
the singleton subsets or the whole of $\Ome$.
Otherwise, $G$ is called {\it imprimitive} if there exist non-trivial blocks for $G$.
Further, $G$ is called {\it $2$-transitive} or {\it $2$-homogeneous} if $G$ is transitive on $\Ome^{(2)}$ or $\Ome^{\{2\}}$, respectively. Note that a $2$-transitive group has rank $2$, and that a $2$-homogeneous group which is not $2$-transitive has rank $3$.

It is easy to see that a complete multipartite graph $\K_{m[b]}$ and its complement $m\K_b$ are $2$-homogeneous.

\begin{lemma}\label{imp}
Let $\Ga=(V,E)$ be a $(G,2)$-set-homogeneous graph.
If $G$ is imprimitive on $V$, then $\Ga=\K_{m[b]}$ or $m\K_b$ with $m,b\geqslant2$.
Moreover, $\Ga$ is homogeneous.
\end{lemma}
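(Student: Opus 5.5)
Fix a nontrivial block system $\calB$ for $G$ on $V$, with $m\geqslant 2$ blocks each of size $b\geqslant 2$. By Lemma~\ref{rank<=5}, and since both $\Ga$ and $\ov\Ga$ are $(G,2)$-set-homogeneous, I may assume $E\neq\emptyset$ and $E\neq V^{\{2\}}$ (if $\Ga$ is complete or empty, its edge set would consist of pairs inside blocks as well as pairs across blocks, so one of $E$, $V^{\{2\}}\setminus E$ fails to be a single $G$-orbit). The aim is to show that $E$ is exactly one of the two sets
\[E_{\rm in}=\{\{u,v\}\mid u\neq v \text{ lie in a common block}\},\qquad E_{\rm out}=V^{\{2\}}\setminus E_{\rm in}.\]

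The key observation is that $G$ preserves $\calB$, so each of $E_{\rm in}$ and $E_{\rm out}$ is a union of $G$-orbits on $V^{\{2\}}$; both are nonempty because $b\geqslant 2$ and $m\geqslant 2$. On the other hand, $(G,2)$-set-homogeneity says that $G$ has exactly two orbits on $V^{\{2\}}$, namely $E$ and $V^{\{2\}}\setminus E$. Hence $\{E,V^{\{2\}}\setminus E\}=\{E_{\rm in},E_{\rm out}\}$. If $E=E_{\rm in}$, then $\Ga\cong m\K_b$; if $E=E_{\rm out}$, then $\Ga\cong\K_{m[b]}$. (The degenerate case, in which $\Ga$ is complete or empty and $G$ has a single orbit on $V^{\{2\}}$, is excluded because that single orbit would have to meet both $E_{\rm in}$ and $E_{\rm out}$.)

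Finally, homogeneity of $m\K_b$ and of its complement $\K_{m[b]}$ is standard. An isomorphism $\varphi$ between induced subgraphs of $m\K_b$ maps each component-intersection to a component-intersection injectively. It therefore extends: first choose a permutation of the $m$ components that extends the induced partial map on components, which is possible since that map is injective; then extend inside each component by an arbitrary bijection. The result is an automorphism of $m\K_b$. Passing to complements gives the same conclusion for $\K_{m[b]}$.

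There is no real obstacle here. The only care needed is to check that the degenerate complete/empty case cannot occur under imprimitivity, and to note the homogeneity extension for $n\K_m$.
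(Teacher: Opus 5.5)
Your proof is correct and follows the paper's argument: the block system splits $2$-subsets into two nonempty $G$-invariant classes (within-block and across-block), and $(G,2)$-set-homogeneity forces these to be exactly the edges and the non-edges, giving $m\K_b$ or $\K_{m[b]}$. The only difference is that you verify homogeneity of $m\K_b$ and $\K_{m[b]}$ directly, by extending the induced injection on components, where the paper cites Gardiner's classification; your extension argument is valid.
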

\proof
Let $\calB$ be a non-trivial block system, and let $B\in\calB$.
Then $\{u,v\}$ with $u,v\in B$ is not equivalent under $G$ to $\{u,w\}$ with $w\in V\setminus B$.
Thus, exactly one of $\{u,v\}$ and $\{u,w\}$ is an edge of $\Ga$.
If $\{u,v\}$ is an edge, then $\Ga=m\K_b$, and  
if $\{u,w\}$ is an edge, then $\Ga=\K_{m[b]}$, where $b=|B|$ and $m=|\calB|$. It then follows from \cite{Gardiner1976} that $\Ga$ is homogeneous.
\hfill\qed

The above Lemmas~\ref{rank<=5} and \ref{imp} lead us { to study} primitive permutation groups of rank at most $5$.
{  The {\it socle}  of a group $G$, denoted by $\soc(G)$, is the subgroup generated by all minimal normal subgroups of $G$.}


\begin{prop}{\rm ({\rm\cite[Corollary~2.2]{Cuypers}})} \label{th-rank-4}
Let $G$ be a primitive permutation group on $\Om$ of rank at most $5$.
Then one of the following holds:
\begin{enumerate}
  \item [{\rm (1)}]\ $G$ is affine, that is, $\soc(G)$ is abelian.

  \item [{\rm (2)}]\ $G$ is almost simple, that is, $\soc(G)$ is nonabelian simple.

  \item [{\rm (3)}]\ $T^t\lhd G\leqslant G_0\wr \mathrm{S}_t$, and $\Ome=\Del^t$,
  where $t=2,3$ or $4$, $G_0$ is a $2$-transitive group on $\Del$, and  $T=\soc(G_0)$ is nonabelian simple.

  \item [{\rm (4)}]\ $\soc(G)=T\times T$ with $T=\A_5$,  $\A_6$, $\PSL_2(7)$ or $\PSL_2(8)$,
 and  the point-stabiliser $\soc(G)_\a$ is the diagonal subgroup of $T\times T$.
\end{enumerate}
\end{prop}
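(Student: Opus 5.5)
This proposition is quoted from \cite[Corollary~2.2]{Cuypers}, so within the paper it is simply cited. A self-contained proof would follow the O'Nan--Scott theorem. The plan is to go through the types of primitive group and discard those whose rank is too large.

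The first tool is the rank bound $\mathrm{rank}(G)\geqslant |\Om|/|G_\a|$: each suborbit has length at most $|G_\a|$, so rank at most $5$ forces $|\Om|\leqslant 5|G_\a|$. Hence $|G|\leqslant 5|G_\a|^2$. We then take $G$ primitive with socle $N$ and run through the O'Nan--Scott types: affine (HA), almost simple (AS), simple diagonal (SD), compound diagonal (CD), product action (PA), and twisted wreath (TW).

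\emph{Affine and almost simple.} These are exactly (1) and (2), and nothing needs to be checked.

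\emph{Twisted wreath.} Here $N=T^k$ is regular and $G_\a$ is a subgroup of $\mathrm{S}_k$ acting transitively on the factors, with $k\geqslant 6$. The order bound gives $|T|^k\leqslant 5\,(k!)^2$, which fails because $|T|\geqslant 60$. So TW is excluded.

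\emph{Diagonal types.} For $N=T^k$ with diagonal point stabiliser, $|\Om|=|T|^{k-1}$ and $G_\a\leqslant \Aut(T)\times \mathrm{S}_k$. For $k\geqslant 3$ the bound already fails. For CD it fails as well, or it reduces to the product-action count below. So only $k=2$ remains, with $|T|\leqslant 5\,|\mathrm{Out}(T)|\cdot 2\cdot|\Aut(T)|$. This bound alone is not decisive, so I would use the orbit structure. The suborbits of $G_\a$ on $T\cong\Om$ are unions of $\Aut(T)$-classes, fused further by the swap, which acts as inversion. The rank is therefore at least the number of $\Aut(T)$-classes of $T$ fused under $x\mapsto x^{-1}$. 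Requiring at most $5$ such classes, counting the identity, picks out the simple groups with at most four nontrivial fused classes. Consulting the character tables leaves $\A_5$, $\A_6$, $\PSL_2(7)$ and $\PSL_2(8)$, which gives (4).

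\emph{Product action.} Take $G\leqslant H\wr \mathrm{S}_t$ on $\Del^t$, with $H$ primitive of type AS or SD on $\Del$. The orbits of $G_\a$ on $\Del^t$ refine the orbits of $H_\d\wr \mathrm{S}_t$, and these are indexed by multisets of $H_\d$-suborbit types. If $H$ has rank $r$, then $G$ has rank at least $\binom{t+r-1}{t}$. Rank at most $5$ forces $r=2$ and $t+1\leqslant 5$, that is, $t\leqslant 4$. Then $H$ is $2$-transitive, and since $2$-transitive groups are affine or almost simple, $H$ is almost simple. Its socle $T$ gives (3). For $r=3$ we get $t=1$ only, which is not product action.

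The main obstacle is the diagonal case with $k=2$. There one has to classify the nonabelian simple $T$ with at most four nontrivial $\Aut(T)$-classes up to inversion. This needs either CFSG-based class-number estimates (for example, that $k(T)$ grows with $|T|$ for Lie-type groups, and lists for sporadic and alternating groups) or a direct small-case check. For this step I would lean on the known list of simple groups with few automorphism classes.
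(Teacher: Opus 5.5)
The paper gives no argument for this proposition; it only quotes Cuypers, so your O'Nan--Scott reconstruction has to stand on its own. Two parts of it are sound. The product-action count $\binom{t+r-1}{t}$ works, and it also disposes of compound diagonal type, since an SD component is never $2$-transitive and so has rank $r\geqslant 3$. The reduction of the diagonal case $k=2$ to counting $\langle \mathrm{Aut}(T), x\mapsto x^{-1}\rangle$-orbits on $T$ is also correct.

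The gap is in twisted wreath type and in simple diagonal type with $k\geqslant 3$. There you rely only on the order inequality $|\Omega|\leqslant 1+4|G_\alpha|$, or the weaker $|G|\leqslant 5|G_\alpha|^2$. This inequality does \emph{not} fail for large $k$, because $k!$ eventually outgrows $|T|^k$:
\begin{itemize}
\item Your TW bound $|T|^k\leqslant 5(k!)^2$ is actually true for $T=\A_5$ as soon as $k\geqslant 18$.
\item Even the sharper $|T|^k\leqslant 1+4\,k!$ holds for $T=\A_5$ once $k!>60^k$.
\item For SD, the bound $|T|^{k-1}\leqslant 1+4\,|\mathrm{Aut}(T)|\,k!$ holds, for example, for $T=\A_5$ and $k=200$.
\end{itemize}
So as written these types are not excluded. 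Order counting cannot detect that a very large $G_\alpha$ still has many orbits.

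The fix is to replace the order estimates by orbit invariants, as you already do for product action.

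In TW type, $N=T^k$ is regular, so the $G_\alpha$-orbits on $\Omega$ are the orbits of $G_\alpha$ acting by conjugation on $N$, inside $\mathrm{Aut}(N)=\mathrm{Aut}(T)\wr\mathrm{S}_k$. Hence the rank is at least the number of size-$k$ multisets of $\mathrm{Aut}(T)$-classes of $T$, namely $\binom{k+c-1}{k}$. Here $c\geqslant 4$: the identity plus elements of three distinct prime orders, since $|T|$ has at least three prime divisors. This exceeds $5$ for every $k\geqslant 2$.

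In SD type, write points as cosets $Dx$ of the diagonal $D$, with $x=(x_1,\dots,x_k)$.
\begin{itemize}
\item The partition of $\{1,\dots,k\}$ by equality of coordinates is invariant, up to $\mathrm{S}_k$, both under change of representative and under $G_\alpha$.
\item On the type $(k-1,1)$, with representative $(a,\dots,a,b)$, the $\mathrm{Aut}(T)$-class of $a^{-1}b$ is a further invariant.
\item For $k\geqslant 4$, the types $(k)$, $(k-1,1)$, $(k-2,2)$, $(k-2,1,1)$, together with the at least threefold splitting of $(k-1,1)$, give at least $6$ suborbits.
\item For $k=3$ your order bound does work.
\end{itemize}

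For the remaining $k=2$ step, note that the rank is at least the number of distinct element orders of $T$. This reduces the check to simple groups with at most five element orders.
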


The next lemma reduces our work to affine groups and almost simple groups.

\begin{lemma}\label{non-HA-AS}
Let $\Ga=(V,E)$ be a $(G,2)$-set-homogeneous graph, and let $G$ be primitive on $V$.
Assume that $G$ is neither affine nor almost simple.
Then
\begin{itemize}
 \item [{\rm (1)}] all orbitals of $G$ are self-paired.

 \item [{\rm (2)}] $T^2\lhd G\leqslant G_0\wr S_2$, and $\Ome=\Del^2$, $G_0$ is a $2$-transitive group on $\Del$, and
$\Ga=\K_n\square\K_n$ with $n=|\Del|$ is $3$-homogeneous but not $4$-homogeneous.
\end{itemize}
\end{lemma}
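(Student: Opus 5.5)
By Proposition~\ref{th-rank-4}, since $G$ is primitive of rank at most $5$ (Lemma~\ref{rank<=5}) and neither affine nor almost simple, $G$ falls under case (3) or case (4) of that proposition. I would treat these two cases separately, in each case first showing that all orbitals are self-paired. Then Lemma~\ref{rank<=5} forces $G$ to have rank $2$ or $3$, and the graph can be identified.

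\emph{Case (3).} Here $\Ome=\Del^t$ with $T^t\lhd G\leqslant G_0\wr\S_t$ and $G_0$ $2$-transitive on $\Del$. Since $T=\soc(G_0)$ is nonabelian simple and $G_0$ is $2$-transitive, $T$ itself is transitive on $\Del$. Fix $\a=(\d,\dots,\d)$. The orbitals of $T^t$ on $\Ome$ are determined by the orbitals of $T$ coordinatewise. The key point is that $G_{\a}$ permutes the $t$ coordinates transitively, because $G$ is primitive, so the projection to $\S_t$ is transitive. Moreover $(G_0)_\d$ is transitive on $\Del\setminus\{\d\}$. Hence every $G_\a$-orbit consists of points differing from $\a$ in a fixed number $i$ of coordinates. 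Swapping $\a$ and $\b$ preserves this number $i$ (the Hamming distance), so if these Hamming-distance classes are exactly the orbitals, every orbital is self-paired.

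In general a $G_\a$-orbit is contained in a distance class and may be strictly smaller, since the top group need not be $i$-homogeneous on coordinates. To handle this I would observe that the rank is then at least the number of distance classes plus one, namely $t+1$. The rank bound $5$ gives $t\le4$, and for $t\ge 3$ a count of orbitals should be enough to rule things out.

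Concretely, for $t\ge3$ the distance-$1$ class and the distance-$2$ class both give self-paired orbitals (distance is symmetric, and each class is a single orbital or a union of orbitals). This already contradicts items (3) and (4) of Lemma~\ref{rank<=5}, which allow at most one self-paired non-trivial orbital. It also contradicts rank $\le 3$, since there are $t\ge 3$ distinct nonempty distance classes. So a general argument works: each distance class is a union of orbitals and is closed under pairing.

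More precisely, I would argue as follows. The pairing map permutes the orbitals inside each distance class. Under case (4) of Lemma~\ref{rank<=5} there are no self-paired non-trivial orbitals, so each class contains an even number of orbitals. Over at least $t\ge2$ classes this needs at least $4$ orbitals, which is consistent with case (4) only if $t=2$ with two orbitals in each class. Under case (3) of Lemma~\ref{rank<=5} there are $3$ non-trivial orbitals spread over $t$ classes, one of which is self-paired.

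This parity and counting step is the main obstacle, and it requires the transitivity of $G_\a$ on each distance class. That transitivity holds for $t=2$: $G_\a$ swaps the two coordinates and $(G_0)_\d$ is transitive on $\Del\setminus\{\d\}$, so each distance class is a single self-paired orbital. For $t=3,4$, I would show that the distance-$t$ class splits into orbitals closed under pairing. I would do this by using an element of $T_\d$ inverting a suitable pair, or by using the known subdegree structure of product action groups of rank $\le5$ from \cite{Cuypers}. The aim is to conclude that $t=2$ and that $G$ has rank $3$ with orbitals the Hamming distance classes. Then $\Ga$ is either $\K_n\square\K_n$ or its complement $\K_n\times\K_n$. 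The complement is excluded only by labelling, since one of the two orbital graphs is $\K_n\square\K_n$; I would state the conclusion up to complement, as $\K_n\times\K_n=\ov{\K_n\square\K_n}$. The fact that $\K_n\square\K_n$ is $3$-homogeneous but not $4$-homogeneous is quoted from \cite{Cameron-3-hom,Buczak}.

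\emph{Case (4).} Here $\soc(G)=T\times T$ with diagonal point stabiliser. We identify $\Ome$ with $T$, the socle acting by $x\mapsto a^{-1}xb$, and $\soc(G)_\a$ acting on $T$ by conjugation. The suborbits of $G_\a$ are then unions of $\Aut(T)$-classes, possibly fused further by the swap $x\mapsto x^{-1}$ if it is present in $G$. The pair of the orbital containing $(1,x)$ contains $(1,x^{-1})$. Every element of $\A_5$, $\A_6$, $\PSL_2(7)$ and $\PSL_2(8)$ is conjugate in $\Aut(T)$ to its inverse, so all orbitals are self-paired. The rank is then at least the number of $\Aut(T)$-classes of $T$, which is at least $4$ for each listed $T$. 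Lemma~\ref{rank<=5} requires rank $\le3$ once all orbitals are self-paired, so this case cannot occur.
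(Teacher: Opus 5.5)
There is a genuine gap in case~(3), at $t=2$.

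For $t\geqslant3$ your counting is enough on its own, with no transitivity needed. Each distance class is closed under pairing, so a class that is a single orbital gives a self-paired orbital. With $t+1\leqslant\text{rank}\leqslant5$, this violates both (3) and (4) of Lemma~\ref{rank<=5}.

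At $t=2$, however, your parity count still allows rank $4$: the distance-$1$ class is one orbital and the distance-$2$ class splits into two mutually paired orbitals. You exclude this by claiming $G_\alpha$ is transitive on every distance class, because it swaps the coordinates and $(G_0)_\delta$ is transitive on $\Delta\setminus\{\delta\}$. That argument only covers the distance-$1$ class. On $(\Delta\setminus\{\delta\})^2$ you need $G_\alpha\cap(G_0\times G_0)$ to be transitive. That group need not contain $(G_0)_\delta\times(G_0)_\delta$; you only know it contains $T_\delta\times T_\delta$, which is transitive there only when $T$ is $2$-transitive.

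The claim really fails. Take $G_0=\PL_2(8)$ on $28$ points and $T=\PSL_2(8)$, so $T_\delta=\mathrm{D}_{18}$ has three orbits of size $9$, cyclically permuted by some $\phi\in(G_0)_\delta$. Let $G=(T\times T).\langle(\phi,\phi),\tau\rangle$ with $\tau$ the coordinate swap. Then $G$ is primitive, of the form in Proposition~\ref{th-rank-4}(3) with $t=2$. But the distance-$2$ class splits into $G_\alpha$-orbits of sizes $243$ and $486$, so $G$ has rank $4$ and its orbitals are not the Hamming classes. Your route therefore gives no control over whether the orbitals inside a split class are self-paired. The plan for $t=3,4$ (an element of $T_\delta$ ``inverting a suitable pair'') is never actually carried out either.

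The missing idea is to prove self-pairedness first, directly from $T^t\leqslant G$; this is the order of the paper's argument, which asserts that every orbital is self-paired before counting. Every orbital of $T$ on $\Delta$ is self-paired. This is trivial if $T$ is $2$-transitive. In the one exception ($\PSL_2(8)$ of degree $28$), pairing commutes with the outer automorphism of order $3$, which permutes the three non-trivial $T$-orbitals as a $3$-cycle, and no involution of $\S_3$ centralises a $3$-cycle.

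So for $\alpha=(\delta_1,\dots,\delta_t)$ and $\beta=(\gamma_1,\dots,\gamma_t)$, choose $x_i\in T$ interchanging $\delta_i$ and $\gamma_i$ (with $x_i=1$ when they are equal). Then $(x_1,\dots,x_t)$ interchanges $\alpha$ and $\beta$, which proves (1) at once. Lemma~\ref{rank<=5} then forces rank $\leqslant3$. Since the rank is at least $t+1$, you get $t=2$, rank $3$, and the orbitals are exactly the two distance classes. Hence $\Ga\cong\K_n\square\K_n$ up to complement; your remark about the complement is correct.

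A smaller point in your case~(4). The $G_\alpha$-orbits on $T$ are contained in, not unions of, $\Aut(T)$-classes. Also, $\Aut(T)$-conjugacy of $x$ and $x^{-1}$ does not make an orbital self-paired when $G_\alpha$ induces less than $\Aut(T)$; for example, $7A$ and $7B$ are paired for $T\times T$ with $T=\PSL_2(7)$. The case is still excluded, as follows. The rank is at least the number of (inverse-closed) $\Aut(T)$-classes, which is at least $5$ unless $T=\A_5$. Since $|G|$ is even, rank $5$ is impossible. For $\A_5$, rank $4$ would make the three non-trivial orbitals exactly the inverse-closed $\Aut(T)$-classes, hence all self-paired, which contradicts Lemma~\ref{rank<=5}(3).
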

\begin{proof}
We need to deal with the candidates { given} in case~(3) and case~(4) of Proposition~\ref{th-rank-4}.

Let $T=\A_5$,  $\A_6$, $\PSL_2(7)$ or $\PSL_2(8)$.
Then any two elements of $T$ of the same order are fused in $\Aut(T)$, and so each fusion class of $T$ is inverse-closed.
An orbital graph of $G$ satisfying Proposition~\ref{th-rank-4}~(4) is a Cayley graph of $T$ with connecting set being a fusion class of $T$, and thus it is arc-transitive. So each orbital of $G$ is self-paired.
In particular, $\Ga$ is 2-homogeneous.

Assume now that Proposition~\ref{th-rank-4}~(3) occurs.
Then $\Ga$ has diameter $t$, and each orbital is clearly self-paired.
Thus $G$ has $t+1$ self-paired orbitals.
Since $\Ga$ is 2-set-homogeneous, we conclude that $t=2$.
It follows that $\Ga=\mathrm{H}(2,n)=\K_n\square\K_n$, where $|\Del|=n$. By \cite{Cameron-3-hom}, $\K_n\square\K_n$ is $3$-homogeneous but not $4$-homogeneous.
\hfill\qed
\end{proof}

Finally, we obtain a reduction for proving Theorem~\ref{th:2sethomo}.

\begin{cor}\label{reduction}
Let $\Ga$ be a $(G,2)$-set-homogeneous graph such that $G$ is primitive on $V$.
Then either $\Ga$ is $(G,2)$-homogeneous, or one of the following holds:
\begin{itemize}
\item[{\rm (1)}] { $G$ is affine or almost simple, and has rank $4$; furthermore, $G$ has exactly one non-trivial self-paired orbital;}



\item[{\rm (2)}] { $G$ has odd order and rank $5$, with no non-trivial self-paired orbital.}


\end{itemize}
\end{cor}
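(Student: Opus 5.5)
We combine Lemma~\ref{rank<=5}, Lemma~\ref{non-HA-AS} and Proposition~\ref{th-rank-4}, going through the four cases of Lemma~\ref{rank<=5} for the primitive group $G$.

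If $G$ has rank $2$, then $G$ is $2$-transitive on $V$. Hence $\Ga$ is $\K_n$ or $n\K_1$, and every isomorphism between induced subgraphs of order at most $2$ extends to an element of $G$. So $\Ga$ is $(G,2)$-homogeneous. If $G$ has rank $3$, Lemma~\ref{rank<=5}(2) says that either $\Ga$ is $(G,2)$-homogeneous, or $\Ga$ is complete or empty. In the latter case, $2$-set-homogeneity only forces $G$ to be $2$-homogeneous, and $G$ need not be $2$-transitive. Since the corollary is phrased as ``either $\Ga$ is $(G,2)$-homogeneous or (1) or (2)'', this trivial case has to be placed somewhere. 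I would handle it in one of two ways. The first is to read the rank-$3$ clause of Lemma~\ref{rank<=5} as already covering it. The second is to note that a $2$-homogeneous, non-$2$-transitive group has odd order, since an involution swapping two points would give a self-paired orbital. It then has rank $3$ with paired non-trivial orbitals, and the statement of the corollary should be read as tolerating this degenerate complete/empty case. This is the one step whose bookkeeping I am least sure of, and I would check it against how the authors use the corollary later.

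Now suppose $G$ has rank $4$ or $5$, so that $\Ga$ is not $(G,2)$-homogeneous. If $G$ were neither affine nor almost simple, Lemma~\ref{non-HA-AS}(1) would make all orbitals self-paired. This contradicts Lemma~\ref{rank<=5}(3)--(4), which say that at most one non-trivial orbital is self-paired. Hence $G$ is affine or almost simple. In rank $4$, Lemma~\ref{rank<=5}(3) gives exactly one non-trivial self-paired orbital, which is conclusion (1).

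In rank $5$, Lemma~\ref{rank<=5}(4) says that no non-trivial orbital is self-paired. Suppose $G$ had even order, and let $g\in G$ be an involution. Then $g$ moves some point $u$ to a point $v\ne u$, and $(u,v)^g=(v,u)$. So the orbital $(u,v)^G$ is self-paired, which is a contradiction. Thus $|G|$ is odd, and conclusion (2) holds. The main work is therefore only bookkeeping; the one genuine group-theoretic input is the involution argument, and the only delicate point is the complete/empty rank-$3$ case discussed above.
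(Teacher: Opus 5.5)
Your argument for ranks $4$ and $5$ is the paper's own. Lemma~\ref{non-HA-AS} excludes the non-affine, non-almost-simple types because they would make every orbital self-paired. An involution $g$ would make $(u,u^g)^G$ self-paired, so $|G|$ is odd in rank $5$.

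The ``delicate point'' you flag is real, but neither of your two proposed readings resolves it: the corollary is false in that case.

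\begin{itemize}
\item[] \emph{Counterexample.} Take $\Ga=\K_3$ with $G=\ZZ_3$ acting regularly. More generally, take $\Ga=\K_p$ with $G=\ZZ_p{:}\ZZ_{(p-1)/2}$ and $p\equiv 3\pmod 4$.
\item[] $G$ is primitive, since it has prime degree.
\item[] $\Ga$ is $(G,2)$-set-homogeneous.
\item[] $|G|$ is odd, so no element swaps the two ends of an edge. Hence $\Ga$ is not $(G,2)$-homogeneous.
\item[] $G$ has rank $3$, so neither (1) nor (2) holds.
\end{itemize}

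Your first reading does not help. Lemma~\ref{rank<=5}(2) only says that $\Ga$ is complete or empty, and that places the case in none of the corollary's alternatives.

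Your second observation is accurate: such a $G$ has odd order and rank $3$ with paired non-trivial orbitals. But that is precisely a witness that the statement fails, not a way of fitting the case into it.

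The paper's proof has the same hole. It asserts that a non-self-paired orbital forces rank $4$ or $5$ ``by Lemma~\ref{rank<=5}'', overlooking the complete/empty subcase of Lemma~\ref{rank<=5}(2).

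The correct repair is one of the following:
\begin{itemize}
\item[] add the hypothesis that $\Ga$ is neither complete nor empty, which is exactly the situation in which the corollary is invoked in the proof of Theorem~\ref{th:2sethomo};
\item[] list this rank-$3$ case as a third alternative.
\end{itemize}

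You should state one of these explicitly rather than appeal to how the statement ``should be read''. With that hypothesis added, your proof is complete.
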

\proof
Assume that $\Ga$ is not $(G,2)$-homogeneous.
Then $G$ has an orbital which is not self-paired, and thus $G$ is of rank $4$ or $5$ by Lemma~\ref{rank<=5}.

If $G$ is of rank $4$, then exactly one of the three non-trivial orbitals of $G$ on $V$ is self-paired,
and by Lemma~\ref{non-HA-AS}, $G$ is either affine or almost simple.

Suppose that $G$ is of rank $5$.
Then none of the four non-trivial orbitals of $G$ is self-paired.
If $G$ has an involution $g$, then $g$ moves at least one vertex $u$ say, and thus $g$ interchanges $u$ and $u^g$.
It implies that $(u, u^g)^G$ is a self-paired orbital, which is a contradiction.
Thus $G$ contains no involution, and $G$ is of odd order.
\hfill\qed

\section{Primitive permutation groups of rank $5$}\label{subset:prime-rank5}

In this section, we shall determine the primitive groups of rank $5$ which have no non-trivial self-paired orbitals (namely, the groups in Corollary~\ref{reduction}~(2)). The arguments depend on some properties of $1$-dimensional affine groups $\AGammaL_1(p^d)$. We notice that $\GammaL_1(p^d)=\ZZ_{p^d-1}{:}\ZZ_d$ is metacyclic, and that any subgroup and any factor group of a metacyclic group are metacyclic.

\begin{lemma}\label{Hall-2'-GammaL_1(q)}
A Hall $2'$-subgroup of a metacyclic group is a normal subgroup.
\end{lemma}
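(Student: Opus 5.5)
Let $H$ be a metacyclic group. By definition there is a cyclic normal subgroup $N\lhd H$ with $H/N$ cyclic. My plan is to show that the set of elements of odd order in $H$ forms a subgroup. Such a subgroup is then automatically the unique Hall $2'$-subgroup, and it is characteristic, hence normal. Since $H$ is solvable, Hall's theorem guarantees that Hall $2'$-subgroups exist and are all conjugate, so it suffices to show that one of them, say $K$, is normal.

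The key steps run as follows. First, let $N_{2'}$ be the unique Hall $2'$-subgroup of the cyclic group $N$. It is characteristic in $N$, hence normal in $H$. Second, consider the quotient $\bar H=H/N_{2'}$. Its subgroup $\bar N=N/N_{2'}$ is a cyclic $2$-group, and $\bar H/\bar N\cong H/N$ is cyclic. Write $H/N\cong C_{2^a}\times C_m$ with $m$ odd.

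Next, take a Hall $2'$-subgroup $\bar K$ of $\bar H$. Since $\bar N$ is a $2$-group, $\bar K\cap \bar N=1$, so $\bar K$ embeds in the cyclic group $H/N$ and is therefore cyclic of order $m$. Now $\bar K$ acts by conjugation on the cyclic $2$-group $\bar N$, and $\Aut(\bar N)$ is a $2$-group. An odd-order group acting on it must act trivially, so $\bar K$ centralises $\bar N$.

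It follows that $\bar N\bar K=\bar N\times \bar K$, and $\bar K$ is characteristic in $\bar N\bar K$. Moreover, $\bar N\bar K$ is the full preimage in $\bar H$ of the unique subgroup of order $m$ in the cyclic group $\bar H/\bar N$. That subgroup is normal, so $\bar N\bar K\lhd \bar H$. Since $\bar K$ is characteristic in a normal subgroup of $\bar H$, we get $\bar K\lhd\bar H$.

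Finally, pull back to $H$. The preimage $K$ of $\bar K$ in $H$ has order $|N_{2'}|\cdot m$, which is odd and equals the full odd part of $|H|$. So $K$ is a Hall $2'$-subgroup of $H$, and it is normal because $\bar K\lhd\bar H$.

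The main obstacle I expect is justifying that $\bar K$ centralises $\bar N$, equivalently that $\Aut$ of a cyclic $2$-group is a $2$-group. This holds because $|\Aut(\ZZ_{2^e})|=2^{e-1}$. Once that is in place, the rest is bookkeeping with characteristic subgroups.
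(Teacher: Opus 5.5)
Your proof is correct, but it takes a different route from the paper's.

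\textbf{The paper's route.} It passes to $\bar Y=Y/\mathbf{O}_{2'}(Y)$ and notes that $N=\mathbf{O}_2(\bar Y)$ is the Fitting subgroup, hence self-centralising. It then argues by contradiction. An element $g$ of odd prime order outside $N$ must act nontrivially on $N$. By Wielandt's criterion ($Z'\le\Phi(Z)$ forces nilpotency), $g$ then acts nontrivially on $N/\Phi(N)$. Since $N$ is metacyclic, $N/\Phi(N)$ has rank at most $2$, so one gets a section $\langle N,g\rangle/\Phi(N)\cong\A_4$. This is impossible because $\A_4$ is not metacyclic.

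\textbf{Your route.} You work directly with a defining cyclic normal subgroup with cyclic quotient. After factoring out its odd part, the only real input is that $\Aut(\ZZ_{2^e})$ is a $2$-group. You then build the normal Hall $2'$-subgroup explicitly through characteristic-subgroup bookkeeping.

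\textbf{What each buys.} Your argument is more elementary and constructive. It needs only the existence of Hall (or Schur--Zassenhaus) complements, in place of Fitting and Frattini theory and Wielandt's theorem. It also sidesteps a few steps the paper leaves to the reader: why such a $g$ exists, the case where $N/\Phi(N)$ is cyclic, and why the prime must be $3$. The paper's argument, by contrast, uses metacyclicity only through section-closed consequences: solubility, $2$-subgroups being $2$-generated, and the absence of $\A_4$ sections. So it would apply unchanged to any section-closed class of soluble groups with those properties.
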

\proof
Let $Y$ be a  metacyclic group, and let $\ov Y=Y/\O_{2'}(Y)$. Let $N=\O_2(\ov Y)$.
Then $N$ is the Fitting subgroup of $\ov Y$, and so $\C_{\ov Y}(N)\leqslant N$.
Suppose that $N\not=\ov Y$.
Let $g\in\ov Y\setminus N$ be of order $p$ with $p$ an odd prime.
Then $Z:=\l N, g\r=N\ {:}\ \l g\r$, and $g$ does not centralize $N$ since $\C_{\ov Y}(N)\leqslant N$.
Let $\Phi(N)$ be the Frattini subgroup of $N$. If $Z/\Phi(N)$ is abelian, then $Z'\leq\Phi(N)\leq\Phi(Z)$, and then by a result of Wielandt (see \cite[5.2.16]{Robinson}), we have $Z$ is nilpotent, which is impossible.
Thus, $g$ does not centralize $N/\Phi(N)$. Since $Y$ is a metacyclic group, $N$ is metacyclic, and $N/\Phi(N)\cong\ZZ_2^2$.
It follows that $Z/\Phi(N)\cong\ZZ_2^2\ {:}\ \ZZ_3\cong \A_4$, which is a contradiction since $\A_4$ is not metacyclic.
Therefore, $N=\ov Y$, and $\O_{2'}(Y)$ is a Hall $2'$-subgroup of $Y$,
{namely, a Hall $2'$-subgroup of a metacyclic group of normal.}
\hfill\qed

In the rest of this section,
let $G$ be a primitive permutation group on $\Ome$ of rank $5$ such that
none of its $4$ non-trivial orbitals is self-paired.
Then $G$ is of odd order, and $G$ is affine with socle $V\cong \ZZ_p^d$, where $p$ is an odd prime and $d$ is a positive integer.
Identify $V$ with a $d$-dimensional vector space over the finite field $\GF(p)$ and identify $\Ome$ with  $V$.
Then
\[G=V{:}G_0,\]
 where $G_0\leqslant\GL_d(p)$ acts naturally on $V$, and $V$ acts on itself by addition.
Note that, for every non-self-paired  orbital $(0,v)^G$ with $v\in V$, since  $(-v,0)=(0,v)^{-v} \in  (0,v)^G $, it follows that $(0,-v) \notin (0,v)^G$. Therefore,  $G_0$ partitions the set $V\setminus\{0\}$ of
non-zero vectors into 4 orbits: $\Del_1$, $-\Del_1$, $\Del_2$, and $-\Del_2$, where
$-\Del_i=\{-v \mid v\in\Del_i\}$ with $i=1$ or 2.

\begin{lemma}\label{1-dim}
The stabiliser $G_0$ is a subgroup of $\GammaL_1(p^d)$.
\end{lemma}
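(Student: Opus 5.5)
Our goal is to show that $G_0\leqslant\GammaL_1(p^d)$, where $G_0\leqslant\GL_d(p)$ has odd order and exactly $4$ orbits on $V\setminus\{0\}$. Since $|G|$ is odd, the Feit--Thompson theorem makes $G_0$ solvable, and primitivity of $G$ makes $G_0$ irreducible on $V$. The orbit count gives $|G_0|\geqslant (p^d-1)/4$, so $G_0$ is a large solvable irreducible linear group. We will compare it with the classification of solvable primitive linear groups and of solvable groups with few orbits; the natural reference is Liebeck's list of affine rank-$3$ groups together with the results on solvable groups of small rank (Foulser, Seitz). The plan is to show that $G_0$ is forced into the semilinear case.

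\textbf{Step 1 (primitivity of $G_0$ as a linear group).} Suppose $G_0$ preserves a decomposition $V=V_1\oplus\cdots\oplus V_t$ with $t\geqslant2$. Then $G_0\leqslant \GL(V_1)\wr \S_t$, and the image of $G_0$ in $\S_t$ is a transitive group of odd order. The vectors with exactly $s$ nonzero components, for $s=1,\dots,t$, form $G_0$-invariant sets, so there are at least $t$ orbits. If $t\geqslant 3$, we refine this: the supports of size $2$ already split into several orbits under an odd-order transitive group on $t$ points, or we use the count $|G_0|\leqslant |\GL(V_1)|^t\cdot|\S_t|_{\rm odd}$ against $(p^d-1)/4$. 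Either way we aim for a contradiction with having only $4$ orbits. For $t=2$, $|G_0|$ odd forces $G_0\leqslant \GL(V_1)\times\GL(V_2)$, which then stabilises $V_1$ and is reducible, a contradiction. So $G_0$ is primitive as a linear group.

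\textbf{Step 2 (structure of a primitive solvable linear group).} For a primitive solvable $G_0$, every abelian normal subgroup is cyclic and acts homogeneously. Let $F$ be the Fitting subgroup, $Z\leqslant F$ the maximal cyclic normal subgroup, and $K=\GF(p^a)$ the field generated by $Z$. Then $G_0$ embeds in $\GammaL_{d/a}(p^a)$, and $F/Z$ is a product of symplectic-type pieces $E$ of order $r^{2m}$. If $E=1$, then $F=Z$ is cyclic and self-centralising up to field automorphisms, which gives $G_0\leqslant\GammaL_1(p^d)$. So we must rule out $E\neq1$. Because $|G_0|$ is odd, each $r$ involved is odd, so $r\geqslant3$.

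\textbf{Step 3 (the main obstacle).} Rule out the extraspecial case using the order bound $|G_0|\geqslant (p^d-1)/4$. Here $d=a\,r^m\cdot(\text{rest})$, while $|G_0|\leqslant (p^a-1)\,a\,r^{2m}\,|\mathrm{Sp}_{2m}(r)|_{\rm odd\ part}$, and this is typically far smaller than $p^{ar^m}/4$. The finitely many small exceptions, such as $r=3$, $m=1$, $d=3a$ with $p^a$ small, will be checked against known lists. A useful additional constraint is that $G_0$ has odd order, so the relevant subgroup of $\mathrm{Sp}_2(3)$ is a Sylow $3$-subgroup and the group is tiny; for instance $3^{1+2}{:}3$ acting on $\GF(4)^3$ needs $p$ odd anyway. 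This numerical elimination is the step I expect to be fiddly. If direct counting leaves residual cases, I would cite the known classification of solvable $\tfrac12$- and $\tfrac14$-transitive linear groups, or Foulser's classification of solvable rank-$3$ and small-rank groups, to finish.
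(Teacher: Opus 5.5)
Your argument is correct in outline, but it follows a different route from the paper's. The paper never analyses $G_0$ as a linear group. It adjoins the central involution $z$ of $\GL_d(p)$, which swaps $\Del_i$ and $-\Del_i$. Hence $X=\langle G,z\rangle$ is a soluble primitive group of rank~$3$ with $X_0=G_0\times\langle z\rangle$. The paper then simply quotes Foulser's classification of soluble rank-$3$ affine groups to get $X_0\leqslant\GammaL_1(p^d)$. The non-semilinear classes are implicitly ruled out because $G_0$ is irreducible, has odd order and index $2$ in $X_0$, and $p$ is odd. You instead work with $G_0$ directly, using the structure (cyclic part times symplectic-type part) of the Fitting subgroup of a primitive soluble linear group together with an order bound. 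This avoids the rank-$3$ classification and is more self-contained, at the cost of more case-checking. The paper's trick of adjoining $-1$ is exactly what would make your fallback citation of Foulser apply in one line.

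Three points need repair or completion.

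\textbf{Step~1.} Neither refinement you suggest works for $t=3$. The image of $G_0$ in $\S_3$ is $\ZZ_3$, which is transitive on $2$-subsets of blocks. Also $3|\GL(V_1)|^3$ is far larger than $(p^d-1)/4$, so the order count gives nothing. Use instead that each support class (vectors with exactly $s$ nonzero components) is $G_0$-invariant and closed under $v\mapsto -v$. It is therefore a union of some of the two sets $\Del_i\cup(-\Del_i)$, so $t\leqslant 2$. But transitivity of an odd-order group on the $t$ blocks forces $t$ odd, a contradiction.

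\textbf{Step~3.} The counting does close once you use that your cyclic $Z$ has odd order, so $|Z|\leqslant (p^a-1)/2$. For $e=3$ you then need $54a\geqslant p^{2a}+p^a+1$ with $p$ odd and $p^a\equiv 1\pmod 3$. This fails already at $p^a=7$, since $57>54$. Larger $e$, several primes, or $d>ae$ fail by a wide margin.

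\textbf{The case $E=1$.} You should say why $C_{G_0}(Z)=Z$ forces $\dim_K V=1$. For example, $K$ together with a semilinear generator of $G_0/Z$ spans a split cyclic algebra over the fixed field, and its simple module is $1$-dimensional over $K$.
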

\proof
Let $z$ be the unique involution of the centre $\Z(\GL_d(p))$.
Then $z$ inverts all vectors of $V$, and so $\Del_i^z=-\Del_i$ for $i=1$ or 2.
Let $X=\l G,z\r$.
Then $X$ is a soluble primitive permutation group of rank 3.
By~\cite[Theorem 1.1]{Foulser-1969}, we conclude that $G_0\lhd X_0\leqslant\GammaL_1(p^d)$.
\hfill\qed

Let $H$ be a Hall $2'$-subgroup of $\GammaL_1(p^d)$. Then $H\unlhd\GammaL_1(p^d)$ by Lemma~\ref{Hall-2'-GammaL_1(q)}.

\begin{lemma}\label{4-orbits-=}
The Hall $2'$-subgroup $H$ has exactly $4$ orbits on $V\setminus\{0\}$,
which are the $4$ orbits of $G_0$ on $V\setminus\{0\}$ and have equal sizes.
Furthermore, $d$ is odd, and $p\equiv 5\pmod{8}$.
\end{lemma}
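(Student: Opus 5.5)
Set $q=p^d$ and write $\GammaL_1(q)=\l\hat\o\r{:}\l\phi\r$, where $\hat\o$ is multiplication by a primitive element $\o$ of $\GF(q)$ and $\phi$ is the Frobenius map $x\mapsto x^p$. By Lemma~\ref{1-dim}, $G_0\leqslant\GammaL_1(q)$, and $G_0$ has odd order by Corollary~\ref{reduction}. Hence $G_0$ lies in some Hall $2'$-subgroup of $\GammaL_1(q)$. By Lemma~\ref{Hall-2'-GammaL_1(q)} the Hall $2'$-subgroup $H$ is normal, so it is the unique one, and therefore $G_0\leqslant H$. This gives the orbit-count statement directly. $G_0$ has exactly $4$ orbits on $V\setminus\{0\}$, and $H\supseteq G_0$, so $H$ has at most $4$ orbits there. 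On the other hand, $H$ has odd order, so $V{:}H$ contains no involution. The argument of Corollary~\ref{reduction} then shows that no non-trivial orbital of $V{:}H$ is self-paired. Consequently $v$ and $-v$ always lie in different $H$-orbits, so the number of $H$-orbits on $V\setminus\{0\}$ is even and at least $2$.

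To pin the number down as exactly $4$, I would show it cannot be $2$. Suppose $H$ had two orbits $\Del$ and $-\Del$. Then $V{:}H$ would be a group of odd order and rank $3$, which is impossible: a rank-$3$ group with no self-paired orbitals has suborbits $\Del,-\Del$ of equal length $(q-1)/2$, and I expect Lemma~\ref{1-dim}'s framework (or a direct count) to rule this out. Concretely, the obstacle is to compute the $H$-orbits explicitly. Write $q-1=2^a m$ with $m$ odd and $d=2^b e$ with $e$ odd. Then $H=\l\hat\o^{2^a}\r{:}\l\phi'\r$ for a suitable odd-order field automorphism $\phi'$ of order $e$; this uses $H\unlhd\GammaL_1(q)$ to choose the complement. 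The orbits of $\l\hat\o^{2^a}\r$ on $\GF(q)^*$ are the $2^a$ cosets of the subgroup of index $2^a$. The field automorphism $\phi'=\phi^{2^b\cdot(\text{unit})}$ acts on these cosets via $x\mapsto x^{p^{j}}$, that is, multiplication of exponents mod $2^a$ by $p^{j}$ for some $j$ with $2^b\mid j$.

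Since the number of $H$-orbits is $4$ and all $H$-orbits must have equal size, the count gives the key constraints. The $4$ orbits of $G_0$ are $G_0$-orbits contained in $H$-orbits, and the counts match, so they coincide. The $H$-orbits all have equal size because $H\unlhd\GammaL_1(q)$ and $\l\hat\o\r$ is transitive on $V\setminus\{0\}$, so it permutes the $H$-orbits transitively. Equality of sizes then forces the number of orbits $4$ to be $2^a$ divided by the size of the orbit of $\l p^{j}\r$ on $\ZZ_{2^a}$. A simple way to finish is to observe that $\l\hat\o\r$ permutes the $H$-orbits transitively, with kernel containing $\hat\o^{2^a}$, and the quotient acts regularly on the orbits. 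So the orbit set is identified with a cyclic quotient of order $4$, namely $\GF(q)^*/(\GF(q)^*)^4$, which requires $4\mid q-1$ and forces $\phi'$ to fix $\o^{\,q-1/4}$-cosets.

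The arithmetic conclusions follow from this. $-1$ must not be a $4$th power (since $v$ and $-v$ lie in different orbits), which forces $8\nmid q-1$, i.e.\ $q\equiv5\pmod 8$ once $4\mid q-1$. Now $p^d\equiv5\pmod8$ with $p$ odd forces $d$ odd and $p\equiv5\pmod8$, since odd squares are $\equiv1\pmod8$. The step I expect to be the main obstacle is making the orbit computation rigorous, namely proving that the $H$-orbits are exactly the cosets of the $4$th powers rather than coarser unions fused by the field automorphisms. Fusion under $\phi'$ multiplies exponents by an odd power of $p$, and this must fix residues mod $4$, which is consistent with $p\equiv1\pmod4$. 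I would handle it by direct computation in $\ZZ_{2^a}$.
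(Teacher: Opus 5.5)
There is a genuine gap. You never actually exclude the possibility that $H$ has exactly two orbits on $V\setminus\{0\}$, and that is the step which forces $4\mid q-1$.

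The reason you give is false. You say that $V{:}H$ would then be an odd-order group of rank $3$, ``which is impossible''. But for any prime $p\equiv 3\pmod 4$ the group $\mathbb{Z}_p{:}\mathbb{Z}_{(p-1)/2}$ (for instance $\mathbb{Z}_7{:}\mathbb{Z}_3$) has odd order and rank $3$, with the squares and the non-squares as its two paired suborbits. Your own description of $H$ in fact shows that $H$ has exactly $(q-1)_2=2^a$ orbits. Since $q=(p^{2^b})^e$ with $e$ odd, one has $p^{2^b}\equiv 1\pmod{2^a}$, so $\phi'$ fixes every coset of $\langle\omega^{2^a}\rangle$. Hence $H$ really does have two orbits whenever $q\equiv 3\pmod 4$, and nothing about $H$ alone rules this case out. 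Your next paragraph then opens with ``Since the number of $H$-orbits is $4$\ldots'', which assumes what has to be proved.

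The missing idea is a parity count that uses $G_0$, not just $H$. Suppose $H$ had two orbits $\Sigma$ and $-\Sigma$. Negation commutes with $G_0\leqslant H$, and the $G_0$-orbits are $\Delta_1,-\Delta_1,\Delta_2,-\Delta_2$. So $\Sigma$ consists of exactly one of $\pm\Delta_1$ together with exactly one of $\pm\Delta_2$, giving $|\Sigma|=|\Delta_1|+|\Delta_2|$. Both summands are odd because $|G_0|$ is odd, so $|\Sigma|$ is even. On the other hand $|\Sigma|$ divides the odd number $|H|$, a contradiction. This is exactly how the paper excludes the two-orbit case.

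Once this is in place, the rest of your argument works. The four $H$-orbits have equal size $m$ because $H$ is normal in the transitive group $\GammaL_1(q)$, and they coincide with the four $G_0$-orbits. Either your observation that $-1$ is not a fourth power, or the paper's simpler remark that $m$ is odd in $q-1=4m$, then gives $q\equiv 5\pmod 8$, hence $d$ odd and $p\equiv 5\pmod 8$. In particular, the explicit computation of the $H$-orbits, which you flagged as the main obstacle, is not needed at all.
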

\proof
Since $G_0\leqslant H$ and $G_0$ has exactly $4$ orbits on $V\setminus\{0\}$, it follows that $H$ has at most $4$ orbits on $V\setminus\{0\}$. Clearly, $|V\setminus\{0\}|=p^d-1$ is even, so $H$ is intransitive on $V\setminus\{0\}$ because $H$ has odd order. Since $\GammaL_1(p^d)$ is transitive on $V\setminus\{0\}$ and $H\unlhd\GammaL_1(p^d)$, all orbits of $H$ on $V\setminus\{0\}$ have the same size. Thus $H$ has  $2$ or $4$ orbits on $V\setminus\{0\}$, with the same size.

Suppose that $H$ has exactly 2 orbits on $V\setminus\{0\}$, say $\Sig_1$ and $\Sig_2$.
Since $H$ is of odd order, a vector $v$ is not equivalent under the action of $H$ to $-v$.
It follows that $\Sig_2=-\Sig_1$. Since $\Del_1$, $-\Del_1$, $\Del_2$, and $-\Del_2$ are all orbits of $G_0$ on $V\setminus\{0\}$, we would have $|\Sig_1|=|\Del_1|+|\Del_2|$, contradicting that $|\Sig_1|$ is odd.

Therefore, $H$ has exactly 4 orbits on $V\setminus\{0\}$, of size $m$ say.
Then $m$ is odd, and $|V\setminus\{0\}|=p^d-1=4m$.
It follows that $d$ is odd, and $p\equiv 5 \pmod{8}$.
\hfill\qed

\begin{prop}\label{AGL_1(p^r)}
Let $\Ga$ be a $(G, 2)$-set-homogeneous graph which is not $(G,2)$-homogeneous.
Suppose further that $G$ is of rank $5$.
Then  $|G|$ is odd, and
\begin{enumerate}
 \item [{\rm (1)}]  $G_0\leqslant\GammaL_1( q)$ where $q=p^d$ such that $p\equiv 5\pmod{8}$ and $d$ is odd, and
 \item [{\rm (2)}]  $\Ga$ is a Paley graph ${\rm Paley}{(q)}$.
\end{enumerate}
Moreover, $\Aut(\Ga)$ is of rank $3$, and $\Ga$ is indeed $2$-homogeneous, but not $3$-homogeneous.
\end{prop}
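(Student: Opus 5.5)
We start from the set-up fixed after Lemma~\ref{Hall-2'-GammaL_1(q)}. Since $\Ga$ is not $(G,2)$-homogeneous and $G$ has rank $5$, Corollary~\ref{reduction} (together with Lemma~\ref{imp}, as $\Ga$ is not one of the homogeneous graphs $\K_{m[b]}$, $m\K_b$) shows that $G$ is primitive of odd order with no non-trivial self-paired orbital. The discussion before Lemma~\ref{1-dim} then makes $G=V{:}G_0$ affine with $V=\ZZ_p^d$ and $p$ odd. Lemma~\ref{1-dim} gives $G_0\leqslant\GammaL_1(q)$ with $q=p^d$, and Lemma~\ref{4-orbits-=} gives $p\equiv5\pmod 8$ and $d$ odd. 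This settles part~(1).

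For part~(2) we identify $\Ga$. The edge set of $\Ga$ is a union of $G$-orbits on $2$-subsets; in terms of $G_0$-orbits on nonzero vectors, the neighbourhood of $0$ is a union of orbits closed under negation. Since $\Ga$ and $\ov\Ga$ are both $G$-edge-transitive and neither is complete or empty, the neighbourhood is $\Del_i\cup-\Del_i$ for $i=1$ or $2$. By Lemma~\ref{4-orbits-=}, these sets are the orbits of the Hall $2'$-subgroup $H$ of $\GammaL_1(q)$. We describe $H$ explicitly. Writing $\GammaL_1(q)=\l\o\r{:}\l\phi\r$ with $|\l\o\r|=q-1=4m$ ($m$ odd) and $|\phi|=d$ odd, we get $H=\l\o^4\r{:}\l\phi\r$. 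Hence the $H$-orbits on $\GF(q)^*$ are the four cosets of the subgroup $\l\o^4\r$ of index $4$; Frobenius preserves each coset because $p\equiv1\pmod 4$, so $\o^{p-1}\in\l\o^4\r$. Since $q\equiv1\pmod4$ while $-1=\o^{2m}$ with $m$ odd, we have $-\l\o^4\r=\o^2\l\o^4\r$. Thus $\Del_i\cup-\Del_i$ is either $\l\o^4\r\cup\o^2\l\o^4\r=\l\o^2\r$, the set of nonzero squares, or $\o\l\o^2\r$, the nonsquares. In the first case $\Ga=\mathrm{Paley}(q)$. In the second, $\Ga$ is its complement, which is isomorphic to $\mathrm{Paley}(q)$ via multiplication by $\o$.

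For the final assertions, $\Aut(\mathrm{Paley}(q))$ contains $\l\o^2\r$ acting by multiplication, together with the translations and field automorphisms. This group is transitive on squares and on nonsquares, so it is a rank~$3$ group; $\Aut(\Ga)$ is exactly this group (of index $2$ in $\AGammaL_1(q)$) by Carlitz' theorem, and since any group containing it still preserves the graph, its rank is $3$. The orbitals are self-paired because $-1$ is a square, so $\Ga$ is $2$-homogeneous. To show $\Ga$ is not $3$-homogeneous, we appeal to the classification of Cameron and Macpherson~\cite{Cameron-3-hom}: Paley graphs with $q>9$ do not occur there, and here $q\geqslant5$ with $q\neq9$ since $d$ is odd. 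For $q=5$ we would get $\C_5$, which is homogeneous. So I expect that either the hypothesis excludes $q=5$ implicitly, or the statement intends $q>5$. The point needing care is this small case. One possible way out is that for $q=5$ the group $G_0\leqslant\GammaL_1(5)$ of odd order is trivial, so $G$ has rank $5$ only as the regular $\ZZ_5$; this is consistent with the hypotheses, so the claim for $q=5$ must be checked directly against $\C_5$.

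I expect the main obstacle to be not this small case but the justification that $\Aut(\Ga)$ is no larger than the rank~$3$ group, and the non-$3$-homogeneity. Both should be quoted from the literature rather than proved from scratch.
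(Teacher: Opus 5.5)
Your proposal is correct and follows the paper's route. Part~(1) comes from Lemmas~\ref{1-dim} and~\ref{4-orbits-=}. You then identify the four $G_0$-orbits with the cosets of the index-$4$ subgroup of $\GF(q)^*$; the paper does this via $L=H\cap\GL_1(q)$ and $\l L,z\r\cong\ZZ_{(q-1)/2}$, while your explicit $H=\l\o^4\r{:}\l\phi\r$ amounts to the same and also treats the nonsquare connection set explicitly. Finally, $\Aut(\Ga)$ has rank $3$ because it contains a rank-$3$ group and preserves a non-trivial graph.

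That sandwich argument is all that is needed. Carlitz' theorem is superfluous, and the ``main obstacle'' you anticipate is not one.

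Your caveat at $q=5$ is correct, and it exposes an error in the statement that the paper's proof overlooks. Take $G=\ZZ_5$ acting regularly on $\C_5\cong\mathrm{Paley}(5)$. This $G$ is primitive of rank $5$ with no non-trivial self-paired orbital, and $\C_5$ is $(G,2)$-set-homogeneous but not $(G,2)$-homogeneous. Yet $\C_5$ is homogeneous. So the final clause ``not $3$-homogeneous'' holds only for $q\geqslant 13$, where the appeal to \cite{Cameron-3-hom} is valid. This does not affect Theorem~\ref{th-main}, which lists $\C_5$ in part~(1).

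One small correction: Lemma~\ref{imp} does not exclude imprimitive $G$ here, because ``not $(G,2)$-homogeneous'' does not stop $\Ga$ from being $m\K_b$. For instance, $\ZZ_3\wr\ZZ_3$ acting on $3\K_3$ is an odd-order rank-$5$ example that is $(G,2)$-set-homogeneous but not $(G,2)$-homogeneous. Primitivity comes instead from the standing hypothesis of Section~\ref{subset:prime-rank5}, which is where you correctly start.
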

\proof By Corollary~\ref{reduction}~(2),  $|G|$ is odd.
From Lemmas~\ref{1-dim} and \ref{4-orbits-=}, we obtain part (1).

Let $L=H\cap \GL_1(q)$, where $H$ is the normal Hall $2'$-subgroup of $\GammaL_1(q)$. Then $L\cong\ZZ_{(q-1)/4}$, and $L$ has exactly 4 orbits on $V\setminus\{0\}$, which are the $4$ orbits of $H$ and so they are the $4$ orbits of $G_0$. Let $z$ be the unique involution of the centre $\Z(\GL_1(q))$. Then $\l L,z\r=\ZZ_{(q-1)/2}$ partitions $\GF(q)\setminus\{0\}$ into two orbits,
one of which is $S=\{\o^2, \o^4, \ldots, \o^{q-3}, \o^{q-1}\}$, {where $ \o$ is a primitive element of $\GF(q)$}. It follows that $\Ga$ is a Paley graph, proving part (2).

The group $\l L,z\r$ is a subgroup of $\Aut(\Ga)$, and is of rank 3. So $\Aut(\Ga)$ is of rank $3$, and then $\Ga$ is $2$-homogeneous. By \cite{Cameron-3-hom}, $\Ga$ is not $3$-homogeneous. \hfill\qed

\section{Affine primitive permutation groups of rank 4}\label{subsec:affineprimitive-rank4}

In this section, we deal with affine groups (in Corollary~\ref{reduction}~(1)), and prove the following theorem.

\begin{theorem}\label{affine-rank-4}
Let $G$ be an affine primitive rank $4$ permutation group.
Then each orbital of $G$ is self-paired, and no non-trivial orbital graphs of $G$ are $2$-set-homogeneous.
\end{theorem}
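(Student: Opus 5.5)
Write $G=V{:}G_0$ with $V=\ZZ_p^d$ and $G_0\leqslant\GL_d(p)$ irreducible. Orbitals correspond to $G_0$-orbits on $V\setminus\{0\}$, and the orbital $(0,v)^G$ is self-paired if and only if $v$ and $-v$ lie in the same $G_0$-orbit. If $p=2$ then $-v=v$, so every orbital is self-paired. So assume $p$ is odd. If the central involution $z=-1$ lies in $G_0$, we are again done, so we may also assume $-1\notin G_0$.

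\textbf{Self-pairedness.} Suppose some orbit $\Del$ of $G_0$ on $V\setminus\{0\}$ satisfies $-\Del\neq\Del$. There are three non-trivial orbits, and pairing $v\mapsto -v$ permutes them, so they must be $\Del,-\Del,\Sig$ with $\Sig=-\Sig$. Put $X=\l G,z\r$. Since $z$ normalises $G_0$ (it is central), $X_0=G_0\l z\r$ and $|X_0:G_0|=2$. Then $X$ is an affine primitive group of rank $3$ with suborbits $\Del\cup-\Del$ and $\Sig$.

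I would now invoke the classification of affine rank $3$ groups (Liebeck \cite{Liebeck-affine}, together with Foulser's soluble case as in Lemma~\ref{1-dim}). In each class (infinite classes A1--A11, extraspecial classes B, exceptional classes C) one has to decide whether an index-$2$ subgroup $G_0$ of $X_0$ not containing $-1$ can split one orbit into two halves exchanged by $-1$ while fixing the other orbit. Two key constraints guide this:
\begin{itemize}
\item $|\Del\cup-\Del|$ is even, and the orbit $\Del$ has length $|\Del\cup-\Del|/2$.
\item The subgroup $G_0$ must still act transitively on $\Sig$, with $-1\notin G_0$.
\end{itemize}

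\textbf{Handling the classes.}
\begin{itemize}
\item \emph{One-dimensional class} ($G_0\leqslant\GammaL_1(q)$). Here the argument of Lemmas~\ref{Hall-2'-GammaL_1(q)} and~\ref{4-orbits-=} adapts. Writing the orbits as unions of cosets of a cyclic subgroup $L\leqslant\GF(q)^*$, the map $-1$ lies in $\l L\r$ unless $|\GF(q)^*:L|$ is even. Comparing $2$-parts of orbit lengths, and using that $G_0$ normalises $L$, then forces $-\Sig\neq\Sig$ or $-1\in G_0$, a contradiction.
\item \emph{Classes with a nontrivial scalar subgroup} (e.g. $\SL_n(q)\leqslant X_0$ on tensor or imprimitive decompositions, and the classes with quasisimple normal subgroups). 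These typically contain $-1$ already inside the quasisimple normal subgroup $X_0^{(\infty)}\leqslant G_0$. Where this fails, I would check directly that $X_0^{(\infty)}$ is transitive on each $X_0$-orbit, so $G_0$ cannot split $\Del\cup-\Del$.
\item \emph{Extraspecial and exceptional classes.} These are finite in number, and I would check them by computation of the orbits of index-$2$ subgroups.
\end{itemize}
This case analysis is the main obstacle, especially the handful of small exceptional groups and the $\GammaL_1$ subcase, where orbits of odd length are plausible. I would treat those by explicit orbit-length parity and by listing the index-$2$ subgroups.

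\textbf{No $2$-set-homogeneous orbital graph.} Once every orbital is self-paired, $G$ has rank $4$ with three self-paired non-trivial orbitals. A $(G,2)$-set-homogeneous graph requires $E$ and $V^{\{2\}}\setminus E$ each to be a single $G$-orbit on $2$-sets, which is impossible with three orbits. This is exactly Lemma~\ref{rank<=5}(3), which demands exactly one self-paired orbital. Hence no non-trivial orbital graph of $G$ is $(G,2)$-set-homogeneous, and the statement concerns $G$-set-homogeneity in the sense of Corollary~\ref{reduction}(1).
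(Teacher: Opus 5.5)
Your reduction coincides with the paper's Lemma~\ref{HA-rank4-pty}: $p$ odd, $z=-1\notin G_0$, $V{:}(G_0\times\langle z\rangle)$ of rank $3$, and $X_0^{(\infty)}\leqslant G_0$ since $|X_0:G_0|=2$. Your deduction of the second assertion from Lemma~\ref{rank<=5}, read as $(G,2)$-set-homogeneity, is also right.

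\textbf{The main gap.} The first assertion is the whole content of the theorem, and you only announce the case analysis over Liebeck's classes. Worse, the one general tool you offer for the non-soluble classes fails in several of them. When $-1\notin X_0^{(\infty)}$, you propose to verify that $X_0^{(\infty)}$ is transitive on each $X_0$-orbit. This is false in the following cases.
\begin{itemize}
\item In (A4) with $a$ odd, $X_0^{(\infty)}=\SL_a(q)$ acting on $\GF(q^2)^a$ has $q+1$ orbits on the vectors $\lambda u$ ($u\in\GF(q)^a$, $\lambda\in\GF(q^2)^*$).
\item In (A3), cases (T3) and (T5) with $b$ odd, $X_0^{(\infty)}=1\otimes\SL_b(q)$ has $q+1$ orbits on the pure tensors $v\otimes w$.
\item In (A2) with $m$ odd, $\SL_m(q)\times\SL_m(q)$ has the two orbits $V_1\setminus\{0\}$ and $V_2\setminus\{0\}$ on $(V_1\cup V_2)\setminus\{0\}$.
\end{itemize}
The paper instead uses a property weaker than transitivity, which suffices. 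For one representative $v$ of each $X_0$-orbit, it exhibits $g\in X_0^{(\infty)}\leqslant G_0$ with $v^g=-v$; for instance, an element of $\SL_a(q)$ acting as $-1$ on $\langle e_1,e_2\rangle$ and trivially on a complement. Then the $G_0$-orbit of $v$ is closed under negation, so that $X_0$-orbit cannot be $\Del\cup-\Del$. For (A2) the paper also has a structural argument. If $(V_1\cup V_2)\setminus\{0\}$ split as $\Sig\cup-\Sig$, then $\pm v_1\pm v_2$ would lie in four distinct orbits of $G_0\cap K$, so $G_0$ would have at least four non-trivial orbits.

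\textbf{The soluble cases.} These are not handled either. Your $\GammaL_1$ paragraph contains no actual argument. Comparing $2$-parts of $|\Sig|$ and $|\Del|$ is not enough by itself, since $|\Sig|_2=2|\Del|_2\leqslant|G_0|_2$ is compatible with $|\Sig|+2|\Del|=q-1$. The paper's argument runs as follows.
\begin{itemize}
\item $-1$ is the unique involution of $\GL_1(q)$, so $G_0\cap\GL_1(q)$ has odd order and $|G_0|_2\leqslant d_2$.
\item If $d$ is odd then $|G|$ is odd, which is impossible because some non-trivial orbital is self-paired.
\item If $d_2=2^e\geqslant 2$, then $(q-1)_2\geqslant 2^{e+2}$. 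The normal Hall $2'$-subgroup $H$ of $\GammaL_1(q)$ (Lemma~\ref{Hall-2'-GammaL_1(q)}) has orbits of equal odd length, hence at least $2^{e+2}$ orbits on $V\setminus\{0\}$. Since $|G_0:G_0\cap H|=|G_0|_2\leqslant 2^e$, $G_0$ has at least $4$ orbits there, a contradiction.
\end{itemize}
Likewise, the infinite soluble family $X_0\leqslant\GammaL_1(q)\wr\S_2$ inside (A2) falls into none of your three buckets. Here $X_0^{(\infty)}=1$ and there are infinitely many groups, so neither the quasisimple argument nor computation applies. This family needs its own $2$-part count. The paper obtains it from Claim~1, which together with $z\notin G_0$ makes $(G_0\cap K)\cap\GL_1(q)^2$ of odd order.
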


For a group $X$, let $X^{(\infty)}$ be the smallest normal subgroup of $X$ such that $X/X^{(\infty)}$ is solvable.
To prove Theorem~\ref{affine-rank-4}, suppose for a contradiction that $G$ is an affine primitive permutation group of rank $4$ which has exactly one non-trivial self-paired orbital, and seek a contradiction.

\begin{lemma}\label{HA-rank4-pty}
Let $G=V{:}G_0$ be an affine primitive rank $4$ permutation group  which has exactly one non-trivial self-paired orbital, where $V\cong \ZZ_p^d$ is the socle of $G$.
Then $p$ is an odd prime.
Let $z$ be the involution of $\Z(\GL_d(p))$ and let $X_0=\l G_0, z\r$.
Then the following hold:
\begin{itemize}
\item[{\rm(1)}]  $X_0=G_0\times \l z\r$ and $V{:} X_0$ is of rank $3$;

\item[{\rm(2)}]  $\Z(G_0)$ is of odd order;

\item[{\rm(3)}]  $X_0^{(\infty)}\leqslant G_0$, and $V{:}X_0^{(\infty)}$ is of rank at least $4$.
\end{itemize}
\end{lemma}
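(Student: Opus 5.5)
We start from the orbit structure of $G_0$ on $V\setminus\{0\}$. The orbitals of $G$ correspond to the $G_0$-orbits on $V\setminus\{0\}$. For any $v$, the pair $(0,v)$ is mapped by translation by $-v$ to $(-v,0)$, so the orbital through $(0,v)$ is self-paired if and only if $-v\in v^{G_0}$. Hence the three non-trivial orbits of $G_0$ are $\Del_0=-\Del_0$, $\Del_1$, and $-\Del_1\neq\Del_1$.

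First, $p$ is odd. If $p=2$, then $-v=v$ for every $v$, so every orbital would be self-paired, a contradiction. Thus $z=-1$ is a genuine involution, and it is central in $\GL_d(p)$, so $X_0=\l G_0,z\r=G_0\l z\r$.

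For (1), note that $z$ fixes $\Del_0$ and swaps $\Del_1$ with $-\Del_1$. So $z\notin G_0$, since otherwise $\Del_1=-\Del_1$. Hence $X_0=G_0\times\l z\r$, and the $X_0$-orbits on $V\setminus\{0\}$ are $\Del_0$ and $\Del_1\cup-\Del_1$. Therefore $V{:}X_0$ has rank $3$.

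For (2), suppose $\Z(G_0)$ has even order and let $t$ be an involution in it.
\begin{itemize}
\item Since $t$ is central in $G_0$, its eigenspaces $V_+=\C_V(t)$ and $V_-=[V,t]$ are $G_0$-invariant.
\item Primitivity of $G$ means $G_0$ is irreducible on $V$, so one of these eigenspaces is $0$.
\item $V_-=0$ would force $t=1$, so $V_+=0$ and $t=-1=z\in G_0$, contradicting (1).
\end{itemize}
Hence $|\Z(G_0)|$ is odd.

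For (3), we have $X_0/G_0\cong\ZZ_2$, which is solvable. So $X_0^{(\infty)}\leqslant G_0^{(\infty)}\leqslant G_0$, because $X_0^{(\infty)}$ is the smallest normal subgroup with solvable quotient and $G_0\lhd X_0$ has solvable quotient. Consequently $X_0^{(\infty)}$ preserves each of the three $G_0$-orbits $\Del_0,\Del_1,-\Del_1$. It therefore has at least three orbits on $V\setminus\{0\}$, and $V{:}X_0^{(\infty)}$ has rank at least $4$.

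There is no serious obstacle. The only step with content is the use of irreducibility of $G_0$ to kill a central involution in (2). The argument also tacitly needs $V\neq0$ and that $\Del_1$ is nonempty, both of which are automatic from rank $4$.
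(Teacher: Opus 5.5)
Your proof is correct and follows the paper's argument. The only difference is in (2): the paper cites Kleidman--Liebeck for the fact that $\mathbf{C}_{\GL_d(p)}(G_0)$ is cyclic for irreducible $G_0$, so that $z$ is its unique involution. You replace that citation with a direct eigenspace argument for a central involution, which reaches the same conclusion and is more self-contained.
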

\begin{proof}
View the socle $V$ as a $d$-dimensional vector space over the field $\GF(p)$, and identify $G$ with a permutation group on $V$.
Then   $G_0\leqslant\GL_d(p)$ acts naturally on $V$, and $V$ acts on itself by addition.
Note that for every non-self-paired orbital $(0,v)^G$ with $v\in V$, we have $(0,-v) \notin (0,v)^G$.
Thus $p$ is odd.
Since $z$ inverts every vector of $V$, we see that $z$ is not in  $ G_0 $, and so
\[\l G_0, z\r=G_0\times\l z\r.\]
Since those two non-self-paired orbitals are fused by $z$, the group $X_0=G_0\times\l z\r$ has exactly two orbits on $V\setminus\{0\}$,  and so $V{:} X_0$ is of rank $3$, as in part~(1).

Since $G=V{:} G_0$ is an affine primitive group, $G_0$ is an irreducible subgroup of $\mathrm{GL}_d(p)$. In view of \cite[Lemmas 2.10.1 and 2.10.2]{K-Lie}, we know that $\mathbf{C}_{\mathrm{GL}_d(p)}(G_0)$ is cyclic. Clearly, $z\in \mathbf{C}_{\mathrm{GL}_d(p)}(G_0)$, so $z$ is the unique involution of $\mathbf{C}_{\mathrm{GL}_d(p)}(G_0)$. Then $Z(G_0)$ is of odd order because $X_0=G_0\times\l z\r$, proving part (2).

Since $X_0^{(\infty)}\leq G_0$ and $G_0$ has four orbits on $V$, $X_0^{(\infty)}$ has at least four orbits on $V$, and then $V{:}X_0^{(\infty)}$ has rank at least $4$. Part (3) holds.
\hfill\qed
\end{proof}

Primitive affine groups of rank 3 are classified into three classes
labelled (A), (B), and (C) by Liebeck in \cite{Liebeck-affine}, as stated in the following theorem.

\begin{theorem}[\cite{Liebeck-affine}]\label{subdegree}
Let $X$ be a finite primitive affine permutation group of rank $3$ and of degree $p^d$, with socle $V$, where $V\cong\mz_p^d$ for some prime $p$, and let $X_0$ be the stabiliser of the zero vector in $V$. Then $X_0$ belongs to one of the following classes $($and, conversely, each of the possibilities listed below does give rise to a rank $3$ affine group$)$.
\begin{itemize}
\item[\rm (A)] Infinite classes. These are
\begin{itemize}
\item[\rm (1)] $X_0\leq \mathrm{\Gamma L}_1(p^d)$: all possibilities for $X_0$ are determined in~\cite[\textsection{3}]{Foulser-Kallaher-solvable-rank-3};
\item[\rm (2)] $X_0$ imprimitive: $X_0$ stabilises a pair $\{V_1,V_2 \}$ of subspaces of $V$ where $V=V_1 \oplus V_2$ and $\dim(V_1)=\dim(V_2)$; moreover, $(X_0)_{V_i}$ is transitive on $V_i\setminus 0$ for $i=1$, $2$ (and hence $ X_0$ is determined by Hering's Theorem~\cite{Hering});
\item[\rm (3)]  tensor product case: for some $a$, $q$ with $q^a= p^d$, consider $V$ as a vector space $\GF(q)^a$ of dimension $a$ over $\GF(q)$; then $X_0$ stabilises a decomposition of $\GF(q)^a$ as a tensor product $V_1 \otimes V_2$ where $\dim_{\GF(q)}(V_1)=2$; moreover, $(X_0)^{V_2}\vartriangleright \SL(V_2)$,  or $(X_0)^{V_2}=\A_7<\SL_4(2)$ $($and $p=q=2$, $d=a=8)$, or $\dim_{\GF(q)}(V_2)\leq 3$;
\item[\rm (4)] $X_0\vartriangleright \SL_a(q) $  and $p^d=q^{2a}$;
\item[\rm (5)] $X_0\vartriangleright \SL_2(q) $  and $p^d=q^{6}$;
\item[\rm (6)] $X_0\vartriangleright \mathrm{SU}_a(q) $  and $p^d=q^{2a}$;
\item[\rm (7)] $X_0\vartriangleright \Omega^{\pm}_{2a}(q)$  $($and if $q$ is odd, $X_0$ contains an automorphism interchanging the two orbits of $\Omega^{\pm}_{2a}(q)$ on nonsingular $1$-spaces$)$;
\item[\rm (8)] $X_0\vartriangleright \SL_5(q) $ and $p^d=q^{10}$ $($from the action of $\SL_5(q)$ on $\bigwedge^2( \GF(q) ^5)$, the   exterior square of $\GF(q)^5)$;

\item[\rm (9)] $X_0/\Z(X_0) \vartriangleright \Omega_7(q).2$ and $p^d=q^{8}$ $($from the action of $B_3(q)$ on a spin module$)$;

\item[\rm (10)] $X_0/\Z(X_0) \vartriangleright \mathrm{P\Omega}_{10}^{+}(q)$ and  $p^d=q^{16}$ $($from the action of $D_5(q)$ on a spin module$)$;

\item[\rm (11)] $X_0  \vartriangleright \mathrm{Sz}(q)$, $q=2^{2m+1}$ and $p^d=q^4$ $($from the embedding $\mathrm{Sz}(q)<\mathrm{Sp}_4(q))$.
\end{itemize}
\item[\rm (B)] `Extraspecial' classes. Here $X_0\leq \N_{\GL_d(p)}(R)$ where $R$ is an $r$-group, irreducible on $V$. Either $r=3$ and $R\cong 3^{1+2}$ $($extraspecial of order $27)$ or $r = 2$ and $|R/\Z(R)| = 2^{2m}$ with $m = 1$ or $2$.
If $r = 2$, then either $|\Z(R)| = 2$ and $R$ is one of the two extraspecial groups $R_1^m,R_2^m$ of order $2^{1+2m}$, or $|\Z(R)|= 4$, when we write $R_3^m$. The possibilities for $(r,p^d,R)$ are
\begin{itemize}
\item[\rm (i)] $r=3$, $p^d=2^6$ and $R=3^{1+2}$;
\item[\rm (ii)] $r=2$, $p^d=3^4$, $7^2$, $13^2$, $17^2$, $19^2$, $23^2$, $3^6$, $29^2$, $31^2$ or $47^2$, and $R=\mathrm{D}_8$ or $\mathrm{Q}_8$;
\item[\rm (iii)] $r=2$, and $(p^d,R)=(3^4,R_{1}^2)$, $(3^4,R_{2}^2)$, $(5^4,R_{2}^2)$, $(5^4,R_{3}^2)$, $(7^4,R_{2}^2)$ or $(3^8,R_{2}^3)$.
\end{itemize}
\item[\rm (C)] `Exceptional' classes.   Here the socle $L$ of $X_0/\Z(X_0)$ is simple, and the possibilities are listed in Table~$\ref{tb:af3C}$.
\end{itemize}
\end{theorem}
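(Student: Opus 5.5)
This is Liebeck's classification, quoted from \cite{Liebeck-affine}, so in the paper we would simply cite it. If a proof had to be reconstructed, the plan would run along Aschbacher's division of subgroups of $\GL_d(p)$, driven by one counting constraint.

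Since $X$ has rank $3$, $X_0$ has exactly two orbits on $V\setminus\{0\}$. The larger orbit has length at least $(p^d-1)/2$, so
\[|X_0|\geq (p^d-1)/2.\]
Primitivity of $X$ means $X_0$ is irreducible on $V$. We would take the smallest field $\GF(q)$ over which $X_0$ acts semilinearly, and apply Aschbacher's theorem to $X_0\leq\GammaL_a(q)$ with $q^a=p^d$. Each geometric class is then treated separately.

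\begin{itemize}
\item[(a)] Semilinear over a field $\GF(q^a)$ of degree $a$, i.e.\ $X_0\leq\GammaL_1(p^d)$. This is the soluble case. There we would do a direct orbit computation on the multiplicative group of $\GF(p^d)$, following Foulser--Kallaher~\cite{Foulser-Kallaher-solvable-rank-3}, which gives (A)(1).
\item[(b)] Imprimitive, $V=V_1\oplus\cdots\oplus V_t$. The vectors lying in a single $V_i$ and the vectors with nonzero components in all $V_i$ give invariant sets of different kinds. Having only two orbits then forces $t=2$, with the stabiliser of $V_1$ transitive on $V_1\setminus\{0\}$. Hering's classification of transitive linear groups finishes this case, giving (A)(2).
\item[(c)] Tensor decompositions $V_1\otimes V_2$. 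The orbits of $X_0$ are unions of rank classes of $\dim V_1\times\dim V_2$ matrices. With only two nonzero orbits, the smaller factor must have dimension $2$, so that only ranks $1$ and $2$ occur. Transitivity on rank-$1$ tensors then pins down the action on $V_2$, giving (A)(3). Tensor-induced and subfield classes would be ruled out by the order bound.
\item[(d)] Classical groups in their natural modules. The orbits are determined by the values of the form (singular versus nonsingular vectors). This produces (A)(4),(6),(7) and the remaining small cases.
\item[(e)] Normalisers of $r$-groups of symplectic type. Here $|X_0|\leq |\Z(R)|\,|R|\,|\mathrm{Sp}_{2m}(r)|\cdot(\text{scalars})$. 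Comparing with $p^{d}=p^{r^m}$ bounds $m$ and $p$, leaving finitely many configurations. These would be checked by computation, yielding (B).
\end{itemize}

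The main obstacle is class $\mathcal S$: $X_0$ modulo scalars almost simple and absolutely irreducible. Here I would first play the inequality $|X_0|^2\gtrsim p^d$ against Landazuri--Seitz lower bounds on degrees of nontrivial projective representations in cross characteristic. This eliminates cross-characteristic groups except finitely many sporadic and alternating cases.

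For Lie type groups in defining characteristic, I would use Liebeck's order bound $|X_0|<q^{3a}$ for non-natural modules. This restricts to modules of small dimension: spin modules of $B_3$ and $D_5$, $\bigwedge^2$ of $\SL_5$, $G_2(q)$ on $6$ dimensions, and $\mathrm{Sz}(q)<\mathrm{Sp}_4(q)$. In each of these one verifies directly that there are exactly two orbits, giving (A)(5),(8)--(11).

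The residual finite list is settled by explicit orbit computations, which yields Table~\ref{tb:af3C} of class (C). The converse, that each listed group does have rank $3$, is checked case by case from the known orbit structures.
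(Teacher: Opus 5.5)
Citing \cite{Liebeck-affine} is exactly what the paper does. The theorem is quoted with attribution and no argument, so your first sentence is all the proof this statement needs here. The reconstruction you outline, however, contains genuine errors and would not work as a proof sketch.

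First, you must take the \emph{largest} field, i.e.\ $a$ minimal with $X_0\leq\mathrm{\Gamma L}_a(q)$ and $q^a=p^d$. The smallest field over which $X_0$ is semilinear is always $\GF(p)$. With that choice, the extension-field subgroups $\mathrm{\Gamma L}_{a/b}(q^b)$ with $1<b<a$ are never reduced to your case (a).

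Second, and more seriously, the subfield class is \emph{not} removed by the order bound. (A4) is precisely $\SL_a(q)\lhd X_0\leq\mathrm{\Gamma L}_a(q^2)$, and (A5) is $\SL_2(q)\lhd X_0\leq\mathrm{\Gamma L}_2(q^3)$; these are the $q_0=q^2$ and $q_0=q^3$ of Table~\ref{tb:A4-A11}. The relevant normalisers have order roughly $q^{a^2+1}$ and $q^6$ up to small factors, comfortably above $(p^d-1)/2$. Your attributions of these families are therefore wrong:
\begin{itemize}
\item $\SL_a(q)$ on $\GF(q^2)^a$ is not a classical group on its natural module; over $\GF(q)$ it is $V_a(q)\oplus V_a(q)$.
\item $\SL_2(q)$ on $\GF(q^3)^2$ is realised over a subfield, so it is not in class $\mathcal S$.
\item The ``$G_2(q)$ on $6$ dimensions'' case you list under class $\mathcal S$ yields no rank $3$ group. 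For $q$ even, $G_2(q)$ is transitive on the nonzero vectors of that module (it is in Hering's list), so there is one orbit, not two.
\end{itemize}

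There are smaller inaccuracies as well.
\begin{itemize}
\item In the tensor case, the restriction ``$(X_0)^{V_2}\rhd\SL(V_2)$, or $\A_7$, or $\dim V_2\leq 3$'' comes from transitivity on the rank-$2$ tensors (essentially on $2$-subspaces of $V_2$), not the rank-$1$ ones. For instance, $\mathrm{Sp}(V_2)$ with $\dim V_2\geq 4$ is transitive on rank-$1$ tensors. It splits the rank-$2$ tensors according to whether the associated $2$-space is degenerate.
\item Tensor-induced groups are not excluded by order either: $(\GL_3(q)\circ\GL_3(q)).2$ on $V_3(q)\otimes V_3(q)$ has order about $q^{17}$ against $q^9$. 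It is the matrix-rank argument of your step (c) that removes them.
\item In (e), the module has dimension $r^m$ over the smallest field containing the needed roots of unity, not over $\GF(p)$. For example, $3^{1+2}$ acts on $2^6$, not $2^3$.
\item Rank $3$ already gives $|X_0|\geq(p^d-1)/2$. Weakening this to $|X_0|^2\gtrsim p^d$ before invoking Landazuri--Seitz only enlarges the residual list.
\end{itemize}
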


\begin{table}[htbp]
\caption{The  possibilities for $L$ in class (C) of Theorem~\ref{subdegree}.} \label{tb:af3C}
\[
\begin{array}{lll|lll}
\hline
L & p^d & \text{embedding of }L &L & p^d & \text{embedding of }L\\
\hline
\A_5 &3^4&\A_5\leq \PSL_2(p^{d/2}) &\A_9&2^8&\A_9<\Omega_8^{+}(2)\\
&31^2& &\A_{10}&2^8&\A_9<\mathrm{Sp}_8 (2)\\
&41^2& &\PSL_2(17)&2^8&\PSL_2(17)<\mathrm{Sp}_8 (2)\\
&7^4& &\PSL_3(4)&3^6&\PSL_3(4)<\mathrm{P\Omega}_6^{-}(3)\\
&71^2& &\mathrm{PSU}_4(2)&7^4&\mathrm{PSU}_4(2)<\PSL_4(7)\\
&79^2& &\mathrm{M}_{11}&3^5&\mathrm{M}_{11}<\PSL_5(3)\\
&89^2& &\mathrm{M}_{24}&2^{11}&\mathrm{M}_{24}<\PSL_{11}(2)\\
\A_6&2^6& \A_6\leq \PSL_3(4)&\mathrm{Suz}&3^{12}&\mathrm{Suz}<\mathrm{PSp}_{12}(3)\\
\A_6&5^4&\A_6\leq \PSp_4(5) &G_2(4)&3^{12}&G_2(4)<\mathrm{Suz}<\mathrm{PSp}_{12}(3) \\
\A_7&2^8& \A_7\leq \PSL_7(4)&\mathrm{J}_2&2^{12}&\mathrm{J}_2<G_2(4)<\mathrm{Sp}_6(4)\\
\A_7&7^4&\A_7\leq \PSp_4(7) &\mathrm{J}_2&5^{6}&\mathrm{J}_2<\mathrm{PSp}_6(5)\\
\hline
\end{array}
\]
\end{table}

For convenience, we label (1)--(11) in class (A) of  Theorem~\ref{subdegree} by (A1)--(A11) in the following arguments.
For a positive integer $n$ and a prime $\ell$, use $n_\ell$ to denote the $\ell$-part of $n$, that is the largest power of $\ell$ that divides $n$.

\subsection{Type (A1)}

In this case, $X_0=G_0\times\l z\r$ should be a subgroup of $\GammaL_1( p^d)$.
The following lemma shows that this is not the case.

\begin{lemma}\label{GL(1,q)}
The group $X_0$ is not a subgroup of $\GammaL_1(p^d)$.
\end{lemma}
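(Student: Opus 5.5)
We argue by contradiction: assume $X_0=G_0\times\l z\r\leqslant\GammaL_1(p^d)$, where, as in Lemma~\ref{HA-rank4-pty}, $G=V{:}G_0$ is affine primitive of rank $4$ with exactly one non-trivial self-paired orbital. Write the $G_0$-orbits on $V\setminus\{0\}$ as $\Del_0=-\Del_0$ (self-paired), $\Del_1$ and $-\Del_1$. By Lemma~\ref{HA-rank4-pty}(1), $X_0$ has exactly the two orbits $\Del_0$ and $\Del_1\cup-\Del_1$.

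The key tool is the structure of $\GammaL_1(q)$ with $q=p^d$. The subgroup $L=X_0\cap\GL_1(q)$ is cyclic and normal in $X_0$, and $z\in L$. Since $L\lhd X_0$, the $X_0$-orbits are unions of $L$-orbits, and these are the cosets of $L$ in $\GF(q)^*$, all of size $|L|$. The group $X_0/L$, a subgroup of $\ZZ_d$, permutes these cosets. The same holds for $K=G_0\cap\GL_1(q)$, which has index at most $2$ in $L$.

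First I would show $K=L$ is impossible. If $K=L$, then $z\in K\leqslant G_0$. This contradicts $z\notin G_0$, which holds because $z$ maps $\Del_1$ to $-\Del_1$. Hence $L=K\times\l z\r$ with $|K|$ odd, and every $L$-coset splits into two $K$-cosets $C$ and $-C$.

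Next I would take $v\in\Del_0$. Since $-v\in\Del_0$, some $g\in G_0$ maps $v$ to $-v$. Write $g=\mu\sigma$ with $\mu$ a scalar and $\sigma$ a field automorphism. Then $g^2$ fixes the line $\l v\r$ setwise, and the image of $g$ in $G_0/K$, a cyclic subgroup of $\ZZ_d$, has to interchange the $K$-cosets $vK$ and $-vK$. The plan is to get a parity contradiction from this. The group $G_0/K$ is cyclic, and it acts on the set of $K$-cosets inside $\Del_0$. This action commutes with the fixed-point-free involution $C\mapsto -C$, and $G_0$ is transitive on these cosets. So the cosets in $\Del_0$ form a single regular-like orbit of the cyclic group $\bar G_0=G_0/K$, and the involution is realised by an element of $\bar G_0$. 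Consequently $\bar G_0$ has even order, and its unique involution $\bar t$ acts on $\Del_0$ as negation on cosets.

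The step I expect to be the main obstacle is showing that $\bar t$ also negates the cosets in $\Del_1$, contradicting $\Del_1\neq-\Del_1$. Here $\bar t$ is represented by $t=\lambda\tau$, where $\tau$ is the field automorphism of order $2$, $x\mapsto x^{\sqrt q}$. On a coset $xK$ it acts by $x\mapsto\lambda x^{\sqrt q}$, so the induced map on $\GF(q)^*/K$ is $\bar x\mapsto\bar\lambda\,\bar x^{\sqrt q}$. The quotient $\GF(q)^*/K$ is cyclic of even order $(q-1)/|K|$, and $\sqrt q$ is odd. The condition that this map sends $\bar v$ to $-\bar v$ becomes an equation in this cyclic group, which I would compare across the two $X_0$-orbits. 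As a fallback, if this direct comparison fails, I would use the explicit list of rank-$3$ subgroups of $\GammaL_1(q)$ from \cite[\textsection3]{Foulser-Kallaher-solvable-rank-3}, as referenced in Theorem~\ref{subdegree}(A1). Combined with Lemma~\ref{HA-rank4-pty}(2), that list would let me check that no index-$2$ subgroup $G_0$ avoiding $z$ can have exactly one self-paired suborbit.
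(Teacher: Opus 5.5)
Your opening matches the paper's. Since $z$ is the unique involution of the cyclic group $\GL_1(q)$ and $z\notin G_0$, the group $K=G_0\cap\GL_1(q)$ has odd order. Your regularity argument then forces $|G_0/K|$, and hence $d$, to be even: the image of the cyclic group $G_0/K$ on the $K$-cosets in $\Delta_0$ is regular, and negation commutes with it, so negation is induced by $G_0$. One slip: the element inducing negation need only have image of order $2$ in that permutation action. The action may have a kernel, so this element need not be the unique involution of $G_0/K$.

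The real gap is the step you yourself flag, and the local coset computation you propose cannot close it. An element negating the $K$-cosets in $\Delta_0$ need not negate those in $\Delta_1$, even for subgroups of $\GammaL_1(q)$ with $|K|$ odd. For example, take $q=9$ and $G_0=\langle x\mapsto x^3\rangle$, so $K=1$, and let $\omega$ be a primitive element of $\GF(9)$. The orbit $\{\omega^2,\omega^6\}$ is self-paired, with the Frobenius acting on it as negation. But $\{\omega,\omega^3\}$ and $\{\omega^5,\omega^7\}$ are paired with each other, and the Frobenius does not negate $\omega$.

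So the hypothesis that $G_0$ has exactly three orbits on $V\setminus\{0\}$ must be used quantitatively, and your proposal never does this. The fallback to the Foulser--Kallaher list only defers the argument.

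The missing idea is a count of $2$-parts. Since $|K|$ is odd and $G_0/K$ embeds in $\mathbb{Z}_d$, we get $|G_0|_2\le |d|_2=2^e$. On the other hand, for $d=2^ed'$ with $e\ge 1$ we have $(p^d-1)_2\ge 2^{e+2}$. By Lemma~\ref{Hall-2'-GammaL_1(q)}, the Hall $2'$-subgroup $H$ of $\GammaL_1(p^d)$ is normal. Hence its orbits on $V\setminus\{0\}$ all have the same odd length, and there are at least $2^{e+2}$ of them. The subgroup $G_0\cap H$ has at least as many orbits, and it has index $|G_0|_2\le 2^e$ in $G_0$. Therefore $G_0$ has at least $4$ orbits on $V\setminus\{0\}$, contradicting rank $4$.
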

\proof
Assume that $X_0\leqslant\GammaL_1(p^d)$. Since $\GL_1(p^d)\cong \ZZ_{p^d-1}$ is cyclic, it implies that $z$ is its unique involution. Therefore, from $z \notin G_0$ we conclude $|G_0 \cap \GL_1(p^d)|_2=1$. Since
 \[
  G_0/G_0 \cap \mathrm{GL}_1(p^d) \cong G_0\mathrm{GL}_1(p^d)/\mathrm{GL}_1(p^d)\leq        \mathrm{\Gamma L}_1(p^d)/\mathrm{GL}_1(p^d)\cong \mathbb{Z}_d,
  \]
It follows that  $|G_0|_2 \leq  |d|_2$. If $d$ is odd, then  $G_0$ is of odd order and so is $G$,  which contradicts that $G$ has a self-paired orbital. Therefore, $d$ is even.

Let $d=2^ed'$ where $e\geqslant1$ and $d'$ is odd. Then $p^d-1$ is divisible by $2^{e+2}$. Let $H$ be a Hall $2'$-subgroup of $\GammaL_1(p^d)$. By Lemma~\ref{Hall-2'-GammaL_1(q)}, we have $H\unlhd\GammaL_1(p^d)$, and since $\GammaL_1(p^d)$ is transitive on $V\setminus\{0\}$, all orbits of $H$ on $V\setminus\{0\}$ have the same size, say $\ell$. Let $m$ be the number of orbits of $H$ on $V\setminus\{0\}$. Then $\ell$ is an odd divisor of ${p^d-1}$, and thus $\ell$ divides $(p^d-1)/ 2^{e+2}$. Since $m\ell=p^d-1$, we have $m\geqslant 2^{e+2}$. The number of orbits of $G_0\cap H$ on $V\setminus\{0\}$ is at least $2^{e+2}$. Recall that $|G_0|_2 \leq  |d|_2=2^e$. Hence,
\[ |G_0/(G_0\cap H)| =|(G_0H)/H| =|G_0|_2 \leq 2^e.\]
It follows that $G_0$ has at least $4$ orbits on $V\setminus\{0\}$, which is a contradiction.\hfill\qed

\subsection{Type (A2)}

In this case, $X_0$ preserves a direct sum of the vector space $V$. We shall prove that this is impossible.

\begin{lemma}\label{direct-sum}
The group $X_0=G_0\times\lg z\rg$ does not preserve a direct sum of the vector space $V$.
\end{lemma}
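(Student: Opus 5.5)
We argue by contradiction: suppose $X_0=G_0\times\l z\r$ stabilises a pair $\{V_1,V_2\}$ with $V=V_1\oplus V_2$, $\dim V_1=\dim V_2=d/2$, as in type (A2). Then the two $X_0$-orbits on $V\setminus\{0\}$ are forced. The set $\Sigma_1=(V_1\cup V_2)\setminus\{0\}$ is $X_0$-invariant, and so is its complement $\Sigma_2=\{v_1+v_2\mid v_i\in V_i\setminus\{0\}\}$. Since $V{:}X_0$ has rank $3$ by Lemma~\ref{HA-rank4-pty}(1), $\Sigma_1$ and $\Sigma_2$ are exactly the two $X_0$-orbits.

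Next we look at how $G_0$, which has index $2$ in $X_0$, splits these orbits. Because $z$ acts as $-1$ and $-1$ preserves each $V_i$, $z$ fixes each of $\Sigma_1,\Sigma_2$ setwise. One of them is therefore a single $G_0$-orbit (giving the self-paired orbital), and the other splits into two $G_0$-orbits $\Del$ and $-\Del$ (giving the two paired orbitals).

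\emph{Case $\Sigma_1$ splits.} Let $K=(G_0)_{V_1}$, of index at most $2$ in $G_0$, and let $(X_0)_{V_1}=K\times\l z\r$. By the (A2) hypothesis $(X_0)_{V_1}$ is transitive on $V_1\setminus\{0\}$. If $G_0$ swaps $V_1$ and $V_2$, then $\Sigma_1$ can split into $\Del,-\Del$ only if $K$ has two orbits $\Del_1,-\Del_1$ on $V_1\setminus\{0\}$. Otherwise $G_0=K$ fixes $V_1$ and $V_2$, and $\Sigma_1$ splits into $V_1\setminus\{0\}$ and $V_2\setminus\{0\}$; but these two sets are not negatives of each other, which is a contradiction. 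So in the remaining sub-case $K$ has two orbits $\Del_1,-\Del_1$ on $V_1$, and similarly on $V_2$. Now pick $v_1\in\Del_1$ and $v_2$ in one of the $K$-orbits on $V_2$. The elements $v_1+v_2$ and $v_1-v_2$ must lie in the single $G_0$-orbit $\Sigma_2$. The pairs of sign classes on $(V_1,V_2)$ give at least two $K$-orbits on $\Sigma_2$ that are not fused by the swap unless the orbits can be matched appropriately. I expect a counting/parity argument on the pairs $(\pm\Del_1,\pm\Del_2)$ to show that $G_0$ cannot be transitive on $\Sigma_2$, because the swap element merely exchanges coordinates.

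\emph{Case $\Sigma_2$ splits.} Then $\Sigma_2=\Del\cup-\Del$, and $\Del$ is invariant under $K$. Since $K$ acts on $V_1$ and $V_2$ independently, we study the map $v_1+v_2\mapsto v_1-v_2$ induced by $(1,-1)\in\GL(V_1)\times\GL(V_2)$. We aim to show that $-1$ on a single component lies in the closure of $K$'s action, which would fuse $\Del$ with $-\Del$. The concrete tool is $X_0^{(\infty)}\le G_0$ from Lemma~\ref{HA-rank4-pty}(3), combined with Hering's theorem for the transitive group $(X_0)_{V_1}^{V_1}$.
\begin{itemize}
\item If that group is insoluble, $X_0^{(\infty)}$ contains $\SL_a(q)$, $\Sp_a(q)$, $G_2(q)$, $\SL_2(5)$ or $\SL_2(13)$ acting on $V_1$. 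Each of these contains $-1_{V_1}$, so $-1_{V_1}\in G_0$, which maps $v_1+v_2$ to $-v_1+v_2$. Composing with the swap, or applying the same argument on $V_2$, fuses $\Del$ with $-\Del$, a contradiction.
\item If that group is soluble, it lies in $\GammaL_1$ or is one of the finitely many exceptional soluble transitive groups. Here we use Lemma~\ref{HA-rank4-pty}(2) and a $2$-part count similar to Lemma~\ref{GL(1,q)}: the number of orbits of the Hall $2'$-part on $\Sigma_2$ is too large relative to $|G_0|_2$.
\end{itemize}

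The main obstacle is this soluble/$\GammaL_1$ subcase of $\Sigma_2$ splitting. It needs careful control of the $2$-parts of $K$ acting on each $V_i$ and of the diagonal behaviour, and the finitely many exceptional Hering soluble groups may need to be checked individually.
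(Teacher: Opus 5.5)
Your setup is right: $\Sigma_1,\Sigma_2$ are the two $X_0$-orbits, exactly one of them splits as $\Delta\cup(-\Delta)$ under $G_0$, and $G_0$ contains some $\sigma$ swapping $V_1$ and $V_2$. The case where $\Sigma_1$ splits is not finished, but it closes more simply than you suggest, and this is the paper's argument. $K=(G_0)_{V_1}$ preserves each of the four disjoint sets $\epsilon_1\Delta_1+\epsilon_2\Delta_2$ (with $\epsilon_i=\pm1$ and $\Delta_i=\Delta\cap V_i$), and these partition $\Sigma_2$. So $K$ has at least four orbits on $\Sigma_2$, and since $|G_0:K|=2$, $G_0$ has at least two. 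Together with the two orbits on $\Sigma_1$, this exceeds rank $4$.

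Your fusion idea for the other case is sound, and it proves the paper's Claim~1 faster. If $z_1=(-1_{V_1},1_{V_2})\in G_0$, then $z_1^{\sigma}=z_2\in G_0$, so $z=z_1z_2\in G_0$, a contradiction.

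The genuine gaps are all in the case where $\Sigma_2$ splits.

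(i) In the insoluble bullet, the claim that ``each of these contains $-1_{V_1}$'' is false for $\mathrm{SL}_a(q)$ with $a$ odd, since $q$ is odd here. The classification does produce this case. There you must argue instead, as the paper does, that $\mathrm{SL}_a(q)\times\mathrm{SL}_a(q)\le X_0^{(\infty)}\le G_0$ is already transitive on $\Sigma_2$.

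(ii) Hering's theorem applied to $(X_0)_{V_1}^{V_1}$ only tells you what $X_0^{(\infty)}$ \emph{induces} on $V_1$. In general $K$ is only a subdirect subgroup of the product of its two projections; it does not act ``independently'' on $V_1$ and $V_2$. So to get $S\times 1\le X_0^{(\infty)}$, and hence $z_1$ itself rather than some $(-1_{V_1},h)$, you must exclude diagonal-type subgroups of $S\times S$. The paper takes this product structure from the classification \cite[Theorem~3.3]{BDFP2021}.

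(iii) The soluble case, which you flag as the main obstacle, is not proved at all. The paper treats $X_0\le\mathrm{\Gamma L}_1(q)\wr\mathrm{S}_2$ with $q=p^n$ as follows.
\begin{itemize}
\item $z_1,z_2,z\notin G_0$ force $K\cap\mathrm{GL}_1(q)^2$ to have odd order, so $|G_0|_2\le 2\,(n_2)^2$.
\item If $n$ is odd, $K$ has odd order and cannot be transitive on $V_1\setminus\{0\}$, as transitivity of $G_0$ on $\Sigma_1$ requires.
\item If $n=2^en'$ with $e\ge1$, then $2^{e+2}\mid q-1$. The $G_0$-orbits $\pm\Delta$ have length $(q-1)^2/2$, which forces $|G_0|_2\ge 2^{2e+3}>2^{2e+1}$.
\end{itemize}
The remaining exceptional soluble groups are eliminated by computer: $\mathbf{N}_{\mathrm{GL}_2(q)}(\mathrm{SL}_2(3))\wr\mathrm{S}_2$ with $q\in\{3,5,7,11,23\}$, and the $\mathrm{Q}_8\circ\mathrm{D}_8$ case with $m=4$, $q=3$.

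Without (i)--(iii) the lemma is not established.
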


\begin{proof} Suppose on the contrary that there exists a direct decomposition $V=V_1\oplus V_2$ with $\dim(V_1)=\dim(V_2)=n$ such that $X_0$ acts transitively on $\{V_1, V_2\}$. Let $K$ be the subgroup of $X_0$ fixing $V_1$ and $V_2$ setwise, and let $\s\in X_0$ interchange $V_1$ and $V_2$. Then $X_0=\l K, \s\r$. Clearly, $|X_0|/|K|=2$ and $\s^2\in K$. In particular, the central involution $z$ fixes $V_1$ and $V_2$, and lies in $K$.

Pick $u\in V_1\setminus\{0\}$ and $w\in V_2\setminus\{0\}$.
Then $u$ and $u+w$ lie in different orbits of $X_0$, and hence the two orbits of $X_0$ on $V\setminus\{0\}$ are
\[\Del_1=(V_1\setminus\{0\})\cup(V_2\setminus\{0\}),\ \Del_2=V\setminus(V_1\cup V_2).\]
We remark that the corresponding orbital graphs are $\mathrm{H}(2, p^n)=\K_{p^n}\square\K_{p^n}$ and its complement, respectively.

Since $G_0$ has three orbits on $V\setminus\{0\}$, one of $\Del_1$ and $\Del_2$ is split into two orbits, and the other is an orbit of $G_0$.

Suppose that $G_0$ has exactly two orbits on $\Del_1=(V_1\setminus\{0\})\cup(V_2\setminus\{0\})$, say $\Sig_1$ and $\Sig_2$.
Then $\Sig_2=\{-v \mid v\in\Sig_1\}$, and $z$ fuses $\Sig_1$ and $\Sig_2$.
It follows that for $i=1$ or $2$, a vector $v_i\in V_i$ is not equivalent to $-v_i$ under $G_0$.
Thus $v_1+v_2$, $v_1-v_2$, $-v_1+v_2$ and $-v_1-v_2$ are pairwise inequivalent under the action of $G_0\cap K$.
So $G_0$ has four orbits on $V\setminus\{0\}$, which is a contradiction.

Thus, $G_0$ has exactly two orbits, say $\Sig_1$ and $\Sig_2=-\Sig_1$, on $\Del_2=V\setminus(V_1\cup V_2)$.
Now $G_0$ is transitive on $\Del_1=(V_1\setminus\{0\})\cup(V_2\setminus\{0\})$.
Recall $X_0=G_0\times \langle z\rangle=K\langle \sigma\rangle$ and $z\in K$.
Thus, $G_0\cap K$ is of index $2$ in $ G_0$. We first prove the following claim.

\medskip
\noindent{\bf Claim~1.}\  For $i\in\{1,2\}$, let $z_i$ be the unique involution in $\Z(\GL(V_i))$. Then $(G_0\cap K )\cap\lg z_1,z_2\rg=1$.

Suppose that $z_1\in G_0\cap K$. Take $v_1+v_2\in \Sig_1$ with $v_1\in V_1\setminus\{0\}$ and $v_2\in V_2\setminus\{0\}$.
Then $-v_1+v_2=(v_1+v_2)^{z_1}\in\Sig_1$.
Since $G_0$ is transitive on $\Del_1=(V_1\setminus\{0\})\cup(V_2\setminus\{0\})$, there exists $g\in G_0$ such that $v_2=v_1^g$.
Then $g$ swaps $V_1$ and $V_2$, and so $v_2^g\in V_1$.
Let $w_1=v_2^g$.
Then $v_2+w_1=(v_1+v_2)^g\in\Sig_1$, and so $-v_2-w_1\in \Sig_2$.
It follows that $-v_2+w_1=(-v_2-w_1)^{z_1}\in\Sig_2$, and hence $(-v_1+v_2)^g=-v_1^g+v_2^g=-v_2+w_1\in\Sig_2$
As each $\Sig_i$ is an orbit of $G_0$, we obtain that $-v_1+v_2\in \Sig_2$, but this is impossible because we have already shown that $-v_1+v_2\in \Sig_1$. Thus, $z_1\notin K$. Similarly, we have $z_2\notin K$, and Claim~1 follows.
 \smallskip

Recall that $|V|=p^d$, and $V=V_1\oplus V_2$ and  $\dim(V_1)=\dim(V_2)=n$.
By~\cite[Theorem~3.3]{BDFP2021}, $X_0\leq \mathrm{\Gamma L}_m(q) \wr \mathrm{S}_2$ where $q^m=p^n$ and one of the following holds:
\begin{itemize}
\item[\rm (0)] $X_0\leq \mathrm{\Gamma L}_1(q)\wr \mathrm{S}_2$ and $m=1$;
\item[\rm (1)]  $\mathrm{SL}_m(q) \times \mathrm{SL}_m(q) \unlhd  X_0$ where $m\geq 2$ and $(m,q)\neq (2,2),(2,3)$;
\item[\rm (2)]  $\mathrm{Sp}_m(q) \times \mathrm{Sp}_m(q) \unlhd  X_0$ where $m$ is even and  $m\geq 4$;
\item[\rm (3)]  $G_2(q)' \times G_2(q)'\unlhd  X_0$ where $m=6$  and  $q$ is even;
\item[\rm (4)]  $S \times S \unlhd  X_0 \leq \mathbf{N}_{\mathrm{\Gamma L}_m(q)}(S)$ where $S=\SL_2(5)$, $m=2$  and  $q \in \{9, 11, 19, 29, 59\}$;
\item[\rm (5)]  $X_0=\mathrm{A}_6\wr \S_2$ or $\mathrm{A}_7\wr \S_2$  where $m = 4$ and $q = 2$;
\item[\rm (6)] $X_0=\mathrm{SL}_2(13)\wr \S_2$ where $m = 6$ and $q = 3$;
\item[\rm (7)] $X_0\leq \mathbf{N}_{\mathrm{GL}_m(q)}(\mathrm{SL}_2(3)) \wr \mathrm{S}_2$ where $m = 2$ and $q\in \{3, 5, 7, 11, 23\}$;
\item[\rm (8)] $E\times E \unlhd X_0 \leq \mathbf{N}_{\mathrm{GL}_m(q)}(E) \wr \mathrm{S}_2$ where $E=\mathrm{Q}_8 \circ \mathrm{D}_8 $, $m = 4$ and $q = 3$.
\end{itemize}

Since $p$ is odd, the cases (3) and (5) are impossible.
If  one of the cases (2), (4) or (6) happens, then $z \in  X_0^{(\infty)}$. However, this is impossible because $X_0^{(\infty)}\leq  G_0$.
For cases (7) and (8),  by computation in Magma~\cite{BCP}, we obtain all candidates for $X_0$, and further computation shows that $X_0$ has no  subgroup $Y$ with index $2$ such that $z \notin Y$, and $Y$ has three orbits on $V \setminus \{0\}$ of lengths $2(q^m-1)$, $(q^m-1)^2/2$ and $(q^m-1)^2/2$.

Now suppose that the case (0) happens. Then $G_0\cap K \leq \mathrm{\Gamma L}_1(q)^2  $ and $(G_0\cap K)^{V_i} \leq \mathrm{\Gamma L}_1(q)=\mathrm{GL}_1(q){:}\ZZ_{n}$ for every $i\in\{1,2\}$.
By Claim~1, $z_i \notin (G_0\cap K)^{V_i}$ and so the order of $ (G_0\cap K)^{V_i} \cap \mathrm{GL}_1(q) $ is odd.
If $n$ is odd, then $ (G_0\cap K)^{V_i}$ has odd order and so has at least two orbits on $V_i\setminus \{0\}$, which implies that $G_0\cap K$ has four orbits on $\Del_1=(V_1\setminus\{0\})\cup(V_2\setminus\{0\})$, contradicting the transitivity of $G_0$  on $\Del_1$.
Therefore, $n$ is even.  Write $n=2^e n'$ for some positive integer $e$ and odd integer $n'$.
Since $n$ is even, we conclude that $p^n-1$ is divisible by $2^{e+2}$.
By the transitivity of $G_0$ on $\Del_1$ and $|\Del_1|=(q-1)^2= (p^n-1)^2$, we have $|G_0|_2\geq |\Del_1|_2 \geq 2^{2(e+2)}$.
However, by Claim~1 we see $(G_0\cap K) \cap \GL_1(q)^2=1$ and so $|G_0|_2=2|G_0\cap K|_2=2(n_2)^2=2^{2e+1}$,  a contradiction.

Finally, suppose that case (1) occurs. Then $\mathrm{SL}_m(q) \times \mathrm{SL}_m(q) \leq X_0^{(\infty)} \leq  G_0$ and $m\geq 2$.
Notice that  $\mathrm{SL}_m(q)$ is transitive on $V_i\setminus \{0\}$ for every $i \in \{1,2\}$ (see \cite[Table~7.3]{Cameron-Book}).
It follows that $G_0$ is transitive on $\Del_2=V\setminus(V_1\cup V_2)$, contradicting our assumption.
\hfill\qed
\end{proof}

\subsection{Type (A3)}

In this case, the group $X_0 $ preserves a tensor decomposition of the vector space $V$. We will prove this is not the case.

\begin{lemma}
The group $X_0=G_0\times\lg z\rg$ does not preserve a tensor decomposition of the vector space $V$.
\end{lemma}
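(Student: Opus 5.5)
We argue by contradiction, in the same way as for type (A2). Suppose $V=V_1\otimes V_2$ over $\GF(q)$ with $q^a=p^d$, $\dim V_1=2$, $\dim V_2=a/2=:b\geq 2$, and suppose $X_0=G_0\times\l z\r$ preserves this decomposition. Then $X_0\leq (\GL(V_1)\circ\GL(V_2)).\langle\phi\rangle$, where $\phi$ is a field automorphism; $V_1$ and $V_2$ cannot be swapped unless $b=2$.

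The two orbits of $X_0$ on $V\setminus\{0\}$ are then the nonzero simple tensors $\Del_1=\{v_1\otimes v_2\}$ and the rank-$2$ tensors $\Del_2$. Here we view vectors of $V$ as $2\times b$ matrices, and the rank is an $X_0$-invariant. We have
\[|\Del_1|=(q^2-1)(q^b-1)/(q-1).\]
Since $p$ is odd by Lemma~\ref{HA-rank4-pty}, we are in the branch with $(X_0)^{V_2}\rhd\SL(V_2)$ or $\dim V_2\le 3$; the $\A_7$ case has $p=2$ and is excluded.

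First suppose $(X_0)^{V_2}\rhd \SL_b(q)$ with $(b,q)$ not in a small solvable exceptional set. Then $X_0^{(\infty)}$ contains $1\otimes\SL_b(q)$, and $X_0^{(\infty)}\le G_0$ by Lemma~\ref{HA-rank4-pty}(3). For $b$ even, $-1\in\SL_b(q)$, so $z=1\otimes(-1)\in X_0^{(\infty)}\le G_0$, contradicting $z\notin G_0$.

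For $b$ odd, the argument is different. Because $\SL_b(q)$ is transitive on $V_2\setminus\{0\}$, each orbit of $G_0$ is a union of sets of the form $\l v_1\r^{\times}$-related orbits. Let $Y=(G_0)^{V_1}$ modulo scalars. The number of $G_0$-orbits on $\Del_1$ is the number of orbits of the induced group on $V_1\setminus\{0\}$ after identifying $v_1\otimes v_2=\lambda v_1\otimes\lambda^{-1}v_2$; since $\GL_b(q)\cdot$scalars absorbs all of $\GF(q)^\times$, these orbits correspond to orbits on the $1$-spaces of $V_1$. That projective action is transitive, so $G_0$ is transitive on $\Del_1$.

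It follows that $G_0$ splits $\Del_2$ into two orbits, which are interchanged by $z$. A rank-$2$ tensor corresponds to a pair (a $2$-dimensional subspace $W\le V_2$, an isomorphism $V_1^*\to W$), and $-1$ acts on it by negating the isomorphism. Since $\SL_b(q)$ with $b\ge3$ induces the full $\GL(W)$ on a $2$-subspace $W$, negation is already realised inside $X_0^{(\infty)}\le G_0$. Hence $G_0$ is transitive on $\Del_2$, a contradiction.

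The remaining cases are those with $\dim V_2\le 3$ and $(X_0)^{V_2}$ not containing $\SL$, or with $\dim V_2=2$ where the factors may be swapped. These are finite families bounded by the rank-$3$ condition. For them I would use the orbit-length argument of Lemma~\ref{GL(1,q)}: $G_0$ has index $2$ in $X_0$ and $z\notin G_0$. Counting $2$-parts then forces
\[|\Del_i|_2\le |G_0|_2=|X_0|_2/2\]
for whichever orbit stays intact. For the small solvable or exceptional subgroups (normalisers of $\SL_2(3)$, $\SL_2(5)$, $2^{1+4}$ and so on) I would instead search for index-$2$ subgroups avoiding $z$ with three orbits, using Magma as the authors do in type (A2).

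I expect this last step to be the main obstacle: the list of possibilities for $(X_0)^{V_2}$ when $\dim V_2\le3$ relies on the classification in \cite{Liebeck-affine}, and tracking it needs care. The generic $\SL$ case should go through cleanly via $X_0^{(\infty)}\le G_0$.
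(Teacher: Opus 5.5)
Your treatment of the case $(X_0)^{V_2}\supseteq\SL(V_2)$ is essentially the paper's argument for (T3) and (T5). For $b$ even, $z=1\otimes(-1)\in X_0^{(\infty)}\le G_0$. For $b$ odd, an element of $1\otimes\SL_b(q)$ acting as $-1$ on $\langle w_1,w_2\rangle$ shows that $G_0$-orbits are closed under negation, so no $X_0$-orbit can split as $\Sig\cup(-\Sig)$. You do owe a justification that $1\otimes\SL_b(q)\le X_0^{(\infty)}$. For $b\ge3$ a Goursat-type argument works, since $\PSL_b(q)$ is too large to be a section of $\mathrm{P\Gamma L}_2(q)$. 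For $b=2$ you need an explicit list of the possible $X_0$, which the paper takes from \cite[Theorem~3.2]{BDFP2021}.

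The genuine gap is the residual case, which you describe as finitely many groups to be handled by a $2$-part count or by Magma. It is not finite. It contains the family (T6) of the paper, with $b=3$ and $\SL_2(q)\otimes1\unlhd X_0\le(\GL_2(q)\otimes\GL_1(q^3)).\langle(1\otimes t)\sigma_q\rangle$, for every odd $q$. Here $(X_0)^{V_2}\le\GammaL_1(q^3)$ does not contain $\SL(V_2)$, so your generic argument does not apply.

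Nor does the $2$-part count help. Take $X_0=\GL_2(q)\otimes\GL_1(q^3)$, which has rank $3$ and is regular on $\Del_2$. Then $|X_0|_2=|\Del_2|_2=|q^2-1|_2|q-1|_2$ and $|\Del_1|_2=|q^2-1|_2\le|X_0|_2/2$. So the count only tells you that $\Del_2$ is the orbit that splits, and gives no contradiction.

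The missing idea is to run the $X_0^{(\infty)}$ argument on the $2$-dimensional factor. For $q\ge5$ we have $\SL_2(q)\otimes1\le X_0^{(\infty)}\le G_0$, and this subgroup contains $z=(-1)\otimes1$, a contradiction. This is how the paper disposes of (T6), and also of (T1) and (T2) for all $b$. Only (T6) with $q=3$ and the cases $(b,q)=(2,3)$ in (T1) and (T5) are left for computer checks.

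You should also invoke Lemmas~\ref{GL(1,q)} and \ref{direct-sum} explicitly for the possibilities $X_0\le\GammaL_1(p^d)$ and $X_0$ preserving a direct sum, which the description of class (A3) does not exclude. It is a precise list such as \cite[Theorem~3.2]{BDFP2021} that makes the residual analysis genuinely finite.
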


\begin{proof}
Suppose on the contrary that $X_0$ stabilises a tensor decomposition of $V$.
From Theorem~\ref{subdegree}~(A)(3) we see  that  $V$ is viewed as an $a$-dimensional vector space over $\GF(q)$ such that $q^a=p^d$, and  $X_0$ stabilises a tensor decomposition
\[V=V_1\otimes V_2,\]
such that $\dim(V_1)=2$, and $\dim(V_2) \geq 2$, and either $X_0^{V_2}\rhd\SL(V_2)$, or $\dim(V_2)=2$ or 3.
Let $\dim(V_2)=b$. Then $a=2b$.
Moreover, by \cite[Lemma~1.1]{Liebeck-affine}, the two orbits of $X_0$ on $V\setminus\{0\}$ are as follows,
for a basis $\{v_1,v_2\}$ of $V_1$,
$$\begin{array}{lll}
\Ome_1=\{v_1\otimes w+v_2\otimes (\lambda w)\mid w\in V_2, \lambda\in\GF(q)\},\\
\Ome_2=\{v_1\otimes w_1+v_2\otimes w_2 \mid w_1,w_2\in V_2,\, \dim\lg w_1,w_2\rg=2\rg\}.
 \end{array}
$$
Clearly, the orbit $\Ome_1$ can be rewritten as $\Ome_1=\{v\otimes w\mid v\in V_1,w\in V_2\}$.
By \cite[Table~12]{Liebeck-affine}, we have
\[|\Ome_1|=(q+1)(q^b-1),\, |\Ome_2|=q(q^b-1)(q^{b-1}-1).\]

 By~\cite[Theorem~3.2]{BDFP2021}, one of the following holds:
\begin{itemize}
\item[\rm (R0)] $X_0 \leq \mathrm{A\Gamma L}_1(p^d)$;
\item[\rm (I0)] $X_0$ stabilises a direct decomposition of $V$;
\item[\rm (T1)] $\mathrm{SL}_2(q) \otimes \mathrm{SL}_b(q) \unlhd   X_0$ where $q \geq 4$ or $(b, q) = (2, 3)$;
\item[\rm (T2)] $\mathrm{SL}_2(5) \otimes \mathrm{SL}_b(q) \unlhd   X_0$ where $q \in\{ 9, 11, 19, 29, 59\}$;
\item[\rm (T3)] $1 \otimes \mathrm{SL}_b(q)  \unlhd   X_0 \leq  \mathbf{N}_{  \mathrm{GL}_2(q)}(\mathrm{SL}_2(3)) \otimes \mathrm{GL}_b(q)$ where $q \in\{ 3, 5, 7, 11, 23\}$  but $X_0 \not\leq \mathrm{\Gamma L}_1(q^2) \times \mathrm{GL}_b(q)$, and further, $-1 \otimes g \in X_0$ for some  $g \in \mathrm{GL}_b(q)$;
\item[\rm (T4)] $1\otimes \mathrm{A}_7 \unlhd X_0 $ and $(b,q)=(4,2)$;
\item[\rm (T5)] $1\otimes \mathrm{SL}_b(q) \unlhd X_0  \leq (\mathrm{GL}_1(q^2)\otimes \mathrm{GL}_b(q)){:}\langle (1\otimes t )\sigma_q\rangle $  for some $t\in \mathrm{GL}_2(q)$ where $\mathrm{\Gamma L}_1(q^2)=\mathrm{GL}_1(q^2){:}\langle (1\otimes t )\sigma_q\rangle$;
\item[\rm (T6)]$\mathrm{SL}_2(q)\otimes 1 \unlhd X_0  \leq (\mathrm{GL}_2(q)\otimes \mathrm{GL}_1(q^3)){:}\langle (1\otimes t )\sigma_q\rangle $  for some $t\in \mathrm{GL}_b(q)$ where $b=3$ and $\mathrm{\Gamma L}_1(q^3)=\mathrm{GL}_1(q^3){:}\langle (1\otimes t )\sigma_q\rangle$.
\end{itemize}

By Lemmas~\ref{GL(1,q)} and \ref{direct-sum} we know that the cases (R0) and (I0) are impossible.
Since $p$ is odd, we do not need to consider the case (T4).
Notice that $z=(-1) \otimes 1=1 \otimes (-1)  \notin G_0$. By Lemma~\ref{HA-rank4-pty}, we have $X_0^{(\infty)} \leq G_0$ and so $z\notin X_0^{(\infty)}$. In cases (T1) or (T6), if $X_0$ is non-solvable, then $z=(-1) \otimes 1 \in \SL_2(q)\otimes 1 \leq X_0^{(\infty)} \leq G_0$, a contradiction. Similarly, if the case (T2) happens, then $z=(-1) \otimes 1 \in \SL_2(5)\otimes 1 \leq X_0^{(\infty)} \leq G_0$, a contradiction.
Now it remains to consider one of the following cases: (T1) with $(b,q)=(2,3)$,  (T3), (T5), and (T6) with $q=3$.

Suppose first that the case (T1) with $(b,q)=(2,3)$ happens. Computation in Magma~\cite{BCP} shows that $\mathbf{N}_{\GL_4(3)}(\SL_2(3) \otimes \SL_2(3))$ has rank $3$ on $V$, and there are four candidates for $X_0$  between $\SL_2(3)\otimes \SL_2(3)$ and  $\mathbf{N}_{\GL_4(3)}(\SL_2(3) \otimes \SL_2(3))$, and any possible $X_0$ has no subgroup $Y$ of index $2$ such that  $Y$ has rank $4$ on $V$ and $z \notin Y$.
Therefore, this case is impossible.

Suppose next that the case (T3) happens. Since $z=1\otimes (-1) \notin G_0$, we conclude that $b$  is odd and $b\geq 3$.
Then $\SL(V_2)=1\otimes \SL_b(q) \leq X_0^{(\infty)} \leq G_0$.
Since $\dim(V_2)=b\geq 3$, we may pick two linearly independent vectors $w_1, w_2$ of $V_2$ and $g\in\SL(V_2)$ such that $w_1^g=-w_1$ and $w_2^g=-w_2$. Let $u_1=v_1\otimes w_1$ and $u_2=v_1\otimes w_1+v_2\otimes w_2$. Then $\Omega_1=u_1^{X_0}$ and $\Omega_2=u_2^{X_0}$, and
\[
\begin{array}{ll}
u_1^g &=v_1^g\otimes w_1^g=v_1\otimes (-w_1)=-u_1, \\
u_2^g &=(v_1\otimes w_1+v_2\otimes w_2)^g=-(v_1\otimes w_1+v_2\otimes w_2)=-u_2.
\end{array}
\]
This is impossible because either $\Ome_1$ or $\Ome_2$ is split into two non-self-paired  orbits  $\Sig_1$ and $\Sig_2 (=-\Sig_1)$ of $G_0$.

Now suppose that the case (T5) happens.
If $(b,q)=(2,3)$, then
\[X_0\leq \mathbf{N}_{\mathrm{GL}_4(3)}(\mathrm{GL}_1(3^2)\otimes \mathrm{GL}_2(3)),\] and  computation in Magma shows that  any possible $X_0$ has no subgroup $Y$ of index $2$ such that  $Y$ has rank $4$ on $V$ and $z \notin Y$.
Assume that $(b,q)\neq (2,3)$. If $b$ is even, then $z=1\otimes -1 \in  X_0^{(\infty)} \leq G_0 $, a contradiction.
Therefore, $b$ is odd and $b\geq 3$.
With a similar argument as for the case (T3), one can obtain a contradiction.

Finally suppose that the case (T6) with $q=3$ happens. Then \[X_0\leq \mathbf{N}_{\mathrm{GL}_6(3)}(\mathrm{GL}_2(3)\otimes \mathrm{GL}_1(3^3)).\] Again by computation in Magma, we find that any possible $X_0$ has no subgroup $Y$ of index $2$ such that $Y$ has rank $4$ on $V$ and $z \notin Y$. \hfill\qed
\end{proof}

\subsection{Types (A4)--(A11)}
In class (A11), $\mathrm{Sz}(q) \trianglelefteq X_0$ with $|V|=p^r$ is even, a contradiction.
For other cases, from~\cite[p.481]{Liebeck-affine} we see that $X_0\leq \mathrm{\Gamma L}_{n}(q_0)$ for some integer $n\geq 2$ and  $q_0$ such that $q_0^n=p^d$, and some information for $X_0$ is given in Table~\ref{tb:A4-A11}.
\begin{table}[htbp]
\caption{Classes (A4)--(A11)}\label{tb:A4-A11}
\[
\begin{array}{llll }
\hline
\text{Class} & X_0 & X_0^{(\infty)}   & \text{Condition} \\
\hline
\text{(A4)} & \mathrm{SL}_a(q) \trianglelefteq X_0
  & \mathrm{SL}_a(q)\text{ or }1 &n=a,q_0=q^2  \\
\text{(A5)} & \mathrm{SL}_2(q) \trianglelefteq X_0
 & \mathrm{SL}_2(q)\text{ or }1  & n=2,q_0=q^3 \\
\text{(A6)} & \mathrm{SU}_a(q)\trianglelefteq X_0
 & \mathrm{SU}_a(q) \text{ or }1  & n=a,q_0=q^2 \\
\text{(A7)} & \Omega^{\pm}_{2a}(q).2 \trianglelefteq X_0
& \Omega^{\pm}_{2a}(q)  \text{ or }1 &  n=2a    \\
 \text{(A8)}  & \mathrm{SL}_5(q) \trianglelefteq X_0 & \mathrm{SL}_5(q)
 &  n=10  \\
  \text{(A9)} &\mathrm{P\Omega}_7(q).2 \trianglelefteq X_0/\mathbf{Z}( X_0)
 &\mathrm{Spin}_7(q) &  n=8 \\
   \text{(A10)} & \mathrm{P\Omega}_{10}^{+}(q)  \trianglelefteq X_0/\mathbf{Z}( X_0)
 & \mathrm{Spin}_{10}(q) &  n=16 \\
 \hline
\end{array}
\]
\end{table}

Suppose first that $X_0^{(\infty)} =1$, that is, $X_0$ is solvable. Then one of the following occurs: (A4) with $(a,q)\in \{(2,2),(2,3)\}$, or (A5) with $q\in\{2, 3\}$, or (A6) with $(a,q)\in \{(2,2),(2,3)\}$, or (A7) with $a=1$ or $(a,q)\in \{(2,2),(2,3) \}$. In case (A7) with $a=2$, noticing that $\Omega^{\pm}_2(q) <\SL_2(q)$, from~\cite[Table~8.1]{Bray-Holt-Dougal} we see that $X_0\leq \mathrm{Q}_{2(q\pm 1)}$. Since $G$ has a self-paired suborbit, we see that $|G|$ is even, which implies that  $|G_0|=|G|/|V|$ is even. Then from the fact that $z$ is the unique involution of the group $\mathrm{Q}_{2(q\pm 1)}$,  we conclude that $z\in G_0$, a contradiction. For the other cases, we can obtain all such groups $X_0$ by Magma~\cite{BCP}, and moreover, we find that every $X_0$ has no subgroup  $G_0$  such that $G_0$ is irreducible on $V$ and has $4$ orbits, and $X_0=G_0\times\lg z\rg$.

We therefore conclude that $X_0^{(\infty)} \neq 1$, that is, $X_0$ is non-solvable.
By Lemma~\ref{HA-rank4-pty}, $p$ is odd, $X_0^{(\infty)} \leq G_0$ and $z \notin X_0^{(\infty)}$. For (A5), (A7), (A9) or (A10), since $X_0^{(\infty)}$ is irreducible (see \cite[p.481]{Liebeck-affine}) and the order of $ \mathbf{Z}(X_0^{(\infty)})$ is divisible by $2$, we obtain that $z\in \mathbf{Z}(X_0^{(\infty)})$, a contradiction.
For (A6), from~\cite[p.483]{Liebeck-affine} we see that $X_0^{(\infty)}=\mathrm{SU}_a(q)$ has two orbits on $V\setminus \{ 0\}$, which implies that $G_0$ has two orbits on $V\setminus \{ 0\}$, a contradiction.

We now consider the class (A4). Following~\cite[p.482, Case(IIc)]{Liebeck-affine}, we pick $\sigma \in \GF(q^2) \setminus \GF(q)  $, and let  $ \{ e_1,e_2,\dots,e_a\}$ be a $\GF(q^2)$-basis for $V$  with respect to which the subgroup  $\mathrm{SL}_a(q)$ of $X_0$  { acts} naturally over $\GF(q) $, and let $v_1 = e_1$ and $v_2=e_1+\sigma e_2$.
By~\cite[p.483]{Liebeck-affine},  $v_1$ and $v_2$ are in different orbits of $X_0$ on $V\setminus\{0\}$. Since $z \notin \mathbf{Z}(X_0^{(\infty)})$, it follows that $a\geq 3$ is odd.
Let $g \in \mathrm{SL}_a(q)$ be such that
\[e_1^g=-e_1,\, e_2^g=-e_2,\, e_j^g=e_j \text{ for all }j\in \{3,\dots,a\}.\]
 Then $v_1^g=-v_1$ and $v_2^g=-v_2$. This implies that neither $v_1^{X_0}$ nor $v_2^{X_0}$ splits into the two  $G_0$-orbits $\Sig_1$ and $\Sig_2$ with $\Sig_2=-\Sig_1$, a contradiction.

Finally, we deal with the class (A8). In this case, we have $V=\bigwedge^2( \GF(q) ^5)$.
Following~\cite[Lemma~2.5]{Liebeck-affine}, we let $\{z_1,z_2,z_3,z_4,z_5 \}$ be a basis for $\GF(q)^5$.
By~\cite[p.489]{Liebeck-affine},  $v_1:=z_1 \wedge z_2$ and $v_2:=z_1 \wedge z_2+z_3 \wedge z_4$ are in different orbits of $X_0$ on $V\setminus\{0\}$. Let $g\in \mathrm{SL}_5(q)$ be such that \[ z_1^g=z_2,\, z_2^g=z_1,\, z_3^g=z_4,\, z_4^g=z_3,\, z_5^g=z_5. \]
Then  $v_1^g=z_1^g \wedge z_2^g=z_2 \wedge z_1=-(z_1 \wedge z_2)=-v_1$, and similarly, $v_2^g=-v_2$.
Since $\mathrm{SL}_5(q)=X_0^{(\infty)}\leq G_0$, neither $v_1^{X_0}$ nor $v_2^{X_0}$ splits into the two  $G_0$-orbits $\Sig_1$ and $\Sig_2=-\Sig_1$.

\subsection{Class (B)}

This class consists of finitely many candidates appeared as normalisers of an extraspecial
group of prime-power order, see Theorem~\ref{subdegree} (B), namely,
$X_0\leqslant \N_{{\rm \Gamma L}_{a}(q)}(R)$, where $R$ is an extraspecial $2$- or $3$-group. If $R$ is a $3$-group, then $p^d=2^6$, which contradicts that $|V|=p^d$ is odd.
Assume that $R$ is a $2$-group.
If $p^d\neq 3^8$,  then by Magma~\cite{BCP}, we can obtain all such $2$-groups $R$ and all subgroups $X_0\leq\N_{{\rm \Gamma L}_{a}(q)}(R)$. Furthermore, we find that $X_0$ has no subgroup $G_0$ such that $G_0$ is irreducible on $V$ and has $4$ orbits, and $X_0=G_0\times\lg z\rg$.
If $p^d=3^8$, then  we have $\N_{{\GL}_8( 3)}(R)<{\rm GSp}_8(3)<\GL_8(3)$ (see \cite[p.485, Case $R=R_2^3$]{Liebeck-affine}).
By Magma~\cite{BCP}, $\N_{\GL_8( 3)}(R)$ is maximal in ${\rm GSp}_8(3)$, and furthermore, we obtain the following information about the maximal subgroups of $\N_{\GL_8( 3)}(R)$:
\begin{itemize}
\item $\N_{\GL_8( 3)}(R)$ has a maximal subgroup, say $M$, which has $3$ orbits on $V$;
\item $\N_{\GL_8( 3)}(R)$ has a maximal subgroup, say $N$, which has $4$ orbits on $V$;
\item For any maximal subgroup $L$ of $\N_{\GL_8( 3)}(R)$, if $L$ is not conjugate to $M$ or $N$, then $L$ has more than $4$ orbits on $V$;
\item $M$ has a maximal subgroup, say $M_1$, which has $4$ orbits on $V$, and any maximal subgroup of $M$ which is not conjugate to $M_1$ has more than $4$ orbits on $V$;
\item $z\in N, M_1$.
\end{itemize}
So $X_0=\N_{\GL_8( 3)}(R)$ or $M$. If $X_0=\N_{\GL_8( 3)}(R)$, then we must have $G_0=N$, which is impossible because $z\in N$ but we require that $z\notin G_0$. If $X_0=M$, then we must have $G_0=M_1$, which is again impossible because $z\in M_1$ but we require that $z\notin G_0$.

\subsection{Class (C)}

Finally, consider the candidates in Theorem~\ref{subdegree} (C). In this case, the socle $L$ of $X_0/\Z(X_0)$ is simple and is listed in Table~\ref{tb:af3C}. If $p^d\neq 3^{12}$, then by Magma~\cite{BCP}, we can obtain all such groups $X_0$, and moreover, we find that every $X_0$ has no subgroup $G_0$ such that $G_0$ is irreducible on $V$ and has $4$ orbits, and $X_0=G_0\times\lg z\rg$.

If $p^d=3^{12}$, then $L= \mathrm{Suz}$ or $G_2(4)$. Note that the (projective) representation of $L$ on $V=\mz_3^{12}$ is absolutely irreducible (see \cite[p.481]{Liebeck-affine}). By \cite[Theorem~4.3.3]{Bray-Holt-Dougal}, $X_0^{(\infty)}$ is given in \cite[Table~4.4]{Bray-Holt-Dougal}. By inspecting \cite[Table~4.4]{Bray-Holt-Dougal}, in case $L=\mathrm{Suz}$, we have $X_0^{(\infty)}=2^\cdot \mathrm{Suz}$, and in case $L=G_2(4)$, we have $X_0^{(\infty)}=2^\cdot G_2(4)$.
For both cases, we have $z\in \mathbf{Z}(X_0^{(\infty)})\leq G_0$, a contradiction.

\section{Almost simple primitive permutation groups of rank 4}\label{subsec:almostsimple-rank4}

In this section, we deal with almost simple groups (in Corollary~\ref{reduction}~(1)). In particular, we will prove the following theorem, which together with Theorem~\ref{affine-rank-4} completes the proof of Theorems~\ref{th:2sethomo} and \ref{trans-rank-4}.

\begin{theorem}\label{rank-4}
Let $G$ be an almost simple primitive rank $4$ permutation group on a set $\Om$ with a unique non-self-paired orbital.
Then $G=\PSU_3(3)$ with stabiliser $\PSL_3(2)$, and the subdegrees are $1$, $7$, $7$ and $21$. Moreover, $\mathbf{N}_{\mathrm{Sym}(\mathit{\Omega})}(G)=\mathrm{P\Gamma U}_3(3)$ is a primitive rank $3$ group with subdegrees $1$, $14$ and $21$.
\end{theorem}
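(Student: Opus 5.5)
The approach mirrors the affine case. Let $G$ be almost simple and primitive of rank $4$ with exactly one self-paired non-trivial orbital. The two non-self-paired orbitals $\Delta$ and $\Delta^*$ are paired, so they have equal subdegrees $m$. Let $\ell$ be the subdegree of the self-paired orbital. Then $|\Om|=1+2m+\ell$.

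The first step is to pass to a rank $3$ overgroup. Set $X=\mathbf{N}_{\mathrm{Sym}(\Om)}(G)$. The aim is to show that some $X\geqslant G$ with $|X:G|=2$ fuses $\Delta$ and $\Delta^*$, giving a rank $3$ group with subdegrees $1,2m,\ell$. An alternative avoids constructing $X$ directly. The orbital graph $(\Om,\Delta\cup\Delta^*)$ together with its complement $(\Om,\Gamma_\ell)$ already gives a symmetric association scheme. Its automorphism group contains $G$, and one can hope to show it is transitive on the arcs of both graphs. This forces the pair to come from a primitive rank $3$ group $X$ in which $G$ has index $2$; in particular $\soc(X)=\soc(G)$, since $X$ is almost simple or affine and an affine group cannot contain $G$ with index $2$. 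If this reduction is hard to make rigorous, I would instead use the classification of almost simple primitive rank $4$ groups (Cuypers' list, used in Proposition~\ref{th-rank-4}) and check self-pairedness case by case. The subdegrees there are known, and non-self-paired orbitals can be detected from character tables or directly in Magma~\cite{BCP}.

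The main step runs through the classification of almost simple rank $3$ groups \cite{Bannai,Kantor,Liebeck-Saxl} and looks for an index-$2$ subgroup $G$ with $\soc(G)\leqslant G<X$ that splits one suborbit into two paired suborbits of equal length. Several criteria remove most cases.

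(a) If an involution of $G_\alpha$ reverses some arc, that is, if $G$ contains an involution swapping two points at "distance" in $\Delta$, then $\Delta$ is self-paired. So every element of $G$ interchanging a pair in $\Delta\cup\Delta^*$ must lie outside $G$. For the classical families (projective spaces, polar spaces, Grassmannians) one exhibits, as in the affine (A4) and (A8) arguments, an explicit element of $\soc(G)$ that swaps two points in each $X$-orbital. This shows every orbital of $\soc(G)$ is self-paired.

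(b) The suborbit of length $2m$ of $X_\alpha$ must split under the index-$2$ subgroup $G_\alpha$. This requires $G_\alpha\cap\soc$ to be intransitive on it while $X_\alpha$ is transitive, which is very restrictive.

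(c) Sporadic and exceptional cases are handled by computation with the ATLAS permutation representations.

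The expected obstacle is the uniform argument for the infinite classical families. In particular, one must exhibit swapping involutions (or elements of even order inverting an arc) inside $\soc(G)$ for each orbital, and do so for every parabolic and non-parabolic rank $3$ action. I would handle this by writing explicit representatives $(\alpha,\beta)$ of each orbital and an involution $g\in\soc(G)$ of determinant $1$ swapping them, using monomial matrices as in the (A8) argument. The surviving case should be $X=\mathrm{P\Gamma U}_3(3)=G_2(2)$ acting on $36$ points with subdegrees $1,14,21$. There $G=\PSU_3(3)$ has $G_\alpha=\PSL_3(2)$, which splits the $14$ into $7+7$, the two orbits being swapped by the outer automorphism. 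Checking directly, or in Magma, that these two $7$-orbitals are paired rather than self-paired completes the identification.
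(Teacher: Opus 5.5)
Your main route has a genuine gap at its first step: nothing guarantees a rank~$3$ group $X$ with $G<X\leqslant\Sym(\Omega)$ and $|X:G|=2$ that fuses $\Delta$ and $\Delta^*$. In the affine case such an $X$ comes for free, because the central involution $-1\in\GL_d(p)$ inverts every vector and so interchanges $\Delta$ and $\Delta^*$. In the almost simple case there is no analogue. Any such $X$ normalises $G$, and $\mathbf{N}_{\Sym(\Omega)}(G)$ embeds in $\Aut(T)$ with $T=\soc(G)$. If that normaliser contains no element swapping $\Delta$ and $\Delta^*$ (for instance if $G$ is its own normaliser), then $X$ does not exist, and your search through the rank~$3$ classification never meets $G$.

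The symmetrisation idea does not rescue this. A rank~$4$ permutation character is multiplicity-free, so the configuration is commutative, and fusing $\Delta,\Delta^*$ gives a symmetric $2$-class scheme. Hence $(\Omega,\Delta\cup\Delta^*)$ is strongly regular. But a strongly regular graph need not have a rank~$3$ automorphism group, and even when it does, $G$ need not have index $2$ in it. That the unique example sits with index $2$ in $\mathrm{P\Gamma U}_3(3)$ is an output of the theorem, not an available input, so the rank~$3$ route is circular.

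What actually works is your one-sentence fallback, and that is the paper's proof. You enumerate the almost simple primitive rank~$4$ groups directly and decide self-pairedness of the $G$-orbitals case by case:
\begin{itemize}
\item sporadic socle via Praeger--Soicher;
\item alternating socle via Bannai;
\item Lie type via Vauhkonen's list for $\PSL$ and Cuypers' Table~I. Note that Proposition~\ref{th-rank-4} is only Cuypers' O'Nan--Scott reduction, not this list.
\end{itemize}

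The substance lies in the infinite families, where computation is unavailable and your proposal supplies nothing. One needs:
\begin{itemize}
\item the result that maximal parabolic actions with non-self-paired orbitals are confined to cases absent from the rank~$\le 5$ list (Lemma~\ref{lm:parabolic});
\item the product-of-two-reflections argument in $\Omega(V)$ for nonsingular $1$-spaces (Lemma~\ref{lm:Omeganonsingular}), which also disposes of $\PSp_4(q)$ and $\PSU_4(r)$ via $\Omega_5(q)$ and $\PO^-_6(r)$;
\item Inglis's theorem for quadratic forms, and Liebeck's results for $G_2(q)$ subgroups;
\item explicit swapping involutions for $\PSL_3(q)$ on flags with a graph automorphism;
\item a Weyl-group count for the $\PSp_4(q)$ flag action with $q$ even.
\end{itemize}

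Your criterion (a), exhibiting an element of $\soc(G)$ that reverses an arc, is the right tool. But it must be applied to the orbitals of each group in that rank~$4$ list, not to the orbitals of a hypothetical overgroup $X$.
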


Here we consider almost simple primitive rank $4$ permutation groups $G$ with exactly one non-trivial self-paired orbital. For the almost simple sporadic case, by \cite[Chapter~4]{Praeger-Soicher}, we have the following result (one can also obtain this by using \cite[Theorem~1.1]{Muzychuk-Spiga} and Magma~\cite{BCP}).

\begin{proposition}\label{prop-sporadic}
Every non-trivial orbital of a primitive rank $4$ permutation group with socle a sporadic simple group is self-paired.
\end{proposition}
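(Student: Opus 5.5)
The statement is Proposition~\ref{prop-sporadic}: every non-trivial orbital of a primitive rank~$4$ group with sporadic socle is self-paired. The plan is to reduce it to a finite check. The starting point is the known classification of primitive rank~$4$ permutation groups with sporadic socle, in \cite[Chapter~4]{Praeger-Soicher}. Equivalently, one can use the classification of all primitive groups of rank at most~$5$ with sporadic socle from \cite[Theorem~1.1]{Muzychuk-Spiga}. Either source gives an explicit finite list of triples $(G,\Om,G_\a)$, with $T=\soc(G)$ sporadic, $T\leqslant G\leqslant\Aut(T)$, and $G_\a$ maximal in $G$, such that $G_\a$ has exactly $4$ orbits on $\Om$. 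For each triple we will decide whether some non-trivial suborbit is paired with a different one.

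Before computing, we use a reduction that settles most cases quickly. The three non-trivial suborbits are $\Del_1,\Del_2,\Del_3$. Pairing is an involution on this set that preserves suborbit lengths. It cannot fix exactly one element of a pair of unequal lengths. So if the three subdegrees are pairwise distinct, every orbital is self-paired.

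Hence only triples with two equal non-trivial subdegrees need attention. For these we test self-pairedness directly. An orbital $(\a,\b)^G$ is self-paired if and only if some $g\in G$ interchanges $\a$ and $\b$. Equivalently, the double coset $G_\a xG_\a$ corresponding to the suborbit is inverse-closed, that is, $G_\a x G_\a=G_\a x^{-1}G_\a$. We would check this in Magma~\cite{BCP} by building the permutation representation (or working with double cosets when the degree is large). For each suborbit representative $\b$ we look for an element of $G_{\{\a,\b\}}\setminus G_\a$, or compare $\b$ with a representative of the paired suborbit via the intersection numbers.

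A cheaper alternative is to use the character-theoretic criterion. The number of self-paired orbitals equals $\sum_\chi \nu(\chi)\,m_\chi$, where $m_\chi$ is the multiplicity of $\chi$ in $1_{G_\a}^G$ and $\nu$ is the Frobenius–Schur indicator. So all orbitals are self-paired exactly when the rank equals this sum, i.e.\ when every constituent of the permutation character is real with indicator $+1$. The permutation characters of the rank~$4$ sporadic actions are tabulated in the Atlas and GAP libraries, so this reduces to checking that the (multiplicity-free) permutation character has only real-valued constituents.

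The main obstacle is the few large-degree actions where one cannot simply construct the permutation group and two subdegrees coincide. For those we would rely on the indicator criterion via stored character tables rather than explicit permutations. Wherever that fails, we would fall back on the tables of \cite{Praeger-Soicher}, which list pairing information for the suborbits.
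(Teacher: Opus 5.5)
Your proposal is correct and follows essentially the same route as the paper, which simply invokes the classification and suborbit-pairing data of \cite[Chapter~4]{Praeger-Soicher}, or alternatively \cite[Theorem~1.1]{Muzychuk-Spiga} together with a Magma check. Your two extra reductions are valid and only make the finite verification cheaper: a non-self-paired orbital forces two equal non-trivial subdegrees, and the number of self-paired orbitals equals $\sum_\chi \nu(\chi)m_\chi$ for the permutation character, which in rank~$4$ is necessarily multiplicity-free.
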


For the case where $\soc(G)$ is alternating, by \cite{Bannai}, we have the following result.

\begin{proposition}\label{prop-alt}
Let $G$ be an almost simple primitive rank $4$ permutation group on $\Om$. If $\soc(G)=\A_n(n\geq 2)$, then every non-trivial orbital of $G$ is self-paired.
\end{proposition}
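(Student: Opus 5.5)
The plan is to rely on Bannai's classification \cite{Bannai} of primitive permutation representations of $\S_n$ and $\A_n$ of rank at most $5$. Let $G$ be almost simple with $\soc(G)=\A_n$, primitive of rank $4$ on $\Om$, and let $H=G_\a$. First I dispose of small $n$ ($n\le 8$, say) by direct inspection in Magma~\cite{BCP}. This covers the isomorphisms $\A_5\cong\PSL_2(4)\cong\PSL_2(5)$, $\A_6\cong\PSL_2(9)$ and $\A_8\cong\PSL_4(2)$, and the exceptional automorphisms of $\A_6$. The check is that every rank $4$ primitive action has all suborbits self-paired. For large $n$, Bannai's theorem says that a rank $4$ primitive action of $\A_n$ or $\S_n$ arises from one of two families. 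The first is the action on $k$-subsets with $k=3$: here the orbitals are indexed by intersection sizes $0,1,2$. The second consists of a few small partition actions, such as $\S_n$ on partitions into two equal parts, or into parts of size $2$ for small $n$.

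In the $k$-subset case the argument is uniform. The orbital containing $(A,B)$ is determined by $|A\cap B|$, and this quantity is symmetric in $A$ and $B$, so $(B,A)$ lies in the same orbital. In fact, already the $G$-invariant relation "intersection size $i$" is symmetric. Since the rank equals the number of possible intersection sizes plus one, each such relation is a single $G$-orbital, and hence self-paired. The same reasoning applies to partition actions. For two partitions $P,Q$ of the same shape, the $G$-orbit of $(P,Q)$ is determined by the intersection matrix $(|P_i\cap Q_j|)$ up to row and column permutations. Swapping $P$ and $Q$ transposes this matrix, and I will check in each listed case that the resulting class is unchanged. When the rank equals the number of such matrix classes, the orbitals are exactly these classes, so they are self-paired. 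Alternatively, one can exhibit an involution of $G$ swapping $P$ and $Q$.

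The main obstacle is bookkeeping. I must be sure that, in each case of Bannai's list, the orbitals of $G$ coincide with those of $\S_n$ even when $G=\A_n$. A suborbit of $\S_n$ could split into two suborbits for $\A_n$, and the two halves could then be paired with each other. To rule this out, for each case I will show that $\A_n\cap G_{\{\a,\b\}}$ contains an element interchanging $\a$ and $\b$. For subsets this is easy for $n$ large: take a product of two transpositions swapping $A\setminus B$ with $B\setminus A$, adjusted by an extra transposition fixing both sets, which exists when $n-|A\cup B|\ge 2$. Boundary cases where there is no room for this adjustment, and $n\le 8$, are handled computationally. Even then, rank $4$ itself forces the $\S_n$-classes not to split, so the statement follows.
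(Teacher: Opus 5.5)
\textbf{Gap: a missing case in Bannai's list.} Your strategy is the paper's: invoke Bannai's list and verify self-pairedness case by case. Your handling of the $3$-subset actions (the symmetric invariant $|A\cap B|$, plus the observation that rank $4$ prevents the $\mathrm{S}_n$-orbitals from splitting in $G$) is sound. The problem is your summary of the list. The case you leave out lies outside the range $n\le 8$ that you check by computer.

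Bannai's tables contain, besides the $3$-subset and bisection actions, a \emph{primitive} case. This is $G=\mathrm{A}_{12}$ acting on the $2520$ cosets of a maximal subgroup $\mathrm{M}_{12}$, and it has rank $4$. Your argument does not reach it. There is no intersection-size or intersection-matrix invariant here. Also, $\mathrm{S}_{12}$ does not act on this coset space, because it interchanges the two $\mathrm{A}_{12}$-classes of $\mathrm{M}_{12}$, so the step ``rank $4$ forces the $\mathrm{S}_n$-classes not to split'' is unavailable. The repair is easy and is what the paper does. The subdegrees are $1,440,495,1584$, which are pairwise distinct. Paired suborbits have equal length, so every orbital is self-paired.

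A smaller point: the ``parts of size $2$'' family never gives rank $4$. It has rank $3$ for $n=6$ and rank at least $5$ for $n\ge 8$. So the only partition cases are bisections with $n=12,14$, where the intersection matrix $\begin{pmatrix} a & 6-a\\ 6-a & a\end{pmatrix}$ (respectively with $7$) is symmetric, and your argument applies.

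\textbf{Your treatment of $n=6$ is better than the paper's.} You check all almost simple $G$ with socle $\mathrm{A}_n$ computationally for small $n$. The paper instead asserts that for $n=6$ the group has rank at most $3$. That assertion fails for $\mathrm{M}_{10}$ and $\mathrm{P\Gamma L}_2(9)$ acting on $36$ points, with point stabilisers $5{:}4$ and $10{:}4$. Both are primitive of rank $4$ with subdegrees $1,5,10,20$; for $\mathrm{M}_{10}$ a Burnside count $\frac{1}{|G|}\sum_g \mathrm{fix}(g)^2=2880/720=4$ confirms the rank. Their orbitals are self-paired because the subdegrees are distinct. These groups also appear, via Vauhkonen's table, in the paper's linear case, so the paper's overall result is unaffected. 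Your computational sweep captures them directly.
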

\proof Let $M=\soc(G)$ and let $M_\a$ be the stabiliser of $\a\in\Om$ in $M$. If $n=6$, then computation by Magma~\cite{BCP} shows that $G$ has rank at most $3$, a contradiction. So we may assume that $n\neq6$. Set $\Del =\{1,2,\ldots,n\}$. Then $G=\A_n$ or $\S_n$. By inspecting \cite[Tables~1--2]{Bannai}, we can see that one of the following holds:
\begin{enumerate}
  \item [{\rm (1)}] $n\geq 7$, $\Ome=\{\o \mid \o\subseteq \Del, |\o|=3\}$, $|M_\a|=3\cdot(n-3)!$ and $|\Ome|=\frac{n(n-1)(n-2)}{6}$;
  \item [{\rm (2)}] $n=12$ or $14$, $M_\a$ fixes the pair of subsets $\{\o_1,\o_2\}$ such that $\o_1=\{1,\ldots,n/2\}$ and $\o_2=\{n/2,\ldots,n\}$, and $|\Om|=\frac{n!}{2\cdot (n/2)!}$;
  \item [{\rm (3)}] $n=12$, $M=G=\A_{12}$, $M_\a\cong \M_{12}$, and $|\Om|=2520$.
\end{enumerate}
If (1) happens, then the suborbits of $M_\a$ are:
\[
\begin{array}{l}
\Del_i=\{\o \mid \o\in\Ome, |\o\cap\a|=i\} \text{ for all } i \in \{1,2,3\}.\\
\end{array}
\]
It is easy to see that all these suborbits are self-paired. By a direct calculation, we also have $|\Del_3|=1$, $|\Del_2|=3(n-3)$,  $|\Del_1|=3\binom{n-3}{2} $ and $|\Del_0|=\binom{n-3}{3} $.

Assume that (2) happens. By Magma~\cite{BCP}, we know that the subdegrees of $M$ are $1,36,200,225$ for $n=12$ and are $1,49,441, 1225$ for $n=14$.
If (3) happens, then again by Magma~\cite{BCP}, the subdegrees of $M$ are $1,440,495,1584$. Now one may easily see that all the non-trivial orbitals of the groups in (2)--(3) are self-paired.\hfill\qed

To complete the proof of Theorem~\ref{rank-4}, we only need to consider the case of groups of Lie type.
We first cite a lemma concerning the orbitals of simple groups of Lie type acting on a set with stabiliser a maximal parabolic subgroup.

\begin{lemma}[{\rm \cite[Lemma~2.13]{YFX}}]\label{lm:parabolic}
Let $T$ be a simple group of Lie type acting on a set $\Delta$ with stabiliser a maximal parabolic subgroup, and let $v\in\Delta$. If $T$ has a non-self-paired orbital on $\Delta$, then one of the following holds:
\begin{enumerate}
\item[\rm (1)] $T=\mathrm{P\Omega}_{2n}^{+}(q)$ with $n\geq 5$, and $T_v$ is of type $A_iD_{n-1-i}$ with $i \in \{\lfloor n/2\rfloor,\ldots,n-3 \}$, so that $T_v$ is the stabiliser of a totally singular $k$-subspace with $i \in \{\lfloor n/2\rfloor+1,\ldots,n-2 \}$;
\item[\rm (2)] $T=F_4(q)$, and $T_v$ is of type $A_1A_2$;
\item[\rm (3)] $T={}^2E_6(q)$, and $T_v=[q^{29}].(\PSL_3(q^2)\times \SL_2(q)){:}(q-1))$ or $[q^{31}].(\SL_3(q)\times \SL_2(q^2)){:}(q^2-1)/(3,q+1)$;
\item[\rm (4)]  $T=E_6(q)$, and $T_v$ is of type $A_1A_4$ or $A_1A_2A_2$;
\item[\rm (5)]  $T=E_7(q)$, and $T_v$ is of type $A_1A_5$, $A_1A_2A_3$, $A_2A_4$ or $A_1D_5$;
\item[\rm (6)]  $T=E_8(q)$, and $T_v$ is of type $A_7$, $A_1A_6$, $A_1A_2A_4$, $A_3A_4$, $A_2D_5$ or $A_1E_6$.
\end{enumerate}
\end{lemma}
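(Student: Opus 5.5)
The statement just above is Lemma~\ref{lm:parabolic}, which the paper quotes from \cite[Lemma~2.13]{YFX}. We do not reprove it from scratch; we only indicate the standard route by which such a list arises.

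For a simple group $T$ of Lie type acting on the cosets of a maximal parabolic subgroup $P=P_J$, the orbitals of $T$ correspond to the $(W_J,W_J)$-double cosets in the Weyl group $W$ (Bruhat decomposition). An orbital $P wP$ is self-paired if and only if $P w^{-1}P=PwP$, that is, $W_J w^{-1} W_J=W_J w W_J$.

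There is one easy sufficient condition. If the longest element $w_0$ of $W$ acts as $-1$ on the root system, then $w_0$ is central. In that case every double coset $W_Jw W_J$ is stable under inversion, since $w^{-1}$ and $w$ lie in the same $W_J$-double coset. This is shown by choosing minimal-length double coset representatives and using the combinatorics of $J$-reduced elements. So non-self-paired orbitals can only occur when the relevant duality does not fix the $W_J$-double cosets.

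For twisted groups, one passes to the relative Weyl group with the same reasoning.

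The plan is then as follows.
\begin{enumerate}
\item Run through the Lie types and compute the double cosets $W_J\backslash W/W_J$ for each maximal $J$.
\item For classical groups, use the geometric description: $T$ acts on totally singular $k$-subspaces, and the orbitals are given by $\dim(U\cap U')$ together with finer invariants. These invariants are symmetric in $(U,U')$, so all orbitals are self-paired, except in $\mathrm{P\Omega}^+_{2n}(q)$. There, the parity and type invariants relating the two families of maximal subspaces can break the symmetry for the indicated middle range of $k$, giving case (1).
\item For the exceptional groups, compute the double cosets by computer from the Weyl group data. The list in (2)--(6) is exactly what remains.
\end{enumerate}

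The main obstacle is the exceptional-group computation in step 3, which is a finite but large Weyl-group enumeration. The orthogonal case in step 2 also needs care with the duality between the two classes of maximal totally singular subspaces.
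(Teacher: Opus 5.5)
Deferring to \cite[Lemma~2.13]{YFX} is exactly what the paper does; it gives no argument of its own. Your Bruhat framework is also the right one: the orbitals of $T$ on $T/P_J$ are indexed by $W_J\backslash W/W_J$, with pairing $W_JwW_J\mapsto W_Jw^{-1}W_J$. The problem is your ``easy sufficient condition'', which is false. Centrality of $w_0$ only says that conjugation by $w_0$ is trivial. It gives no reason for $w^{-1}$ to lie in $W_JwW_J$: you are conflating the opposition map $x\mapsto w_0xw_0$ with inversion $x\mapsto x^{-1}$, which is what governs pairing.

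Since $w_0=-1$ in $W(F_4)$, $W(E_7)$, $W(E_8)$ and in the relative Weyl group (of type $F_4$) of ${}^2E_6$, your criterion would remove items (2), (3), (5), (6) outright. So your closing claim that the computation leaves exactly (2)--(6) contradicts your own argument.

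Here is an explicit failure. $W(D_6)$ has $w_0=-1$. Let $T=\mathrm{P\Omega}^+_{12}(q)$ act on totally singular $4$-spaces. In a standard hyperbolic basis take $U=\langle e_1,e_2,e_3,e_4\rangle$ and $U'=\langle e_1,f_2,e_5,e_6\rangle$. Then $M=\mathrm{rad}(U+U')=\langle e_1,e_3,e_4,e_5,e_6\rangle$. The maximal totally singular subspaces $U+M=\langle e_1,\dots,e_6\rangle$ and $U'+M$ meet in the $5$-space $M$, so they lie in opposite families. An element swapping $U$ and $U'$ fixes $M$, hence swaps $U+M$ and $U'+M$ and interchanges the two families, so it is not in $T$. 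This orbital is therefore not self-paired; it is the case $n=6$, $k=4$ of item (1).

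The correct test is as follows. Let $d$ be the unique minimal-length element of $W_JdW_J$. Since length is inversion-invariant, $d^{-1}$ is the minimal-length element of $W_Jd^{-1}W_J$, so the orbital is self-paired if and only if $d^2=1$. The classification amounts to finding the maximal $J$ for which some minimal $(W_J,W_J)$-double coset representative is not an involution. Centrality of $w_0$ plays no role in this.

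Your step 2 has a related gap. Symmetry of the invariants $\dim(U\cap U')$ and the rank $r$ of the form on $U\times U'$ only gives self-pairedness for the full isometry group. For $T$ itself you must show that a swapping element can be chosen inside $T$. The mechanism in the example is what your vague remark about ``parity and type invariants'' has to become. If $\dim(U\cap U')+r=2k-n$ with $r$ odd, then $\mathrm{rad}(U+U')$ has dimension $n-r$, and adding it to $U$ and to $U'$ gives maximal subspaces in opposite families. This is also where the range $k\ge\lfloor n/2\rfloor+1$ in (1) comes from.
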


The next lemma is about the orbitals of the derived subgroup of the special orthogonal group acting on nonsingular $1$-subspaces.

\begin{lemma}\label{lm:Omeganonsingular}
Let $V$ be an $n$-dimensional vector space over $\GF(q)$ with $n\geq 5$.
Suppose that $q$ is odd if $n$ is odd.
Let $T=\Omega(V)$, the derived subgroup of the special orthogonal group $\mathrm{SO}(V)$, and let $\Del$ be an orbit of $T$ acting on the set of nonsingular $1$-subspaces of $V$. Then all $T$-orbitals on $\Del$ are self-paired.
\end{lemma}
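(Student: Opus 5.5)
The plan is to show directly that for any two points $\langle v\rangle,\langle w\rangle\in\Del$ there is an element of $T$ interchanging them. Interchanging every such pair gives $(\langle v\rangle,\langle w\rangle)^T=(\langle w\rangle,\langle v\rangle)^T$, so every orbital is self-paired. The natural candidate for the swapping element is a product of reflections, corrected so that it lands in $\Omega(V)$.

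Let $Q$ be the quadratic form and $B$ the associated bilinear form. Since $\langle v\rangle$ and $\langle w\rangle$ lie in the same $T$-orbit, we may rescale $w$ so that $Q(w)=Q(v)$. Under $\Omega(V)$, the nonsingular $1$-spaces of a fixed discriminant class form one orbit, and we can choose the scalar to achieve equality of $Q$-values. If $q$ is even, $Q(v)$ is a square, so this is automatic.

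Let $W=\langle v,w\rangle$ and split into two cases according to whether $W$ is nondegenerate.
\begin{itemize}
\item[\rm (a)] $W$ nondegenerate. Then $V=W\perp W^\perp$ with $\dim W^\perp=n-2\geq 3$. Either $v-w$ or $v+w$ is nonsingular, since $Q(v-w)+Q(v+w)=4Q(v)$ for $q$ odd, with a similar check for $q$ even. The reflection $r$ in that vector swaps $\langle v\rangle$ and $\langle w\rangle$, and $r\in \mathrm{O}(W)$ acts on $W$. To enter $\Omega(V)$, multiply by an element $s\in\mathrm{O}(W^\perp)$ chosen so that $rs$ has determinant $1$ and spinor norm trivial. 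Concretely, take $s$ to be a reflection (or a product of two reflections) in $W^\perp$ with the appropriate spinor norm. This is possible because a nondegenerate space of dimension $\geq 2$ contains nonsingular vectors of both square classes of $Q$-value. For $q$ even we use the quasideterminant/Dickson invariant instead of spinor norm. Then $rs\in\Omega(V)$ and it swaps the two points, since $s$ fixes $W$ pointwise.
\item[\rm (b)] $W$ degenerate. Its radical is then $\langle u\rangle$ with $u$ singular, for example $u=w-\lambda v$. Here I would choose a nondegenerate subspace $U\supseteq W$ of dimension $3$ or $4$ containing $v,w$; this is possible since $n\geq5$. Inside $U$, $v-w$ or $v+w$ is still nonsingular when $q$ is odd, because $Q(v\pm w)=2Q(v)\pm B(v,w)$ and these cannot both vanish. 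We then proceed as in (a), with $U^\perp$ of dimension $\geq1$ used for the spinor-norm correction. If $\dim U^\perp$ happens to be too small to realise both spinor norms, enlarge $U$ instead.
\end{itemize}

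The main obstacle I expect is this spinor norm/determinant correction: guaranteeing that the complementary space contains vectors of both square-class norms. This is exactly where $n\geq5$ is used, since it leaves a complement of dimension $\geq2$ or $3$, in which every nonzero value of $Q$ is represented. A secondary point is the characteristic $2$ case with $n$ even, where reflections become transvections $x\mapsto x+B(x,a)Q(a)^{-1}a$, and one uses the Dickson invariant in place of the spinor norm. The parity fix is then simply multiplication by another transvection in the complement.
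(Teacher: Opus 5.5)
\textbf{Main gap: $q$ even with $B(v,w)=0$.} For nondegenerate $W$ your argument is the paper's: a reflection in $v\pm w$ times a reflection in $W^\perp$ of the same square class, or any second transvection when $q$ is even. For $q$ odd your degenerate case also works, modulo the small point at the end. The genuine gap is $q$ even (so $n\geq 6$ is even) with $W$ degenerate, i.e.\ $B(v,w)=0$.

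In that case $Q(v+w)=Q(v)+Q(w)+B(v,w)=0$, so the reflection you start from does not exist. Your case (b) is explicitly argued only for $q$ odd, and your Dickson-invariant remark handles only the parity correction, not the swap. (Also, $B$ vanishes on all of $W$ here, so describing its radical as $\langle u\rangle$ is wrong.)

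In fact no single transvection can swap the two points. Any $g\in\mathrm{O}(V)$ with $\langle v\rangle^g=\langle w\rangle$ sends $v$ to $\lambda w$ with $\lambda^2=1$, hence to $w$ itself. A transvection $x\mapsto x+B(x,a)Q(a)^{-1}a$ taking $v$ to $w$ forces $a\in\langle v+w\rangle$, which is singular. So a genuinely different element is needed. The paper isolates this subcase and uses an explicit isometry interchanging $v$ and $w$ with $\operatorname{rank}(I-g)=2$, hence with trivial quasideterminant.

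\textbf{A repair in your framework.} Put $u=v+w$ and choose $a$ with $B(a,v)=1$, $B(a,u)=0$ and $Q(a)\neq 0$. Such $a$ exists: these conditions define a coset of $L=\langle v,u\rangle^\perp$. If $Q$ vanished on that coset, $Q|_L$ would be additive, making $L$ totally isotropic although $\dim L=n-2>n/2$.

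Let $a'=a+Q(a)u$. Then $Q(a')=Q(a)$ and $B(a',u)=0$, and both $r_a$ and $r_{a'}$ fix $u$. Applying $r_{a'}$ and then $r_a$ sends $v\mapsto v+Q(a)^{-1}a+u\mapsto v+u=w$; the other order also works. So the product interchanges $v$ and $w$. Being a product of two transvections, it lies in $\Omega(V)$.

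This is the kind of element the paper has in mind. Note, however, that the decomposition $V=\langle v,w\rangle\perp\langle v',w'\rangle\perp U$ the paper writes for this subcase cannot literally hold when $B(v,w)=0$, since $B$ vanishes on $\langle v,w\rangle$. The explicit pair $r_a,r_{a'}$ is the safer route.

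\textbf{Minor point ($q$ odd, case (b)).} ``Enlarge $U$'' goes the wrong way. Either take $\dim U=3$, so that $\dim U^\perp=n-3\geq 2$, or skip $U$ altogether. $W^\perp$ has radical $\langle u\rangle$, and $W^\perp/\langle u\rangle$ is nondegenerate of dimension $n-3\geq 2$. Hence $W^\perp$ contains a vector $x$ with $Q(x)=Q(v\pm w)$, and $r_{v\pm w}r_x$ does the job.
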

\f\proof
Let $Q$ be the quadratic form on $V$ fixed by $T$, and let $\boldsymbol{\beta}(,)$ be the associated polar form. Take two different nonsingular $1$-subspaces $\langle v\rangle, \langle w\rangle \in \Del$. Since $\Del$ is an orbit of $T$, we must have $Q(v)=Q(w)\neq0$. To show that all $T$-orbitals on $\Del$ are self-paired, we shall find some $g\in T$ swapping $\langle v\rangle$ and $\langle w\rangle$.

Since the dimension of $V$ is at least $5$, by Witt's lemma, $V$ has two vectors $v'$ and $w'$ and an $(n-4)$-dimensional subspace $U$ such that
\[V=\langle v,w\rangle \perp \langle v',w'\rangle \perp U,\, Q(v')=Q(w') =Q(v) \text{ and } \boldsymbol{\beta}(v',w')=\boldsymbol{\beta}(v,w).\]
Then $Q(v'+w')=Q(v+w)$ and $Q(v'-w')=Q(v-w)$.

For a nonsingular element $z$ of $V$, the reflection $r_z{:} V\mapsto V$ is defined by $x^{r_z}=x-\boldsymbol{\beta}(x,z)z/Q(z)$ for all $x\in V$ (see \cite[Definition~1.6.8]{Bray-Holt-Dougal}). Note that all reflections are in the general orthogonal group $\mathrm{GO}(V)$. For arbitrary $x, y\in V$ with $Q(x)=Q(y)$, direct computation shows that if $Q(x-y) \neq 0$ then $x^{r_{(x-y)}}=y$ and  $y^{r_{(x-y)}}=x$, and if $Q(x+y) \neq 0$ then $x^{r_{(x+y)}}=-y$ and  $y^{r_{(x+y)}}=-x$.

Assume  that $q$ is odd. Since $Q(v)=Q(w)$, we deduce  $\boldsymbol{\beta}(v+w,v-w) =0$ and so
\[
4Q(v)=Q(2v) =Q(v+w)+Q(v-w)+\boldsymbol{\beta}(v+w,v-w)=Q(v+w)+Q(v-w).
\]
Since $q$ is odd, it follows that $4Q(v)\neq 0$, and hence  if $Q(v-w)= 0$ then  $Q(v+w) \neq 0$.
Let $g$ be $r_{(v-w)}r_{(v'-w')}$ if $Q(v-w) \neq 0$, otherwise $r_{(v+w)}r_{(v'+w')}$. Clearly, $g$ swaps $\langle v\rangle$ and $\langle w\rangle$. Further, since $g$ is the product of two reflections, its spinor norm is $1$ and hence $g \in T$ (see~\cite[Definitions~1.6.10~and~1.6.13]{Bray-Holt-Dougal}), as required.

Assume that $q$ is even.
By assumption we see that $n$ is even.
If $Q(v+w)\neq 0$, then $g:=r_{(v+w)}r_{(v'+w')}$ is in $T$  (see~\cite[Definitions~1.6.10~and~1.6.13]{Bray-Holt-Dougal}) and swaps $\langle v\rangle$ and $\langle w\rangle$,  as required. Thus assume $Q(v+w)=0$. Then $\boldsymbol{\beta}(v,w)=Q(v+w)-Q(v)-Q(w)=0$ and $\boldsymbol{\beta}(v',w')=0$. Now we let $g \in \GL(V)$  such that
 \[ v^g=w,\, w^g=v,\, (v')^g=w',\, (w')^g=v',\,u^g=u \text{ for all } u \in U. \]
By matrix computation and applying~\cite[Lemma~1.5.21]{Bray-Holt-Dougal}, we derive that $g\in \mathrm{GO}(V)$. Let $I$ be the $n\times n$ identity matrix. It is easy to check that $I-g$ has rank $2$. By~\cite[Proposition~1.6.11]{Bray-Holt-Dougal}  the quasideterminant of $g$ is $1$, and so $g\in T$, as required. \hfill\qed

Now, we  deal with groups of Lie type.

\begin{proposition}\label{prop-sim-lie}
Let $G$ be an almost simple primitive rank $4$ permutation group on $\Om$ and $\a\in\Om$.
If $\soc(G)$ is of Lie type, then either every non-trivial orbital of $G$ is self-paired,
or $G=\PSU_3(3)$ as in Theorem~$\ref{rank-4}$.
\end{proposition}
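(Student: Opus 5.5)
We will rely on the existing classification of almost simple primitive groups of rank at most $5$ with socle of Lie type. Cuypers' list of primitive groups of rank at most $5$ (the source of Proposition~\ref{th-rank-4}) suffices for the finitely many small cases. For the infinite families we use the rank-$3$ classification of Kantor--Liebler and Liebeck--Saxl, together with the known lists of rank-$4$ and rank-$5$ almost simple actions (Cuypers, Bannai). Throughout, let $T=\soc(G)$ and fix $\a\in\Om$. The idea is that in almost every case a single element of $T$ swaps a representative pair $(\a,\b)$ from each suborbit, which makes each orbital self-paired.

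\textbf{Step 1: parabolic actions.} We first treat the case where $G_\a$ contains a Borel-type parabolic, or $T_\a$ is a maximal parabolic. Lemma~\ref{lm:parabolic} limits non-self-paired orbitals to the listed cases. For each of these, the Bruhat decomposition gives the suborbits of $T$ (indexed by double cosets of Weyl subgroups). We check that the rank is then too large, exceeding $4$ even after the graph or field automorphisms in $G$ fuse suborbits. For $\mathrm{P\Omega}_{2n}^+(q)$ acting on totally singular $k$-spaces, the number of $T$-suborbits is about $k+1$ or more, and the rank is at least $5$ for $k\ge 3$. For the exceptional cases we read off the number of $(W_J,W_J)$-double cosets; it is at least $5$. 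Where a graph automorphism in $G$ swaps two parabolic classes, the action is on flags or on a non-parabolic set, and we treat it in Step~2.

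\textbf{Step 2: non-parabolic actions in the list.} Each remaining rank-$4$ almost simple action with socle of Lie type is on one of the following:
\begin{itemize}
\item nonsingular $1$-spaces of orthogonal groups, where Lemma~\ref{lm:Omeganonsingular} applies directly for $n\geq 5$;
\item nonsingular points or hyperplanes for small-dimensional unitary and orthogonal groups;
\item $\PSL_2(q)$ acting on cosets of a dihedral group or of $\PSL_2(q_0)$;
\item $\PSL_n(q)$ acting on pairs $\{U,W\}$ of a point and a complementary hyperplane, or on antiflags;
\item $G_2(q)$ acting on $\Om_6^\pm(q)$-cosets;
\item finitely many sporadic-looking small cases.
\end{itemize}
For the geometric families we construct swapping elements explicitly, in the spirit of the proof of Lemma~\ref{lm:Omeganonsingular}. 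Given two points in the same $T$-orbit, Witt's lemma supplies an isometry interchanging them. We then adjust this isometry by an element acting on the orthogonal complement so that it lands in $T$ (correcting the determinant or spinor norm), which is possible as long as the complement is large enough. For $\PSL_n(q)$ acting on antiflags, the transpose-inverse graph automorphism is needed for rank $4$, and we exhibit an involution of $T$ that swaps two antiflags in general position.

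\textbf{Step 3: small cases by computer.} For the finitely many small groups (small $q$ and small dimension) we compute with Magma and find that all orbitals are self-paired, with one exception. That exception is $\PSU_3(3)$ acting on the $36$ cosets of $\PSL_3(2)$: it has subdegrees $1,7,7,21$, and the two suborbits of length $7$ are paired. We also check that $\mathrm{P\Gamma U}_3(3)$ fuses these two suborbits, giving rank $3$ with subdegrees $1,14,21$.

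\textbf{Main obstacle.} The hardest part is assembling a complete and trustworthy list of rank-$4$ Lie-type actions. In particular, we must make sure that every case with $G\neq T$ has been covered, since there $T$ itself may have rank larger than $4$, so swapping elements may have to come from $G\setminus T$. For the parabolic cases we will first try to bound the rank of $T$ by counting Weyl double cosets, and then bound the fusion effected by $\mathrm{Out}(T)$, which has small order.
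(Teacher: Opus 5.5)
Your skeleton matches the paper's. You go through Cuypers' Table~I (the paper uses Vauhkonen's list for linear socles), discard maximal parabolics with Lemma~\ref{lm:parabolic}, treat nonsingular points by Lemma~\ref{lm:Omeganonsingular}, compute small cases, and isolate $\PSU_3(3)$ on $36$ points. Your Step~1 in effect reproduces the paper's check that no parabolic line of the table is on that list.

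The gap is Step~2, which is where the proof actually lives. Your list of remaining rank-$4$ actions is not the real one. Your only tool, a Witt-lemma isometry interchanging two points and corrected on the complement so as to lie in $T$, fails on some of the actual cases. The clearest is $\PSL_3(q)$ on flags with a graph automorphism in $G$, which you defer from Step~1 but never treat; it also covers $\PGL_2(7)$ on $21$ points.
\begin{itemize}
\item Its subdegrees are $1,2q,2q^2,q^3$.
\item The suborbit of size $2q^2$ is the union of the $T$-suborbits indexed by $s_1s_2$ and $s_2s_1$, and these two are paired with each other in $T$.
\item So no element of $T$ swaps $\alpha$ with a point of that suborbit, and any construction of a swapping element inside $T$ must fail.
\end{itemize}
You note that swaps may have to come from $G\setminus T$, but you give no mechanism. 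The paper's mechanism is a counting trick. It exhibits involutions of $T$ swapping representatives of the suborbits of sizes $2q$ and $q^3$. The remaining non-trivial orbital is then forced to be its own pair.

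Your tool also does not reach the non-geometric lines.
\begin{itemize}
\item For $T=G_2(q)$ with $T_\alpha=\mathrm{SU}_3(q).2$ or $\SL_3(q).2$, Witt's lemma yields isometries of the $7$-dimensional orthogonal module with no reason to lie in $G_2(q)$.
\item For $T_\alpha=G_2(q)$ in $\PSp_6(q)$ or $\mathrm{P\Omega}_7(q)$, the points of $\Om$ are not subspaces of the natural module at all.
\item The paper cites Liebeck for these cases. They are not routine machine checks either, since Line~55 allows $q=32$.
\item $\PSp_n(q)$ with $q$ even, on the cosets of $\mathrm{O}^\pm_n(q)$, and $\PSU_n(r)$ on nonsingular points are families with $n$ unbounded, not ``small-dimensional'' cases. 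The paper uses Inglis and Wei for these.
\item ${}^2E_6(2)$ on the $23113728$ cosets of $F_4(2)$ needs a separate argument. The paper transfers self-pairedness from the rank-$4$ action of $\PSU_6(2){:}\langle\sigma\rangle$, using that the orders of the products $\sigma x$ separate the orbitals.
\end{itemize}

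Two minor points: Bannai's list concerns alternating groups, and the rank-$3$ classifications play no role in this statement.
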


\f\proof
Suppose that $G$ has at least one non-self-paired orbital.
Then  since  $G$ has rank $4$, we conclude that $G$ has two non-self-paired orbitals of the same subdegree and one non-trivial self-paired orbital.
Let $S=\soc(G)$ and $ S_\alpha=S \cap G_\alpha$.

\medskip
\f{\bf Case~1.}\ The Linear Case.

In this case, if $S_\alpha$ is a maximal parabolic subgroup of $S$, then by Lemma~\ref{lm:parabolic}, all $S$-orbitals are self-paired, and so are $G$-orbitals. Thus, by \cite[Theorem \& Table~1]{Vauhkonen}, we only need to consider the following two subcases:\medskip

\medskip
\f{\bf Subcase 1.1.}\  $\soc(G)=\PSL_3(q)$ and $G$ contains a graph automorphism of $\PSL_3( q)$, and $\Ome$ is the set of pairs $\{U,W\}$ of subspaces of $\GF(q)^3$
satisfying $\dim(U)=1$, $\dim(W)=2$, and $U\leq W$.\medskip

Take $U=\lg e_1\rg$ and $W=\lg e_1,e_2\rg$, define $\a=\{U,W\}$ and
$$
\begin{array}{lll}
\Del_0&=&\{\{U, W\}\};\\
\Del_1&=&\{\{U',W'\} \mid U'=U,\ {\rm or}\ W'=W\}\setminus\Del_0;\\
\Del_2&=&\{\{U',W'\} \mid U'\neq U, W'\neq W,\ {\rm and}\\
&&  {\rm either}\
U\nleq W', U'\leq W,\ {\rm or }\ U\leq W', U'\nleq W \};\\
\Del_3&=&\{\{U',W'\} \mid U\nleq W', U'\nleq W\}.\\
\end{array}
$$
Then the $\Del_i$ are four $G_\a$-orbits on $\Om$ (see \cite[p.185--186]{Vauhkonen}). Clearly, $|\Del_0|=1$.

Take $\b=\{U, W'\}\in\Del_1$ such that $W'\neq W$. Then we may assume that $W'=\{e_1, e_3\}$ such that $e_3\notin W$. This implies that $e_1, e_2, e_3$ form a basis of $\GF(q)^3$. Then there exists an involution $g\in\PSL_3( q)$ such that $g$ sends $\lg e_1\rg, \lg e_2\rg, \lg e_3\rg$ to $\lg e_1\rg, \lg e_3\rg, \lg e_2\rg$, respectively. This implies that $g$ interchanges $\a$ and $\b$, and so $\Del_1$ is self-paired.

Let $U'=\{e_3'\}$ with $e_3'\notin W$, and let $W'=\lg e_2, e_3'\rg$. Then $e_1\notin W'$, and then $\g=\{U', W'\}\in \Del_3$. Again, we have $e_1, e_2, e_3'$ form a basis of $\GF(q)^3$. Then there exists an involution $g\in\PSL_3( q)$ such that $g$ sends $\lg e_1\rg, \lg e_2\rg, \lg e_3'\rg$ to $\lg e_3'\rg, \lg e_2\rg, \lg e_1\rg$, respectively. This implies that $g$ interchanges $\a$ and $\g$, and so $\Del_3$ is self-paired.

Consequently, $\Del_2$ must be also self-paired. \medskip

\f{\bf Subcase 1.2.}\  $G$ is one of the groups in \cite[Table~1]{Vauhkonen} (see Appendix~1).\medskip

Since $G$ has exactly one non-trivial self-paired orbital, by \cite[Table~2]{Vauhkonen}, $G$ is one of the three groups as follows:
\[
\begin{array}{llllc}
\hline
G & G_\alpha & \text{degree} &\text{subdegrees}& \text{rank of }\N_{{\rm Sym}(\Om)}(G) \\
\hline
\PGL_2(7) &\mathrm{D}_{16}&21&1,4,8,8&4 \\
\PSL_2(8) &\mathrm{D}_{18}&28&1,9,9,9&2 \\
\PSL_2(32).5 &\mz_{33}{:}\mz_{10}&496&1,165,165,165&4 \\
\hline
\end{array}
\]

\medskip
For symplectic groups, unitary groups, orthogonal groups and the exceptional groups of Lie type,
we will analyse the classification of primitive groups of rank at most $5$ given by Cuypers in \cite[Table~I]{Cuypers} (also see  Appendix~2),
which consists of 68 cases. When we write ``\underline{Line~$n$}", we mean Line~$n$ in Table~I of \cite{Cuypers}.
We shall proceed down these candidates.

\medskip
\f{\bf Case~2.}\ The Symplectic Case.\medskip

 In this case, $(S,  S_\alpha)$ is one of the pairs in Lines 14--22 of \cite[Table~I]{Cuypers}.

\vspace{0.1cm}
\underline{Lines 14-16}:\   For these lines, $S_\alpha$ is a maximal parabolic subgroup of $S$, and hence all $G$-orbitals are self-paired by Lemma~\ref{lm:parabolic}.

\vspace{0.1cm}
\underline{Line 17}:\
 Now $S=\mathrm{PSp}_n(2)$ with $n\geq 6$ and $ S_\alpha  $ is the stabiliser of a non-degenerate $2$-space.
 By \cite[Lemma~8.5~(i)]{Cuypers}, $G$ has rank $5$.

\vspace{0.1cm}
\underline{Line 18}:\   Now $S=\PSp_4(q)'$ and $S_\alpha$
is the stabiliser of a decomposition of $V$ into a direct sum of two isomorphic non-degenerate $2$-spaces, where $q\in\{2,3,4,5,7,$ $8,$ $9,$ $16,$ $32\}$. If $q$ is even, then $S_\alpha$ is conjugate by a graph automorphism of $S$ to $\mathrm{SO}^{+}_4(q)$, and by~\cite[Theorem~1]{Inglis1990}, all $G$-orbitals are self-paired.
Let $q$ be odd. View $S$  as $\Omega_{5}(q)$. Then $S_\alpha \cong \Omega^{+}_4(q).2$ is conjugate to the stabiliser in $\Omega_{5}(q)$ of some nonsingular $1$-subspace. According to Lemma~\ref{lm:Omeganonsingular},  all $G$-orbitals  are self-paired.

\vspace{0.1cm}
\underline{Line 19}:\
 Now $S=\PSp_4(q)$ and $S_\alpha$
is the stabiliser of an extension field of $\GF(q)$ of order $q^2$, where $q\in\{3,5,7,9\}$. Also, viewing $S$  as $\Omega_{5}(q)$, $S_\alpha  \cong \Omega^{-}_4(q).2 $ is conjugate to the stabiliser in $\Omega_{5}(q)$ of some nonsingular $1$-subspace. According to Lemma~\ref{lm:Omeganonsingular},  all $G$-orbitals are self-paired.

\vspace{0.1cm}
\underline{Line 20}:\  Now $S=\PSp_4(q)$ and $S_\alpha$
is the normaliser of a non-degenerate quadratic form on $V$, where $q\in\{2,4,8,16,32\}$. By~\cite[Theorem~1]{Inglis1990}, all orbitals of $G$ are self-paired.

\vspace{0.1cm}
\underline{Line 21}:\  Now $(S, S_\alpha)=(\PSp_6(q),G_2(q))$ and $q\in \{2,4,8\}$. By \cite{Liebeck-2-closures}, all orbitals of $G$ are self-paired.

\vspace{0.1cm}
\underline{Line 22}:\  In this case, $S=\PSp_4(q)$ with $q$  even, and $G$ contains a graph automorphism, and $S_\alpha$ is the stabiliser of a pair $(U, W)$, where $U\leq W$, $\dim(U)=1$, $\dim(W)=2$ and $W$ is totally singular. Now $S_\alpha$ is a Borel subgroup of $S$. By \cite[Proposition~8.2.1]{Carter}, $S$ has a subgroup $N$ such that $S=S_\alpha N S_\alpha$, $N\cap S_\a\unlhd N$, and $N/(S_\a\cap N)$ is isomorphic to the Weyl group of $S$ which is $D_8$. By \cite[Proposition~8.2.3]{Carter}, the number of double cosets of $S_\a$ is equal to the order of the Weyl group of $S$. This implies that $S$ has rank $8$. Furthermore, since $N/(S_\a\cap N)\cong D_8$, it follows that $S$ has six self-paired suborbits and two paired orbits, say $\Delta, \Delta'$.

By \cite[Table~8.14]{Bray-Holt-Dougal}, we see that $G=S\lg g\rg$, where $g$ is a graph automorphism of $S$ which fixes $S_\a$. So $g\in G_\a$. If $g$ is a field automorphism of $S$, then $U^g=U$ and $W^g=W$, and then $G_\a=\lg S_\a, g\rg=G_U\cap G_W$ which is not maximal in $G$, a contradiction. Thus, $g$ is not a field automorphism of $S$, and then $g^2$ is field automorphism of $S$. Suppose that $g$ does not fuse $\Delta$ and $\Delta'$. Then $\Delta$ and $\Delta'$ are two non-trivial suborbits of $G$.

Let $O_1:=\{(U, W')\mid U\leq W', W'\neq W, \dim(W')=2\}$ and $O_2:=\{(U', W)\mid U'\leq W, U'\neq U, \dim(U')=1\}$. Then $O_1$ and $O_2$ are two orbits of $S_\a$, and $g$ fuses $O_1$ and $O_2$ into one orbit of $G_\a$. This implies that $G$ has at least five suborbits, a contradiction.

\medskip
\f{\bf Case~3.} The Unitary Case.\medskip

In this case, $(S, S_\alpha)$ is one of the pairs in Lines 23--36 of \cite[Table~I]{Cuypers}.

\vspace{0.1cm}
\underline{Lines 23--25}:\  For these lines, $S_\alpha$ is a maximal parabolic subgroup of $S$, and hence all $G$-orbitals are self-paired by Lemma~\ref{lm:parabolic}.

\vspace{0.1cm}
\underline{Line 26}:\  Now $S=\PSU_n(r)$ with $r \in \{2,3,4,8 \}$, and  $S_\alpha$ is the stabiliser of a nonsingular $1$-space.  From \cite[Section~2]{Wei-1983} we may see that all the orbitals of $G$ are self-paired.

\vspace{0.1cm}
\underline{Lines 27--28, 30--36}:\
By \cite[Lemma~8.3]{Cuypers} and Atlas~\cite{Conway-Atlas}, among the pairs in Lines 27--28, 30--36, the groups of rank $4$ are as follows (the subdegrees are obtained by Magma~\cite{BCP}):
\[
\begin{array}{cllllc}
\hline
\text{Line}&S& S_\alpha & \text{degree} &\text{subdegrees}& \text{rank of }\N_{{\rm Sym}(\Om)}(S) \\
\hline
27&\PSU_3(3) &4^2{:}\S_3 &63&1,6,32,24&4 \\
30&\PSU_6(2) &\mathrm{PSp}_6(2)&6336&1,315,2240,3780&4 \\
32&\PSU_3(5) &\M_{10}&175 & 1,12,72,90 &4 \\
36&\PSU_3(3) &\PSL_3(2)  & 36 &  1,7,7,21 &3 \\
\hline
\end{array}
\]
It is easy to see all the orbitals of the first four groups are self-paired. For the last group, by Magma~\cite{BCP}, exactly one non-trivial orbital is self-paired, and this is the group in Theorem~\ref{rank-4}.

\vspace{0.1cm}
\underline{Line 29}:\  Here $S=\PSU_4(r)$ with $r\in\{2,3,4,5,7,8,9\}$, and  $S_\alpha $ is the normaliser of  $\PSp_4(r)$.
Note that $\PSU_4(r)\cong \mathrm{P\Omega}^{-}_6(r)$ and $\PSp_4(r)\cong  \mathrm{P\Omega}_5(r)$.
Thus, we identify $S$ with $\mathrm{P\Omega}^{-}_6(r)$, and then $S_\alpha $ is the stabiliser of a  nonsingular $1$-space.
By Lemma~\ref{lm:Omeganonsingular}, we see that all $S$-orbitals  are self-paired.

\medskip
\f{\bf Case~4.}\ The Orthogonal Case.\medskip

In this case, $(S, S_\alpha)$ is one of the pairs in Lines 37--50 of \cite[Table~I]{Cuypers}.

\vspace{0.1cm}
\underline{Lines 37--43}:\  For these lines, $S_\alpha$ is a maximal parabolic subgroup of $S$, and hence all $G$-orbitals are self-paired by Lemma~\ref{lm:parabolic}.

 \vspace{0.1cm}
\underline{Lines 44--45}:\  Now $S=\mathrm{P\Omega}_{2m+1}(q)$  with $q \in \{3,5,7,9 \} $ or $S=\mathrm{P\Omega}^{+}_{2m}(q)$ with $q \in \{2,3,4,5,7,8,9 \} $, and $S_\alpha $ is the stabiliser of a nonsingular $1$-space. By Lemma~\ref{lm:Omeganonsingular},  all $S$-orbitals  are self-paired.

 \vspace{0.1cm}
\underline{Line 46}:\
Now $S=\mathrm{P\Omega}^{\pm}_{2m}(2)$ with $m\geq3$, and $S_\alpha $ is the stabiliser of a non-degenerate $2$-space of elliptic type.
By \cite[Lemma~8.5(ii)]{Cuypers}, we derive that either $S=\PO^-_6(2)(\cong\PSU_4(2))$ and $ S_\alpha=3^3{:}\S_4$, or $S=\PO^{+}_6(2)(\cong\PSL_4(2)\cong \A_8)$ and $ S_\alpha=(\A_5\times3){:}2$. For the former, $S$ has rank $3$, and for the latter, $S$ has rank $4$ with
subdegrees $1, 15, 30, 10$.

 \vspace{0.1cm}
\underline{Line 47}:\
 Now $S=\mathrm{P\Omega}_7(q)$ and $S_\alpha=G_2(q)$.
By \cite{Liebeck-2-closures}, all orbitals of $G$ are self-paired.

 \vspace{0.1cm}
\underline{Lines 48--50}:\
For the pairs in Lines 48-50, by \cite[Table~9]{John-3/2-tran}, we have the following:
\[
\begin{array}{cllllc}
\hline
\text{Line}&S& S_\alpha & \text{degree} &\text{subdegrees}& \text{rank of }\N_{{\rm Sym}(\Om)}(S) \\
\hline
48 &  \mathrm{P\Omega}_7(3)  &  \PSp_6(2)  & 3159 &  1,288,630,2240  &  4 \\
49 &  \mathrm{P\Omega}^{+}_8(2)  &  \A_9  & 960 &  1,84,315,560  &  4 \\
50 & \mathrm{P\Omega}^{+}_8(3)  &  \mathrm{P\Omega}^{+}_8(2)  &  28431  &  1,960,960,960,3150,22400  &  4 \\
\hline
\end{array}
\]
It implies that all orbitals of $G$ are self-paired.
\medskip

\f{\bf Case~5.}\ The Exceptional Group of Lie Type.\medskip

In this case, $(S, S_\alpha)$ is one of the pairs in Lines 51--68 of \cite[Table~I]{Cuypers}.

 \vspace{0.1cm}
\underline{Lines  51--54, 58--59, 61, 63, 65, 67--68}:\  These lines are impossible by Lemma~\ref{lm:parabolic}.

 \vspace{0.1cm}
\underline{Lines 55--56}:\
Now $S=G_2(q)$ and $S_\alpha=\mathrm{SU}_3(q).2$ or $\SL_3(q).2$.
By \cite{Liebeck-2-closures}, all orbitals of $S$ are self-paired.

 \vspace{0.1cm}
\underline{Line  57}:\
Now $S=G_2(4)$ and $S_\alpha=\mathrm{J}_2$, which  gives  rise to a group of rank $3$ by~\cite{Liebeck-Saxl}.

 \vspace{0.1cm}
\underline{Line  60}:\
Now $S={}^2F_4(2)'$ and $S_\alpha=\PSL_3(2).2$. By computation in Magma~\cite{BCP},  $S$ has rank $4$ with subdegrees $1$, $312$, $351$ and $936$.

 \vspace{0.1cm}
\underline{Line  62}:\
Now $S=F_4(2)$ and $S_\alpha=\PSp_8(2)$. From Atlas~\cite{Conway-Atlas} we see that $G=S$.
 By~\cite[Lemma~8.6~(i)]{Cuypers}, $G$ would have  rank $5$.

 \vspace{0.1cm}
\underline{Line 64}:\
Now $S={}^2E_6(2)$ and $S_\alpha=F_4(2)$. By \cite[Lemma~8.6(ii)]{Cuypers}, $G$ has rank $4$. In this case, $S_\alpha$ is the centraliser of the out-involution $\s$ of $S$. Let $G={}^2E_6(2){:}\lg\s\rg$ and $C=F_4(2)\times\lg\s\rg$. Set $\Om=\{\s^g \mid g\in G\}$. Now let $H\leq G$ s.t. $H=\PSU_6(2){:}\lg\s\rg$. Then $C_H(\s)=\PSp_6(2)\times\lg\s\rg$. Take $x,y\in\Om$ such that $x,y$ are in the same orbit of $C$. Then $\s x$ and $\s y$ are conjugate, and so they have the same order. Let $X=\{\s^h\ |\ h\in H\}$. Then $H$ is primitive on $X$ with rank $4$ and all orbitals are self-paired. By Magma~\cite{BCP}, we can obtain that $|\{o (\s x)\ |\ x\in X\}|=4$, and so each orbital of $H$ on $X$ is contained in exactly one orbital of $G$ on $\Om$. This implies that all orbitals of $G$ are also self-paired.
(We are grateful to Prof. Frank L${\rm \ddot{u}}$beck who kindly helps us to obtain the subdegrees (namely, $1,69615, 6336512, 16707600$) of this group by using GAP. )

 \vspace{0.1cm}
\underline{Line  66}:\
Now $S=E_6(2)$ and $S_\alpha=F_4(2)$, and $G$ contains a graph automorphism. By \cite[Lemma~8.7]{Cuypers}, $G$ would have rank $5$.
\hfill\qed

Now combining Propositions~\ref{prop-sporadic}, \ref{prop-alt} and \ref{prop-sim-lie}, the proof of Theorem~\ref{rank-4} is complete.

\section{Proof of Theorems~\ref{th:2sethomo} and \ref{trans-rank-4}}\label{sec6:proof-main-theorems}

\f{\bf Proof of Theorem~\ref{th:2sethomo}.}\ Let $\Ga=(V,E)$ be a $(G,2)$-set-homogeneous graph.
Suppose that $\Ga$ is neither complete nor empty.
If $G$ is imprimitive on the vertex set $V$, then by Lemma~\ref{imp},
$\Ga\cong m\K_b$ or $\K_{m[b]}$ for some positive integers $m$ and $ b$. Then Theorem~\ref{th:2sethomo}~(2) holds.

Assume that $G$ is primitive on $V$. By Lemma~\ref{rank<=5}, $G$ is of rank at most 5.

If $G$ is of rank $3$, then $\Ga$ is $(G,2)$-homogeneous and all candidates for $G$ are listed in \cite{Bannai,Kantor,Liebeck-affine,Liebeck-Saxl}. Then Theorem~\ref{th:2sethomo}~(3)  holds.

If $G$ is of rank $4$, then exactly one of the orbitals is self-paired. By Corollary~\ref{reduction}, $G$ is affine or almost simple. By Theorem~\ref{affine-rank-4}, $G$ is not affine. If $G$ is almost simple, then by Theorem~\ref{rank-4}, $G\cong\PSU_3(3)$ with stabiliser $\PSL_3(2)$, and the subdegrees are $1,7,7$ and $21$. Then $\Ga$ is either the orbital graph of $G$ of order $36$ and of valency $21$, or the complement of this graph. Furthermore, by Magma~\cite{BCP}, $\Aut(\Ga)={\rm P\Gamma U}_3(3)$ is $2$-homogeneous on $\Ga$. This proves part (4) of our theorem.

If $G$ is of rank 5, then none of the non-trivial orbitals of $G$ is self-paired. From Proposition~\ref{AGL_1(p^r)} we obtain part (5) of our theorem. \hfill\qed

\medskip
\f{\bf Proof of Theorem~\ref{trans-rank-4}.}\ Combining Proposition~\ref{th-rank-4}, Lemma~\ref{non-HA-AS}, Theorem~\ref{affine-rank-4} and Theorem~\ref{rank-4}, we obtain the proof of this theorem.\hfill\qed

\section{Finite $3$-set-homogeneous graphs}\label{set:3sethom}

In this section, we shall classify finite $3$-set-homogeneous graphs, and then prove Theorem~\ref{th:3sethomo}.

\subsection{Preliminaries}\label{subsec:3reduce}
\subsubsection{A reduction}
A finite graph $\Ga$ is \emph{strongly regular} with parameters $(v, k, \lambda, \mu)$ if it has $v$ vertices and is regular with valency $k$ where $0<k<v-1$, and if any two adjacent vertices have $\ld=\ld(\Ga)$ common neighbours and any two non-adjacent vertices have $\mu=\mu(\Ga)$ common neighbours. Note that a strongly regular graph is neither complete nor edgeless.

\begin{lem}\label{lem:prim-3ch-is-srg}
Let $\Ga $ be a finite $(G, 3)$-set-homogeneous graph. Then one of the following holds.
\begin{enumerate}
\item[{\rm(1)}] $\Ga \cong\K_n$ or $n\K_1$;

\item[{\rm(2)}]  $\Ga \cong \K_{m[b]}$ or $m\K_b$ where $m,b\geq 2$;

\item [{\rm (3)}]\ $G$ is primitive on $V(\Ga)$ of rank $3$, and $\Ga$ is strongly regular of diameter $2$. Furthermore, if $\Ga$ is not the pentagon, then either $\Ga$ or its complement $\overline{\Ga}$ has girth $3$.
\end{enumerate}
\end{lem}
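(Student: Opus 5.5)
Since $(G,3)$-set-homogeneity implies $(G,2)$-set-homogeneity, we start from Theorem~\ref{th:2sethomo} and Lemmas~\ref{rank<=5}, \ref{imp}. If $\Ga$ is complete or edgeless we are in (1); if $G$ is imprimitive, Lemma~\ref{imp} gives (2). So we may assume $G$ is primitive and $\Ga$ is neither complete nor empty. By Lemma~\ref{rank<=5}, $G$ has rank $3$, $4$ or $5$.

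The key step is to show that the rank-$4$ and rank-$5$ possibilities cannot be $3$-set-homogeneous, i.e.\ that $G$ must be rank $3$. In the rank-$5$ case Proposition~\ref{AGL_1(p^r)} gives $|G|$ odd and $\Ga=\mathrm{Paley}(q)$ with $q\equiv 5\pmod 8$. We then pick a triangle (or an independent $3$-set, whichever occurs) and use that $G_{\{u,v,w\}}$ must induce a group on $\{u,v,w\}$ transitive on its vertices. Since $|G|$ is odd, it induces $\ZZ_3$, so the edges $\{u,v\},\{v,w\},\{w,u\}$ are cyclically permuted. This forces the arcs $(u,v),(v,w),(w,u)$ to lie in one $G$-orbital. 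We then use the fact that the path $\K_{1,2}$ (an induced $2$-path $u\sim v\sim w$, $u\not\sim w$) has an automorphism swapping its ends. Hence some $g\in G$ fixes $v$ and swaps $u,w$, so $g$ has even order, a contradiction. Such induced paths exist because $\Ga$ is connected and not complete. In the rank-$4$ case Theorems~\ref{affine-rank-4} and \ref{rank-4} leave only $G=\PSU_3(3)$ on $36$ points, and the same path argument applies. An element of $G_v$ swapping $u,w$ in the valency-$14$ graph, whose neighbourhood is the union of the two paired suborbits of length $7$, would send an arc of one non-self-paired orbital to the paired one. So we choose the induced path with both edges $\{v,u\},\{v,w\}$ from the $14$-suborbits but with $(v,u)$ and $(v,w)$ in different paired orbitals, or in the complement use a $\overline{\K_{1,2}}$. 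If a clean choice fails, we resort to a direct Magma check. This is the main obstacle.

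Once $G$ is primitive of rank $3$, $\Ga$ is an orbital graph, hence strongly regular. Connected, non-complete rank-$3$ primitive graphs have diameter $2$, and primitivity gives connectivity of both $\Ga$ and $\overline\Ga$. For the girth statement, suppose both $\Ga$ and $\overline\Ga$ are triangle-free. Ramsey's $R(3,3)=6$ gives $|V|\le5$, and the only vertex-transitive graph on at most $5$ vertices that is strongly regular with $\Ga,\overline\Ga$ both triangle-free is $\C_5$ (the $4$-cycle is imprimitive). Otherwise one of them contains a triangle and has girth $3$, which gives (3).
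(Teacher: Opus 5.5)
\textbf{There is a genuine gap in the step you flag as key: the exclusion of ranks $4$ and $5$.} Both of your arguments there treat $(G,3)$-set-homogeneity as if it were $(G,3)$-homogeneity.

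Set-homogeneity only says that if $[U]\cong[U']$ with $|U|\le 3$, then $U^g=U'$ for some $g\in G$. With $U'=U=\{u,v,w\}$ this gives an element stabilising $U$ setwise, possibly the identity. So it gives no element of $G_v$ interchanging the ends of an induced path. It also gives no reason for $G_{\{u,v,w\}}$ to be transitive on a triangle.

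The conclusion you are aiming for is in fact false in the smallest case. Take $\C_5=\mathrm{Paley}(5)$ with $G=\ZZ_5$. It is $(G,3)$-set-homogeneous: its ten $3$-subsets are five induced paths and five copies of $\K_2\cup\K_1$, and $\ZZ_5$ permutes each family regularly. Yet $G$ is primitive of rank $5$, and since it has odd order it swaps no two points. So the rank-$3$ clause of (3) as literally stated needs the exception $(\Ga,G)=(\C_5,\ZZ_5)$; the graph is still the pentagon. Separately, the proposed Magma fallback for $\PSU_3(3)$ is a plan rather than an argument.

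\textbf{How the paper handles this, and a self-contained fix.} The paper does not attack ranks $4$ and $5$ by hand. It quotes Zhou's theorem that a $(G,3)$-set-homogeneous graph is $(G,2)$-homogeneous, so all orbitals are self-paired, and Lemma~\ref{rank<=5} together with Theorem~\ref{th:2sethomo} then leaves only rank $3$. That citation must likewise be read with the $(\C_5,\ZZ_5)$ exception.

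If you want a self-contained substitute for your key step, use counting. $G$ must be transitive on the $vk(k-\lambda-1)/2$ induced $2$-paths (Lemma~\ref{lm:Gaorder}).
\begin{itemize}
\item In the rank-$5$ case, $\Ga=\mathrm{Paley}(q)$ with $q=p^d$ and $d$ odd, so $k=(q-1)/2$ and $\lambda=(q-5)/4$. This gives $qm^2$ induced $2$-paths, where $m=(q-1)/4$. On the other hand $|G|=q|G_0|$, and $G_0$ lies in the Hall $2'$-subgroup of $\GammaL_1(q)$, which has order $md$. Hence $m\mid d$, which forces $q=5$ and $G=\ZZ_5$.
\item In the rank-$4$ case, the valency-$14$ graph has parameters $(36,14,4,6)$ and hence $2^2\cdot 3^4\cdot 7$ induced $2$-paths. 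This does not divide $|\mathrm{P\Gamma U}_3(3)|=2^6\cdot 3^3\cdot 7$, and the complement is excluded along with it.
\end{itemize}

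Your final paragraph is fine and agrees with the paper: strong regularity, diameter $2$, and the Ramsey argument for girth.
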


\proof By \cite[Theorem~1.1 \& Corollary~1.2]{Zhou-EJC2021}, $\Ga$ is $(G,2)$-homogeneous, and then by Theorem~\ref{th:2sethomo}, we conclude that either parts (1) or (2) hold, or $G$ is primitive on $V(\Ga)$ of rank $3$.

Assume that neither part (1) nor part (2) happens. Then $\Ga$ has diameter $2$ and $G$ is arc-transitive on both $\Ga$ and the complement $\overline{\Ga}$ of $\Ga$. This implies that $\Ga$ is strongly regular. The first assertion of the part (3) is proved. If $\Ga$ has valency $2$, then since $\Ga$ has diameter $2$, it follows that $\Ga$ is the pentagon. If $\Ga$ has valency at least $3$, then either $\Ga$ or its complement $\overline{\Ga}$ has girth $3$. \hfill\qed

By Lemma~\ref{lem:prim-3ch-is-srg}, to prove Theorem~\ref{th:3sethomo}, it suffices to classify those $(G,3)$-set-homogeneous graphs $\Ga$ such that $G$ is primitive on $V(\Ga)$ of rank $3$. The following result can be obtained by the fundamental O'Nan-Scott theorem (see \cite[\textsection{1}]{Liebeck-ONan-Scott}).

\begin{theorem}\label{th:rank3groups}
Let $G$ be a primitive group of rank  $3$.
Then one of the following holds:
\begin{itemize}
\item[\rm (1)]  $T\times T\lhd G\leqslant H\wr\S_2$, where $H$ is a $2$-transitive permutation group of degree $n$ and $\soc(H)=T$ is a nonabelian simple group, and $G$ has production action;
\item[\rm (2)]  $G$ is an affine group;
\item[\rm (3)]  $G$ is an almost simple group.
\end{itemize}
\end{theorem}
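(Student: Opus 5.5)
The plan is to apply the O'Nan--Scott theorem in the form of \cite[\textsection{1}]{Liebeck-ONan-Scott} and rule out every type except those listed, using that $G$ has rank $3$. Throughout we use the fact that a rank $3$ group $G$ has point stabiliser $G_\a$ with exactly two nontrivial orbits on $\Om\setminus\{\a\}$. By Proposition~\ref{th-rank-4}, since rank $3\le 5$, $G$ is affine, almost simple, of product type $T^t\lhd G\le G_0\wr\S_t$ on $\Del^t$ with $t\in\{2,3,4\}$ and $G_0$ $2$-transitive, or of diagonal type with $\soc(G)=T\times T$ and $T\in\{\A_5,\A_6,\PSL_2(7),\PSL_2(8)\}$. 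The affine and almost simple cases give conclusions (2) and (3) directly. So it remains to show that the product case forces $t=2$ and that the diagonal case cannot occur.

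For the product case, the Hamming distance between points of $\Del^t$ is $G$-invariant. A point at distance $i$ from $\a$ exists for each $i=1,\dots,t$, so $G_\a$ has at least $t$ nontrivial orbits. Rank $3$ then forces $t\le2$, and $t=1$ is the almost simple case, so $t=2$. Here $G\le H\wr\S_2$ with $H=G_0$ $2$-transitive of degree $n=|\Del|$ and $\soc(H)=T$ nonabelian simple, and $T\times T\lhd G$. This is exactly conclusion (1), the product action. (If one wishes, one can also note that $G$ must swap the two coordinates, so that the distance-$1$ points form a single orbit, but this is not needed for the statement.)

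For the diagonal case, $\Om$ is identified with $T$, and $\soc(G)_\a$ is the diagonal subgroup. The suborbits of $G$ are unions of $\Aut(T)$-classes of $T$ (possibly fused further by inversion and the swap). For each of the four groups, count the $\Aut(T)$-classes of nonidentity elements, which are the element orders up to fusion. $\A_5$ has orders $2,3,5$; $\PSL_2(7)$ has $2,3,4,7$; $\A_6$ and $\PSL_2(8)$ similarly have at least three. So $G_\a$ always has at least three nontrivial orbits, and the rank is at least $4$, which contradicts rank $3$. Indeed, the argument in Lemma~\ref{non-HA-AS} already showed that these fusion classes are inverse-closed, so no further fusion occurs.

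The only step needing care is checking the diagonal class counts, namely that the fusion classes of nonidentity elements number at least three in each case. This is a finite check from the character tables in the Atlas; no real obstacle is expected. Alternatively, the whole statement can simply be quoted from Liebeck's O'Nan--Scott description of rank $3$ groups.
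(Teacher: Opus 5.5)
Your argument is correct, but it takes a different route from the paper. The paper gives no argument: it simply states that the result follows from the O'Nan--Scott theorem as described in \cite[\textsection{1}]{Liebeck-ONan-Scott}, which is the alternative you mention in your last sentence.

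You instead start from the rank-$\le 5$ list of Proposition~\ref{th-rank-4} (Cuypers), already used in Section~\ref{subsec:2reduce}, and cut it down with two elementary counts. First, Hamming distance on $\Delta^t$ gives at least $t$ nontrivial suborbits, which forces $t=2$. Second, in the diagonal case every element of $G_\alpha\le\Aut(T)\times\langle\text{swap}\rangle$ preserves element orders on $\Omega\cong T$. So the number of nontrivial suborbits is at least the number of nonidentity element orders of $T$: this is $3$ for $\A_5$ and $4$ for $\A_6$, $\PSL_2(7)$ and $\PSL_2(8)$.

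One small wording fix. The suborbits of $G$ are not in general unions of $\Aut(T)$-classes. If $G$ induces less than $\Aut(T)$ they are finer; for example, the two classes of $5$-elements of $\A_5$ stay separate when $G=T\times T$. What is true, and all you need, is that each suborbit lies inside a single element-order class. The bound on the rank then holds a fortiori, and your inverse-closure remark is not needed.

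As for what each route buys: the paper's citation is immediate and does not depend on Cuypers. Yours stays inside the paper's own framework and makes explicit why the diagonal type and the product type with $t\ge 3$ cannot occur at rank $3$; the remaining O'Nan--Scott types are already excluded by Cuypers. Both ultimately rest on the O'Nan--Scott theorem.
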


\begin{lemma}\label{lm:CaseI}
Suppose that $\Ga$ is a finite $(G,3)$-set-homogeneous graph such that $G$ satisfies Theorem~{\rm\ref{th:rank3groups}~(1)}.
Then $\Ga$ is isomorphic to $\K_n\square\K_n$ or its complement $\K_n\times\K_n$, and $\Ga$ is $3$-homogeneous.
\end{lemma}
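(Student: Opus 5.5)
The plan is to identify the two orbitals of a product-action rank $3$ group directly, and then to quote the known $3$-homogeneity of the Hamming graph.

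First, the orbit structure. Since $G$ satisfies Theorem~\ref{th:rank3groups}~(1), the vertex set is $V=\Del^2$ with $|\Del|=n$. Here $T\times T\lhd G\leqslant H\wr\S_2$, the group $H$ is $2$-transitive on $\Del$, and $G$ acts in product action.

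Fix a vertex $\a=(a,b)$. The natural candidate partition of $V\setminus\{\a\}$ is
\[
\Sig_1=\{(a',b')\mid \text{exactly one of } a'=a,\ b'=b \text{ holds}\},\qquad
\Sig_2=\{(a',b')\mid a'\neq a,\ b'\neq b\}.
\]
Both sets are invariant under $H\wr\S_2$, hence under $G_\a$. Because $G$ has rank $3$, these are exactly the two $G_\a$-orbits; no further check of transitivity on each set is needed. Note that the rank $3$ hypothesis is what ensures the swap of coordinates is present, fusing $\{a\}\times(\Del\setminus\{b\})$ with $(\Del\setminus\{a\})\times\{b\}$.

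Next, identify $\Ga$. By Lemma~\ref{lem:prim-3ch-is-srg}(3), $\Ga$ is an orbital graph of $G$. Its edge set is therefore either the orbital containing $\Sig_1$ or the one containing $\Sig_2$. In the first case $\Ga=\mathrm{H}(2,n)=\K_n\square\K_n$ by Definition~\ref{Def-graph-families}(2). In the second case $\Ga$ is the complement $\K_n\times\K_n$.

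Finally, the homogeneity claim. It suffices to treat $\K_n\square\K_n$, since complementation preserves $k$-homogeneity. The $3$-homogeneity of $\K_n\square\K_n$ is part of the Cameron--Macpherson classification~\cite{Cameron-3-hom}, and was already used in Lemma~\ref{non-HA-AS}. A direct check is also easy in outline. The maximal cliques are the rows and columns. Any isomorphism between induced subgraphs of order at most $3$ (a vertex, an edge, a non-edge, a triangle lying in a line, a path $P_3$, $\K_2\cup\K_1$, or $3\K_1$) can be realised by choosing permutations in $\S_n\wr\S_2$ on rows and columns, possibly with a swap.

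I expect no real obstacle in this argument. The only point needing care is the identification of the two suborbits, and that follows immediately from invariance together with the rank hypothesis.
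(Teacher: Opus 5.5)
Your proposal is correct and is essentially the paper's proof. Like you, the paper identifies the $G_\a$-orbits as $\{\a\}$, the vertices at Hamming distance $1$, and those at distance $2$; it then deduces that $\Ga(\a)$ is one of the latter two, so $\Ga\cong\K_n\square\K_n$ or $\K_n\times\K_n$, and cites \cite[Corollary~1.2]{Cameron-3-hom} for $3$-homogeneity. Your invariance-plus-rank argument simply justifies an orbit decomposition that the paper asserts without comment.

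One point is left implicit in both arguments: $\Ga$ is neither complete nor empty. Citing Lemma~\ref{lem:prim-3ch-is-srg}(3) alone does not exclude case~(1) of that lemma. The gap closes because $G$ preserves Hamming distance, so it is not transitive on $2$-subsets, and $(G,2)$-set-homogeneity then forbids $\Ga$ from being complete or empty.
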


\proof Without loss of generality, assume that $H$ is a $2$-transitive permutation group on the set $\Del =\{1,2,\dots, n\}$. Then $V(\Ga)=\Del \times\Del $. Take $\a=(1, 1)\in V(\Ga)$. Then the three orbits of $G$ are
\[\Ome_0=\{\a\}, \Ome_1=\{(1, i), (i,1)\mid i\in\Delta\setminus\{1\}\}, \Ome_2=\{(i,j)\mid i, j\in\Del \setminus\{1\}\}.\]
So $\Ga(\a)=\Ome_1$ or $\Ome_2$. It is easy to see that $\Ga\cong\K_n\square\K_n$ or $\K_n\times\K_n$. By \cite[Corollary~1.2]{Cameron-3-hom}, $\Ga$ is $3$-homogeneous.\hfill\qed

By Lemmas~\ref{lem:prim-3ch-is-srg} and \ref{lm:CaseI}, from now on we will adopt the following hypothesis.

\begin{hypothesis}\label{hyp}
Let $\Ga=(V, E)$ be a finite $(G,3)$-set-homogeneous graph satisfying the following:
\begin{itemize}
  \item $G\leq\Aut(\Ga)$ is primitive on $V$ with rank $3$,
  \item $\soc(G)$ is either abelian or nonabelian simple,
  \item $\Ga$ has girth $3$ and diameter $2$.
\end{itemize}
\end{hypothesis}

We will deal with the case where $\soc(G)$ is abelian in Subsection~\ref{subsec:affine}, and the case where $\soc(G)$ is nonabelian simple in Subsection~\ref{subsec:simple}.

\subsubsection{Some useful lemmas}

The first two lemmas concern the arc-stabilisers of $(G,3)$-set-homogeneous graphs. Let $s$ be a positive integer and let $\Sig$ be a graph. For a subgroup $H$ of $\Aut(\Sig)$, we say that $\Sig$ is {\em $(H, s)$-connected-set-homogeneous} if any two isomorphic connected induced subgraphs of $\Sigma$ on at most $s$ vertices are equivalent under $H$. Clearly, every $(H,s)$-set-homogeneous graph is $(H,s)$-connected-set-homogeneous.
For any vertex $v$ of a graph $\Sig$ of diameter at least two, let $\Sig(v)$ be the vertices of $\Sig$ adjacent to $v$, and let $\Sig_2(v)$ be the vertices of $\Sig$ at distance $2$ from $v$.

\begin{lemma}{\rm \cite[Theorem~1.3~(3)-(4)]{Zhou-EJC2021}}\label{rank}
Let $\Sig$ be a finite $(H, 3)$-connected-set-homogeneous non-complete graph of girth $3$ with $H\leq\Aut(\Sig)$. Then for every edge $\{\a, \b\}$ of $\Sig$, we have the following:
\begin{enumerate}
  \item [{\rm (1)}]\ $H_{\a\b}$ has $s$ orbits on $\Sig(\a)\cap \Sig(\b)$ with equal size, where $s=1, 2, 3$ or $6$.
  \item [{\rm (2)}]\ $H_{\a\b}$ has $t$ orbits on $\Sig(\a)-(\Sig(\a)\cap \Sig(\b))\cup\{\b\}$ with equal size, where $t=1$ or $ 2$.
      \end{enumerate}
\end{lemma}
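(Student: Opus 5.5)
The plan is to translate each assertion into a statement about $H$-orbits on small connected induced subgraphs, and then count flags. Since $\Sig$ has girth $3$ and is not complete, the connected induced subgraphs on at most $3$ vertices are $\K_1$, $\K_2$, the path $\mathrm{P}_3$ and $\K_3$, and all four types occur. So $(H,3)$-connected-set-homogeneity says exactly that $H$ is transitive on vertices, on edges, on induced $2$-paths $\{\b,\a,\g\}$ (with $\a$ the middle vertex), and on triangles. Fix an edge $e=\{\a,\b\}$. We have $H_{\a\b}\unlhd H_e$ with $|H_e:H_{\a\b}|\le 2$, so every $H_e$-orbit splits into at most two $H_{\a\b}$-orbits of equal size. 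It therefore suffices to control the $H_e$-orbits and then pass to $H_{\a\b}$.

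For part (1) I would identify $\Sig(\a)\cap\Sig(\b)$ with the set of triangles through $e$ and consider the flags $(e',T)$ with $e'$ an edge and $T\supset e'$ a triangle. Because $H$ is transitive on triangles, the $H$-orbits on flags correspond to the $H_T$-orbits on the three edges of a fixed triangle $T$, so there are $1$, $2$ or $3$ of them. Because $H$ is transitive on edges, the $H$-orbits on flags also correspond to the $H_e$-orbits on triangles through $e$. Counting flags in each orbit in two ways shows that the $H_e$-orbit sizes are proportional to the $H_T$-orbit sizes on the edges of $T$, which form a partition $3$, $2+1$ or $1+1+1$. The partitions $3$ and $1+1+1$ give one or three $H_e$-orbits of equal size, and splitting by $H_{\a\b}$ then gives $s\in\{1,2,3,6\}$ orbits of equal size. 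The delicate point is that $s=6$ needs three $H_e$-orbits each splitting into two. To see that splitting is uniform across the $H_e$-orbits I would use the induced action of $H_T$ on the vertex set of $T$ as a transitive subgroup of $\S_3$, together with arc information coming from the $\mathrm{P}_3$-transitivity.

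The main obstacle is the partition $2+1$, which has to be excluded or absorbed. In that case $H_T$ acts on the vertices of $T$ with a fixed vertex $\g$ and swaps $\a,\b$. I would try to contradict transitivity on triangles by comparing how often each vertex plays the role of the distinguished vertex $\g$: the triangles through $\a$ would then have to be counted with two different weights, incompatible with vertex-transitivity and with edge-transitivity, since all edges of $T$ lie in a single $H$-orbit. If that counting argument does not close, the fallback is to analyse $H_{\a\b}$ on $\Sig(\a)\cap\Sig(\b)$ directly, using the reversal map on ordered triangles to match its orbits in pairs.

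For part (2), each $\g\in\Sig(\a)\setminus\big((\Sig(\a)\cap\Sig(\b))\cup\{\b\}\big)$ determines the induced path $\b\a\g$. Transitivity of $H$ on induced $2$-paths shows that $H$ has at most two orbits on the ordered paths $(\b,\a,\g)$, and these are interchanged by reversal $(\b,\a,\g)\mapsto(\g,\a,\b)$ whenever there are two. Because $H$ is transitive on the arcs underlying these paths (edge-transitivity combined with reversal), each ordered-path orbit meets the fibre over the arc $(\a,\b)$ in a single $H_{\a\b}$-orbit. Reversal then gives a bijection between the two fibres, so the sizes are equal, giving $t=1$ or $2$.
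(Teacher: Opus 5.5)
The paper does not prove this lemma; it quotes Zhou's Theorem~1.3. So your argument has to stand on its own, and in part (1) it does not. The gap is your partition $2+1$, where the group $H_T^T$ induced by $H_T$ on the vertices of a triangle $T$ is generated by a transposition. This case has to be absorbed, not excluded, and neither of your suggestions does that.

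The weighting contradiction you hope for does not exist. Write $\mathcal{T},V,E$ for the sets of triangles, vertices and edges. Then each vertex is the fixed vertex of $|\mathcal{T}|/|V|$ triangles and a moved vertex of $2|\mathcal{T}|/|V|$ triangles. Each edge is the fixed edge of $|\mathcal{T}|/|E|$ triangles and a moved edge of $2|\mathcal{T}|/|E|$. All of this is compatible with vertex- and edge-transitivity.

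The case also genuinely occurs. Take $\K_{3[3]}$ with vertices $(i,x)$, $i\in\{1,2,3\}$, $x\in\ZZ_3$. Let $H$ be generated by:
\begin{itemize}
\item the translations $(i,x)\mapsto(i,x+g_i)$ with $g_1+g_2+g_3=0$;
\item $\rho\colon(1,x)\mapsto(2,x),\,(2,x)\mapsto(3,x),\,(3,x)\mapsto(1,x+1)$;
\item $\sigma\colon(1,x)\mapsto(2,-x),\,(2,x)\mapsto(1,-x),\,(3,x)\mapsto(3,-x)$.
\end{itemize}
Then $|H|=54$. $H$ is transitive on vertices, edges and triangles, and regular on the $54$ induced $2$-paths. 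For $T=\{(1,0),(2,0),(3,0)\}$ we get $H_T=\langle\sigma\rangle$, which induces a transposition on $T$. For $e=\{(1,0),(2,0)\}$, the group $H_e=\langle\sigma\rangle$ has orbits of sizes $1$ and $2$ on the common neighbourhood, while $H_{\alpha\beta}=1$, so $s=3$. Since $s$ is odd here, no pairing of orbits by reversal can explain it. Your uniformity step also treats $H_T^T$ as a transitive subgroup of $\mathrm{S}_3$, but it is $1$ or $\ZZ_2$ in exactly the cases where uniformity is at stake.

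The missing ingredient is a pointwise comparison of $H_e$ with $H_{\alpha\beta}$. For $\gamma\in\Sigma(\alpha)\cap\Sigma(\beta)$ and $T=\{\alpha,\beta,\gamma\}$ we have $H_{e,\gamma}=H_e\cap H_T$. Hence, when $H_e\neq H_{\alpha\beta}$, the orbit $\gamma^{H_e}$ is a single $H_{\alpha\beta}$-orbit if $(\alpha\,\beta)\in H_T^T$, and splits into two equal halves otherwise.

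In the $2+1$ case we do have $H_e\neq H_{\alpha\beta}$, because $H_T^T$ contains a transposition. The $H_e$-orbit of size $a$ (fixed vertex $\gamma$) stays whole. The orbit of size $2a$ (fixed vertex $\alpha$ or $\beta$) splits into two orbits of size $a$, so $s=3$. In the $1+1+1$ case the same criterion gives uniform splitting.

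A more economical route uses ordered triangles. $H$ has exactly $6/|H_T^T|$ orbits on them, all of the same size. Each such orbit $\mathcal{O}$ meets the ordered triangles starting with $(\alpha,\beta)$ in at most one $H_{\alpha\beta}$-orbit, of size $|\mathcal{O}|/|(\alpha,\beta)^H|$. Every orbit meets this fibre if $H$ is arc-transitive, and exactly half do otherwise (in which case $H_T^T\le\ZZ_3$). So $s=6/|H_T^T|$ or $3/|H_T^T|$.

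Part (2) needs similar care. ``Edge-transitivity combined with reversal'' does not give arc-transitivity, because reversal is not induced by an element of $H$. Reversal also does not preserve the fibre over $(\alpha,\beta)$, so the ``bijection between the two fibres'' is not justified as written. Equal sizes still hold: an ordered-path orbit $\mathcal{P}$ that meets the fibre does so in one $H_{\alpha\beta}$-orbit of size $|\mathcal{P}|/|(\alpha,\beta)^H|$, and $|\mathcal{P}_1|=|\mathcal{P}_2|$.

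However, $t\ge1$ for every arc needs its own argument. If the fibre over $(\alpha,\beta)$ were empty, regularity would give $\alpha$ and $\beta$ the same closed neighbourhood. Edge-transitivity would transfer this to every edge, and a connected graph would then be complete. Connectivity is tacitly needed here, and also for your claim that $\mathrm{P}_3$ occurs: $2\K_3$ satisfies the hypotheses and has empty fibres.
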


\begin{lemma}\label{lm:Gab10orbits}
Under Hypothesis~{\rm \ref{hyp}}, let $\{\alpha,\beta\}\in E$. Then $G_{\alpha\beta}$ has at most $12$ orbits on $V(\mathit{\Ga})\setminus\{\alpha,\beta\}$.
\end{lemma}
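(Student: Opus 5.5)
We partition $V(\Ga)\setminus\{\a,\b\}$ according to adjacency with $\a$ and $\b$ and bound the number of $G_{\a\b}$-orbits on each part separately. Since $\Ga$ has diameter $2$ under Hypothesis~\ref{hyp}, every vertex other than $\a,\b$ lies in exactly one of four sets:
\[
A=\Ga(\a)\cap\Ga(\b),\quad B=\Ga(\a)\setminus(\Ga(\b)\cup\{\b\}),\quad B'=\Ga(\b)\setminus(\Ga(\a)\cup\{\a\}),\quad C=V\setminus(\Ga(\a)\cup\Ga(\b)).
\]
Each of these sets is $G_{\a\b}$-invariant, because $G_{\a\b}$ fixes $\a$ and $\b$ pointwise. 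The plan is to show that $A$ contributes at most $6$ orbits and each of $B$, $B'$, $C$ contributes at most $2$, giving $6+2+2+2=12$.

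For $A$ and $B$ we apply Lemma~\ref{rank} directly. Since $\Ga$ is $(G,3)$-set-homogeneous, it is $(G,3)$-connected-set-homogeneous, and it is non-complete of girth $3$. Lemma~\ref{rank}(1) gives at most $6$ orbits on $A$, and Lemma~\ref{rank}(2) gives at most $2$ orbits on $B$. For $B'$ we apply the same Lemma~\ref{rank}(2) to the edge with the roles of $\a$ and $\b$ interchanged; this is legitimate because $G_{\b\a}=G_{\a\b}$. So $B'$ also has at most $2$ orbits.

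The set $C$ is the step requiring an actual argument. Its vertices $\g$ are exactly those for which $\{\a,\b,\g\}$ induces $\K_2\cup\K_1$, with the isolated vertex being $\g$. Let $\g,\g'\in C$. By $3$-set-homogeneity there is $g\in G$ with $\{\a,\b,\g\}^g=\{\a,\b,\g'\}$. Since $g$ preserves adjacency, it must send the unique isolated vertex $\g$ to the unique isolated vertex $\g'$, and hence it maps the edge $\{\a,\b\}$ to itself. It follows that $g\in G_{\{\a,\b\}}$, so $G_{\{\a,\b\}}$ is transitive on $C$. Since $|G_{\{\a,\b\}}:G_{\a\b}|\le 2$, the transitive set $C$ splits into at most $2$ orbits of $G_{\a\b}$.

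Adding the four bounds gives at most $6+2+2+2=12$ orbits of $G_{\a\b}$ on $V(\Ga)\setminus\{\a,\b\}$. I expect the only slightly delicate point to be the passage from the setwise stabiliser to the arc stabiliser on $C$, which the index-$2$ observation handles. If $C=\emptyset$ (that is, every vertex is adjacent to $\a$ or $\b$), the bound only improves.
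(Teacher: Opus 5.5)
Your proof is correct and follows the paper's proof. It uses the same four-part partition of $V(\Gamma)\setminus\{\alpha,\beta\}$, applies Lemma~\ref{rank} to the common neighbourhood and to the two one-sided neighbourhoods, and applies $3$-set-homogeneity to triples inducing $\K_2\cup\K_1$ to handle $\Gamma_2(\alpha)\cap\Gamma_2(\beta)$. Your argument for that last part (transitivity of $G_{\{\alpha,\beta\}}$ together with $|G_{\{\alpha,\beta\}}:G_{\alpha\beta}|\le 2$) is a cleaner packaging of the paper's explicit argument with the elements $h$ and $hg$.
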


\begin{proof}By Hypothesis~{\rm \ref{hyp}}, $\Ga$ is a $(G,3)$-set-homogeneous graph. Furthermore, $\Ga$ has diameter $2$ and girth $3$. Note that $V(\mathit{\Ga})\setminus\{\alpha,\beta\}$ is partitioned into the following four subsets:
\[\Ga(\a)\cap\Ga(\b), \Ga(\a)\cap\Ga_2(\b), \Ga_2(\a)\cap\Ga(\b), \Ga_2(\a)\cap\Ga_2(\b).\]
Moreover, $G_{\a\b}$ setwise fixes each of the four subsets.

Since every $(G,3)$-set-homogeneous graph is $(G,3)$-connected-set-homogeneous, by Lemma~\ref{rank} $G_{\a\b}$ has at most $6$ orbits on $\Ga(\a)\cap\Ga(\b)$, and has at most $2$ orbits on $\Ga(\a)\cap\Ga_2(\b)$ and on $\Ga_2(\a)\cap\Ga(\b)$.

To complete the proof, it suffices to prove that $G_{\a\b}$ has at most two orbits on $\Ga_2(\a)\cap\Ga_2(\b)$. To see this, we may assume that $G_{\a\b}$ is intransitive on $\Ga(\a)\cap\Ga_2(\b)$. Then there exist $\d_1, \d_2\in\Ga_2(\a)\cap\Ga_2(\b)$ such that the orbits $\d_1^{G_{\a\b}}\neq \d_2^{G_{\a\b}}$. Then $\d_1,\d_2$ are not adjacent to $\a$ and $\b$. Since $\Ga$ is $(G,3)$-set-homogeneous, there exists a $g\in G$ such that $\d_1^g=\d_2$, and $\a^g=\b$ and $\b^g=\a$. For any $\d\in \Ga_2(\a)\cap\Ga_2(\b)$, also since $\Ga$ is $(G,3)$-set-homogeneous, there exists $h\in G_{\{\a,\b\}}$ such that $\d^h=\d_1$. If $h\in G_{\a\b}$, then $\d\in \d_1^{G_{\a\b}}$. If $h\notin G_{\a\b}$, then $h$ swaps $\a$ and $\b$ and so $hg\in G_{\a\b}$. Furthermore, $\d^{hg}=\d_1^g=\d_2$, and hence $\d\in \d_2^{G_{\a\b}}$. Thus, $\Ga_2(\a)\cap\Ga_2(\b)=\d_1^{G_{\a\b}}\cup \d_2^{G_{\a\b}}$.
The proof is completed.
\hfill\qed
\end{proof}

The next lemma gives some basic properties about strongly regular graphs, see \cite[\textsection{1.1.1-1.1.2}]{BrouwerSRG}.

\begin{lem}\label{lem:drg-basic}
Let $\Ga$ be a strongly regular graph with parameters $(v, k, \lambda, \mu)$. Then the following hold:
\begin{enumerate}
  \item [{\rm (1)}]\ The complement $\overline{\Ga}$ of $\Ga$ is strongly regular with parameters $(v, \overline{k}, \overline{\ld}, \overline{\mu})$, where $\overline{k}=v-k-1$, $\overline{\ld}=v-2k+\mu-2$ and $\overline{\mu}=v-2k+\lambda$;
  \item [{\rm (2)}]\ $0\leq \ld\leq k-1$ and $0\leq\mu\leq k$;
  \item [{\rm (3)}]\ $k(k-1-\lambda)=\mu(v-1-k)$.
\end{enumerate}
\end{lem}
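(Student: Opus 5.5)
The plan is to prove the three parts by double counting in $\Ga$ and in its complement $\overline{\Ga}$. Throughout, fix a vertex $\a$ of $\Ga$; it has $k$ neighbours and $v-1-k$ non-neighbours.

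We begin with part (3), since parts (1) and (2) lean on the same kind of count. Count the paths $\a-\b-\g$ with $\b\in\Ga(\a)$ and $\g\in\Ga_2(\a)$, i.e.\ $\g$ not adjacent to $\a$ and $\g\neq\a$. First count from $\b$. Each neighbour $\b$ of $\a$ has $k$ neighbours. One of them is $\a$, and $\lambda$ of them are common neighbours of $\a$ and $\b$. The remaining $k-1-\lambda$ neighbours of $\b$ are non-adjacent to $\a$, which gives $k(k-1-\lambda)$ paths. Now count from $\g$. Each of the $v-1-k$ non-neighbours $\g$ has exactly $\mu$ common neighbours with $\a$, which gives $\mu(v-1-k)$ paths. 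Equating the two counts yields (3).

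For part (1), $\overline{\Ga}$ is clearly regular of valency $v-k-1$. Take two vertices $x,y$ that are adjacent in $\overline{\Ga}$, i.e.\ non-adjacent in $\Ga$. We count vertices $w\notin\{x,y\}$ adjacent in $\Ga$ to neither. By inclusion--exclusion this number is
\[ (v-2)-2k+\mu, \]
because $|\Ga(x)\cup\Ga(y)|=2k-\mu$ and neither $x$ nor $y$ lies in this union. So $\overline{\lambda}=v-2k+\mu-2$.

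Now take $x,y$ adjacent in $\Ga$. Among the other $v-2$ vertices, the union $\Ga(x)\cup\Ga(y)$ has size $2k-\lambda$, but it contains $x$ and $y$ themselves. The count of common non-neighbours is therefore
\[ (v-2)-(2k-\lambda-2)=v-2k+\lambda, \]
so $\overline{\mu}=v-2k+\lambda$.

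We must also have $0<\overline{k}<v-1$. This holds because $0<k<v-1$ by the definition of strongly regular.

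For part (2), $\lambda$ counts common neighbours of an adjacent pair $\a,\b$. These lie in $\Ga(\a)\setminus\{\b\}$, so $0\leq\lambda\leq k-1$. Similarly, $\mu$ counts common neighbours of a non-adjacent pair, and these lie in $\Ga(\a)$, so $0\leq\mu\leq k$.

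None of these steps is a real obstacle. The only care needed is the bookkeeping in part (1): one must exclude $x$ and $y$ themselves when passing to the complement, which is where the $-2$ in $\overline{\lambda}$ comes from and why it is absent in $\overline{\mu}$. Alternatively, all three parts can simply be cited from \cite[\textsection{1.1.1-1.1.2}]{BrouwerSRG}.
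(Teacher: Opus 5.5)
Your double-counting argument is correct in all three parts, including the handling of $x$ and $y$ themselves when passing to $\overline{\Ga}$ and the check $0<\overline{k}<v-1$. The paper gives no argument of its own for this lemma and simply cites \cite[\textsection{1.1.1-1.1.2}]{BrouwerSRG}, where these standard facts come from the same kind of counting, so your proposal makes that citation self-contained.
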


Let $\mathbf{P}_{3}$ be a path of length $2$. For a graph $\Ga$, let
\[
\begin{array}{l}
\Ome_1(\Ga):=\{U\subseteq V(\Ga) \mid [U]\cong \K_1\cup\K_2 \},\\
\Ome_2(\Ga):=\{U\subseteq V(\Ga) \mid [U]\cong \mathbf{P}_{3}\},\\
\Ome_3(\Ga):=\{U\subseteq V(\Ga) \mid [U]\cong \K_3\},\\
\Ome_4(\Ga):=\{U\subseteq V(\Ga) \mid [U]\cong 3\K_1\}.\\
\end{array}
\]
\begin{lemma}\label{lm:Gaorder}
Let $\Ga$ be a strongly regular graph with parameters $(v, k, \lambda, \mu)$. Suppose that $G\leq\Aut(\Ga)$ is primitive on $V(\Ga)$ of rank $3$. Then $\Ga$ is $(G, 3)$-set-homogeneous if and only if $G$ is transitive on each $\Om_i(\Ga)$ for $i=1,2,3,4$.
Furthermore,
\begin{align*}
&|\Ome_1(\Ga)|=\frac{vk(v+\lambda-2k)}{2},\ |\Ome_2(\Ga)|=\frac{vk(k-\lambda-1)}{2},\  |\Ome_3(\Ga)|=\frac{vk\lambda}{6}, \\
& |\Ome_4(\Ga)|=\frac{v(v-k-1)(v-2k+\mu-2)}{6} =\frac{v(v^2 - 3vk - 3v + 3k^2 - k\lambda + 3k + 2)}{6}.
\end{align*}
\end{lemma}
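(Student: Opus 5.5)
The plan is to treat the two assertions of Lemma~\ref{lm:Gaorder} separately: first the equivalence, then the four counting formulas.

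For the equivalence, note that up to isomorphism a graph on three vertices is one of $\K_1\cup\K_2$, $\mathbf{P}_3$, $\K_3$, $3\K_1$. Hence the induced subgraphs of order exactly $3$ are precisely the sets in $\Om_1(\Ga),\dots,\Om_4(\Ga)$, and two such sets are isomorphic exactly when they lie in the same $\Om_i(\Ga)$. Thus $G$-transitivity on each $\Om_i(\Ga)$ is the same as the order-$3$ part of $(G,3)$-set-homogeneity, which gives the forward direction immediately. For the converse, the subgraphs of order at most $2$ are handled by the rank $3$ hypothesis. $G$ is transitive on vertices, and its two non-trivial orbitals are the adjacency and non-adjacency relations of $\Ga$. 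These are self-paired, since $\Ga$ is undirected and each relation is a single orbital. So $G$ is transitive on edges and on non-edges, and hence on isomorphic induced subgraphs of order $1$ and $2$.

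For the counts, each set is counted by a double-counting over a distinguished vertex or edge, using the strongly regular parameters.
\begin{itemize}
\item $|\Om_3(\Ga)|$: there are $vk/2$ edges, each lying in $\ld$ triangles, and each triangle contains $3$ edges. This gives $vk\ld/6$.
\item $|\Om_2(\Ga)|$: a $\mathbf{P}_3$ is determined by its middle vertex together with two non-adjacent neighbours. For a fixed vertex $u$, the neighbourhood $\Ga(u)$ has $k$ vertices and each has $\ld$ neighbours inside $\Ga(u)$. So the number of non-adjacent pairs in $\Ga(u)$ is $k(k-1-\ld)/2$, and the total is $vk(k-1-\ld)/2$.
\item $|\Om_1(\Ga)|$: such a set is an edge $\{a,b\}$ together with a vertex adjacent to neither $a$ nor $b$. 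By inclusion--exclusion, the number of such vertices is $v-2-2(k-1)+\ld=v-2k+\ld$. Each set contains exactly one edge, so the count is $(vk/2)(v-2k+\ld)$.
\item $|\Om_4(\Ga)|$: apply the triangle count to the complement $\ov\Ga$, using Lemma~\ref{lem:drg-basic}(1). This gives $v\ov k\,\ov\ld/6=v(v-k-1)(v-2k+\mu-2)/6$.
\end{itemize}

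The second expression for $|\Om_4(\Ga)|$ needs $\mu$ eliminated. Use Lemma~\ref{lem:drg-basic}(3), $\mu(v-k-1)=k(k-1-\ld)$, to get
\[
(v-k-1)(v-2k+\mu-2)=(v-k-1)(v-2k-2)+k(k-1-\ld).
\]
Expanding the right-hand side gives $v^2-3vk-3v+3k^2-k\ld+3k+2$. As a sanity check, the four counts sum to $\binom{v}{3}$.

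None of these steps is deep. The only place to be careful is the $\Om_1(\Ga)$ count, where inclusion--exclusion must correctly exclude $a$ and $b$ themselves, and the final algebraic simplification.
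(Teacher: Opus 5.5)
Your proposal is correct and follows essentially the same route as the paper: the equivalence comes from the four isomorphism types of $3$-vertex graphs together with the rank-$3$ hypothesis covering orders $1$ and $2$, and the counts are double counts with the strongly regular parameters, with $|\Om_4(\Ga)|$ obtained as the triangle count of $\ov\Ga$. The only local differences are that you count $\Om_2(\Ga)$ directly via middle vertices, whereas the paper passes to $\Om_1(\ov\Ga)$ and uses $\mu(v-k-1)=k(k-1-\ld)$, and that you actually verify the second expression for $|\Om_4(\Ga)|$, which the paper leaves unchecked.
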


\proof Note that every induced subgraph of $\Ga$ of order $3$ is isomorphic to $3\K_1$, $\K_1\cup\K_2$, $\mathbf{P}_3$ or $\K_3$. Since $G$ is primitive on $V(\Ga)$ of rank $3$, this implies that $\Ga$ is $(G,2)$-homogeneous. It follows that $\Ga$ is $(G,3)$-set-homogeneous if and only if $G$ acts transitively on each $\Om_i$ for $i=1,2,3,4$.

For each $U\in\Om_1(\Ga)$, $[U]$ is a union of an edge and a singleton. For every edge $e=\{\a,\b\}$ of $\Ga$, $e$ is a non-edge of $\overline{\Ga}$, and by Lemma~\ref{lem:drg-basic}, $\a$ and $\b$ have $v-2k+\ld$ common neighbours in $\overline{\Ga}$. So there are exactly $v-2k+\ld$ vertices which are not incident with $e$ in $\Ga$. It follows that $|\Om_1(\Ga)|=|E\Ga|(v-2k+\ld)=vk(v-2k+\ld)/2$.

For each $U\in\Om_2(\Ga)$, $[U]$ is a $2$-path, and so $\overline{[U]}\cong \K_1\cup\K_2$. This implies that $|\Om_2(\Ga)|=|\Om_1(\overline{\Ga})|$. By Lemma~\ref{lem:drg-basic}, $\overline{\Ga}$ is also a strongly regular graph with parameters $(v, \overline{k}, \overline{\ld}, \overline{\mu})$, where $\overline{k}=v-k-1$, $\overline{\ld}=v-2k+\mu-2$ and $\overline{\mu}=v-2k+\lambda$.
Then \[|\Om_1(\overline{\Ga})|=\frac{v\overline{k}}{2}(v-2\overline{k}+\overline{\ld})=\frac{v(v-k-1)}{2}\mu.\]
By Lemma~\ref{lem:drg-basic}, we have $\mu(v-k-1)=k(k-1-\lambda)$. It follows that $|\Om_2(\Ga)|=|\Om_1(\overline{\Ga})|= {vk(k-\ld-1)}/{2}$.


For an arbitrary edge $e \in E\Ga$, there are $\lambda$ triangles passing through $e$.
It follows that $|\Om_3(\Ga)|=|E(\Ga)|\lambda/3= {vk\lambda}/{6}$.

Clearly, for each $U\in\Om_4(\Ga)$, we have $\overline{[U]}\cong \K_3$. Consequently, we have $|\Om_4(\Ga)|=|\Om_3(\overline{\Ga})|$.  By Lemma~\ref{lem:drg-basic}~(2), $|\Om_3(\overline{\Ga})|= {v(v-k-1)(v-2k+\mu-2)}/{6}$. \hfill\qed

Let $q\geq 2$ be a prime power and $n\geq 3$. A prime $r$ is called a {\em Zsigmondy prime} of $q^n-1$ if $r\mid q^n-1$ but $r\nmid q^i-1$ for $i<n$. By \cite{Z} and \cite[Lemma~1.13.3]{Bray-Holt-Dougal}, we have the following lemma.

\begin{lemma}\label{primitive-prime}
Let $q\geq 2$ be a prime power and $n\geq 3$ with $(q,n)\neq (2, 6)$. Then $q^n-1$ has at least one Zsigmondy prime $r$. Furthermore, $r\equiv 1\ (\mod n)$.
\end{lemma}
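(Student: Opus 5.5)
The plan has two parts: existence of a Zsigmondy prime, then the congruence $r\equiv 1 \pmod n$. For existence I would simply invoke Zsigmondy's theorem \cite{Z}. It says that for integers $a\geq 2$ and $n\geq 2$, $a^n-1$ has a primitive prime divisor except when $(a,n)=(2,6)$, or when $n=2$ and $a+1$ is a power of $2$. Since we assume $n\geq 3$ and $(q,n)\neq(2,6)$, both exceptions are ruled out. Hence there is a prime $r$ with $r\mid q^n-1$ and $r\nmid q^i-1$ for all $1\leq i<n$; in the language of \cite[Lemma~1.13.3]{Bray-Holt-Dougal}, $r$ is a primitive prime divisor.

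For the congruence I would use the multiplicative order of $q$ modulo $r$. First, $r\nmid q$: otherwise $r\mid q^n$ and $r\mid q^n-1$ together give $r\mid 1$. So $q$ is a unit modulo $r$, and we may set $e=\mathrm{ord}_r(q)$.

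From $q^n\equiv 1\pmod r$ we get $e\mid n$, in particular $e\leq n$. If $e<n$, then $r\mid q^e-1$ with $e<n$, which contradicts the primitivity of $r$. Therefore $e=n$.

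By Fermat's little theorem $q^{r-1}\equiv 1\pmod r$, so $n=e$ divides $r-1$, that is, $r\equiv 1\pmod n$. This is the second assertion.

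No step is a genuine obstacle, since Zsigmondy's theorem is cited rather than reproved. The only care needed is to check that the hypotheses $n\geq3$ and $(q,n)\neq(2,6)$ exclude exactly the exceptional cases of the cited theorem, including the $n=2$ Mersenne-type exception, which is excluded automatically because $n\geq 3$.
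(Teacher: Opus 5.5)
Your proposal is correct and takes essentially the same route as the paper, which proves the lemma only by citing Zsigmondy's theorem \cite{Z} for existence and \cite[Lemma~1.13.3]{Bray-Holt-Dougal} for the congruence. The one difference is that you prove the congruence yourself: primitivity gives $\mathrm{ord}_r(q)=n$, and Fermat's little theorem then gives $n\mid r-1$, which is the standard argument behind the cited lemma.
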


Finally, we prove an easy lemma.

\begin{lemma}\label{lem:claim}
Let $p$ be a prime and $d$ an integer. Then
\begin{enumerate}
  \item [{\rm (i)}]\ If $d\geq 3$ and $p$ is an odd prime, then $p^d-1>8d$.
  \item [{\rm (ii)}]\ If $d\geq 6$, then $2^d-1>8d.$
\end{enumerate}
\end{lemma}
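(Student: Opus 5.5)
Both parts follow from an induction on $d$. For each part we check a base case and then show that the gap between the two sides grows at each step.

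For (i), fix an odd prime $p$, so $p\geq 3$. Base case $d=3$: we have $p^3-1\geq 26>24=8\cdot 3$. For the inductive step, assume $p^d-1>8d$ for some $d\geq 3$. Then
\[
p^{d+1}-1 = p\,(p^d-1) + (p-1) \geq 3(p^d-1) > 24d \geq 8d+8 = 8(d+1),
\]
where the last inequality $24d\geq 8d+8$ holds because $d\geq 1$. This proves (i) for all $d\geq 3$. Note that $d=3$ really is needed as the starting point: at $p=3$, $d=2$ we get $3^2-1=8<16$.

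For (ii), take $p=2$. Base case $d=6$: we have $2^6-1=63>48=8\cdot 6$. For the inductive step, assume $2^d-1>8d$ for some $d\geq 6$. Then
\[
2^{d+1}-1 = 2(2^d-1)+1 > 16d+1 > 8(d+1),
\]
which proves (ii) for all $d\geq 6$. Again the threshold is sharp: at $d=5$, $31<40$.

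No step here is a real obstacle. The only point needing care is the choice of base case in each part, which we verify directly. The thresholds $d\geq 3$ and $d\geq 6$ are presumably chosen to feed later order comparisons, for instance comparing $|\Ome_i(\Ga)|$ against bounds on $|G|$ for small-dimensional affine cases.
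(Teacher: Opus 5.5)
Your proof is correct and follows the same route as the paper: induction on $d$ from the base cases $d=3$ and $d=6$, with the inductive step bounding $p^{d+1}-1\ge 3(p^d-1)$ and $2^{d+1}-1=2(2^d-1)+1$. Your write-up of the step in (ii) is slightly tidier than the paper's, but the argument is the same.
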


\begin{proof} For (i), if $d=3$, then $p^3-1\geq 26>8d=24$, as required. Assume that $d>3$. By induction on $d$, we have $p^{d-1}>8(d-1)$. Then $p^d-1=(p-1)\cdot p^{d-1}+p^{d-1}-1>3(p^{d-1}-1)>24(d-1)>8d$, completing the proof of part~(i).

For (ii), if $d=6$, then $2^6-1=63>8d=48$, as required. Assume that $d>6$. Then $8d>48$. By induction on $d$, we have $2^{d-1}>8(d-1)$. Then $2^d-1= 2^{d-1}+2^{d-1}-1>16(d-1)>8d$, completing the proof of part~(ii).\hfill\qed
\end{proof}

\subsection{$G$ is affine}\label{subsec:affine}

In this subsection, we shall assume that $\Ga$ is a $(G,3)$-set-homogeneous graph satisfying Hypothesis~\ref{hyp} and $\soc(G)$ is abelian. The following is the main result of this subsection.

\begin{theorem}\label{th-affine}
Under Hypothesis~{\rm\ref{hyp}}, assume that $\soc(G)$ is abelian. Then $\Ga$ is $3$-homogeneous, and, up to a complement, $\Ga$ is isomorphic to one of the following graphs: $\K_n\square\K_n$ $(n$ is a prime power$)$, the affine polar graph $\mathrm{VO}_{2m}^{\pm}(2)$ $(m\geq 1)$.
\end{theorem}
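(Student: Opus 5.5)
Under Hypothesis~\ref{hyp} with $\soc(G)=V\cong\ZZ_p^d$ abelian, we have $G=V{:}G_0$ with $G_0\leqslant\GL_d(p)$ irreducible, and $G_0$ has exactly two orbits on $V\setminus\{0\}$. So $G$ is one of the affine rank $3$ groups in Theorem~\ref{subdegree}. The plan is to run through classes (A), (B), (C) and use two necessary conditions coming from $3$-set-homogeneity to eliminate almost all cases.

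The first condition is numerical. By Lemma~\ref{lm:Gaorder}, $G$ is transitive on each of $\Om_1(\Ga),\dots,\Om_4(\Ga)$. Hence $|G|=p^d|G_0|$ is divisible by
\[
\mathrm{lcm}\bigl(vk(v+\lambda-2k)/2,\ vk(k-\lambda-1)/2,\ vk\lambda/6,\ |\Om_4(\Ga)|\bigr),
\]
and the same holds with $\Ga$ replaced by $\ov\Ga$. The second condition is structural. By Lemma~\ref{lm:Gab10orbits}, for an edge $\{0,\beta\}$ the stabiliser $G_{0\beta}$ has at most $12$ orbits on $V\setminus\{0,\beta\}$. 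Lemma~\ref{rank} gives the finer bounds: at most $6$ orbits on $\Ga(0)\cap\Ga(\beta)$ and at most $2$ on each of the other parts. These yield $|G_0|\geq k\cdot(\text{size of the largest part})/2$, roughly $|G_0|\gtrsim p^{3d/2}/C$ or at least $|G_0|\ge k\lambda/6$ etc. I would combine these with the subdegrees from \cite[Table~12]{Liebeck-affine}.

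The case analysis runs as follows.

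\textbf{(A1), one-dimensional.} Here $|G_0|\leqslant d(p^d-1)$. Lower bounds like $|G_0|\ge k(v-2k+\lambda)/2$ force $p^d-1\leqslant 8d$ or similar. Lemma~\ref{lem:claim} then leaves only small $p^d$ ($p$ odd with $d\le2$, or $2^d$ with $d\le5$). These, including the Paley graphs with $p^d=9$ and $p^d=5$, are handled directly: $\mathrm{Paley}(9)=\K_3\square\K_3$, $\mathrm{Paley}(5)=\C_5$ (excluded by girth), and the small $\mathrm{VO}$ cases.

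\textbf{(A2), imprimitive.} This gives exactly $\K_n\square\K_n$ and its complement with $n=p^{d/2}$. These are $3$-homogeneous by \cite{Cameron-3-hom}.

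\textbf{(A7), orthogonal.} This is the main source of survivors. Over $\GF(q)$ with $q$ odd, or $q=2^f>2$, one exhibits two non-equivalent triangles, or two non-equivalent $\mathbf P_3$'s, distinguished by whether the span of the three vectors is degenerate or of a given type. For $q=2$, we get $\mathrm{VO}^{\pm}_{2m}(2)$, which is $3$-homogeneous by \cite{Cameron-3-hom}.

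\textbf{(A3)--(A6), (A8)--(A11).} For the remaining infinite classes, I would use invariants of triples as in the proofs of \cite{Liebeck-affine}. Take $\{0,u,w\}$ with $u,w$ in the same orbit: tensor rank of $u-w$, the dimension of $\langle u,w\rangle_{\GF(q)}$ versus $\langle u,w\rangle_{\GF(q^2)}$, or the rank in $\bigwedge^2$. Producing two triangles with different such invariants contradicts transitivity on $\Om_3$ or $\Om_4$. Where that is awkward, the order bound with a Zsigmondy prime (Lemma~\ref{primitive-prime}) dividing $|\Om_i|$ but not $|G|$ finishes the case.

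\textbf{(B) and (C).} These finitely many groups are checked by the divisibility condition, with Magma for the survivors.

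Finally, $3$-homogeneity of the surviving graphs is quoted from \cite{Cameron-3-hom}.

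The main obstacle is the uniform treatment of the infinite classes (A3)--(A11), and especially (A7) with general $q$. There one must find explicit inequivalent triples rather than rely on counting, because $|G_0|$ is large enough to pass the divisibility tests. My first attempt would be to use the form's value on $u-w$ for triangles $\{0,u,w\}$ with $u,w$ singular. For $q>2$, the quantity $\boldsymbol\beta(u,w)$ up to squares and scalars takes at least two classes, giving two $G$-orbits on $\Om_3(\Ga)$.
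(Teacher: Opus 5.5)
Your overall plan is the paper's plan: run through Liebeck's classes, kill cases either by the divisibility conditions of Lemma~\ref{lm:Gaorder} or by two order-$3$ induced subgraphs separated by a $G_0$-invariant, and quote Cameron--Macpherson for the survivors. However, the step you single out as the crux fails as you propose it. That step concerns the affine polar graphs $\mathrm{VO}^{\epsilon}_{2m}(q)$ with $q>2$, which arise from (A7) and also from (A6) and (A9).

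\begin{itemize}
\item \emph{Triangles.} For a triangle $\{0,u,w\}$ you have $Q(u)=Q(w)=Q(u-w)=0$. Since $Q(u-w)=Q(u)+Q(w)-\boldsymbol{\beta}(u,w)$, this forces $\boldsymbol{\beta}(u,w)=0$. So your invariant is identically zero on triangles. Also $\langle u,w\rangle$ is always totally singular, so ``degenerate or of a given type'' separates nothing.
\item \emph{Paths.} For a $\mathbf{P}_3$ with middle vertex $0$, $\langle u,w\rangle$ is a hyperbolic plane. By Witt's lemma together with similarities, all such paths are equivalent under $q^{2m}{:}\mathrm{\Gamma O}^{\epsilon}_{2m}(q)$, and this group may be $G$ itself.
\end{itemize}

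The invariant that works is the $\GF(q)$-dimension of the span. Compare $\{0,e_1,ae_1\}$ with $a\in\GF(q)\setminus\{0,1\}$ against $\{0,e_1,e_2\}$ with $\langle e_1,e_2\rangle$ totally singular. These are inequivalent because $G_0\leq\GammaL_{2m}(q)$.

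This argument needs $q>2$ \emph{and} Witt index at least $2$, so $\mathrm{VO}^-_4(q)$ is left over. There every triangle through $0$ has the form $\{0,u,au\}$. The line stabiliser induces $\mathrm{A\Gamma L}_1(q)$ on $\langle u\rangle$, and this group is $3$-homogeneous for $q\in\{3,4,5,8\}$. So for those $q$ all triangles are equivalent under $q^4{:}\mathrm{\Gamma O}^-_4(q)$, and your mechanism produces nothing. This case must be killed numerically, as the paper does. For example, $|\Om_1(\Ga)|=vk(v+\lambda-2k)/2$ must divide $|G|$, and $|G_0|$ divides $|\mathrm{\Gamma O}^-_4(q)|=2q^2(q^4-1)(q-1)s$ with $q=p^s$. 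Together these force $(q^2-q+1)\mid 3s$, hence $q=2$.

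A smaller point concerns (A1). The bound you can actually prove is $p^d-2\leq 12d$, not $p^d-1\leq 8d$. It follows from $|G_{0\b}|\leq d$ and Lemma~\ref{lm:Gab10orbits}, or equivalently by adding the constraints $\lambda\mid 6|G_{0\b}|$, $(k-\lambda-1)\mid 2|G_{0\b}|$ and $(v-2k+\lambda)\mid 2|G_{0\b}|$. This bound also leaves $3^3$ and $2^6$, so the finite list must be checked by computer rather than ``directly''.

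For the remaining classes your sketch is compatible with the paper: dimension of spans for $q>2$, and divisibility for $\mathrm{VSz}(q)$ and for the $q=2$ alternating-forms and half-spin graphs. For the bilinear forms graphs $\mathrm{H}_q(2,m)$, the paper argues instead via the orbit structure of $G_{\a\b}$ on $\Ga(\a)\cap\Ga(\b)$ (Lemma~\ref{rank}), with $\mathrm{H}_2(2,2)\cong\K_4\square\K_4$ surviving.
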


Under Hypothesis~\ref{hyp}, $\Ga$ is a $(G,3)$-set-homogeneous graph of girth $3$ and diameter $2$. Furthermore, $G$ is primitive on $V(\Ga)$ of rank $3$. To prove this theorem, assume that $G$ is of affine type. Then $\soc(G)$ is isomorphic to $\mz_p^d$ for some prime $p$ and integer $d$ and acts regularly on $V(\Ga)$. For convenience, we may let $V(\Ga)=\mz_p^d$ and let $G_0$ be the stabiliser of the zero vector $0$ in $V$. Then $G$ satisfies Theorem~\ref{subdegree}, and then we obtain the proof of this theorem from the following Lemmas~\ref{lem:class:b-c}--\ref{A8-10-11}.

\begin{lemma}\label{lem:class:b-c}
If $G$ satisfies Theorem~{\rm\ref{subdegree}} {\rm(B)--(C)}, then one of the following holds:
\begin{enumerate} [\rm (1)]
\item  $\Ga\cong \mathrm{VO}^{-}_6(2)$ or $\overline{\mathrm{VO}^{-}_6(2)}$, arising from the case where $p^d=2^6$ and $G_0\leq \N_{\GL_6(2)}(3^{1+2})$ in {\rm(B)};
\item  $\Ga\cong \K_9\square\K_9$ or $\K_9\times\K_9$, arising from the case where $p^d=3^4$ and $G_0\leq \N_{\GL_4(3)}(2^{1+4}) $ in  {\rm(B)};
\item  $\Ga\cong \mathrm{VO}^{+}_8(2)$ or $\overline{\mathrm{VO}^{+}_8(2)}$, arising from the case where $p^d=2^8$ and $\soc(G_0/\Z(G_0))=\A_9<\mathrm{P\Omega}^{+}_8(2)$ in {\rm(C)}.
\end{enumerate}
\end{lemma}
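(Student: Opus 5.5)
The plan is to treat classes (B) and (C) as a finite list of explicit groups, and to test each one using the counting criterion of Lemma~\ref{lm:Gaorder}. Every candidate $G_0$ in Theorem~\ref{subdegree}~(B) is contained in $\N_{\GL_d(p)}(R)$ for one of the finitely many extraspecial groups $R$ listed there. Every candidate in (C) has $\soc(G_0/\Z(G_0))=L$ as in Table~\ref{tb:af3C}. So for each pair $(p^d,R)$ or $(p^d,L)$ we first record the two subdegrees of the rank $3$ group; these are tabulated in \cite{Liebeck-affine}. From them we get the parameters $(v,k,\lambda,\mu)$ of the two orbital graphs. We then compute $|\Om_i(\Ga)|$ for $i=1,2,3,4$ using Lemma~\ref{lm:Gaorder}.

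The first filter is arithmetic. Transitivity of $G$ on each $\Om_i(\Ga)$ forces $|G|=p^d|G_0|$ to be divisible by every $|\Om_i(\Ga)|$, and $|G_0|$ is bounded by $|\N_{\GammaL}(R)|$ or by $|\Z(G_0)|\cdot|\Aut(L)|$. A cruder but quick version comes from Lemma~\ref{lm:Gab10orbits}. Since $G_{\alpha\beta}$ has at most $12$ orbits on $V\setminus\{\alpha,\beta\}$, we need $p^d-2\le 12|G_{\alpha\beta}|=12|G_0|/k$, and similarly for the complement. We expect these two conditions to eliminate most of the large cases outright. Among these are $\mathrm{Suz}$ and $G_2(4)$ on $3^{12}$, $\mathrm{J}_2$ on $2^{12}$ and $5^6$, $\M_{24}$ on $2^{11}$, and the many $\A_5$ cases on $p^2$.

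For the survivors we intend to compute directly in Magma~\cite{BCP}. We build $\N_{\GL_d(p)}(R)$, or the normaliser of the relevant quasisimple group, and run through its subgroups $G_0$ that are rank $3$ on $V$. For each orbital graph we then check whether $G$ is transitive on all four sets $\Om_i(\Ga)$. Equivalently, we check whether $G_{\alpha\beta}$ acts with the orbit structure forced by Lemma~\ref{rank} and Lemma~\ref{lm:Gab10orbits}, together with the swap condition on triangles and on independent triples. We expect exactly three survivors. The first is $2^6$ with $R=3^{1+2}$, where the graph is identified with $\mathrm{VO}_6^-(2)$ because $\N(3^{1+2})$ preserves an elliptic quadratic form. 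The second is $3^4$ with $R=2^{1+4}$, where the orbit of size $16$ is the union of two complementary totally singular-like $2$-spaces. That graph is $\K_9\square\K_9$, which we can recognise from its parameters $(81,16,7,2)$. The third is $2^8$ with $\A_9<\Omega_8^+(2)$, which gives $\mathrm{VO}_8^+(2)$ because the orbits are the singular and nonsingular vectors.

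The main obstacle is the case $p^d=3^8$ with $R=R_2^3$, together with the other large class (B) and (C) cases where subgroup enumeration is expensive. There I would use the divisibility of $|G_0|$ by $|\Om_i(\Ga)|/p^d$ to restrict to a few maximal subgroups of $\N_{\GL_8(3)}(R)$, reusing the maximal subgroup data already computed in the class (B) analysis of Section~\ref{subsec:affineprimitive-rank4}. Finally, in each surviving case we confirm the identification of $\Ga$ by checking that the parameters and the preserved quadratic form match Definition~\ref{Def-graph-families}.
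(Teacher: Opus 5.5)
Your route is the paper's: enumerate the finitely many rank~$3$ groups of classes (B)--(C), obtain the parameters of their orbital graphs, and filter by computer using Lemma~\ref{lm:Gaorder}. The one real difference is your final direct check of transitivity on the four sets $\Om_i(\Ga)$, and that check is essential. The paper's write-up records only the divisibility test $|\Om_i(\Ga)|\mid|G|$, and divisibility alone does not finish the job.

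Take $L=\A_6$ on $2^6$. The full normaliser $\N_{\GL_6(2)}(3.\A_6)=3.\S_6$, whose outer elements act $\GF(4)$-semilinearly, gives $G=2^6{:}3.\S_6$ of order $2^{10}\cdot3^3\cdot5$. Let $\Ga$ be the orbital graph with parameters $(64,18,2,6)$, the collinearity graph of the generalised quadrangle of order $(3,5)$. Then $|\Om_1|,|\Om_2|,|\Om_3|,|\Om_4|$ equal $2^7\cdot3^3\cdot5$, $2^6\cdot3^3\cdot5$, $2^7\cdot3$ and $2^{10}\cdot3\cdot5$, and all of these divide $|G|$.

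Yet $\Ga$ is not $3$-set-homogeneous. Fix non-adjacent $x,y$ and sum $|\Ga(x)\cap\Ga(y)\cap\Ga(z)|$ over the $32$ vertices $z$ adjacent to neither. Each of the $6$ common neighbours of $x,y$ contributes $12$, giving $72$, which is not divisible by $32$. So independent triples do not all have the same number of common neighbours, and it is your transitivity step that removes this case.

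Your forecast of exactly three survivors is, however, inconsistent with your own filter when $G$ genuinely lies in (B) or (C):
\begin{itemize}
\item $7$ divides $|\Om_3(\K_9\square\K_9)|=1512$ but not $|\GL_4(3)|$;
\item $5$ divides $|\Om_3(\mathrm{VO}^-_6(2))|=2880$ but not $2^6\cdot|\N_{\GL_6(2)}(3^{1+2})|=2^{10}\cdot3^4$;
\item $5^2$ divides $|\Om_3(\mathrm{VO}^+_8(2))|=2^8\cdot3^2\cdot5^2\cdot7$ but not $2^8\cdot|\S_9|$.
\end{itemize}
So none of these three graphs would appear among your survivors. This does not damage your argument, since the lemma's conclusion only lists possibilities, but your identification step becomes moot.

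The graphs named in the statement are those that are $3$-set-homogeneous under their full automorphism groups, which lie outside (B)--(C). In effect, the paper's list is what one obtains by testing against $\Aut(\Ga)$ rather than against $G$.
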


\proof If $G$ satisfies Theorem~\ref{subdegree} (B)--(C), then all the corresponding rank $3$ graphs can be found in \cite[Tables~11.5-11.6]{BrouwerSRG}, and we can also obtain their parameters $(v, k, \lambda, \mu)$ from \cite[Table~11.7]{BrouwerSRG}.
For each rank $3$ group $G$ given in Theorem~\ref{subdegree} (B)--(C), we can construct it in Magma~\cite{BCP}, and further, for each rank $3$ graph $\Ga$ arising from $G$, we can determine whether $|G|$ is divisible by $|\Om_i(\Ga)|$ with $i=1,2,3,4$, where $\Om_i(\Ga)$ is given in Lemma~\ref{lm:Gaorder}. We obtain three graphs as given in parts (1)--(3).
\hfill\qed

Next we consider the groups satisfying Theorem~\ref{subdegree} (A). The following two lemmas deal with the cases when $G$ is of type {\rm (A1)} or {\rm (A2)}, respectively.

\begin{lemma}\label{lem:A1}
If $G$ is of type {\rm(A1)}, then $\Ga\cong\mathrm{Paley}(9)\cong \K_3\square\K_3$.
\end{lemma}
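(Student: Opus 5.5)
Under type (A1) we have $G_0\leq\GammaL_1(q)$ with $q=p^d$, so $|G_0|$ divides $d(q-1)$ and $|G|$ divides $qd(q-1)$. The plan is to play this against Lemma~\ref{lm:Gaorder}, which says $G$ must be transitive on each $\Om_i(\Ga)$. In particular $|G|$ is divisible by $|\Om_3(\Ga)|=vk\lambda/6$ and by $|\Om_2(\Ga)|=vk(k-\lambda-1)/2$, and similarly for $\Om_1,\Om_4$. Dividing by $v=q$ gives
\[\frac{k\lambda}{6}\mid d(q-1), \qquad \frac{k(k-\lambda-1)}{2}\mid d(q-1).\]
I will also need the analogous conditions for the complement.

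First I bound $k$ using the structure of rank $3$ subgroups of $\AGammaL_1(q)$. The two nontrivial suborbits $\Ga(0)$ and $\Ga_2(0)$ are unions of cosets of $G_0\cap\GL_1(q)$. The index of $G_0\cap\GL_1(q)$ in $G_0$ divides $d$, so each suborbit has size at least roughly $(q-1)/(2d)$ and at most $q-1$. Since $G_0$ acts on $\Ga(0)$ with stabiliser $G_{0\beta}$, the order $|G_{0\beta}|$ is at most $d(q-1)/k$, and this is at most about $2d^2$. On the other hand Lemma~\ref{rank} says $G_{0\beta}$ has at most $6$ orbits on $\Ga(0)\cap\Ga(\beta)$ and at most $2$ orbits on $\Ga(0)\setminus(\Ga(\beta)\cup\{\beta\})$. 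Hence
\[\lambda\leq 6|G_{0\beta}|, \qquad k-\lambda-1\leq 2|G_{0\beta}|,\]
so $k\leq 1+8|G_{0\beta}|=O(d^2)$. Applying the same argument to $\overline\Ga$ gives $q-1-k=O(d^2)$ as well. Combined with $k\geq (q-1)/(2d)$, this forces $q-1\leq Cd^3$ for a small explicit constant $C$.

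Next I sharpen this into an explicit finite list. I would use $|G_{0\beta}|\leq d(q-1)/k$ together with $k\geq (q-1)/2$ for one of $\Ga,\overline\Ga$, which gives $|G_{0\beta}|\leq 2d$. Then $q-1\leq 2(1+8\cdot 2d)$, which is about $32d$. The elementary inequalities of Lemma~\ref{lem:claim} ($p^d-1>8d$ for odd $p$ and $d\geq3$, and $2^d-1>8d$ for $d\geq6$) are the model here. I would push a variant of them to obtain $q-1\leq 8d$ or a comparable bound, leaving only $d\leq 2$ for odd $p$ and $d\leq5$ for $p=2$, plus small primes. Getting this sharpened bound is the main obstacle. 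I expect I will have to refine it by using the orbit counts on $\Ga(0)$ more carefully: $G_{0\beta}$ has $s\in\{1,2,3,6\}$ orbits of equal size on $\Ga(0)\cap\Ga(\beta)$, and $G_{0\beta}$ embeds in $\ZZ_d$ (it meets $\GL_1(q)$ trivially because it fixes a nonzero vector).

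That last observation is the key point. Since $G_{0\beta}$ is cyclic of order dividing $d$, we have $\lambda\leq 6d$ and $k-\lambda-1\leq 2d$, so $k\leq 8d+1$. The same holds for $\overline\Ga$, giving $q-1\leq 16d+2$. This leaves a short list: $p^d\in\{4,5,7,8,9,11,13,16,17,19,23,25,27,29,31,32,49,64,81,\dots\}$, with only small cases surviving once the bound is checked against each $p^d$.

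For each surviving $q$, the rank $3$ graphs from $\AGammaL_1(q)$ are Paley graphs (when $q\equiv1\pmod 4$) or the graphs from the few exceptional rank $3$ subgroups listed by Foulser--Kallaher. I would check the divisibility conditions of Lemma~\ref{lm:Gaorder}, together with girth $3$ from Hypothesis~\ref{hyp}, directly or in Magma. Paley$(5)=\C_5$ is excluded because it has girth $5$, and Paley$(9)\cong\K_3\square\K_3$ should survive. Paley$(13)$ and Paley$(17)$ fail because $|\Om_3|=q k\lambda/6$ does not divide $|\AGammaL_1(q)|$; for example, for $q=13$ we get $13\cdot6\cdot2/6=26$ and $k\lambda/6=2$, so I would instead use the $\Om_2$ count $k(k-\lambda-1)/2=9\nmid 12$. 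Any other small cases such as $q=16,25,49,81$ would be eliminated by the same arithmetic, leaving $\Ga\cong\mathrm{Paley}(9)\cong\K_3\square\K_3$.
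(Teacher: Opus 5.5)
Once you reach the observation that $G_{0\beta}$ embeds in $\ZZ_d$, your argument is essentially the paper's. The orbit counts of Lemma~\ref{rank} bound $q$ linearly in $d$, and a computer check of the resulting finite list finishes; the paper packages these counts for a single edge as Lemma~\ref{lm:Gab10orbits} (at most $12$ orbits on $V\setminus\{\alpha,\beta\}$, hence $p^d-2\le 12d$), so your preliminary $O(d^3)$ and ``about $32d$'' estimates are unnecessary.

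There are two slips to fix:
\begin{itemize}
\item Lemma~\ref{rank} as stated needs girth $3$, and $\overline\Ga$ need not have it (e.g.\ $\overline\Ga$ could be the Clebsch graph, an orbital graph of a rank-$3$ subgroup of $\AGammaL_1(16)$). Bound $\Ga_2(0)$ through the $\Ga$-edge instead, as in Lemma~\ref{lm:Gab10orbits}.
\item $\mathrm{Paley}(17)$ is not eliminated by comparing with $|\AGammaL_1(17)|=272$, because all four $|\Om_i|$ divide $272$. You must use the actual rank-$3$ group, of order $136$. Alternatively, note that $d=1$ forces $k-\lambda-1\le 2$, while $\mathrm{Paley}(17)$ has $k-\lambda-1=4$.
\end{itemize}
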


\begin{proof} Assume that $G$ is of type {\rm(A1)}. Then $G\leq \mathrm{A\Gamma L}_1(p^d)$. Let $\{\alpha, \beta\}$ be an edge of $\Ga$.
Then $|V(\Ga)|=p^d$ and $G_{\alpha\beta}\leq \ZZ_d$. Furthermore, $G_{\a\b}$ is semiregular on $V(\Ga) \setminus \{\alpha, \beta\}$.
From Lemma~\ref{lm:Gab10orbits} it follows that $G_{\alpha\beta}$ has at most $12$ orbits on $V(\Ga) \setminus \{\alpha, \beta\}$.
Therefore, $G_{\alpha\beta}$ has one orbit of length at least $(|V(\Ga)|-2)/12$.
It follows that $|G_{\alpha\beta}|\geq (|V(\Ga)|-2)/12$, which means
\[ p^d-2 \leq 12d.\]
This implies that
\[
p^d\in\{2^i (1\leq i\leq6), 3^j (1\leq j\leq 3), 5^k, 7^k, 11^k (1\leq k\leq 2), 13\}.
\]
Since $\Ga$ has girth $3$ and diameter $2$ by Hypothesis~\ref{hyp}, it follows that $|V(\Ga)|=p^d\geq7$. For all of the remaining possible values of $p^d$, we make use of Magma~\cite{BCP} to construct all primitive rank $3$ subgroups $G$ of $\mathrm{A\Gamma L}_1(p^d)$, and then for each rank $3$ graph arising from $G$, we can further check if $|\Om_i(\Ga)|$ divides $|G|$ for $i=1,2,3,4$, where $\Om_i(\Ga)$ is given in Lemma~\ref{lm:Gaorder}. We find that the only possibility is $p^d=3^2$, and $\Ga\cong \mathrm{Paley}(9)\cong\K_3\square\K_3$. \hfill\qed
\end{proof}

\begin{lemma}\label{lem:A2}
If $G$ is of type {\rm(A2)}, then $\Ga$ is isomorphic to $\K_{q^{d/2}}\square\K_{q^{d/2}}$ or its complement, and $\Ga$ is $3$-homogeneous.
\end{lemma}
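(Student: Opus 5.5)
The plan is to exploit the explicit description of the two $X_0$-orbits in type (A2), obtained in the proof of Lemma~\ref{direct-sum}. We have $V=V_1\oplus V_2$ with $\dim V_1=\dim V_2$, $|V_i|=q^{d/2}$ (writing $q$ for the field size, so that $|V|=q^d$). The group $G_0$ stabilises the pair $\{V_1,V_2\}$, and $(G_0)_{V_i}$ is transitive on $V_i\setminus\{0\}$ for $i=1,2$. Hence the two $G_0$-orbits on $V\setminus\{0\}$ are
\[\Del_1=(V_1\setminus\{0\})\cup(V_2\setminus\{0\}),\qquad \Del_2=V\setminus(V_1\cup V_2).\]
The first step is to identify the orbital graphs. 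Write each vector as $v=v_1+v_2$ with $v_i\in V_i$. Two vectors $u,w$ lie at ``distance'' $\Del_1$ exactly when $u-w$ has precisely one nonzero coordinate. The map $V\to V_1\times V_2$, $v\mapsto(v_1,v_2)$, then identifies the orbital graph of $\Del_1$ with $\K_{q^{d/2}}\square\K_{q^{d/2}}$: vertices are adjacent iff they agree in exactly one coordinate. Likewise the orbital graph of $\Del_2$ is $\K_{q^{d/2}}\times\K_{q^{d/2}}$, its complement. Since $\Ga$ is an orbital graph of the rank $3$ group $G$ (Hypothesis~\ref{hyp}), $\Ga$ is one of these two graphs.

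The second step is $3$-homogeneity. It follows directly from \cite[Corollary~1.2]{Cameron-3-hom}, exactly as in Lemma~\ref{lm:CaseI}, because $\K_n\square\K_n$ and its complement are $3$-homogeneous. No further group-theoretic analysis of $G$ is needed for the statement as given.

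The only point needing care is justifying the orbit description from Theorem~\ref{subdegree}(A2) when $G_0$ itself is the rank $3$ stabiliser, rather than the group $X_0$ considered earlier. Two facts settle it. First, $\Del_1$ and $\Del_2$ are unions of $G_0$-orbits, since $G_0$ preserves $\{V_1,V_2\}$. Second, $G_0$ has exactly two nonzero orbits. Together these force the orbits to be precisely $\Del_1$ and $\Del_2$. I expect no real obstacle. The hypothesis that $\Ga$ has girth $3$ merely selects $\K_n\square\K_n$ (for $n\ge3$) among the two, up to complement, and the statement already allows either.
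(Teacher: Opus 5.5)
Your proof is correct, and its overall structure matches the paper's. You identify $\Ga$ as $\K_n\square\K_n$ or $\K_n\times\K_n$ with $n^2=|V|$, and then invoke \cite[Corollary~1.2]{Cameron-3-hom} for $3$-homogeneity.

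The difference lies in the identification step. The paper reads it off from the table of rank~$3$ graphs in \cite[Table~11.4]{BrouwerSRG}. You derive it directly:
\begin{itemize}
\item $V_1\cup V_2$ is $G_0$-invariant because $G_0$ stabilises $\{V_1,V_2\}$.
\item Hence $\Del_1$ and $\Del_2$ are nonempty unions of $G_0$-orbits, and rank~$3$ forces them to be exactly the two orbits.
\item Since $V\leqslant G\leqslant\Aut(\Ga)$, $\Ga$ is the Cayley graph on $V\cong V_1\times V_2$ with connection set $\Del_1$ or $\Del_2$, namely the rook's graph or its complement.
\end{itemize}
Your version is self-contained. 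It also shows that only the invariance of the decomposition and the rank are needed; you do not need the transitivity of $(G_0)_{V_i}$ on $V_i\setminus\{0\}$ or Hering's theorem. The paper's version is a one-line lookup but depends on the external classification.

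One side remark of yours is inaccurate, though harmless. For $n\geq 3$, both $\K_n\square\K_n$ and $\K_n\times\K_n$ have girth $3$ and diameter $2$, so the girth hypothesis does not select between them. The case $n=2$ is excluded anyway by primitivity and by the diameter and girth conditions.
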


\proof By \cite[Table~11.4]{BrouwerSRG} we see that $\Ga\cong \K_{q^{d/2}}\square\K_{q^{d/2}}$ or $\K_{q^{d/2}}\times\K_{q^{d/2}}$. Then by \cite[Corollary~1.2]{Cameron-3-hom}, $\Ga$ is $3$-homogeneous. \hfill\qed

Next we deal with the case when $G$ is of type  (A3), (A4)  or  (A5).

Following~\cite[\textsection{3.4.1}]{BrouwerSRG}, for integers $m, n\geq 2$ and a prime power $q$,  the \emph{bilinear forms graph} $\mathrm{H}_{q}(n,m)$ is a graph of which vertices are the
$n\times m$ matrices over $\GF(q)$, and two distinct vertices are adjacent whenever their difference has rank $1$.

\begin{lemma}\label{lem:A3-A5}
Suppose that $G$ is of type {\rm(A3)}, {\rm(A4)} or {\rm(A5)}.
Then $\Ga\cong\K_4\square\K_4$ or $\K_4\times\K_4$, and $\Ga$ is $3$-homogeneous.
\end{lemma}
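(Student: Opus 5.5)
Our strategy is to identify the rank $3$ graphs of these three types explicitly and then rule out all but one with the orbit-counting criterion of Lemma~\ref{lm:Gaorder}. By \cite[Table~11.4]{BrouwerSRG} (see also \cite[\textsection{3.4.1}]{BrouwerSRG}), in types (A3), (A4) and (A5) the nontrivial orbital graphs of $G$ are, up to complement, bilinear forms graphs. Type (A3) gives $\mathrm{H}_q(2,b)$, where $v=q^{2b}$ and adjacency means that the difference $v_1\otimes w_1+v_2\otimes w_2$ has rank $1$. Type (A4) gives $\mathrm{H}_{q}(2,a)$ over the subfield, coming from $V=\GF(q)^2\otimes\GF(q)^a$. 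Type (A5) gives $\mathrm{H}_q(2,3)$. In every case $\Ga$ or $\overline\Ga$ is $\mathrm{H}_q(2,b)$ with $b\ge2$, and we may take $\Ga=\mathrm{H}_q(2,b)$ up to complement. Its parameters are
\[k=(q+1)(q^b-1),\quad \lambda=q^b+q-2,\quad \mu=q(q+1),\]
as in \cite{BrouwerSRG}. The orbit $\Ome_1$ from the (A3) discussion in Section~\ref{subsec:affineprimitive-rank4} is exactly the set of rank-one tensors.

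Step one is to produce a forbidden induced $3$-subgraph class that splits into at least two $G$-orbits for all but small parameters. The natural choice is the triangles through $0$, that is, $\Ome_3(\Ga)$. A triangle $\{0,x,y\}$ with $x,y,x-y$ all rank one falls into one of two geometrically distinct kinds: either $x,y$ share a left factor, $x=v\otimes w_1$ and $y=v\otimes w_2$, or they share a right factor, $x=v_1\otimes w$ and $y=v_2\otimes w$. When $b\ge3$, a stabiliser preserving the tensor decomposition cannot swap the two factors, since they have different dimensions. So the triangles split into "row" and "column" types and $G$ is not transitive on $\Ome_3(\Ga)$, contradicting Lemma~\ref{lm:Gaorder}. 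The same count can be cross-checked via Lemma~\ref{rank}(1) at an edge $\{0,v\otimes w\}$. The common neighbours form $(v\otimes W)\cup(V_1\otimes w)$ minus trivial points, of sizes $q^b-q$ and $q^2-q$, and these are unions of $G_{0,v\otimes w}$-orbits of unequal sizes, which contradicts the equal-size conclusion of Lemma~\ref{rank}(1) unless $q^b=q^2$.

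Step two handles $b=2$. Then $\Ga=\mathrm{H}_q(2,2)$ and the two triangle types can be fused by transposition. Here we use the equal-size condition more sharply. The sets above have equal size $q^2-q$, so $\Ga(\alpha)\cap\Ga(\beta)$ has $2(q^2-q)$ elements, and $G_{\alpha\beta}$ must have $1$, $2$, $3$ or $6$ orbits on it. We also apply Lemma~\ref{lm:Gab10orbits} together with the divisibility $|\Ome_i(\Ga)|\mid|G|$ from Lemma~\ref{lm:Gaorder}. Now $|G|\le q^4\,|\GammaL_2(q)\otimes\GL_2(q)|\cdot 2$, which is roughly $q^{10}\log q$, while $|\Ome_4|$ or $|\Ome_1|$ grows like $q^{12}$. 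This bound eliminates all large $q$, leaving a finite list of small $q$ to be checked in Magma \cite{BCP} as in Lemma~\ref{lem:A1}. We expect only $q=2$ to survive. In that case $\mathrm{H}_2(2,2)$ has $16$ vertices and valency $9$, and its complement is $\K_4\square\K_4$, which is $3$-homogeneous by \cite[Corollary~1.2]{Cameron-3-hom}.

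The main obstacle is making the splitting argument in Step one rigorous in the exceptional small subcases of (A3). These are the cases $\dim V_2\le3$ without $\SL(V_2)$, and the groups (T3), (T5), (T6) from the list quoted from \cite{BDFP2021}. There $G_0$ may be small, and the edge-stabiliser orbits could a priori merge differently. For these we fall back on the counting bound $|\Ome_i(\Ga)|\le|G|$ together with Lemma~\ref{lem:claim}, which reduces the problem to finitely many $(b,q)$, and then finish by direct Magma computation.
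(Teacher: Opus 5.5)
Your argument for $b\ge3$ is sound, and it takes a different route from the paper. The paper partitions $\Gamma(\alpha)\cap\Gamma(\beta)$ into $N_1,N_{21},N_{22}$ by their valencies in the induced subgraph, then plays these off against the equal-size orbit conclusion of Lemma~\ref{rank}(1). You instead observe that a triangle whose differences lie in some $v\otimes V_2$ cannot be carried to one whose differences lie in some $V_1\otimes w$ by a group preserving $V_1\otimes V_2$ with $\dim V_1=2\ne b$. That is cleaner, and it is uniform in $G_0$: a smaller $G_0$ can only have more orbits. So the ``main obstacle'' in your last paragraph does not exist. The fallback you propose there would also not bound $b$ once $1\otimes\SL_b(q)\le G_0$, since $|G|\ge q^{2b}|\SL_b(q)|$ eventually exceeds $|\Omega_4(\Gamma)|\approx q^{6b}/6$. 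For (A4) and (A5) you still need to justify that $G_0$ preserves the decomposition. Alternatively, bypass the group structure: through an edge, triangles of the three kinds (row, column, collinear) have $q^b-3$, $q^2-3$ and $q^b+q^2-q-3$ common neighbours, so even $\Aut(\Gamma)$ cannot fuse them. Your ``cross-check'' is not a valid inference as stated, because sets of unequal size can be unions of orbits of a common size. It needs $s\in\{1,2,3,6\}$ and the third piece of the common neighbourhood.

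The genuine gap is the case $b=2$. Your count $|\Gamma(\alpha)\cap\Gamma(\beta)|=2(q^2-q)$ omits the $q-2$ vertices $c\beta$ with $c\in\GF(q)\setminus\{0,1\}$. Correspondingly $\lambda=q^b+q^2-q-2$, not $q^b+q-2$; the two agree only at $q=2$. Those omitted vertices are exactly what settles $b=2$, $q\ge3$. The triangle $\{0,\beta,c\beta\}$ has a $1$-dimensional $\GF(q)$-span of differences, while $\{0,\beta,\gamma\}$ with $\gamma\in V_1\otimes w\setminus\langle\beta\rangle$ has a $2$-dimensional one. Hence no element of $G\le q^4{:}\GammaL_4(q)$, transposition included, maps one to the other; graph-theoretically, they have $2q^2-q-3$ versus $q^2-3$ common neighbours. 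Instead, your Step two ends with ``we expect only $q=2$ to survive'' after a computation you have not performed. The estimates behind it are also off: with $q=p^s$ you have $|G|\le 2sq^6(q-1)(q^2-1)^2\sim 2sq^{11}$, and $|\Omega_1(\Gamma)|\sim q^{11}/2$, not $q^{12}$. Only $|\Omega_4(\Gamma)|=q^6(q-1)^2(q+1)(q^3-2q^2-q+3)/6$ outgrows $|G|$. That counting route can be finished: $|\Omega_4(\Gamma)|\mid|G|$ forces $(q^3-2q^2-q+3)\mid 12s(q^2-1)$, which fails for every $q\ge3$. As written, though, the case $b=2$, $q\ge3$ is asserted rather than proved. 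Your identification of $\mathrm{H}_2(2,2)$ with the complement of $\K_4\square\K_4$ is correct.
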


\proof If $G$ is of type (A3), then by \cite[\textsection{3.4.1}~and~Table~11.4]{BrouwerSRG}, $\Ga\cong\mathrm{H}_{q}(2,m)$ or $\overline{\mathrm{H}_{q}(2, m)}$, where $m \geq 2$ is an integer such that $q^{2m}=p^d$. If $G$ is of type  (A4), then by \cite[\textsection{3.4.5}~and~Table~11.4]{BrouwerSRG}, $\Ga\cong\mathrm{H}_{q}(2, m)$ or $\overline{\mathrm{H}_{q}(2,m)}$, where $m\geq 2$. If $G$ is of type  (A5), then by \cite[\textsection{3.4.5}]{BrouwerSRG}, $\Ga\cong\mathrm{H}_{q}(3, 2)\cong \mathrm{H}_{q}(2, 3)$ or  $\overline{\mathrm{H}_{q}(2,3)}$.

Noticing that $\mathrm{H}_2(2, 2)\cong\mathrm{H}(2,4)$, and by \cite[Corollary~1.2]{Cameron-3-hom}, $\mathrm{H}(2,4)$ is $3$-homogeneous. To prove the lemma, it is enough to prove the following claim.\smallskip

\f{\bf Claim~1.}\ $\mathrm{H}_{q}(2,m)$ is $3$-set-homogeneous if and only if $m=q=2$.
\smallskip

By the argument in the paragraph above Claim~1, we only need to prove the necessity. Let $\Sig =\mathrm{H}_{q}(2,m)$, and let $\a$ be the $2\times m$ zero matrix. Then $\Sig (\a)$ consists of all $2\times m$ matrices of rank $1$. Take
\[\b=\left(\begin{array}{cccc}
                 1 & 0 &  \cdots & 0 \\
                 0 & 0 & \cdots & 0 \\
               \end{array}
             \right).\]
We first prove that $\Sig (\a)\cap\Sig (\b)=N_1\cup N_2$, where
\[N_1=\left\{\left(\begin{array}{cccc}
                 s & 0 & \cdots & 0 \\
                 t & 0 & \cdots & 0 \\
               \end{array}
             \right)\,\middle\vert\, s, t \in \GF(q), t\neq 0\right\},\]
             and \[N_2=\left\{ \left(\begin{array}{cccc}
                 r_1 & r_2 &  \cdots & r_m \\
                 0 & 0 & \cdots & 0 \\
               \end{array}
             \right)\,\middle\vert\, r_i\in \GF(q), (r_1,r_2,\dots,r_m)\neq (0,0,\dots,0), (1,0,\dots,0)\right\}.\]
It is easy to check that $N_1\cup N_2\subseteq \Sig (\a)\cap\Sig (\b)$. For every $\g\in\Sig (\a)\cap\Sig (\b)$, since $\g$ has rank $1$, we may assume that
\[\g=\left(\begin{array}{cccc}
                 sr_1 & sr_2 &  \cdots & sr_m \\
                 tr_1 & tr_2 & \cdots & tr_m \\
               \end{array}
             \right)\in \Sig (\a).\]
If $t=0$, then $\g\in N_2$. Assume $t\neq0$. Note that
\[\g-\b=\left(\begin{array}{cccc}
                 sr_1-1 & sr_2 & \cdots & sr_m \\
                 tr_1 & tr_2 & \cdots & tr_m \\
               \end{array}
             \right)\]
             has rank $1$.
Since $t\neq 0$, $\g-\b$ is row equivalent to $\left(\begin{array}{cccc}
                 1 & 0 & \cdots & 0 \\
                 0 & tr_2 & \cdots & tr_m \\
               \end{array}
             \right)$.
Since $\g-\b$ has rank $1$, one has $tr_2=tr_3=\cdots=tr_m=0$, and so $r_2=r_3=\cdots=r_m=0$ as $t\neq 0$. This implies that $\g\in N_1$.
Thus, we have proved that $\Sig (\a)\cap\Sig (\b)=N_1\cup N_2$.

Clearly, $[N_1]\cong\K_{q(q-1)}$ and $[N_2]\cong\K_{q^m-2}$. Furthermore, for
\[\g_1=\left(\begin{array}{cccc}
                 s & 0 &  \cdots & 0 \\
                 t & 0 & \cdots & 0 \\
               \end{array}
             \right)\in N_1, \g_2=\left(\begin{array}{cccc}
                 r_1 & r_2 &  \cdots & r_m \\
                 0 & 0 & \cdots & 0 \\
               \end{array}
             \right)\in N_2,\]
$\g_1$ is adjacent to $\g_2$ if and only if $r_2=\cdots=r_m=0$ and $r_1\neq0,1$. Set
\[ N_{21}=\left\{\left(\begin{array}{cccc}
                 r & 0 &  \cdots & 0 \\
                 0 & 0 & \cdots & 0 \\
               \end{array}
             \right)\,\middle\vert\  r\neq 0,1\right\} \text{ and } N_{22}=N_2\setminus N_{21}.\]
The above argument implies that in $[\Sig (\a)\cap\Sig (\b)]=[N_1\cup N_2]$, every vertex of $N_1$ has valency $q(q-1)-1+(q-2)=q^2-3$, every vertex of $N_{21}$ has valency $(q^m-3)+q(q-1)=q^m+q^2-4$ and every vertex of $N_{22}$ has valency $q^m-3$.

Note that $|N_{21}|=q-2$ and $|N_{22}|=(q^m-2)-(q-2)=q^m-q$. By Proposition~\ref{rank}~(3), $G_{\a\b}$ has $s$ orbits on $\Sig(\a)\cap\Sig(\b)$ with equal size $\ell$, where $s=1, 2, 3$ or $6$. Note that all the vertices lying in the same orbit of $G_{\a\b}$ on $\Sig(\a)\cap\Sig(\b)$ have the same valency in $[\Sig(\a)\cap\Sig(\b)]$.

If $m>2$, then $q^2-3, q^m+q^2-4, q^m-3$ are pair-wise distinct. Then either $N_1, N_{21}$ and $N_{22}$ are three orbits of $G_{\a\b}$ on $\Sig(\a)\cap\Sig(\b)$, or each of $N_1, N_{21}$ and $N_{22}$ splits into two orbits of $G_{\a\b}$ on $\Sig(\a)\cap\Sig(\b)$. This implies that
$q^2-3=q^m+q^2-4=q^m-3$, a contradiction.

If $m=2$ and $q>2$, then in $[\Sig (\a)\cap\Sig (\b)]=[N_1\cup N_2]$, every vertex of $N_1\cup N_{22}$ has valency $q(q-1)-1+(q-2)=q^2-3$, and every vertex of $N_{21}$ has valency $(q^m-3)+q(q-1)=2q^2-4$. Clearly, $2q^2-4>q^2-3$. Recall that $|N_{21}|=q-2$, $|N_1|=q^2-q$ and $|N_{22}|=q^2-q$. This implies that $N_{21}$ is a union of some orbits of $G_{\a\b}$ on $\Sig(\a)\cap\Sig(\b)$. Then $\ell$ divides $ |N_{21}|$, and $|\Sig(\a)\cap\Sig(\b)|=6\ell$. If $|N_{21}|=\ell$, then $|N_1|+|N_{22}|=5\ell$ and so $\ell$ divides $ |N_1|$ because $|N_1|=|N_{22}|=q^2-q$. It follows that $q-2\mid (q^2-q)$. Since $(q-2, q-1)=1$, one has $q-2\mid q$, implying $q=4$. Then $24=2(q^2-q)=|N_1|+|N_{22}|=5\ell=5|N_{21}|=5(q-2)=10$, a contradiction. If $|N_{21}|\geq2\ell$, then since $q>2$, one has $q^2-q>q-2$, and hence $|N_1|+|N_{22}|>4|N_{21}|=8\ell>6\ell=|\Sig(\a)\cap\Sig(\b)|$, a contradiction. Thus, $m=q=2$, proving Claim~1.\hfill\qed

Now we consider the case where $G$ is of type  {\rm(A6)}, {\rm(A7)} or {\rm(A9)}.

\begin{lemma}\label{lem:A6-7-9}
Suppose that $G$ is of type  {\rm(A6)}, {\rm(A7)} or {\rm(A9)}.
Then $\Ga\cong\K_q\square\K_q$, $\mathrm{VO}_{2m}^{+}(2)$ or $\mathrm{VO}_{2m}^{-}(2)$ with $m\geq 1$, or their complements. In particular, $\Ga$ is $3$-homogeneous.
\end{lemma}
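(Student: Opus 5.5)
We would begin by identifying the rank $3$ graphs arising in classes (A6), (A7) and (A9) via \cite[\textsection{3.3}, Table~11.4]{BrouwerSRG}. In class (A6), $X_0\rhd\mathrm{SU}_a(q)$ acting on $\GF(q^2)^a$; the two orbits on nonzero vectors are the isotropic and non-isotropic vectors, so $\Ga$ is (up to complement) the affine unitary graph $\mathrm{VU}_a(q)$. In class (A7), $\Ga$ is (up to complement) the affine polar graph $\mathrm{VO}^{\pm}_{2a}(q)$, with adjacency $Q(x-y)=0$. In class (A9), $\Ga$ is the affine spin graph of order $q^8$ from $B_3(q)$; it is isomorphic to $\mathrm{VO}^+_8(q)$ via triality, since $\mathrm{Spin}_7(q)$ is transitive on singular vectors of the $8$-dimensional spin module. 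Thus (A9) reduces to (A7) with $a=4$, $\epsilon=+$. Small degenerate cases are handled by Definition~\ref{Def-graph-families}(3): $\mathrm{VO}_2^+(q)\cong\K_q\square\K_q$ and $\mathrm{VO}_2^-(q)$ is empty. This accounts for the $\K_q\square\K_q$ alternative.

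We would then show that for $q>2$ these graphs are not $3$-set-homogeneous, using the local structure as in Claim~1 of Lemma~\ref{lem:A3-A5}. Take $\a=0$ and $\b$ a singular (respectively isotropic) vector $e$. Then $\Ga(\a)\cap\Ga(\b)$ consists of singular $x\neq 0,e$ with $B(x,e)=0$ (in the unitary case, $x$ isotropic with the analogous condition). This set splits into $N_1=\{\lambda e:\lambda\neq 0,1\}$, of size $q-2$ (or $q^2-2$ in the unitary case), and the singular vectors in $e^\perp\setminus\langle e\rangle$. We would compute the valencies of these two kinds of vertices inside $[\Ga(\a)\cap\Ga(\b)]$. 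Vertices of $N_1$ are adjacent to every other vertex of the common neighbourhood; vertices of the second kind generally are not. So for $q>2$, $N_1$ is a nonempty union of $G_{\a\b}$-orbits. By Lemma~\ref{rank}(1), $G_{\a\b}$ has $s\in\{1,2,3,6\}$ orbits of equal size $\ell$ there. Hence $\ell\le q-2$ (respectively $q^2-2$), and so $|\Ga(\a)\cap\Ga(\b)|\le 6(q-2)$. This contradicts the size $\lambda$ of the common neighbourhood, which grows like $q^{2a-3}$, once $a\ge 2$. The small leftover cases (small $a$, $q=3,4$) would be checked with the divisibility test $|\Om_i(\Ga)|\mid|G|$ of Lemma~\ref{lm:Gaorder}. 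Failing that, we would use Magma as in Lemma~\ref{lem:A1}.

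For $q=2$, $N_1$ is empty and the argument gives nothing. Here we would invoke the known result: by \cite{Cameron-3-hom}, $\mathrm{VO}^{\pm}_{2m}(2)$ and their complements are $3$-homogeneous. The unitary graphs with $q=2$ live on $\GF(4)^a$ and are not of this form. For them we expect the argument above to still apply with $N_1$ of size $q^2-2=2$, using the bound $6\cdot 2<\lambda$; $a=2$ would need a direct check. The main obstacle is obtaining correct valency counts in $[\Ga(\a)\cap\Ga(\b)]$, in particular showing that the second-kind vertices have strictly smaller valency in every case. I would verify this first by counting singular vectors in $e^\perp\cap x^\perp$. Finally, $\K_q\square\K_q$ is $3$-homogeneous by \cite[Corollary~1.2]{Cameron-3-hom}, which completes the ``in particular'' clause.
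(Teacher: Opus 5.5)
Your proposal has a genuine gap, and one of its steps is false.

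\textbf{The family $\mathrm{VO}^-_4(q)$, $q\geq 3$.} This graph arises in (A7) with $a=2$, $\epsilon=-$, and your local argument says nothing about it. Here $e^\perp/\langle e\rangle$ is an anisotropic $2$-space of type $-$, so every singular vector of $e^\perp$ lies in $\langle e\rangle$. Hence the common neighbourhood of $0$ and $e$ is exactly $N_1=\{te: t\in\GF(q)\setminus\{0,1\}\}$, a clique of size $q-2=\lambda$. There are no vertices of the second kind, and the bound $\lambda\le 6(q-2)$ holds trivially. So your claim that $\lambda$ outgrows $6(q-2)$ once $a\ge 2$ fails here. Since this is an infinite family, deferring ``small leftover cases'' to Magma does not cover it. The paper disposes of it by divisibility (Lemma~\ref{lm:Gaorder}). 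Using $\Aut(\mathrm{VO}^-_4(q))=q^4{:}\mathrm{\Gamma O}^-_4(q)$, the condition $k\lambda\mid 6|\mathrm{\Gamma O}^-_4(q)|$ leaves finitely many $q$, and $k(v+\lambda-2k)/2\mid|\mathrm{\Gamma O}^-_4(q)|$ removes those. Within your framework you could instead argue as follows. Since $G_0\le\GammaL_4(q)$, the group $G_{0e}$ acts on $N_1$ only through field automorphisms, so its orbits there have length at most $\log_p q$. Lemma~\ref{rank}(1) then gives $q-2\le 6\log_p q$. Either way, some extra input of this kind is needed.

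\textbf{Class (A6).} The affine unitary graph is not a separate family. The map $x\mapsto H(x,x)$ is a nondegenerate $\GF(q)$-quadratic form of type $(-1)^a$ on the $2a$-dimensional $\GF(q)$-space, with polar form $\mathrm{Tr}\,H$. Hence $\mathrm{VU}_a(q)\cong\mathrm{VO}^{(-1)^a}_{2a}(q)$, and (A6) folds into (A7); this is how the paper reads \cite[\textsection{3.3.1}]{BrouwerSRG}. Consequently the common-neighbour condition is $\mathrm{Tr}\,H(x,e)=0$, not $H(x,e)=0$. Moreover, for $t\in\GF(q^2)\setminus\GF(q)$ the vertex $te$ is \emph{not} adjacent to all common neighbours. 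Take $f$ isotropic with $H(f,e)=c\neq 0$ and $\mathrm{Tr}(c)=0$. Then $f$ is a common neighbour of $0$ and $e$, but $H(f-te,f-te)=\pm c(t^q-t)\neq 0$. So the universally adjacent set has size $q-2$, not $q^2-2$, and it is empty for $q=2$. Your proposed exclusion of the unitary graphs with $q=2$ therefore rests on a false claim. Those graphs are exactly $\mathrm{VO}^{\pm}_{2a}(2)$, which already appear in the conclusion. For example, $\mathrm{VU}_2(2)\cong\K_4\times\K_4$, where the common neighbourhood of an edge induces $2\K_2$.

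\textbf{What does work.} For $\mathrm{VO}^{\epsilon}_{2m}(q)$ with $q\ge 3$ and either $\epsilon=+$, $m\ge2$ or $\epsilon=-$, $m\ge 3$, your valency argument is sound. A second-kind vertex $x$ is non-adjacent to the lift of any singular vector of $e^\perp/\langle e\rangle$ not orthogonal to $\bar x$. In that range it is a combinatorial version of the paper's shorter observation that the triangles $\{0,e_1,ae_1\}$ and $\{0,e_1,e_2\}$ span $\GF(q)$-subspaces of different dimensions.
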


\proof From~\cite[\textsection{3.3.1}~and~Table~11.4]{BrouwerSRG}, we see that if $G$ is of type  (A6), (A7)  or  (A9), then $\Ga$ is isomorphic to $\mathrm{VO}_{2m}^{\epsilon}(q)$ for some positive integer $m$, where $\epsilon\in \{-,+\}$. By definition, we see that $\mathrm{VO}_{2}^{-}(q)\cong q^2\K_1$. 
Furthermore, as remarked in \cite[\textsection{3.3.1}]{BrouwerSRG}, we know that $\mathrm{VO}_{2}^{+}(q)\cong \K_q\square\K_q$. By \cite[Corollary~1.2]{Cameron-3-hom}, the graphs $\K_q\square\K_q$, $\mathrm{VO}_{2m}^{+}(2)$ and $\mathrm{VO}_{2m}^{-}(2)$ are $3$-homogeneous, where $m\geq 1$.

Let $\Sig =\mathrm{VO}_{2m}^{\pm}(q)$, where $m\geq 2$ and $q\geq 3$.
It remains to show that $\Sig$ is not $3$-set-homogeneous.

Suppose that $m=2$ and $\epsilon=-$. By~\cite[\textsection{3.3.1}]{BrouwerSRG}, $v=v(\Sig )=q^{4}=p^d$, $k=k(\Sig )=(q^2+1)(q-1)$ and $\lambda=\ld(\Sig )=q-2$. Let $A=\Aut(\Sig )$. Then $A=\ZZ_p^d{:}A_0$, where $A_0={\rm\Gamma O}_{4}^-(q)$ (see \cite[Theorem~1.1]{Skresanov-arXiv}). Then $|A_0|=2q^{2}(q^2+1)(q^2-1)(q-1)d$. By Lemma~\ref{lm:Gaorder}, we have
$ {vk\ld}/{6} $ divides $ |A|.$ It follows that $k\ld=(q^2+1)(q-1)(q-2)$ is a divisor of $6|A_0|=12q^{2}(q^2+1)(q^2-1)(q-1)d$. Hence,
$(q-2)\mid 12dq^2(q^2-1)$, and then $(q-2)\mid 36dq^2$ as $(q-2, q-1)=1$ and $(q-2, q+1)\in \{1,3\}$.
Assume first that $q$ is odd. Then $(q-2, q)=1$, and then $(q-2,q^2)=1$. So $(q-2)\mid 36d$, namely, $(p^d-2)\mid 36d$. By induction on $d$, one may prove that if $d\geq 5$, then $p^d-2>36d$. Thus, $d\leq 4$. Since $p^d=q^4$, one has $d=4$. However, since $p^d-2$ is an odd divisor of $36d=4\cdot9d$, one has $p^d-2=1$, $3,9$ or $27$, and so $p^d=3,5,11$ or $29$, implying $d\leq 2$, a contradiction. Assume now that $q$ is even. Then $(q-2, q)=2$ and then $(q-2, q^2)=2$ since $4\nmid (q-2)$. Thus, $(q-2)\mid 72d$, namely, $(2^d-2)\mid 72d$. By induction on $d$, one may prove that if $d\geq 10$, then $2^d-2>72d$. Thus, $d\leq 9$, and then $4\leq d\leq 9$ as $2^d=q^4$. Clearly, $2^{d-1}-1$ is an odd divisor of $36d$, and $2^{d-1}-1\geq 7$. We have $2^{d-1}-1\in\{9, 5, 45, 27, 7, 21, 63, 81\}$. So $2^{d-1}-1=7$ or $63$, and then $q=2^d=2^4$ or $2^7$.
By Lemma~\ref{lm:Gaorder}, we have $k(v+\ld-2k)/2$ divides $ |A_0|$, and then
\[\frac{(q^2+1)(q-1)}{2}(q^4+(q-2)-2(q^2+1)(q-1))\mid 2dq^{2}(q^4-1)(q-1).\]
It follows that \[ (q^4-2q^3+2q^2-q) \mid 2^2dq^{2}(q^2-1).\]
If $q=2^d=2^4$, then $(q^3-2q^2+2q-1)\mid (q^2-1)$, which is impossible. If $q=2^d=2^7$, then $(q^3-2q^2+2q-1)\mid 7(q^2-1)$, which is also impossible.

Suppose that $\epsilon=+$ and $m\geq 2$ or $\epsilon=-$ and $m\geq 3$. Recall that the vertex set of $\Sig =\mathrm{VO}_{2m}^{\pm}(q)$ is a vector space $V$ of dimension $2m$ over $\GF(q)$, provided with a non-degenerate quadratic form $Q$ of type $\varepsilon=\pm$. Furthermore, two different vectors $u, v$ of $\Sigma$ are adjacent whenever $Q(v-u)=0$. Since either $\epsilon=+$ and $m\geq 2$ or $\epsilon=-$ and $m\geq 3$,  there are two mutually orthogonal hyperbolic pairs $(e_1, f_1)$ and $(e_2, f_2)$ in $V$, and so $\lg e_1, e_2\rg$ is totally singular subspace of $V$. This implies that $[\lg e_1, e_2\rg]$ is a clique. Noticing that $q\geq 3$, we can take $a\in \GF(q)\setminus \{0,1\}$. Let $\alpha_1=0$, $\alpha_2=e_1$, $\alpha_3=ae_1$ and $\alpha_4=e_2$. Then $[\{\alpha_1, \alpha_2, \alpha_3\}]$ and $[\{\alpha_1, \alpha_2, \alpha_4\}]$ are different triangles. Clearly, $\langle \alpha_1,\alpha_2,\alpha_3\rangle$ has dimension $1$ and $\langle \alpha_1,\alpha_2,\alpha_4\rangle$ has dimension $2$, so these two triangles are not equivalent under  $\Aut(\Sig )\leq q^{2m}{:}\GammaL_{2m}(q)$. Thus, $\Sig$ is not $3$-set-homogeneous.
\hfill\qed

Finally, we prove the following lemma.

\begin{lemma}\label{A8-10-11}
$G$ is not of type {\rm(A8)}, {\rm(A10)} or {\rm(A11)}.
\end{lemma}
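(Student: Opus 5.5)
We argue by contradiction, treating the three types separately. In each case the idea is to show that the rank~$3$ graph $\Ga$ (or its complement) either contains two non-equivalent induced subgraphs of order $3$, or has $|\Om_i(\Ga)|\nmid|G|$ for some $i$ in Lemma~\ref{lm:Gaorder}. The case $\Ga$ versus $\overline{\Ga}$ is symmetric because the whole hypothesis is closed under complements.

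\emph{Type (A11).} Here $\mathrm{Sz}(q)\unlhd X_0$ with $q=2^{2m+1}$ and $p^d=q^4$. From \cite{Liebeck-affine} (or \cite[Table~11.4]{BrouwerSRG}) we read off the subdegrees $(q^2+1)(q-1)$ and $q(q^2+1)(q-1)$. We also have $|G_0|\le |\mathrm{Sz}(q)|(q-1)(2m+1)=q^2(q^2+1)(q-1)^2(2m+1)$, using $X_0\le (\GF(q)^\times\times \mathrm{Sz}(q)){:}\ZZ_{2m+1}$. Lemma~\ref{lm:Gaorder} requires $v\bar k(\dots)/6$ and $vk\lambda/6$ to divide $|G|=q^4|G_0|$. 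We compute $\lambda$ from the parameters: this graph is a subgraph of $\mathrm{VO}^-_4(q)$, whose parameters are known. We then compare $\lambda$ with the order bound, or better, use Lemma~\ref{lm:Gab10orbits}: $G_{\a\b}$ has at most $12$ orbits on $V\setminus\{\a,\b\}$, so
\[
|G_{\a\b}|\ge (q^4-2)/12 .
\]
On the other hand $|G_{\a\b}|=|G_0|/k\le q^2(q-1)(2m+1)$ for the smaller valency, or $\le q(q-1)(2m+1)$ for the larger one. The second is immediately too small. For the first, the inequality $q^2(q-1)(2m+1)\ge (q^4-2)/12$ forces $q$ small (only $q=8$ might survive). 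We finish the remaining case $q=8$ by a Magma check of the divisibility conditions.

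\emph{Types (A8) and (A10).} We again use Lemma~\ref{lm:Gab10orbits} as the main tool. In (A8) we have $v=q^{10}$, and the orbit of $\a=0$ containing $z_1\wedge z_2$ has length $(q^5-1)(q^2-1)\cdots/(q-1)$, namely the number of nonzero decomposable vectors, $(q^5-1)(q^4-1)/(q-1)$. Then $|G_{\a\b}|\le |\GammaL_1(q)\circ\SL_5(q)|/k$, and we compare this with $(q^{10}-2)/12$. Since $|\SL_5(q)|\approx q^{24}$ this crude bound fails, so instead we would use the structure directly. With $\a=0$ and $\b=z_1\wedge z_2$, the set $\Ga(\a)\cap\Ga(\b)$ of decomposable $w$ with $w-\b$ decomposable splits as $\{u\wedge v: \dim(\langle u,v\rangle\cap\langle z_1,z_2\rangle)\ge1\}$. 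This common neighbourhood contains two kinds of triangles, mirroring the argument of Lemma~\ref{lem:A6-7-9}. In the first kind, $w\in\langle \b\rangle$ is a scalar multiple of $\b$, which exists when $q>2$. In the second kind, $w=z_1\wedge z_3$. The resulting triangles $\{0,\b,c\b\}$ and $\{0,\b,z_1\wedge z_3\}$ span subspaces of dimension $1$ and $2$, so they are inequivalent under $G\le \mathrm{A\Gamma L}_{10}(q)$. The same dimension argument works for the complement, whose triangles are found among indecomposable vectors using $\{0,u,cu\}$ against $\{0,u,u'\}$. For $q=2$ we instead count valencies inside $[\Ga(\a)\cap\Ga(\b)]$ and apply Lemma~\ref{rank}(1), or verify directly with Magma that $|\Om_3|\nmid|G|$.

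For (A10) we handle $v=q^{16}$ and the spin module the same way. The singular (pure spinor) vectors form an orbit closed under nonzero scalars and containing $2$-dimensional totally "pure" subspaces. This gives the two inequivalent triangles for $q>2$. The case $q=2$ is handled by Lemma~\ref{lm:Gaorder} divisibility with the known parameters from \cite[Table~11.4]{BrouwerSRG}.

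The main obstacle is the $q=2$ subcases, where scalars give no extra triangle. There we must rely on explicit parameter arithmetic, namely checking that $vk\lambda/6$ or $|\Om_1|$ fails to divide $|G|$, or on a computer check.
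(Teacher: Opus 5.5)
The $q>2$ cases of (A8) and (A10) follow the paper's argument. You compare the triangles $\{0,\b,c\b\}$ with $c\neq0,1$ and $\{0,\b,\g\}$ with $\g\notin\lg\b\rg$; their affine spans have different dimensions, so $G\le \mathrm{A\Gamma L}_n(q)$ cannot map one to the other. In (A10) you should still exhibit $\b,\g$ explicitly, as the paper does inside the quadric $\Phi\subset V_1$. The real problems are in the $q=2$ subcases and in (A11).

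\textbf{The $q=2$ subcases of (A8) and (A10) are not closed, and the tests you name first would fail.}
\begin{itemize}
\item Counting valencies inside $[\Ga(\a)\cap\Ga(\b)]$ gives nothing. For (A8) with $q=2$ one has $G_0=\SL_5(2)$. Then $G_{\a\b}$ is the stabiliser of the plane $\lg z_1,z_2\rg$, which is transitive on the $42$ planes meeting it in a point, so $s=1$ in Lemma~\ref{rank}.
\item $|\Om_3|$ does not help either. With $(v,k,\lambda,\mu)=(2^{10},155,42,20)$ one gets $k\lambda/6=5\cdot7\cdot31$, which divides $|\SL_5(2)|$. For (A10) with $(2^{16},2295,310,72)$ one gets $k\lambda/6=3^2\cdot5^2\cdot17\cdot31$, which divides $|\Omega^+_{10}(2)|$.
\item The test that works, and the one the paper uses, is $|\Om_1|$. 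Here $k(v+\lambda-2k)/2$ equals $2\cdot3^3\cdot5\cdot7\cdot31$ in (A8), whose $3$-part is too large for $|\GL_5(2)|$. In (A10) it equals $2^2\cdot3^3\cdot5\cdot13\cdot17\cdot19\cdot31$, and $13,19\nmid|\Aut(\Omega^+_{10}(2))|$.
\end{itemize}
Your last paragraph lists $|\Om_1|$ as one option. You need to commit to it and actually carry out this arithmetic.

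\textbf{In (A11) your route differs from the paper's, but the case reduction is wrong.} The paper uses $k\lambda/6\mid|G_0|$ to force $e\le2$ and then checks $|\Om_1|$ numerically for $q=8,32$. Your orbit-counting route is different. However, your claim that only $q=8$ survives $q^2(q-1)(2m+1)\ge(q^4-2)/12$ is false: $q=32$ gives $158720\ge 87381$.

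You do not need that branch at all. The orbital graph of valency $q(q^2+1)(q-1)$ also has girth $3$ and diameter $2$, and it is $(G,3)$-set-homogeneous whenever the other orbital graph is. So Lemma~\ref{lm:Gab10orbits} applies to it and gives $(q^4-2)/12\le|G_{\a\b}|\le q(q-1)(2m+1)$. This fails for every $q\ge8$; already at $q=8$ it reads $341<168$. With that choice your (A11) argument is complete with no computer check, which is tidier than the paper's route.

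Minor points, neither of which is used in your argument: the number of nonzero decomposable vectors in $\bigwedge^2\GF(q)^5$ is $(q^5-1)(q^2+1)$, and $\mathrm{VSz}(q)$ has the parameters of $\mathrm{VO}^-_4(q)$ but is not a subgraph of it.
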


\proof Suppose first that $G$ is of type  (A8). From~\cite[\textsection{3.4.2}~and~Table~11.4]{BrouwerSRG}, up to a complement, $\Ga$ is the alternating forms graph on $\GF(q)^5$, which has vertices the $5\times 5$ skew-symmetric matrices with zero diagonal over $\GF(q)$, and two vertices are adjacent whenever their difference has rank $2$. If $q=2$, then $G_0\leq \GL_5(2)$ and from~\cite[\textsection{3.4.2}~and~Table~11.4]{BrouwerSRG}, we see that $\Ga$ has parameters $(v,k,\ld,\mu)=(2^{10}, 155, 42, 20)$.
By Lemma~\ref{lm:Gaorder}, $vk(v+\lambda-2k)/2$ is a divisor of $|G|$, and so $k(v+\lambda-2k)/2=2\cdot3^3\cdot5\cdot7\cdot11$ is a divisor of $|\GL_5(2)|=2^{10}\cdot3^2\cdot5\cdot7\cdot11$, a contradiction. Assume next $q>2$. Let $\a$ be the $5\times 5$ zero matrix. Let
\[\b=\left(
    \begin{array}{ccccc}
      0 & 1 & 0 & 0 & 0 \\
      -1 & 0 & 0 & 0 & 0 \\
      0 & 0 & 0 & 0 & 0 \\
      0 & 0 & 0 & 0 & 0 \\
      0 & 0 & 0 & 0 & 0 \\
    \end{array}
  \right)\ {\rm and}\ \g=\left(
    \begin{array}{ccccc}
      0 & 1 & 1 & 0 & 0 \\
      -1 & 0 & 0 & 0 & 0 \\
      -1 & 0 & 0 & 0 & 0 \\
      0 & 0 & 0 & 0 & 0 \\
      0 & 0 & 0 & 0 & 0 \\
    \end{array}
  \right).
\]
Since $q>2$, we can take $t\in \GF(q)\setminus \{0,1\}$. It is easy to see that $[\{\a,\b, t\b\}]$ and $[\{\a, \b, \g\}]$ are two triangles.
Note that we may view $V(\Ga)$ as a $10$-dimensional vector space over $\GF(q)$ and so  $G_0\leq\GammaL_{10}(q)$. Clearly, the subspace generated by $\a, \b, t\b$ has dimension $1$ but the subspace generated by $\a,\b, \g$ has dimension $2$. So $ \{\a,\b, t\b\} $ and $ \{\a, \b, \g\} $ are not equivalent under $G$, a contradiction.

Suppose now that $G$ is of type  (A10). By \cite[\textsection{3.3.3}~and~Table~11.4]{BrouwerSRG}, we may assume that $\Ga= VD_{5,5}(q)$, the affine half spin graph. By \cite[p.95]{BrouwerSRG}, the vertex set of $\Ga= VD_{5,5}(q)$ is a $16$-dimensional vector space $V$ over $\GF(q)$, and there exists a subset $S$ of vectors in $V$ such that two different vertices $u,v$ of $\Ga$ are adjacent if and only if $u-v\in S$. Furthermore, $S$ contains a subset $\Phi$ which is defined as follows: Let $V=V_1\oplus V_2$ such that $V_1, V_2$ are two $8$-dimensional subspaces over $\GF(q)$. Identify $V_1$ with $\GF(q)^8$, labelling the coordinates $x_i$ with $i\in \{-4, -3, -2, -1, 1, 2, 3, 4\}$ and consider the quadratic form
\[Q: V_1\rightarrow \GF(q): (x_{-4}, x_{-3}, x_{-2}, x_{-1}, x_{1}, x_{2}, x_{3}, x_{4})\mapsto x_{-1}x_1+x_{-2}x_2+x_{-3}x_3+x_{-4}x_4.\]
Let $\Phi=\{u\in V_1\mid Q(u)=0\}$. If $q>2$, then we can take $t\in \GF(q)\setminus\{0,1\}$. Let $\a$ be the zero vector of $V$. Let $\b=(0,0,1,1,1,-1,0,0)\in V_1$ and let $\g=(1,1,1,1,1,-1,1,-1)\in V_1$. Then $\b, t\b, \g, (t-1)\b, \g-\b\in\Phi$. So $[\{\a, \b, t\b\}]$ and $[\{\a, \b, \g\}]$ are two triangles. Clearly, the subspace generated by $\a, \b, t\b$ has dimension $1$ but the subspace generated by $\a,\b, \g$ has dimension $2$. So $ \{\a,\b, t\b\} $ and $ \{\a, \b, \g\} $ are not equivalent under $G$ since  $G\leq q^{16}{:} \GammaL(16,q)$, a contradiction. Thus, $q=2$. Then by \cite[Proposition~3.3.1]{BrouwerSRG}, $\Ga$ has parameters $(v,k,\ld,\mu)=(q^{16}, (q^8-1)(q^3+1), q^8+q^6-q^3-2, q^3(q^3+1))$. By~\cite[p.100]{BrouwerSRG} (or~\cite[Theorem~1.1]{Skresanov-arXiv}), $G$ is contained in the subgroup of index $2$ in $q^{16} {:} \Aut(\mathrm{GO}^{+}_{10}(q))$  preserving the systems of maximal singular subspaces. Since $\Ga$ is $(G,3)$-set-homogeneous, by Lemma~\ref{lm:Gaorder}, $ vk(v+\ld-2k)/2$ divides $|G|$, and hence $k(v+\ld-2k)/2=2^{2}\cdot3^3\cdot5\cdot13\cdot17\cdot19\cdot31$ divides $|\Aut(\mathrm{GO}^{+}_{10}(q))|/2=2^{20}\cdot3^5\cdot5^2\cdot7\cdot17\cdot31$, a contradiction.

Finally, suppose that $G$ is of type  (A11). From~\cite[Table~11.4]{BrouwerSRG}, we see that $\Ga$ is the Suzuki-Tits ovoid graph $\mathrm{VSz}(q)$, where $q=2^{2e+1}$ with $e\geq1$, and by \cite[\textsection{3.3.1}]{BrouwerSRG}, the parameters of this graph are $(v,k,\ld,\mu)=(q^{4}, (q^2+1)(q-1), q-2, q(q-1))$. By \cite[Theorem~1.1]{Skresanov-arXiv}, we have $G_0\leq (\ZZ_{q-1} \times \mathrm{Sz}(q)).(2e+1)$. Since $\Ga$ is $(G,3)$-set-homogeneous, by Lemma~\ref{lm:Gaorder}, we have $ {vk\ld}/{6}$ divides $|G|$. It follows that $ {k\ld}/{6}$ divides $ |G_0|$ and hence $ {k\ld}/{6}$ divides $ |(\ZZ_{q-1} \times \mathrm{Sz}(q)).(2e+1)|$. It follows that
\[\frac{(q^2+1)(q-1)(q-2)}{6} \mid q^2(q^2+1)(q-1)^2(2e+1),\]
and then
\[(q-2)\mid 6q^2(q-1)(2e+1).\]
Since $(q-2, q-1)=1$ and $(q-2, q^2)=(2(2^{2e}-1),2^{4e+2})=2$, we derive that $(q-2)\mid 12(2e+1)$, namely, $2(2^{2e}-1)\mid 12(2e+1)$.
Thus, $(2^{2e}-1)\mid 3(2e+1)$. By Lemma~\ref{lem:claim}~(ii), if $e\geq 3$, then $2^{2e}-1>16e=6e+10e\geq 3(2e+1)$. It follows that $e\leq 2$. By Lemma~\ref{lm:Gaorder}, $vk(v+\lambda-2k)/2$ is a divisor of $|G|$, and so $ k(v+\lambda-2k)/2$ divides $|G_0|$. This would imply that either $e=1$ and $726180= (8^2+1)(8-1)(8^4+8-2-2(8^2+1)(8-1))/2$ divides $3\cdot 8^2(8^2+1)(8-1)^2=611520$, or $e=2$ and $3025996800= (32^2+1)(32-1)(32^4+32-2-2(32^2+1)(32-1))/2$ divides $5\cdot 32^2(32^2+1)(32-1)^2=5043328000$, a contradiction.
\hfill\qed

\subsection{Almost simple case}\label{subsec:simple}

In this section, we shall assume that $\Ga$ is a $(G,3)$-set-homogeneous graph satisfying Hypothesis~\ref{hyp} and $\soc(G)$ is non-abelian simple. The following is the main result of this subsection.

\begin{theorem}\label{th-almost-simple}
Under Hypothesis~\ref{hyp}, let $\soc(G)$ be non-abelian simple. Then $\Ga$ is $3$-homogeneous, and, up to a complement, $\Ga$ is isomorphic to one of the following graphs:
\begin{enumerate}[{\rm (1)}]
  \item an orbital graph of the Higman-Sims group $\mathrm{HS}$ on $100$ points;
  \item an orbital graph of the McLaughlin group $\mathrm{McL}$ on $275$ points;
  \item the elliptic orthogonal graph $\mathrm{\Gamma}(\mathrm{P\Omega}^{-}_6(q))$.
\end{enumerate}
\end{theorem}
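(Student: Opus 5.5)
The proof runs through the classification of almost simple primitive rank~$3$ groups (Bannai, Kantor--Liebler, Liebeck--Saxl) and tests each resulting strongly regular graph against the necessary conditions already available. Let $T=\soc(G)$. By Lemma~\ref{lm:Gaorder}, $G$ must be transitive on each of $\Om_1(\Ga),\dots,\Om_4(\Ga)$. Hence the four numbers
\[
\frac{vk(v+\lambda-2k)}{2},\quad \frac{vk(k-\lambda-1)}{2},\quad \frac{vk\lambda}{6},\quad \frac{v(v-k-1)(v-2k+\mu-2)}{6}
\]
all divide $|G|\le|\Aut(T)|$. Moreover, by Lemma~\ref{lm:Gab10orbits}, $G_{\alpha\beta}$ has at most $12$ orbits on $V\setminus\{\alpha,\beta\}$. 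This gives $v-2\le 12|G_{\alpha\beta}|=12|G_\alpha|/k$, and the analogous bound holds for non-edges after applying Lemma~\ref{lm:Gab10orbits} to $\overline\Ga$.

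I would split into three cases: $T$ alternating, $T$ sporadic, and $T$ of Lie type.

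\textbf{Alternating and sporadic socles.} Here the rank~$3$ actions are an explicit finite list plus the infinite families $\A_n$ on $2$-sets (triangular graphs $T(n)$) and $\A_n$ on partitions. Triangular graphs fail directly. In $T(n)$, adjacent vertices $\{1,2\},\{1,3\}$ have two kinds of common neighbours: $\{2,3\}$, and $\{1,j\}$ with $j\ge4$. These give non-equivalent triangles only if some automorphism mixes them, and that is impossible for $n\ge 6$ because one triangle type is a star of three pairs sharing a point and the other is a triangle of pairs. The small exceptions are checked by hand or in Magma. For the finite sporadic and exceptional entries I would run the divisibility test, or use Magma to test transitivity on the $\Om_i$; I expect only $\mathrm{HS}$ on $100$ and $\mathrm{McL}$ on $275$ to survive. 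That they genuinely are $3$-homogeneous is quoted from \cite{Cameron-3-hom}.

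\textbf{Lie type socles.} This is the main obstacle. For the infinite families (polar graphs on singular points, Grassmann graphs $J_q(n,2)$, nonsingular points of orthogonal and unitary spaces, half-spin and $E_6$ graphs, and so on) I would argue geometrically, as in Lemmas~\ref{lem:A6-7-9} and~\ref{A8-10-11}. In each family I would exhibit two triangles, or two copies of $\mathbf P_3$, that are distinguished by a geometric invariant preserved by $\Aut(\Ga)$. The natural invariant is whether the three points span a line or a plane. For example:
\begin{itemize}
\item In a polar graph on singular points, with adjacency meaning perpendicular, three points on a totally singular line differ from three pairwise perpendicular points spanning a totally singular plane. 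Such a plane exists whenever the Witt index is at least $3$.
\item In the Grassmann graph, a triangle of lines through a common point differs from a triangle of lines in a common plane, unless the two are swapped by a duality. Even then, they are told apart by the common-neighbour counts of Lemma~\ref{rank}.
\end{itemize}
This should force the polar space to have rank~$2$. Among the rank~$2$ polar graphs, the divisibility conditions and Lemma~\ref{rank} should leave only $\mathrm{\Gamma}(\mathrm{O}^-_6(q))$. In that graph the complement has $\lambda$-graphs that are cocliques, which is consistent with $3$-homogeneity, and its $3$-homogeneity is again taken from \cite{Cameron-3-hom}. For the remaining bounded-$q$ entries (small-field actions such as $\PSU_n(2)$ on nonsingular points and the $\Omega^\pm_{2m}(2)$, $\Omega_{2m+1}(3)$ actions) I would use the $G_{\alpha\beta}$ orbit bound together with a Zsigmondy prime (Lemma~\ref{primitive-prime}) dividing one of the $|\Om_i|$ but not $|G|$, and settle the finitely many leftovers by Magma.

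I expect the hardest part to be choosing, uniformly across the many Lie-type rank~$3$ families, a pair of isomorphic $3$-subsets that provably lie in different $\Aut(\Ga)$-orbits. This needs knowledge of $\Aut(\Ga)$, for which I would cite \cite{BrouwerSRG}.
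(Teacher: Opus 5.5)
Your overall route is the paper's. You run through the almost simple rank-$3$ list in \cite{BrouwerSRG} and eliminate candidates either by the divisibility conditions of Lemma~\ref{lm:Gaorder} and Zsigmondy primes, or by exhibiting two isomorphic $3$-subsets whose spans have different dimensions. Magma handles the small cases, and \cite{Cameron-3-hom} is quoted for the survivors. Two concrete steps you commit to would fail as written, though.

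First, consider the families with $q$ fixed and dimension unbounded ($\PSU_n(2)$, $\mathrm{P\Omega}^{\pm}_{2m}(2)$, $\mathrm{P\Omega}^{\pm}_{2m}(3)$, $\mathrm{P\Omega}_{2m-1}(3)$ on nonsingular points). For these you propose the bound $v-2\le 12|G_{\alpha\beta}|$ plus a Zsigmondy prime, with finitely many leftovers for Magma. The bound is vacuous there, and there may be no arithmetic obstruction at all. For $\mathrm{NO}^{+}_{2m}(2)$ with $m$ odd one has $v=2^{m-1}(2^m-1)$, $k=2^{2m-2}-1$, $\lambda=2(2^{2m-4}-1)$ and $\mu=2^{m-2}(2^{m-1}+1)$. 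All four numbers $|\Om_i(\Ga)|$ divide $|\mathrm{O}^{+}_{2m}(2)|$; the square $(2^{m-1}-1)^2$ in $|\Om_4(\Ga)|$ is absorbed by the factors $2^{m-1}-1$ and $2^{2(m-1)}-1$ of the group order. So infinitely many cases survive your arithmetic filter. These families need the geometric argument from your own earlier list, applied to cocliques rather than triangles. The paper (Step~5.4) takes three pairwise non-orthogonal nonsingular points spanning a $2$-space against three spanning a $3$-space. The $\mathrm{P\Omega}_{2m-1}(8)$ action on nonsingular hyperplanes, which your plan does not mention, is also settled in the paper by a structural argument, via Inglis' invariant $\alpha(Q,R)$.

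Second, your treatment of $J_q(4,2)$ when $G$ contains a duality is self-contradictory. If a duality in $G$ interchanges the concurrent non-coplanar triangle with the coplanar non-concurrent one, the two triangles lie in one $G$-orbit. Then no common-neighbour count, and indeed no invariant of $\Ga$, can separate them. You must choose a different pair: a pencil triangle (three lines through a point inside a plane) against a non-pencil one. Under the Klein correspondence $J_q(4,2)\cong\Gamma(\mathrm{O}^{+}_6(q))$, this is exactly your line-versus-plane argument, which the paper applies to $\Gamma(\mathrm{O}^{+}_6(q))$ in Step~5.2. For $J_q(4,2)$ itself the paper instead uses $\lambda\mid 6|G|/(vk)$, which forces $(2q-1)\mid 3s$.

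Two smaller inaccuracies:
\begin{itemize}
\item Divisibility and Lemma~\ref{rank} do not dispose of every rank-$2$ polar graph other than $\Gamma(\mathrm{O}^-_6(q))$: $W(3,3)$, $W(3,4)$ and $Q(4,3)$ pass all these tests. The paper handles the symplectic graphs by cocliques on a hyperbolic line versus cocliques spanning a plane, and $Q(4,3)$ by Magma.
\item In $\Gamma(\mathrm{O}^-_6(q))$ it is the $\mu$-graphs $\{x,y\}^{\perp}$ of $\Ga$ that are cocliques, not the $\lambda$-graphs of $\overline{\Ga}$.
\end{itemize}
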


\proof By Hypothesis~\ref{hyp}, $\Ga$ is a $(G, 3)$-set-homogeneous graph such that $G\leq\Aut(\Ga)$ is primitive on $V(\Ga)$ of rank $3$. Assume further that $G$ is almost simple with socle $T$. We shall finish the proof by the following five steps:

\smallskip
\f{\bf Step~1.}\  If $T$ is a sporadic simple group, then one of the following holds:
\begin{enumerate}[\rm (1)]
\item $\Ga$ is an orbital graph of the Higman-Sims group $\mathrm{HS}$ on $100$ points, and $G=\mathrm{HS}$ or $\mathrm{HS}.2$.
\item $\Ga$ is an orbital graph of the McLaughlin group $\mathrm{McL}$ on $275$ points, and $G=\mathrm{McL}$ or $\mathrm{McL}.2$.
\end{enumerate}

Suppose that $T$ is  a sporadic simple group. One may find all candidates for $T$ from \cite[Table~11.3]{BrouwerSRG}, and the parameters of the rank $3$ graphs arising from each $T$ from \cite[Table~11.7]{BrouwerSRG}. For each of these groups $T$, we can use Magma~\cite{BCP} to determine the rank $3$ groups $G$ and the rank 3 graphs $\Ga$ arising from $G$ satisfying $T=\soc(G)$ and $|\Om_i(\Ga)|$ divides $|G|$ for $i=1,2,3,4$, where $\Om_i(\Ga)$ is given in Lemma~\ref{lm:Gaorder}. Then we obtain the graphs $\Ga$ and groups $G$ as given (1) and (2), respectively.

\smallskip
\f{\bf Step~2.}\ $T\ncong\A_n$ with $n\geq 5$.
\smallskip

Suppose on the contrary that $T\cong\A_n$ with $n\geq 5$. 
According to~\cite[Theorem~11.3.1]{BrouwerSRG}, one of the following holds:
\begin{enumerate}[\rm (1)]
\item Up to a complement, $\Ga$ is the Johnson graph $J(n,2)$ whose vertex set is the set of $2$-subsets of $\{ 1,2,\ldots,n\}$, and two $2$-subsets $\alpha$ and $\b$ are adjacent if and only if $|\alpha \cap \b|=1$, and $\Aut(\Ga)=\S_n$.
\item $n=6$, $G_\a\cong\S_2\wr\S_3$, and the graph $\Ga$ has parameters $(v,k,\lambda,\mu)=(15,6,1,3)$.
\item $n=8$, $G_\a\cong\S_4\wr\S_2$, and the graph $\Ga$ has parameters $(v,k,\lambda,\mu)=(35,16,6,8)$.
\item $n=10$, $G_\a\cong\S_5\wr\S_2$, and the graph $\Ga$ has parameters $(v,k,\lambda,\mu)=(126, 25, 8, 4)$.
\item $n=9$, $G=\A_9$, $G_\a \cong \mathrm{P\Gamma L}_2(8)$, and the graph $\Ga$ has parameters $(v,k,\lambda,\mu)=(120, 56, 28, 24)$.
\end{enumerate}

For (1), $\Ga$ is the Johnson graph $J(n,2)$. Let  $\alpha_1=\{1,2\}$,  $\alpha_2=\{1,3\}$, $\alpha_3=\{2,3\}$ and $\alpha_3'=\{1,4\}$.
Then $[\{\alpha_1,\alpha_2,\alpha_3 \}]$ and $[\{\alpha_1,\alpha_2,\alpha_3' \}]$ are two triangles.  However, the setwise stabiliser of $\{\alpha_1,\alpha_2,\alpha_3 \}$ in $\S_n$ is $\Sym(\{2, 3\})\times \Sym(\{4, 5,\ldots, n\})\cong \S_2 \times \S_{n-3}$, while the setwise stabiliser of $\{\alpha_1,\alpha_2,\alpha_3' \}$ in $\S_n$ is $\Sym(\{2, 3, 4\})\times \Sym(\{5, 6, \ldots, n\})\cong \S_3 \times \S_{n-4}$.
This implies that $[\{\alpha_1, \alpha_2, \alpha_3\}]$ and $[\{\alpha_1, \alpha_2, \alpha_3'\}]$ are not equivalent under $\Aut(\Ga)$, a contradiction.

For every graph in (2)--(5), by Magma~\cite{BCP}, we see that the induced subgraphs isomorphic to $3\K_1$ are not equivalent under $\Aut(\Ga)$, a contradiction.\smallskip

From now on we deal with simple groups of Lie type. In the remainder of the proof, we let $q=p^s$ where $p$ is a prime and $s$ is a positive integer.

\smallskip
\f{\bf Step~3.}\ $T$ is not isomorphic to a simple group of exceptional Lie type.
\smallskip

Suppose on the contrary that $T$ is a simple group of exceptional Lie type. From~\cite[Theorem~11.3.4]{BrouwerSRG}, we see that
one of the following holds:
\begin{enumerate}[\rm (1)]
\item $T=E_6(q)$ and $\Ga$ is the collinearity graph of $E_{6,1}(q)$ (see \cite[Proposition~4.9.1]{BrouwerSRG}).
\item $T=G_2(q)$ with $q=3,4,8$, and the graphs $\Ga$ arising from $T$ are listed in \cite[Table~11.2]{BrouwerSRG}.
\end{enumerate}

For (1), by \cite[Proposition~4.9.1]{BrouwerSRG}, the parameters $(v,k,\ld,\mu)$ of $\Ga$ are
\[\left(\frac{(q^{12}-1)(q^9-1)}{(q^{4}-1)(q -1)}, \frac{q(q^3+1)(q^8-1)}{q-1}, \frac{q^2(q^2+1)(q^5-1)}{q-1}+q-1, \frac{(q^3+1)(q^4-1)}{q-1}\right).\]
Since $\Ga$ is $(G,3)$-set-homogeneous, it follows from Lemma~\ref{lm:Gaorder} that $vk(v+\lambda-2k)/2$ is a divisor of $|G|$.  Since
$G\leq\Aut(E_6(q))$, $|G|$ is a divisor of $2sq^{36}\prod_{i\in \{2, 5, 6, 8, 9, 12\}}(q^i-1)$.
Computation shows that
\[
\begin{split}
v+\lambda-2k& = q^7(q^2 - q + 1)(q^3 + q^2 - 1)(q^4 + q^3 + q^2 + q + 1),\\
\frac{|G|}{vk} & \mid  6sq^{35}(q-1)^6(q+1)^2(q^2+1)(q^2+q+1)(q^4 + q^3 + q^2 + q + 1).
\end{split}
\]
Then since $vk(v+\lambda-2k)/2$ divides $|G|$, it implies that
\[
(q^2-q+1)(q^3+q^2 - 1) \mid 4s(q-1)^6(q+1)^2(q^2+1)(q^2+q+1).
\]
By a direct computation, we have $q\neq 2$. By Lemma~\ref{primitive-prime}, there exists a Zsigmondy  prime divisor $r$ of $q^6-1=p^{6s}-1$ such that $r$ does not divide $p^i-1$ for $i<6s$. As $q^6-1=(q^3-1)(q^3+1)$, one has $r\mid (q^3+1)$. Since $q^3+1=(q+1)(q^2-q+1)$ and $r\nmid (q^2-1)$, one has $r\mid  (q^2-q+1) $, and so $r\mid (q^2-q+1)(q^3+q^2-1)$. Since $r\nmid (q^i-1)$ for $i<6$, one has $r\nmid (q-1)^6(q+1)^2(q^2+1)(q^2+q+1)$. It follows that $r\mid 4s$. However, by Lemma~\ref{primitive-prime}, we have $r\equiv 1\pmod{6s}$, a contradiction.

 For (2), if $T=G_2(3)$, then by \cite[Table~11.7]{BrouwerSRG}, the corresponding rank $3$ graph has parameters $(v,k,\ld,\mu)=(351, 126, 45, 45)$, and by Lemma~\ref{lm:Gaorder}, we would have $ {vk\ld}/{6}=351\cdot63\cdot15 $ divides $|\Aut (G_2(3))|=2^7\cdot3^6\cdot7\cdot13$, a contradiction. If $T=G_2(4)$, then by \cite[Table~11.7]{BrouwerSRG}, there are two corresponding rank $3$ graphs, of which one has parameters $(v, k, \ld, \mu)=(416, 100, 36, 20)$ and the other has parameters $(v, k, \ld, \mu)=(2016,  975, 462, 480)$. By Lemma~\ref{lm:Gaorder}, for the former, we would have $ v(v-k-1)(v-2k+\mu-2)/{6}=2^5\cdot3^3\cdot5\cdot7\cdot13^2$ divides $|\Aut(G_2(4))|=2^{13}\cdot 3^3\cdot5^2\cdot7\cdot13$, and for the latter, we would have $ {vk\ld}/{6}=2^5\cdot3^3\cdot5^2\cdot7^2\cdot11\cdot13$ divides $|\Aut (G_2(4))|=2^{13}\cdot 3^3\cdot5^2\cdot7\cdot13$. This is clearly impossible. If $T=G_2(8)$, then by \cite[Table~11.7]{BrouwerSRG}, the corresponding rank $3$ graph has parameters $(v, k, \ld, \mu)=(130816, 32319, 7742, 8064)$, and by Lemma~\ref{lm:Gaorder}, we would have ${vk\ld}/{6}=2^8\cdot3^4\cdot7^4\cdot19\cdot73\cdot79  $ divides $|\Aut (G_2(8))|=2^{18}\cdot3^6\cdot7^2\cdot19\cdot73$, a contradiction.

\smallskip
\f{\bf Step~4.}\ $T$ is not isomorphic to $\PSL_m(q)$ with $(m,q)\neq (2,2), (2,3)$.
\smallskip

Suppose on the contrary that $T\cong\PSL_m( q)$ with $(m,q)\neq (2,2), (2,3)$.
By \cite[Theorem~11.3.3]{BrouwerSRG}, we see that $T\in\{\PSL_m(q) (m\geq 4), \PSL_2(5)\cong\PSL_2(4)\cong\A_5, \PSL_2(9)\cong\A_6, \PSL_2(8), \PSL_4(2)\cong\A_8, \PSL_3(4), \PSL_4(3)\}$.

By Step~2, $T$ is not isomorphic to $\A_n$ for $n\geq5$, and so $T$ is not isomorphic to $\PSL_2(5)\cong\PSL_2(4), \PSL_2(9)$, $\PSL_4(2)$. If $T=\PSL_2(8)$, then by \cite[Theorem~11.3.3]{BrouwerSRG}, the graph arising from $T$ has $36$ vertices and automorphism group $\S_9$, which is also impossible by Step~2. If $T\cong\PSL_3(4)$, then by \cite[Theorem~11.3.3]{BrouwerSRG}, $\Ga$ is the Gewirtz graph with parameters $(v, k, \ld, \mu)=(56, 10, 0, 2)$, and by Lemma~\ref{lm:Gaorder}, we have $ {vk(k-\ld-1)}/{2}=2^3\cdot 5\cdot 7\cdot19$ divides $|\Aut(\PSL_3(4))|=2^8\cdot 3^3\cdot 5\cdot7$, a contradiction. If $T\cong\PSL_4(3)$, then by \cite[Theorem~11.3.3]{BrouwerSRG}, $\Ga$ is the $NO_6^+(3)$ graph (see \cite[p.293]{BrouwerSRG}) with parameters $(v, k, \ld, \mu)=(117, 36, 15, 9)$, and by Lemma~\ref{lm:Gaorder}, we have $ v(v-k-1)(v-2k+\mu-2)/{2}=2^5\cdot 3^2\cdot 5\cdot13^2$ divides $ |\Aut(\PSL_4(3))|=2^9\cdot 3^6\cdot 5\cdot13$, a contradiction.

Suppose now that $T\cong\PSL_m(q)$ with $m\geq 4$. By \cite[\textsection{3.5.1}~and~Theorem~11.3.3]{BrouwerSRG}, $\Ga$ is the Grassmann graph with vertex set the set of $2$-subspaces of $\GF(q)^m$, and two different $2$-subspaces $\alpha$ and $\b$ are adjacent whenever $\dim(\alpha \cap \b)=1$. Moreover, by \cite[Theorem~9.3.1]{Brouwer-Cohen-Neumarer}, $\Aut(\Ga)=\mathrm{P\Gamma L}_m( q)$ if $m\geq 5$, and $\Aut(\Ga)=\mathrm{P\Gamma L}_m( q)\lg\s\rg$ if $m=4$, where $\s$ is a graph automorphism of $\PSL_4(q)$. Fix a basis $\{e_1, e_2, \ldots, e_m\}$ of $\GF(q)^m$.
Let
\[\alpha_1=\langle e_1,e_2 \rangle, \alpha_2=\langle e_1, e_3\rangle, \alpha_3=\langle e_2,e_1+e_3\rangle,  \alpha_4=\langle e_1,e_4\rangle.
\]
Then $[\{\alpha_1, \alpha_2, \alpha_3\}]$ and $[\{ \alpha_1, \alpha_2, \alpha_4\}]$ are two triangles.
Note that $\dim(\langle \alpha_1,\alpha_2,\alpha_3\rangle)=3$ and $\dim(\langle \alpha_2,\alpha_3,\alpha_4\rangle)=4$. Clearly, $[\{\alpha_1, \alpha_2, \alpha_3\}]$ and $[\{ \alpha_2, \alpha_3, \alpha_4\}]$ are not equivalent under $\mathrm{P\Gamma L}_m( q)$.

Therefore, we must have $m=4$. Notice that now $q \neq 2$ by Step 2.
From~\cite[\textsection{3.5.1}]{BrouwerSRG} we see that $\Ga$ has parameters
\[(v,k,\ld,\mu)=\left(\frac{(q^4-1)(q^3-1)}{(q^2-1)(q-1)}, q(q+1)^2, 2q^2+q-1, (q+1)^2\right).\]
By Lemma~\ref{lm:Gaorder}, we conclude that $\lambda$ divides $6|G|/(vk)$, which implies that \[2q^2+q-1 \mid 12sq^5(q-1)^3.\] Since $2q^2+q-1=(2q-1)(q-1)$, $(2q^2+q-1,q)=1$ and $(2q-1,q-1)=1$, it follows that $(2q-1)\mid 3s$. If $q=p^s$ with $p$ odd, then by Lemma~\ref{lem:claim}~(i), we have $s\leq2$ and so $2q-1=1$ or $3$, which is impossible. If $q=2^s$, then by Lemma~\ref{lem:claim}~(ii), we have $s\leq5$ and so $2q-1=3, 9, 5$ or $15$. It follows that $q=2^3$. Then we would have $2q-1=15$ divides $ 3s=3\cdot 3$, a contradiction.

\smallskip
\f{\bf Step~5.}\ Let $T$ be one of the groups $\PSp_{2m-2}( q)$, $\mathrm{P\Omega}^{\pm}_{2m}(q)$, $\mathrm{P\Omega}_{2m-1}(q)$ or $\PSU_m(q)$ with $m\geq 3$. Then $T\cong \PSU_4(q)\cong\mathrm{P\Omega}^{-}_6( q)$, and up to a complement, $\Ga\cong\mathrm{\Gamma}(\mathrm{O}^-_6(q))$.

\smallskip
From \cite[Theorem~11.3.2]{BrouwerSRG}, one of the following cases holds:

\begin{enumerate}[\rm (1)]
\item $V(\Ga)$ is an orbit of $T$ acting on the set of singular (or isotropic) points of a projective space.

\item $V(\Ga)$ is an orbit of $T$ acting on the set of maximal totally singular (or isotropic) subspaces of a vector space, and $T$ is one of the groups $\PSp_4( q)$, $\PSU_4( q)$,  $\PSU_5( q)$, $\mathrm{P\Omega}^{-}_6(q)$, $\mathrm{P\Omega}^{+}_8(q)$ or $\mathrm{P\Omega}^{+}_{10}(q)$.

\item  $V(\Ga)$ is an orbit of $T$ acting on the set of nonsingular points of a projective space, and $T$ is one of the groups $\PSU_m( 2)$, $\mathrm{P\Omega}^{\pm}_{2m}(2)$, $\mathrm{P\Omega}^{\pm}_{2m}(3)$ or $\mathrm{P\Omega}_{2m-1}( 3)$.

\item $V(\Ga)$ is an orbit of $T$ acting on the set of nonsingular hyperplanes of a projective space, and $T=\mathrm{P\Omega}_{2m-1}( 4)$ or $\mathrm{P\Omega}_{2m-1}( 8)$, where in the latter case $G= \mathrm{P\Omega}_{2m-1}(8).3$.

\item $(T, T_\a)=(\PSU_3(3), \PSL_3(2))$, $(\PSU_3(5), \A_7)$, $(\PSU_4(3), \PSL_3(4))$, $(\PSp_6(2), G_2(2))$,  $(\mathrm{P\Omega}_{7}(3), G_2(3))$ or $(\PSU_6(2), \PSU_4(3).2)$, where $\a\in V(\Ga)$.
\end{enumerate}

We will complete the proof of this final step case by case.

\smallskip
\f{\bf Step~5.1.}\ Case (5) cannot happen.
\smallskip

If $(T, T_\a)=(\PSU_3(3), \PSL_3(2))$, then by~\cite[\textsection{10.14}]{BrouwerSRG}, $\Ga$ has parameters $(v, k, \lambda, \mu)=(36, 14, 4, 6)$, and by Lemma~\ref{lm:Gaorder}, we have $  {vk(k-\lambda-1)}/{2} = 2^2\cdot 3^4 \cdot 7 $ divides $|\Aut(\PSU_3(3))|=2^6\cdot 3^3 \cdot 7$, a contradiction.

If $(T, T_\a)=(\PSU_3(5),\A_7)$, then by~\cite[\textsection{10.19}]{BrouwerSRG}, $\Ga$ has parameters $(v,k,\lambda,\mu)=(50,7,0,1)$, and by Lemma~\ref{lm:Gaorder}, we have $    {v(v-k-1)(v-2k+\mu-2)}/{6} =2 \cdot 5^3 \cdot 7^2$ divides $ |\Aut(\PSU_3(5))|=2^5\cdot 3^3 \cdot 5^3\cdot 7$, a contradiction.

If $(T, T_\a)=(\PSU_4(3), \PSL_3(4))$, then by~\cite[\textsection{10.19}]{BrouwerSRG}, $\Ga$ has parameters $(v,k,\lambda,\mu)=(162,56,10,24)$, and computation in Magma~\cite{BCP} shows that $\Aut(\Ga)=\PSU_4(3).2^2$ is not transitive on $\Ome_4(\Ga)$, which is impossible by Lemma~\ref{lm:Gaorder}.

If $(T, T_\a)=(\PSp_6(2), G_2(2))$, then by~\cite[\textsection{10.19}]{BrouwerSRG}, $\Ga$ has parameters $(v,k,\lambda,\mu)=(120,56,28,24)$, and by Lemma~\ref{lm:Gaorder}, we have $  {vk\lambda}/{6} = 2^7 \cdot 5 \cdot 7^2 $ divides $ |\Aut(\PSp_6(2))|=2^9\cdot 3^4 \cdot 5 \cdot 7$, a contradiction.

If $(T, T_\a)=(\mathrm{P\Omega}_{7}(3),G_2(3))$, then by~\cite[\textsection{10.19}]{BrouwerSRG}, $\Ga$ has parameters $(v,k,\lambda,\mu)=(1080,351,126,108)$, and by Lemma~\ref{lm:Gaorder}, we have $   {v(v-k-1)(v-2k+\mu-2)}/{6} =2^7 \cdot 3^3\cdot 5  \cdot 7 \cdot 11^2 \cdot 13 $ divides $ |\Aut(\mathrm{P\Omega}_{7}(3))|=2^{10}\cdot 3^9 \cdot 5 \cdot 7\cdot 13$, a contradiction.

If $(T,T_v)=(\PSU_6(2), \PSU_4(3).2)$, then by~\cite[\textsection{10.19}]{BrouwerSRG}, $\Ga$ has parameters $(v,k,\lambda,\mu)=(1408,567,246,216)$, and by Lemma~\ref{lm:Gaorder}, we have $ {vk\lambda }/{6}= 2^7\cdot 3^4 \cdot 7 \cdot 11 \cdot 41 $ divides $ |\Aut(\PSU_6(2))|=2^{16}\cdot 3^7 \cdot 5 \cdot 7 \cdot 11$, a contradiction.

\smallskip

In the remainder, we let $T$ be a classical group satisfying Step~5, and let $V$ be the vector space on which $T$ acts naturally,
and let $Q$ be the quadratic  form associated with $T$ if $T$ is an  orthogonal  group.

\smallskip
\f{\bf Step~5.2.}\ If Case (1) happens, then $T=\mathrm{P\Omega}^{-}_6(q)$, and $\Ga\cong \mathrm{\Gamma}(\mathrm{O}^-_6(q))$ or its complementary graph. In particular, $\Ga$ is $3$-homogeneous.
\smallskip

In Case (1), $V(\Ga)$ is an orbit of $T$ acting on the set of singular (or isotropic) points of a projective space.

Assume first that $T=\PSp_{2m-2}(q)$ with $m\geq 3$.
Let $\boldsymbol{\beta}(,)$ be the symplectic form on $V$ associated with $T$.
By \cite[\textsection{2.5.1}]{BrouwerSRG}, we may assume that $\Ga$ is isomorphic to the symplectic graph $\mathrm{\Gamma}(\mathrm{PSp}_{2m-2}(q))$, namely, $\Ga$ is a graph with vertex set the $1$-subspaces of $V$, and two different vertices $\lg e \rg$ and $\lg f\rg$ are adjacent whenever $\boldsymbol{\beta}(e, f)=0$.
Since $2m-2\geq 4$, there exist two mutually orthogonal hyperbolic pairs $(e_1, f_1)$ and $(e_2, f_2)$. Let $e_3=e_1+f_1$. By direct computation, we see that $\boldsymbol{\beta}(e_3, e_1)\neq 0$ and $\boldsymbol{\beta}(e_3, e_2)\neq 0$.
Let
\[
 \alpha_1=\langle e_1\rangle, \alpha_2=\langle f_1\rangle, \alpha_3=\langle e_3\rangle, \alpha_4=\langle e_2+e_3\rangle.
\]
Then  $[\{\alpha_1,\alpha_2,\alpha_3\}]\cong [\{ \alpha_1,\alpha_2,\alpha_4\}] \cong 3\K_1$.
Since $\dim(\langle \alpha_1,\alpha_2,\alpha_3\rangle)=2$ while $\dim(\langle \alpha_1,\alpha_2,\alpha_4\rangle)=3$,  $[\{ \alpha_1, \alpha_2, \alpha_3\}]$ and $[\{\alpha_1, \alpha_2, \alpha_4\}]$ are not equivalent under $G$, contradicting that $\Ga$ is $(G,3)$-set-homogeneous.

Assume now that $T=\PSU_m(q)$ with $m\geq 3$.
Let $\boldsymbol{\beta}(,)$ be the Hermitian form on $V$ associated with $T$.
By \cite[\textsection{2.7.3}]{BrouwerSRG}, $\Ga$ is isomorphic to the unitary graph $\mathrm{\Gamma}(\mathrm{PSU}_{m}(q))$, namely, $\Ga$ is a graph with vertex set the set of totally isotropic $1$-subspaces of $V$, and two different vertices $\lg e\rg$ and $\lg f\rg$ are adjacent whenever $\boldsymbol{\beta}(e, f)=0$. Since $\PSU_3( q)$ acts $2$-transitively on the set of isotropic $1$-subspaces, we have $m\geq4$. Hence there exist two mutually orthogonal hyperbolic pairs $(e_1, f_1)$ and $(e_2, f_2)$. Let $e_3=e_1+(t-t^q)f_1$ with $t\in \GF(q^2)\setminus \GF(q)$.
By direct computation, we see that $e_3$ is isotropic, and $\boldsymbol{\beta}(e_3,e_1)\neq 0$ and $\boldsymbol{\beta}(e_3,e_2)\neq 0$.
Let
\[
 \alpha_1=\langle e_1\rangle, \alpha_2=\langle f_1\rangle, \alpha_3=\langle e_3\rangle, \alpha_4=\langle e_2+e_3\rangle.
\]
Then  $[\{\alpha_1,\alpha_2,\alpha_3\}]\cong [\{ \alpha_1,\alpha_2,\alpha_4\}] \cong 3\K_1$. Since $\dim(\langle \alpha_1, \alpha_2, \alpha_3\rangle)=2$ while $\dim(\langle \alpha_1, \alpha_2, \alpha_4\rangle)=3$,  $[\{ \alpha_1, \alpha_2, \alpha_3\}]$ and $[\{\alpha_1, \alpha_2, \alpha_4\}]$ are not equivalent under $G$, contradicting that $\Ga$ is $(G,3)$-set-homogeneous.

We now consider the orthogonal groups. In this case, we may assume that $\Ga$ is isomorphic to the so-called orthogonal graph (see, for its definition, \cite[\textsection{2.6.2}]{BrouwerSRG}).
In particular, following~\cite[\textsection{2.6.2}]{BrouwerSRG} for the notation for the orthogonal graph, if $T=\mathrm{P\Omega}^{\epsilon}_n(q)$, where $n$ is even and $\epsilon \in \{+,- \}$,  then $\Ga \cong \Gamma(\mathrm{O}^{\epsilon}_n(q))$, and if $T=\mathrm{P\Omega}_n(q)$, where $n$ is odd, then $\Ga \cong \Gamma(\mathrm{O}_n(q))$.

If $T=\mathrm{P\Omega}_{5}( q)$ with $q$ even, then by \cite[\textsection{2.6.4}]{BrouwerSRG}, $\Ga$ is also isomorphic to the symplectic graph $\mathrm{\Gamma}(\mathrm{PSp}_{4}(q))$, and we have already proved that this graph is not $(G,3)$-set-homogeneous (see the 2nd paragraph in the proof of Step~5.2).

If $T=\mathrm{P\Omega}_{5}(q)$ with $q$ odd, then $|G|$ divides $2sq^4(q^4-1)(q^2-1)$, and from~\cite[\textsection{2.6.3}]{BrouwerSRG}, we see that $\Ga$ has parameters \[(v,k,\ld,\mu)=(q^3+q^2+q+1, q(q+1), q-1, q+1).\]
Since $\Ga$ is  $(G,3)$-set-homogeneous, by Lemma~\ref{lm:Gaorder}, $|\Ome_4(\Ga)|$ divides $|G|$, where $|\Ome_4(\Ga)|=v(v^2-3vk-3v+3k^2-k\lambda+3k+2)/6=(q^9-q^5)/6=q^5(q^4-1)/6$. It implies $q \mid 3s$.
The only candidate for $q$ is $3$.
 However, computation in Magma~\cite{BCP} shows that in the case $q=3$, $\Aut(\mathrm{P\Omega}_{5}(3))$ is not transitive on the set of independent subsets of $\Ga$ of size $3$, contradicting that $\Ga$ is $3$-set-homogeneous.

If either $(n, \epsilon)=(6,+)$ or $n\geq 7$, then there exist three mutually orthogonal hyperbolic pairs $(e_i, f_i)$ for all $i\in \{1, 2, 3\}$.
Let
\[
\alpha_1=\langle e_1\rangle, \alpha_2=\langle e_2\rangle, \alpha_3=\langle e_1+e_2\rangle,\alpha_4=\langle e_3\rangle.
\]
Then $[\{ \alpha_1,\alpha_2,\alpha_3\}]\cong [\{ \alpha_1,\alpha_2,\alpha_4\}] \cong  \K_3$.
Clearly,  $\dim(\langle \alpha_1,\alpha_2,\alpha_3\rangle)=2$ while $\dim(\langle \alpha_1,\alpha_2,\alpha_4\rangle)=3$.
It follows that the two triangles $[\{ \alpha_1, \alpha_2, \alpha_3\}]$ and $[\{\alpha_1, \alpha_2, \alpha_4\}]$ are not equivalent under $G$, contradicting that $\Ga$ is $(G,3)$-set-homogeneous.

Thus, $T=\mathrm{P\Omega}^{-}_{6}(q)$, and by \cite[\textsection{2.6.1}]{BrouwerSRG}, up to a complement, we have $\Ga\cong \mathrm{\Gamma}(\mathrm{O}^-_6(q))$.  By~\cite[Corollary~1.2]{Cameron-3-hom}(vi), $\Ga$ is $3$-homogeneous, as required.

\smallskip
\f{\bf Step~5.3.}\ If Case (2) happen, then $T=\PSU_4(q)$ and $\Ga\cong \mathrm{\Gamma}(\mathrm{O}^-_6(q))$.
\smallskip

In Case (2), $V(\Ga)$ is an orbit of $T$ acting on the set of maximal totally singular (or isotropic) subspaces of $V$, and $T$ is one of the groups $\PSp_4( q)$, $\PSU_4( q)$,  $\PSU_5( q)$, $\mathrm{P\Omega}^{-}_6(q)$, $\mathrm{P\Omega}^{+}_8(q)$ or $\mathrm{P\Omega}^{+}_{10}(q)$.
Let $\Delta(T)$ be the graph with vertex set the set of maximal totally singular (or isotropic) subspaces of $V$, and two vertices $\alpha, \beta$ are adjacent whenever $\alpha\cap \beta$ has codimension $1$ in both $\alpha$ and $\beta$.

First, let $T$ be one of $\PSp_4( q)$, $\PSU_4( q)$,  $\PSU_5( q)$ and $\mathrm{P\Omega}^{-}_6(q)$. Then by \cite[Theorems~11.3.2, 2.2.19 \& Table~2.1]{BrouwerSRG}, we have $\Ga\cong\Delta(T)$. If $T=\PSU_4(q)$, then by \cite[\textsection{2.7.5}]{BrouwerSRG}, we have $\Delta(\PSU_4(q))\cong\mathrm{\Gamma}(\mathrm{O}^-_6(q))$, and by \cite[Corollary~1.2]{Cameron-3-hom}(vi), $\Ga$ is $3$-homogeneous, as required.

Next we shall prove there exist no $3$-set-homogeneous graphs arising from each of the groups $\PSp_4( q)$,  $\PSU_5( q)$, $\mathrm{P\Omega}^{-}_6(q)$.

If $T=\mathrm{PSp}_4(q)$ with $q\geq 3$, then from~\cite[\textsection{2.6.4}]{BrouwerSRG} we see that $ \Delta(\mathrm{PSp}_4(q))$ is isomorphic to the orthogonal graph $\Gamma(\mathrm{O}_5(q))$ associated with $ \mathrm{P\Omega}_{5}(q)$, which has been proved to be not $(G,3)$-set-homogeneous in Step~5.2.

If $T=\PSU_5(q)$, then $|G|$ is a divisor of $2sq^{10}(q^5+1) (q^4-1) (q^3+1) (q^2-1)$. By~\cite[Theorem~2.2.19~and~Table~2.1]{BrouwerSRG},  we know that $\Ga=\Delta(\PSU_5(q))$ has parameters \[(v,k,\ld,\mu)=((q^5+1)(q^3+1), q^3(q^2+1), q^3-1, q+1).\]
Since $\Ga$ is $(G,3)$-set-homogeneous, by Lemma~\ref{lm:Gaorder}, $\ld=6|G|/(vk)$ divides $12sq^{7}(q^2-1)^2$.
If $q=4$, then $7\mid \ld=q^3-1$ but $7\nmid 12sq^{7}(q^2-1)^2$, a contradiction. If $q\neq 4$, then by Lemma~\ref{primitive-prime}, there exists a Zsigmondy  prime divisor $r$ of $q^3-1=p^{3s}-1$ such that $r$ does not divide $p^i-1$ for $i<3s$. It follows that $r\mid 12s$. However, by Lemma~\ref{primitive-prime}, we have $r\equiv 1\ (\mod 3s)$, a contradiction.

If $T=\mathrm{P\Omega}^{-}_6(q)$, then from~\cite[\textsection{2.7.5}]{BrouwerSRG}, we see that $\Delta(\mathrm{P\Omega}^{-}_6(q))\cong\Gamma(\PSU_4(q))$. However, in Step~5.2, we already proved that $\Gamma(\PSU_4(q))$ is not $3$-set homogeneous, a contradiction.

Second, let $T=\mathrm{P\Omega}^{+}_8(q)$ or $\mathrm{P\Omega}^{+}_{10}( q)$. Then by \cite[Theorem~11.3.2, Table~2.1 \& \textsection{2.2.12}]{BrouwerSRG}, we conclude that $\Ga\cong\Delta_{1/2}(T)$, which is one of the two connected components of the distance-$2$ graph of $\Delta(T)$. Furthermore, if $T=\mathrm{P\Omega}^{+}_8(q)$, then by \cite[\textsection{2.2.12}]{BrouwerSRG}, $\Delta_{1/2}(T)\cong\Gamma(\mathrm{P\Omega}^{+}_8(q))$, which has been proved to be not $3$-set homogeneous in Step~5.2.
Let $T=\mathrm{P\Omega}^{+}_{10}(q)$. Then $|G|$ is a divisor of $2sq^{20}(q^5-1) (q^8-1)(q^6-1)(q^4-1)(q^2-1)$.
By~\cite[Theorem~2.2.20~and~Table~2.1]{BrouwerSRG} we conclude that
$\Ga=\Delta_{1/2}(\mathrm{P\Omega}^{+}_{10}(q))$ has parameters \[(v,k,\ld,\mu)=(\prod_{i=1}^4(q^i+1), \frac{q(q^2+1)(q^5-1)}{(q-1)q(q+1)}, q^2(q+1)(q^2+q+1)+q-1, (q^2+1)(q^2+q+1)).\]
Then $\lambda= (q^2+1)(q^3 + 2q^2 + q - 1)$, and since $\Ga$ is $(G,3)$-set-homogeneous, by Lemma~\ref{lm:Gaorder}, we have $\ld$ divides $6|G|/(vk)$.  It follows that \[(q^2+1)(q^3 + 2q^2 + q - 1)\mid 12sq^{19}(q-1)^5(q+1)^2(q^2+q+1).\]
By Lemma~\ref{primitive-prime}, $q^4-1=p^{4s}-1$ has a  Zsigmondy prime divisor $r$ such that $r\nmid (p^{i}-1)$ for $i<4s$. In particular, $r\nmid (q-1)(q^2-1)(q^3-1)$. It follows that $r\nmid (q-1)^5(q+1)^2(q^2+q+1)$. Since $q^4-1=(q^2+1)(q^2-1)$, one has $r\mid (q^2+1)$, and hence $r\mid 12sq^{19}(q-1)^5(q+1)^2(q^2+q+1)$. Therefore, $r\mid 12s$. However, by Lemma~\ref{primitive-prime}, we have $r\equiv 1\ (\mod 4s)$, a contradiction.

\smallskip
\f{\bf Step~5.4.}\ Case (3) cannot happen.
\smallskip

Suppose by way of contradiction that Case~(3) happens. Then $V(\Ga)$ is an orbit of $T$ acting on the set of nonsingular points of a projective space, and $T$ is one of the groups $\PSU_m( 2)$, $\mathrm{P\Omega}^{\pm}_{2m}(2)$, $\mathrm{P\Omega}^{\pm}_{2m}(3)$ or $\mathrm{P\Omega}_{2m-1}( 3)$, where $m\geq 3$.

Let $T=\PSU_m(2)$ with $m\geq 3$. By \cite[\textsection{3.1.6}]{BrouwerSRG}, we may assume that $\Ga=\overline{NU_m(2)}$, and the vertices of $\overline{NU_m(2)}$ are the nonisotropic $1$-subspaces of $V$, and two vertices are adjacent whenever they are orthogonal. Let $\boldsymbol{\beta}(,)$ be the Hermite form of $V$ associated with $T$.
By~\cite[Proposition~1.5.29]{Bray-Holt-Dougal},  $V$ has a basis $\{e_1,e_2,\ldots,e_m\}$ such that $\boldsymbol{\beta}(e_i,e_i)= 1$ for all $i\in \{1,2,\ldots,m\}$ and $V=\langle e_1\rangle \perp \langle e_2\rangle \perp \cdots \perp \langle e_m\rangle $.
Notice that $V$ is a vector space over $\GF(4)$.
Write $\GF(4)=\{0,1, t, t+1\}$ with $t^2=t+1$.
Let
\[
\alpha_1=\langle e_1 \rangle, \alpha_2=\langle e_1+e_2+e_3 \rangle,  \alpha_3=\langle te_1+e_2+e_3 \rangle,  \alpha_4=\langle e_1+te_2+e_3 \rangle.
\]
Then $\a_1,\a_2,\a_3,\a_4\in V(\Ga)$. Notice that $\boldsymbol{\beta}( e_1+e_2+e_3,e_1)=1$, $\boldsymbol{\beta}(te_1+e_2+e_3,e_1)=t$, $\boldsymbol{\beta}(te_1+e_2+e_3,  e_1+e_2+e_3)=t$, $\boldsymbol{\beta}(e_1+te_2+e_3, e_1 )=1$ and $\boldsymbol{\beta}(e_1+te_2+e_3, e_1+e_2+e_3 )=t$.
Thus $ [\{ \alpha_1,\alpha_2,\alpha_3\}]\cong [\{ \alpha_1,\alpha_2,\alpha_4\}] \cong  3\K_1$.
Since $\langle \alpha_1,\alpha_2,\alpha_3\rangle=\langle e_1,e_2+e_3\rangle$ has dimension $2$ while $\langle \alpha_1,\alpha_2,\alpha_4\rangle=\langle e_1,e_2,e_3\rangle$ has dimension $3$, there is no $g\in G$ such that $\{ \alpha_1,\alpha_2,\alpha_3\}^g=\{ \alpha_1,\alpha_2,\alpha_4\}$, a contradiction.

Let $T=\mathrm{P\Omega}^{\pm}_{2m}(3)$ or $\mathrm{P\Omega}_{2m-1}(3)$.
Let $\boldsymbol{\beta}(,)$ be the  polar form  associated with $Q$.
By \cite[\textsection{3.1.3}--\textsection{3.1.4}]{BrouwerSRG}, $T$ has two orbits on the set of nonsingular points, say $O_1$ and $O_{-1}$, such that all points in $O_{\varepsilon}$ have $Q$-value $\varepsilon$ for $\varepsilon=1$ or $-1$. By \cite[\textsection{3.1.3}--\textsection{3.1.4}]{BrouwerSRG}, we may assume that $\Ga$ is the graph with vertex set $O_{\varepsilon}$, and two vertices are adjacent whenever they are orthogonal. Since $m\geq 3$, there exist two mutually orthogonal hyperbolic pairs $(e_1,f_1)$ and $(e_2,f_2)$.
Let $d_1=e_1+\varepsilon f_1$, $d_2=e_1+\varepsilon f_1+e_2$, $d_3=e_1+\varepsilon f_1-e_2$ and $d_4=e_1+\varepsilon f_1-\varepsilon f_2$.
By a direct computation, we see that $Q(d_i)=\varepsilon$ for all $i\in \{1,2,3,4\}$, $\boldsymbol{\beta}(d_1, d_2)=\boldsymbol{\beta}(d_1, d_3)=\boldsymbol{\beta}(d_1, d_4)=\boldsymbol{\beta}(d_2, d_3)=-1$ and $\boldsymbol{\beta}(d_2, d_4)=\varepsilon$.
Let $\alpha_i=\langle d_i\rangle$ for all $i\in \{1,2,3,4\}$.  Then all $\alpha_i$ are in $V(\Ga)$, and
 $ [\{ \alpha_1,\alpha_2,\alpha_3\}]\cong [\{ \alpha_1,\alpha_2,\alpha_4\}] \cong  3\K_1$.
However, $\langle \alpha_1,\alpha_2,\alpha_3\rangle=\langle e_1+\epsilon f_1,e_2 \rangle$ has dimension $2$ and $\langle \alpha_1,\alpha_2,\alpha_4\rangle=\langle e_1+\epsilon f_1,e_2,f_2\rangle$ has dimension $3$. So there is no $g\in G$ such that $\{ \alpha_1,\alpha_2,\alpha_3\}^g=\{ \alpha_1,\alpha_2,\alpha_4\}$, contradicting that $\Ga$ is $(G,3)$-set-homogeneous.

Let $T= \mathrm{P\Omega}^{\pm}_{2m}(2)$.
Let $\boldsymbol{\beta}(,)$ be the  polar form  associated with $Q$.
By \cite[\textsection{3.1.2}]{BrouwerSRG}, we may assume that $\Ga$ is the graph with vertex set the set of nonsingular points, and two vertices are adjacent whenever they are orthogonal. Since $m\geq 3$, there exist two mutually orthogonal hyperbolic pairs $(e_1, f_1)$ and $(e_2, f_2)$. Let $\alpha_1=\langle e_1+f_1 \rangle$, $\alpha_2=\langle e_1+e_2+f_2 \rangle$, $\alpha_3=\langle f_1+e_2+f_2 \rangle$ and  $\alpha_3'=\langle e_1+e_2+w \rangle$ with $w \in \langle e_1, f_1, e_2, f_2\rangle^\perp $ and $Q(w)=1$. Then $\a_1,\a_2,\a_3,\a_3'\in V(\Ga)$, and  $ [\{ \alpha_1,\alpha_2,\alpha_3\}]\cong [\{ \alpha_1,\alpha_2,\alpha_3'\}] \cong  3\K_1$. It is easy to see that $ \dim( \langle \alpha_1,\alpha_2,\alpha_3\rangle)=2$ and $ \dim( \langle \alpha_1,\alpha_2,\alpha_3'\rangle)=3$. Therefore, there exists no $g\in G$ such that $\{ \alpha_1,\alpha_2,\alpha_3\}^g=\{ \alpha_1,\alpha_2,\alpha_3'\}$, contradicting that $\Ga$ is $(G,3)$-set-homogeneous.

\smallskip
\f{\bf Step~5.5.}\ Case (4) cannot happen.
\smallskip

Suppose by way of contradiction that Case~(4) happens. Then $V(\Ga)$ is an orbit of $T$ acting on the set of nonsingular hyperplanes of a projective space, and $T=\mathrm{P\Omega}_{2m-1}( 4)$ or $\mathrm{P\Omega}_{2m-1}( 8)$, where $m\geq3$ and in the latter case $G= \mathrm{P\Omega}_{2m-1}(8).3$.
Let $V$ be a vector space of dimension $2m-1$ over $\GF(q)$ with $q=4$ or $8$, provided with a non-degenerate quadratic form $Q$. By \cite[\textsection{3.1.4}]{BrouwerSRG}, the set of nonsingular hyperplanes of $\PG(2m-2, q)$ splits into two parts,  say $O_{1}$ and $O_{-1}$ of sizes $ q^{m-1}(q^{m-1}+1)/2$ and $ q^{m-1}(q^{m-1}-1)/2$, respectively. Furthermore, $\Ga$ is a graph with vertex set $O_{1}$ or $O_{-1}$, and two hyperplanes $x, y$ are adjacent when  $Q\cap x\cap y$  is degenerate.

First, let $T=\mathrm{P\Omega}_{2m-1}(4)$. From~\cite[\textsection{3.1.4}]{BrouwerSRG},  we see that $\Ga$ has parameters:
\[\begin{array}{l}
v=4^{m-1}(4^{m-1}+\varepsilon)/2,\ k=(4^{m-2}+\varepsilon)(4^{m-1}-\varepsilon),\\

\lambda=2(4^{2(m-2)}-1)+3\cdot4^{m-2}\varepsilon, \mu= 2\cdot4^{m-2}(4^{m-2} +\varepsilon),
\end{array}
\]
where $\varepsilon=1$ or $-1$.
By~\cite[Table~8.14]{Bray-Holt-Dougal} we see that $G$ contains no graph automorphism when $T=\mathrm{P\Omega}_5(4)\cong \PSp_{4}(4)$. Therefore, $|G|$ divides $|T.2|=2\cdot 4^{(m-1)^2}\prod_{i=1}^{m-1}(4^{2i}-1)$. Since $\Ga$ is $(G,3)$-set-homogeneous, by Lemma~\ref{lm:Gaorder}, $\lambda $ divides $6|G|/(vk)$, which implies $\lambda \mid 6(2^{2(m-2)}-\varepsilon)\prod_{i=1}^{m-3}(2^{4i}-1)$.
Note that
\[\lambda=(2\cdot 4^{2(m-2)}-\varepsilon)(4^{2(m-2)}+2\varepsilon)=2(2^{4m-7}-\varepsilon)(2^{4m-9}+\varepsilon).\]
If $\varepsilon=1$, then by Lemma~\ref{primitive-prime}, there is a Zsigmondy prime divisor $r $ of $2^{2(4m-9)}-1$ such that $r \mid \lambda$   while $r \nmid 6(2^{2(m-2)}-1)\prod_{i=1}^{m-3}(2^{4i}-1)$, a contradiction.
If $\varepsilon=-1$, then there is a Zsigmondy prime divisor $r$ of $2^{2(4m-7)}-1$ such that $r \mid \lambda$ while $r \nmid 12s(2^{2(m-2)}+1)\prod_{i=1}^{m-3}(2^{4i}-1)$,  a contradiction.

Second, let $T=\mathrm{P\Omega}_{2m-1}(8)$ with $m\geq 3$. Note that the action of $\mathrm{P\Omega}_{2m-1}(8)$ on an orbit of nonsingular hyperplanes of $\PG_{2m-2}(8)$ is permutation isomorphic to the action of $\mathrm{Sp}_{2m-2}(8)$ on the
set $\mathcal{Q}$ of quadratic forms on $W$ polarizing into the given symplectic form $\boldsymbol{\beta}(,)$, where $W=\GF(8)^{2m-2}$.
From~\cite[Table~8.14]{Bray-Holt-Dougal} we see that $G$ contains no graph automorphism when $T=\mathrm{Sp}_4(8)$.
This implies that $G\leq \mathrm{\Gamma Sp}_{2m-2}(8)$.

Let $\GF(8)=\{0, 1, t, t^2, \ldots, t^6\}$ with $t+t^2=t^4$, and let $S=\{i+i^2 \mid i \in \GF(8) \}=\{0, t, t^2, t^4\}$. Let $\Sig$ be the set of quadratic forms on $W$. As proved in \cite[p.327]{Inglis1990}, for each pair $(Q,R)$ of quadratic forms in $\Sig$, there exists a unique vector $y(Q,R) \in W$ such that $Q(w)+R(w)=\boldsymbol{\beta}(w,y(Q,R))^2$ for all $w\in W$. Let $\a(Q,R):=Q(y(Q,R))$. By \cite[Lemma~2(iii)]{Inglis1990}, two quadratic forms $Q$ and $R$ are in the same orbits of $\mathrm{Sp}_{2m-2}(8)$ on $\Sig$ if and only if $\a(Q,R)\in S$. From the proof of \cite[Theorem~1]{Inglis1990}, we see that for $Q\in\Sig$, the orbits of the stabiliser $(\mathrm{Sp}_{2m-2}(8))_Q$ on $Q^{\mathrm{Sp}_{2m-2}(8)}$ are: $\{Q\}$ and $\{R\in\mathcal{Q}\mid \alpha(Q,R)=\ld\}$ with $\ld\in S$. Notice that $\mathrm{\Gamma Sp}_{2m-2}(8)=\{g: g\in \mathrm{\Gamma L}_{2m-2}( 8) \mid \boldsymbol{\beta}(w^g, u^g)=\boldsymbol{\beta}(w, u)^{\sigma(g)}\}$, where $\s$ is a linear function from $\mathrm{\Gamma L}_{2m-2}( 8)$ to $\Aut(\GF(8))$ such that its kernel is $\GL_{2m-2}(8)$. As observed in \cite[p.329]{Inglis1990}, for each $Q \in \Sig$ and each $g\in \mathrm{\Gamma Sp}_{2m}(8)$, we have $Q^g(w)=Q(w^{g^{-1}})^{\sigma(g)}$ for all $w \in W$. Furthermore, by~\cite[Lemma~3]{Inglis1990}, we have $\a(Q^g, R^g)=\a(Q,R)^{\s(g)}$ for all $g\in \mathrm{\Gamma Sp}_{2m-2}(8)$. This implies that the orbits of $(\mathrm{\Gamma Sp}_{2m-2}(8))_Q$  on $Q^{\mathrm{Sp}_{2m-2}(8)}$ are
\[\{Q\}, \{R\in\mathcal{Q}\mid \alpha(Q, R)=0\}\ {\rm and}\ \{R\in\mathcal{Q}\mid \alpha(Q, R)\in\{t,t^2,t^4\}\}.\]

Now let $\{e_1, f_1, e_2, f_2, \ldots, e_{m-1}, f_{m-1}\}$ be a basis of $W$, where $(e_i, f_i)$'s are mutually orthogonal hyperbolic pairs.
Let $Q, R$ be two quadratic forms on $W$ such that
\[\begin{array}{l}
Q(e_i)=Q(f_i)=0\ {\rm for\ all}\ 1\leq i\leq m-1,\ {\rm and}\\

R(e_{1})=R(f_{1})=1,\ {\rm and}\ R(e_i)=R(f_i)=0\ {\rm for\ all}\ 2\leq i\leq m-1.
\end{array}
\]
A calculation shows that $\sum_{i=1}^{m-1}Q(e_i)Q(f_i)=0\in S$ and $\sum_{i=1}^{m-1}R(e_i)R(f_i)=1\notin S$. By \cite[p.327]{Inglis1990} or \cite{Dye1978}, $Q^{\mathrm{Sp}_{2m-2}(8)}$ and $R^{\mathrm{Sp}_{2m-2}(8)}$ are the two orbits of $\mathrm{Sp}_{2m-2}(8)$ on the
set of quadratic forms on $W$.

With the above argument, we may let $\Ga$ be a graph with vertex set $V(\Ga)=Q^{\mathrm{Sp}_{2m-2}(8)}$ or $R^{\mathrm{Sp}_{2m-2}(8)}$, and two different quadratic forms $Q', R'\in\mathcal{Q}$ are adjacent if and only if $\alpha(Q', R')=0$.

Assume first that $V(\Ga)=Q^{\mathrm{Sp}_{2m-2}(8)}$. For $w\in W$, let
\[\begin{array}{l}
X(w)=Q(w)+\boldsymbol{\beta}(w,e_1)^2,\\

Y(w)=Q(w)+\boldsymbol{\beta}(w,f_1)^2,\\

Z(w)=Q(w)+\boldsymbol{\beta}(w, e_1+f_2)^2.\\
\end{array}\]
Then $y(Q, X)=e_1$, $y(Q, Y)=f_1$ and $y(Q, Z)=e_1+f_2$. Furthermore, for all $w\in W$, we have $X(w)+Y(w)=\boldsymbol{\beta}(w,e_1+f_1)^2$ and $X(w)+Z(w)=\boldsymbol{\beta}(w,f_2)^2$. It follows that
$y(X, Y)=e_1+f_1$ and $y(X, Z)=f_2$.

Since $Q(e_1)=Q(f_1)=Q(e_1+f_2)=0$, one has $\alpha(Q, X)=\alpha(Q, Y)=\alpha(Q, Z)=0$. It follows that $X,Y,Z\in V(\Ga)$, and $\{Q, X\}, \{Q, Y\}, \{Q,Z\}\in E(\Ga)$. As $X(e_1+f_1)=Q(e_1+f_1)+\boldsymbol{\beta}(e_1+f_1, e_1)^2=0$, it follows that $\alpha(X,Y)=0$ and so $\{X, Y\}\in E(\Ga)$. Similarly, as $X(f_2)=Q(f_2)+\boldsymbol{\beta}(f_2, e_1)^2=0$, it follows that $\a(X,Z)=0$, and so $\{X, Z\}\in E(\Ga)$. Now we have $[\{Q, X, Y\}] \cong [\{Q, X, Z\}]\cong \mathbf{K}_3$. Since $\Ga$ is $(G,3)$-set-homogeneous, it follows that there exists $g\in G\leq \mathrm{\Gamma Sp}_{2m-2}(8)$ such that $\{Q, X, Y\}^g=\{Q, X, Z\}$. Recall that for $L \in \Sig$ and $g\in \mathrm{\Gamma Sp}_{2m-2}(8)$, we have $L^g(w)=L(w^{g^{-1}})^{\sigma(g)}$ for all $w \in W$. It follows that $(Q+X+Y)^g=Q+X+Z$, and hence $Q+X+Y$ and $Q+X+Z$ are in the same orbit of $\mathrm{\Gamma Sp}_{2m-2}(8)$. On the other hand, for $w\in W$, we have $Q(w)+X(w)+Y(w)=Q(w)+\boldsymbol{\beta}(w,e_1+f_1)$ and $Q(w)+X(w)+Z(w)=Q(w)+\boldsymbol{\beta}(w, f_2)$. Then $e_1+f_1=y(Q, Q+X+Y)$ and $f_2=y(Q, Q+X+Z)$.  Since $\a(Q, Q+X+Y)=Q(e_1+f_1)=1\notin S$ and $\a(Q, Q+X+Z)=Q(f_2)=0\in S$, by \cite[Lemma~2 (iii)]{Inglis1990}, $Q+X+Y\notin Q^{\mathrm{\Gamma Sp}_{2m-2}(8)}$ but $Q+X+Z\in Q^{\mathrm{\Gamma Sp}_{2m-2}(8)}$. This is a contradiction.

Assume now that $V(\Ga)=R^{\mathrm{Sp}_{2m-2}(8)}$. Recall that $R(e_{1})=R(f_{1})=1,\ {\rm and}\ R(e_i)=R(f_i)=0\ {\rm for\ all}\ 2\leq i\leq m-1.$  Since $m\geq 3$, for $w\in W$, we may let
\[\begin{array}{l}
X(w)=R(w)+\boldsymbol{\beta}(w, e_2)^2,\\

Y(w)=R(w)+\boldsymbol{\beta}(w, f_2)^2,\\

Z(w)=R(w)+\boldsymbol{\beta}(w, te_2)^2.\\
\end{array}\]
Then $y(R,X)=e_2, y(R,Y)=f_2$ and $y(R, Z)=te_2$. Furthermore, for all $w\in W$, we have $X(w)+Y(w)=\boldsymbol{\beta}(w, e_2+f_2)^2$, and $X(w)+Z(w)=\boldsymbol{\beta}(w, (t+1)e_2)^2$. It follows that
$y(X, Y)=e_2+f_2$ and $y(X, Z)=(t+1)e_2$.

Since $R(e_2)=R(f_2)=R(te_2)=0$, one has $\alpha(R, X)=\alpha(R, Z)=\alpha(R, Y)=0\in S$. It follows that $X,Y,Z\in V(\Ga)$, and $\{R, X\}, \{R, Y\}, \{R,Z\}\in\Ga$. As $X(e_2+f_2)=R(e_2+f_2)+\boldsymbol{\beta}(e_2+f_2, e_2)^2=1+1=0$, it follows that $\alpha(X,Y)=0$ and so $\{X, Y\}\in E(\Ga)$. Similarly, as $X((t+1)e_2)=0$, it follows that $\a(X,Z)=0$, and so $\{X, Z\}\in E(\Ga)$. This implies that $[\{R, X, Y\}] \cong [\{R, X, Z\}]\cong \K_3$. Again, since $\Ga$ is $(G,3)$-set-homogeneous, it follows that there exists $g\in G\leq \mathrm{\Gamma Sp}_{2m-2}(8)$ such that $\{R, X, Y\}^g=\{R, X, Z\}$. This implies that $(R+X+Y)^g=R+X+Z$, and then $R+X+Y$ and $R+X+Z$ are in the same orbit of $\mathrm{\Gamma Sp}_{2m-2}(8)$. On the other hand, for $w\in W$, we have $R(w)+X(w)+Y(w)=R(w)+\boldsymbol{\beta}(w, e_2+f_2)$ and $R(w)+X(w)+Z(w)=R(w)+\boldsymbol{\beta}(w, (t+1)e_2)$. Then $y(R, R+X+Y)=e_2+f_2$ and $y(R, R+X+Z)=(t+1)e_2$.  Since $\a(R, R+X+Y)=R(e_2+f_2)=1\notin S$ and $\a(R, R+X+Z)=R((t+1)e_2)=0\in S$, by \cite[Lemma~2 (iii)]{Inglis1990}, $R+X+Y\notin R^{\mathrm{\Gamma Sp}_{2m-2}(8)}$ but $R+X+Z\in Q^{\mathrm{\Gamma Sp}_{2m-2}(8)}$. A contradiction occurs.\hfill\qed

\subsection{Proof of Theorem~\ref{th:3sethomo}}\label{subsec:proofTh:3sethomo}

Let $\Ga=(V, E)$ be a finite connected $(G,3)$-set-homogeneous graph. By Lemma~\ref{lem:prim-3ch-is-srg}, we see that, up to a complement, either $\Ga$ is isomorphic to one of the graphs: $\C_5, \K_n, \K_{m[b]}$, or $G$ is primitive on $V$ and $\Ga$ is a strongly regular graph of diameter $3$ and girth $3$. For the former, there is nothing further to prove. Assume therefore the latter happens in what follows.

If the socle of $G$ is neither elementary abelian nor nonabelian simple, then by Theorem~\ref{th:rank3groups} and Lemma~\ref{lm:CaseI}, we have $\Ga\cong \K_n\square\K_n$ or $\K_n\times\K_n$.

If the socle of $G$ is elementary abelian, then by Theorem~\ref{th-affine}, up to a complement, $\Ga$ is isomorphic either to $\K_n\square\K_n$ with $n$ a prime power or to one of the graphs in part (1) of our theorem.

If the socle of $G$ is nonabelian simple, then by Theorem~\ref{th-almost-simple}, up to a complement, $\Ga$ is isomorphic to one of the graphs in parts (3)--(4) of our theorem.

\section{Proof of Theorem~\ref{th-main}}\label{set:proof-of-main}

By definition, both 1-homogeneous graphs and 1-set-homogeneous graphs are exactly the vertex-transitive graphs. Assume now that $k\geq 2$. It is obvious by definition that every $k$-homogeneous graph is $k$-set-homogeneous. By Devillers~\cite{Devillers-2002paper}, any finite $5$-set-homogeneous graph is set-homogeneous and hence homogeneous (by \cite{Ronse}), and moreover, a $4$-set-homogeneous graph is either homogeneous or one of the Schl\"{a}fli graph on $27$ vertices and its complement, which are $4$-homogeneous. By Theorems~\ref{th:2sethomo} and \ref{th:3sethomo}, we know that every $k$-set-homogeneous graph is also $k$-homogeneous for $k=2$ or $3$. This proves the first assertion of Theorem~\ref{th-main}.

Now let $\Ga$ be a $k$-homogeneous graph with $k\geq 2$. If $k\geq 5$, then by \cite{Cameron-6-transitive}, $\Ga$ is homogeneous, and then by \cite{Gardiner1978}, $\Ga$ is one of the graphs in part~(1). If $\Ga$ is $4$-homogeneous but not $5$-homogeneous, then by \cite{Buczak}, we know that part (2) holds. If $\Ga$ is $3$-homogeneous but not $4$-homogeneous, then by Theorem~\ref{th:3sethomo} or \cite{Cameron-3-hom}, we conclude that $\Ga$ is one of the graphs in part (3). Finally, if $\Ga$ is $2$-homogeneous but not $3$-homogeneous, then $\Ga$ and its complement $\ov\Ga$ are both arc-transitive. 
This implies that $\Ga$ must be an orbital graph of a primitive permutation group of rank $3$, which is not one of the graphs in parts (1)--(3). \hfill\qed


\vfill
\eject

\section*{Appendix~1}

\begin{center}
Tables~1--2 of~\cite{Vauhkonen}\medskip

\begin{tabular}{cccc}
\hline
&$G$&$H$&subdegrees\\
\hline
1 &$\PGL_2(7)$& $\mathrm{D}_{16}$& 4,8,8\\
2 &$\PSL_2(8)$&$\mathrm{D}_{18}$& 9,9,9\\
3 &$\M_{10}=\PSL_2(9).2$& $\mz_5{:}\mz_4$&5,10,20\\
4 &$\PL_2(9)$&$\mz_{10}{:}\mz_4$&5,10,20\\
5&$\PL_2(16)$&$\mz_{17}{:}\mz_8$&17,34,68\\
6&$\PL_2(16)$&$(\A_5\times2).2$&12,15,40\\
7&$\PSL_2(19)$&$\A_5$&6,20,30\\
8&$\PSL_2(25)$&$\S_5$&10,24,30\\
9&${\rm P\Sigma L}_2(25)$&$\S_5\times 2$&10,24,30\\
10& $\PL_2(32)$&$\mz_{33}{:}\mz_{10}$&165,165,165\\
11&$\A_8=\GL_4(2)$&$(\A_5\times \A_3){:}2$&10,15,30\\
12&$\S_8=\GL_4(2).2$&$\S_5\times \S_3$&10,15,30\\
13&$\PSL_3(4)$&$\GL_3(2)$&21,42,56\\
14 &$\PSL_3(4).2$&$\GL_3(2){:}2$&21,42,56\\
15 &${\rm P\Sigma L}_3(4)$&$\GL_3(2)\times2$&21,42,56\\
16 &$\PSL_3(4).\lg\s\rg$&$\GL_3(2){:}2$&21,42,56\\
17 &${\rm P\Sigma L}_3(4).\lg\s\rg$&$\GL_3(2){:}2\times2$&21,42,56\\
18 &${\rm P\Sigma L}_4(4)=\PO^+_6(4).\lg\phi\rg$&${\rm Sp}_4(4).2$&255,272,480\\
19 &$\Aut(\PSL_4(4))={\rm PSO}^+_6(4)$&${\rm Sp}_4(4).[4]$&255,272,480\\
20 &$\PSL_4(5)=\PO^+_6(5)$&$\PSp_4(5).[a]$&325,600,624\\
&$~\unlhd~G\leq {\rm PO}^+_6(5)$& $a=|G|/|\PSL_4(5)|$&\\
\hline
\end{tabular}
\end{center}

\newpage
\section*{Appendix~2}

Cuypers in \cite{Cuypers} gave a classification of rank at most $5$
primitive groups whose socle is a finite simple group of Lie type, and we restate it as the following theorem (see {\rm\cite[Theorem~1.1]{Cuypers}}).

\begin{theorem}
Let $G$ be a finite group whose socle $S$ is a finite simple group of Lie type.
Suppose $M$ is a maximal proper subgroup of $G$ such that $MS=G$ and the permutation representation
of $G$ acting on the set of left (or right) cosets of $M$ has permutation rank at most five. Then up to conjugation
in $\Aut(S)$ the groups $S$ and $M\cap S$ are as stated in Table~I.
\end{theorem}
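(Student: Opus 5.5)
The plan is to turn the rank condition into a numerical \emph{largeness} condition on $M$, use known classifications of large maximal subgroups to get a finite list of candidate families, and then compute the rank in each family, by geometry where possible and by machine for small cases.

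\textbf{Reduction to large subgroups.} Let $r\leq 5$ be the rank of $G$ on $\Omega=[G:M]$. The nontrivial suborbits have length at most $|M|$, and there are $r-1\leq 4$ of them, so
\[|\Omega|-1\leq 4|M|,\qquad\text{hence}\qquad |G|\leq |M|(4|M|+1).\]
So $M$ is a large maximal subgroup, of order roughly at least $|G|^{1/2}/2$.

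I would then invoke the Liebeck--Saxl--Seitz classification of large maximal subgroups of almost simple groups of Lie type, in the sharpened form with explicit lists (Alavi--Burness). For classical $S$ I would go through Aschbacher's classes $\mathcal C_1,\dots,\mathcal C_8,\mathcal S$, and for exceptional $S$ through the Liebeck--Seitz list of maximal subgroups. The outcome is finitely many infinite families plus finitely many sporadic small-$q$ candidates:
\begin{itemize}
\item parabolic subgroups;
\item stabilisers of nondegenerate or nonsingular subspaces;
\item $\mathcal C_2$ decompositions into two isomorphic subspaces;
\item $\mathcal C_3$ extension-field subgroups of degree~$2$;
\item $\mathcal C_5$ subfield subgroups of index~$2$;
\item $\mathcal C_8$ classical subgroups;
\item a few $\mathcal S$-type and exceptional embeddings such as $G_2(q)<\PSp_6(q)$, $F_4(2)<{}^2E_6(2)$ and $\mathrm J_2<G_2(4)$.
\end{itemize}

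\textbf{Parabolic case.} Take $M=P_J$. By the Bruhat decomposition, the $(P_J,P_J)$-double cosets of $S$ correspond to the $(W_J,W_J)$-double cosets of the Weyl group $W$. A graph or field automorphism in $G$ can fuse double cosets; graph automorphisms also enlarge the parabolic to a stabiliser of a flag pair, as in the $\PSL_3(q)$ point--line and $\PSp_4(2^a)$ Borel examples. The rank is therefore a purely Coxeter-theoretic count. I would enumerate the maximal $J$ for each type, compute $|W_J\backslash W/W_J|$, and keep those with at most $5$ classes after fusion. This yields the parabolic lines of Table~I: points, lines and maximal totally singular subspaces of low rank, $E_6$ with $P_1$, and so on.

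\textbf{Non-parabolic families.} Here I would compute the orbits of $M$ on $\Omega$ using Witt's lemma. Two subspaces of the given type lie in the same $M$-orbit exactly when certain invariants agree: the dimension and isometry type of their intersection and of its radical, and the value of the form between spanning vectors. Requiring at most $4$ nontrivial values of these invariants forces small dimension or small $q$. Examples are nonsingular points for $q\leq 9$, non-degenerate $2$-spaces for $q=2$, and $\PSp_4(q)$ decompositions for $q\leq 32$.

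\textbf{Remaining sporadic candidates.} For the finitely many leftover pairs I would first use the character-theoretic identity
\[r=\langle 1_M^G,1_M^G\rangle\]
together with the crude bound $r\geq |\Omega|/|M|$ to discard most of them. The survivors I would settle by computer (Magma/GAP), using Atlas permutation characters where available. Finally I would check maximality and the condition $MS=G$ over all almost simple overgroups, up to $\Aut(S)$-conjugacy, which yields Table~I.

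\textbf{Main obstacle.} I expect the hardest part to be the middle ground: families where $q$ is not bounded a priori but the rank depends delicately on $q$ and on the outer automorphisms in $G$. Typical instances are nonsingular-point actions where field automorphisms fuse orbits, and the $\PSL_2(q)$ and $\PSL_3(q)$ cases with dihedral or novelty stabilisers. There I would parametrise suborbits explicitly, e.g.\ by the invariant $\beta(v,w)^2/(Q(v)Q(w))$, and count how many classes survive the action of $\mathrm{Out}$. I would also have to verify the exceptional-group lines individually, e.g.\ $F_4(2)<{}^2E_6(2)$, via involution centralisers.
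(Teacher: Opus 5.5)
The paper contains no proof of this statement. It is Cuypers' classification \cite[Theorem~1.1]{Cuypers}, restated verbatim in Appendix~2 and used as a black box in Section~\ref{subsec:almostsimple-rank4}, where the authors go through Table~I line by line. So there is no in-paper argument to compare yours with.

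Your outline has the standard shape for such rank classifications: the bound $|G:M|\le 1+4|M|$, reduction to large maximal subgroups, the Bruhat decomposition for parabolics, explicit orbit invariants, and machine checks for small cases. One step, however, would not go through as described.

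\textbf{The gap: non-subspace families of unbounded dimension.} For infinite non-parabolic families, the only mechanism you give is a Witt-lemma comparison of subspaces. Everything else you propose ($r\ge|\Omega|/|M|$, permutation characters, computer) handles only finitely many cases. But the bound $|G|\le 5|M|^2$ leaves infinite families that are not subspace actions and whose dimension is unbounded. Two examples:
\begin{itemize}
\item $\mathcal C_8$ normalisers of $\PSp_n(q)$ in $\PSL_n(q)$, where $|M|^2\approx q^{n^2+n}$ against $|G|\approx q^{n^2-1}$;
\item $\mathcal C_5$ index-$2$ subfield subgroups, where $|M|^2$ and $|G|$ are comparable.
\end{itemize}
For these, $r\ge|\Omega|/|M|$ is vacuous precisely because $M$ is large. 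Yet Table~I keeps only $n=4$ with $q\le 9$ and $n=6$ with $q=2$ for the first, and only $\PSL_2(q)$ and $\PSL_3(4)$ for the second. So you must show the rank exceeds $5$ everywhere else, and that needs a genuine orbit invariant.

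A standard device: here $M\cap S$ is essentially the centraliser of an involutory graph, field or graph-field automorphism $\tau$. Then $\Omega$ is identified with the class of $\tau$, and the conjugacy class of $\tau\tau'$ is constant on $M$-orbits. For $\mathcal C_8$ this is the class of $f^{-1}g$ (up to scalars) for two symplectic forms $f,g$. Its characteristic polynomial is $p(\lambda)^2$ with $\deg p=n/2$, so it takes a number of values growing with $n$ and $q$. The paper itself uses this device for Line~64 in the proof of Proposition~\ref{prop-sim-lie}, comparing orders of the products $\sigma x$.

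\textbf{Smaller corrections.}
\begin{itemize}
\item Field and diagonal automorphisms normalise a standard pair $B,N$ and act trivially on $W$, so they never fuse $(P_J,P_J)$-double cosets. Only graph automorphisms do: the ordinary ones, triality, and those of $B_2$, $F_4$ in characteristic $2$ and $G_2$ in characteristic $3$ (whose squares are field automorphisms). Compare Line~22, where the graph automorphism fuses the eight Borel double cosets of $\PSp_4(2^a)$ into five classes.
\item When $G$ contains a graph automorphism of $\PSL_n(q)$, the novelty maximal subgroups include not only flag stabilisers. They also include non-parabolic stabilisers of pairs $\{U,W\}$ with $V=U\oplus W$, for instance antiflags. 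Antiflag stabilisers are large for all $n\ge 3$ and all $q$, so they must appear in your non-parabolic list. The phrase ``nondegenerate subspace'' does not cover them, since there is no form.
\item The order theorems you want are Liebeck's for classical groups and Liebeck--Saxl's for exceptional groups. The Alavi--Burness list (for $|M|^3\ge|G|$) is valid but much coarser than your condition requires.
\end{itemize}
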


\begin{center}
{Table~I}\medskip

\begin{tabular}{|c|c|l|l|}
\hline nr. &  $S=\soc(G)$ &  $S_\alpha=M\cap S$ & restrictions \\
\hline
1 & $\PSL_n(q)$ & stabiliser of a $m$-space & $m\leq {\rm min}(4, {n\over 2}), n\geq 2$\\
2 & $\PSL_2(q)$ & stabiliser of a decomposition of  $V$ as& $q\in\{4,5,7,8,9,16,32\}$\\
&& $V=V_1\oplus V_2$, $\dim(V_1)=1$&\\
3 & $\PSL_2(q)$ & stabiliser of an extension field $F$ of& $q\in\{4,5,7,8,9,16,32\}$\\
&& $\GF(q)$ of order $q^2$&\\
4 & $\PSL_2(q)$ & stabiliser of a subfield of $\GF(q)$ with& $q\in\{4,9,16,25,49,64,$\\
&& index $2$&$81\}$\\
5 & $\PSL_3(4)$ & stabiliser of a subfield of $\GF(q)$ with & \\
&&index $2$&\\
6 & $\PSL_3(4)$ & normaliser of a non-degenerate & \\
&&hermitian form&\\
7 & $\PSL_4(q)$ & normaliser of a non-degenerate & $q\leq 9$\\
&&symplectic form&\\
8 & $\PSL_6(2)$ & normaliser of a non-degenerate & \\
&&symplectic form&\\
9 & $\PSL_2(11)$ & $\mathrm{A}_5$ & \\
10 & $\PSL_2(19)$& $\mathrm{A}_5$&\\
11 & $\PSL_3(4)$& $\mathrm{A}_6$&\\
12 & $\PSL_n(q)$ & stabiliser of pair $\{U,W\}$ with $U\leq W$&$n\geq 3$ and $G$ contains a\\
&& and  $\dim(U)=1$ &  graph  automorphism  \\
13 & $\PSL_n(2)$ & stabiliser of pair $\{U,W\}$ with $U\leq W$ &$n\geq 3$ and  $G$ contains a\\
&&and  $\dim(U)=1$  & graph automorphism\\
\hline

14 & $\PSp_n(q)$ & stabiliser of a singular $1$-space   &$n$ even  \\
15&$\PSp_n(q)$&stabiliser of a singular $1$-space& $n=4$ or $6$\\
16&$\PSp_n(q)$&stabiliser of a maximal totally& $n=4,6$ or $8$\\
&& singular subspace&\\
17&$\PSp_n(2)$&stabiliser of a non-degenerate $2$-space& $n\geq 6$\\
18&$\PSp_4(q)'$&stabiliser of a decomposition  of $V$ & $q\in\{2,3,4,5,7,9,$\\
&&  into a direct sum of two  isomorphic&$16,32\}$ \\
&&  non-degenerate $2$-spaces & \\
19&$\PSp_4(q)$&stabiliser of an extension field $F$ of& $q\in\{3,5,7,9\}$\\
&&   $\GF(q)$ of order $q^2$ & \\
20&$\PSp_n(q)$&normaliser of a non-degenerate & $q\in\{2,4,8,16,32\}$\\
&&  quadratic form $V$ & \\
21&$\PSp_6(q)$& $G_2(q)$ & $q\in\{2,4,8\}$\\
\hline
\end{tabular}
\end{center}

\begin{table}
\begin{center}
\begin{tabular}{|c|c|l|l|}
\hline
nr. &  $S=\soc(G)$ &  $S_\alpha=M\cap S$ & restrictions \\
\hline
22 & $\PSp_4(q)$ & stabiliser of pair $\{U,W\}$ &$q$ is even and  \\
&&with $U\leq W$ and $W$ is totally singular & $G$ contains a\\
&&$\dim(U)=1$ and $\dim(W)=2$ & graph automorphism\\ \hline
23 & $\PSU_n(r)$ & stabiliser of a singular $1$-space &$n\geq 3$  \\
24 & $\PSU_n(r)$ & stabiliser of a totally singular $2$-space &$4\leq n\leq 7$  \\
25 & $\PSU_n(r)$ & stabiliser of a maximal  totally &$6\leq n\leq 9$  \\
&& singular subspace&\\
26 & $\PSU_n(r)$ & stabiliser of a nonsingular $1$-space &$r\in\{2,3,4,8\}$  \\
27 & $\PSU_3(3)$ & stabiliser of an orthogonal bases&  \\
28 & $\PSU_4(2)$ & stabiliser of an orthogonal bases&  \\
29 & $\PSU_4(r)$ & normaliser of $\PSp_4(r)$  & $r\in\{2,3,4,5,7,8,9\}$ \\
30 & $\PSU_6(2)$ & normaliser of $\PSp_6(2)$ &  \\
31 & $\PSU_4(3)$ & $2^4{:}\mathrm{A}_6$ &  \\
32 & $\PSU_3(5)$ & $\mathrm{M}_{10}$ &  \\
33 &$\PSU_3(5)$& $\mathrm{A}_7$&\\
34 &$\PSU_4(3)$& $\PSL_3(4)$&\\
35 &$\PSU_6(2)$& $\PSU_4(3)$&\\
36 &$\PSU_3(3)$& $\PSL_3(2)$&\\
\hline
37 &$\PO_{2m+1}(q)$& stabiliser of a singular $1$-space&\\
38 &$\PO^{\pm}_{2m }(q)$& stabiliser of a singular $1$-space&\\
39 &$\PO^{\pm}_{2m }(q)$& stabiliser of a totally singular $2$-space& $m\leq 3$\\
40 &$\PO^{\pm}_{2m }(q)$& stabiliser of a totally singular $2$-space& $m\leq 4$\\
41 &$\PO_{2m+1}(q)$& stabiliser of a maximal totally  & $m\leq 4$\\
&&singular subspace&\\
42 &$\PO^-_{2m }(q)$& stabiliser of a maximal totally & $m\leq 5$\\
&&singular subspace&\\
43 &$\PO^+_{2m }(q)$& stabiliser of a maximal totally & $m\leq 9$\\
&&singular subspace&\\
44 &$\PO_{2m+1}(q)$& stabiliser of a nonsingular $1$-space & $q\in\{3,5,7,9\}$\\
45 &$\PO^{\pm}_{2m }( q)$& stabiliser of a nonsingular $1$-space & $q\in\{2,3,4,5,7,8,9\}$\\
46 &$\PO^{\pm}_{2m }(2)$& stabiliser of a non-degenerate $2$-space & $m\geq 3$\\
&& of elliptic type&\\
47 &$\PO_7(q)$& $G_2(q)$ & $q\in\{3,5,7,9\}$\\
48 &$\PO_7(3)$& $\PSp_6(2)$ & \\
49 &$\PO^+_{8}(2)$& $\mathrm{A}_9$ & \\
50 &$\PO^+_{8}(3)$& $\PO^+_{8}(2)$ & \\ \hline
51 &${}^2B_2(q)$& maximal parabolic & \\
52 &${}^2C_2(q)$&maximal parabolic&\\
53 &${}^2G_2(q)$&maximal parabolic&\\
54 &$G_2(q)$&maximal parabolic (2 classes)&\\
55 &$G_2(q)$&${\rm SU}_3(q).2$& $q\in\{2,3,4,5,7,8,$\\
&&&$16,32\}$\\
56 &$G_2(q)$&${\rm SL}_3(q).2$& $q\in\{2,3,4,5,8\}$\\
57 &$G_2(4)$&$\mathrm{J}_2$&\\
\hline
58 &${}^3D_4(q)$&maximal parabolic (2 classes)&\\

\hline
59 &${}^2F_4(q)'$&maximal parabolic (2 classes)&\\
60 &${}^2F_4(2)'$&$\PSL_3(3).2$&\\
\hline
\end{tabular}
\end{center}
\end{table}

\begin{table}
\begin{center}
\begin{tabular}{|c|c|l|l|}
\hline nr. &  $S=\soc(G)$ &  $S_\alpha=M\cap S$ & restrictions \\
\hline

61 &$F_4(q)$&maximal parabolic of type $B_3$ or $C_3$&\\
62 &$F_4(2)$&$B_4(2)$&\\
\hline
63 &${}^2E_6(q)$&maximal parabolic of type ${}^2A_5$ or ${}^2D_5$&\\
64 &${}^2E_6(2)$& $F_4(2)$&\\
\hline
65 &$E_6(q)$&maximal parabolic of type $A_5$ or $D_5$&\\
66 &$E_6(2)$&$F_4(2)$& $G$ contains a graph \\
&&& automorphism \\
\hline
67 &$E_7(q)$&maximal parabolic of type $E_6$ or $D_6$&\\
\hline
68 &$E_8(q)$&maximal parabolic of type $E_7$&\\
\hline
\end{tabular}
\end{center}
\end{table}

\end{document}